\newcommand{\R}{\mathbb{R}}
\newcommand{\Rn}{\mathbb{R}^n}
\newcommand{\F}{{F}}
\newcommand{\M}{\mathcal{M}}
\newcommand{\A}{\mathcal{A}}
\newcommand{\C}{\mathcal{C}}
\newcommand{\tr}{\operatorname{tr}}
\newcommand{\osc}{\operatorname{osc\,}}
\newcommand{\vep}{\varepsilon}
\newcommand{\supp}{\operatorname{supp}}
\newcommand{\USC}{\operatorname{USC}}
\newcommand{\LSC}{\operatorname{LSC}}
\newcommand{\limsupstar}{\operatorname{\textstyle\limsup^*}}
\newtheorem{thm}{Theorem}[section]
\newtheorem{prop}[thm]{Proposition}
\newtheorem{cor}[thm]{Corollary}
\newtheorem{lem}[thm]{Lemma}
\theoremstyle{definition}
\newtheorem{defn}[thm]{Definition}
\newtheorem{rem}[thm]{Remark}
\numberwithin{equation}{section}
\author[M.~Soria-Carro]{Mar\'ia Soria-Carro}
    \address{Department of Mathematics\\
    Rutgers University\\
 Hill Center - Busch Campus\\ 
 110 Frelinghuysen Road, Piscataway, NJ 08854, USA}
    \email{maria.soriacarro@rutgers.edu}
\author[P.~R.~Stinga]{Pablo Ra\'ul Stinga}
    \address{Department of Mathematics\\
    Iowa State University\\
    396 Carver Hall, Ames, IA 50011, USA}
    \email{stinga@iastate.edu}
\keywords{Transmission problems, fully nonlinear elliptic equations, viscosity solutions, regularity estimates}
\subjclass[2010]{Primary: 35B65, 35Q74. Secondary: 35J60, 74A50}
\thanks{Research partially supported by NSF grant 1500871 and Simons Foundation grant 580911.}
\begin{document}

\title[Regularity of fully nonlinear transmission problems]{Regularity of viscosity solutions to \\ fully nonlinear elliptic transmission problems}

\begin{abstract}
We develop the regularity theory of viscosity solutions
to transmission problems for fully nonlinear second order uniformly elliptic equations.
Our results give a complete theory of existence, uniqueness, comparison principle, and regularity
of solutions to flat interface transmission problems; and the $C^{0,\alpha}$, $C^{1,\alpha}$ and $C^{2,\alpha}$
regularity of viscosity solutions up to the transmission surface for the case of curved interfaces.
\end{abstract}

\maketitle

\section{Introduction}

Transmission problems describe phenomena in which a physical quantity changes behavior across some fixed interface. 
Their analysis started in the 1950s with the pioneering work of Picone in elasticity \cite{Picone} and further
subsequent contributions \cite{Campanato,Lions,Stampacchia}. 
In 1960, Schechter generalized the problem of transmission for elliptic equations with smooth coefficients and interfaces \cite{Schechter}.
Other variations include diffraction problems in the theory of discontinuous coefficients
\cite{Ladyzhenskaya-Uraltseva, Oleinik}. 
See \cite{Borsuk} for a detailed exposition on classical transmission problems.
These problems have a wide range of applications in different areas such as
electromagnetic processes, composite materials, vibrating folded membranes and climatology.
For other recent developments, see \cite{CSCS,Citti-Ferrari,Kriventsov,Li-Nirenberg,Li-Vogelius} and references therein.

We consider viscosity solutions to the following transmission problem for fully nonlinear elliptic second order equations in the unit ball $B_1$
of $\R^n$, $n\geq2$:
\begin{align} \label{eq:TP} 
\begin{cases}
\F^+(D^2 u) = f^+& \hbox{in}~\Omega^+=B_1\cap \{x_n> \psi(x')\}\\
\F^-(D^2 u) = f^-& \hbox{in}~\Omega^-=B_1\cap \{x_n< \psi(x')\}\\
 u_\nu^+ -   u_\nu^- = g & \hbox{on}~\Gamma=B_1 \cap \{x_n = \psi(x')\}.
\end{cases}
\end{align}
Here $u\in C(B_1)$ and $u^\pm=u|_{\overline{\Omega^\pm}}$.
The interface $\Gamma$ is given by the graph of a function $\psi: \R^{n-1} \to \R$
with unit normal vector $\nu$ pointing towards $\Omega^+$, and $u_\nu^\pm$ denote the derivatives of $u^\pm$
in the direction $\nu$. Let $\mathcal{S}^n$ be the set of symmetric matrices of size $n\times n$.
We assume that $F^\pm: \mathcal{S}^n \to \R$ are fully nonlinear uniformly elliptic operators with ellipticity constants $0<\lambda\leq\Lambda$,
that is, for every $M,N\in \mathcal{S}^n$ with $N\geq0$, we have
$$
\lambda \|N\| \leq F^\pm(M+N)-F^\pm(M)\leq \Lambda \|N\|.
$$ 
For simplicity, we assume $F^\pm(0)=0$. As usual, $\mathcal{E}(\lambda,\Lambda)$ denotes the class of all
such operators.

The third equation in \eqref{eq:TP} is the transmission condition. It describes
the interaction between $u^+$ and $u^-$ across the common interface $\Gamma$.
If $g$ is nontrivial, the transmission condition prescribes a jump discontinuity between the normal derivatives
of the solution on each side of $\Gamma$.
Hence, $u$ will not be differentiable across $\Gamma$. However, we show that $u^+$ and $u^-$ have better
regularity properties up to the interface.
Even though \eqref{eq:TP} looks like two Neumann problems attached at the interface through the transmission condition,
it is not possible to decouple the equations for $u^+$ and $u^-$. Thus,
the analysis of \eqref{eq:TP} presents challenging difficulties as we will need to deal with both sides of the interface simultaneously. 

We develop the complete theory of existence, uniqueness and regularity 
in the case when the interface is flat. Then we prove the following regularity of viscosity solutions for curved interfaces.
See the end of this section for notation.

\begin{thm}[$C^{0,\alpha}$ regularity] \label{thm:holder}
Let $\Gamma=B_1 \cap \{x_n = \psi(x')\}$. Let $u$ satisfy
$$
\begin{cases}
u \in {S}^*_{\lambda, \Lambda}(f^\pm) & \hbox{in}~\Omega^\pm\\
 u_{\nu}^+ -  u_{\nu}^- = g & \hbox{on}~\Gamma,
\end{cases}
$$
with $f^\pm \in C(\Omega^\pm)\cap L^\infty (B_1^\pm)$, $g \in L^\infty(\Gamma)$ and $\psi\in C^{1,\alpha}(\overline{B_1'})$,
for some $0<\alpha<1$.
Then $u\in C^{0,\alpha_1}(\overline{B_{1/2}})$ and 
$$
\|u\|_{C^{0,\alpha_1}(\overline{B_{1/2}})} \leq C\big ( \|u\|_{L^\infty(B_1)}+  \|g\|_{L^\infty(\Gamma)} + \|f^-\|_{L^n(\Omega^-)}+\|f^+\|_{L^n(\Omega^+)}\big),
$$
where $0<\alpha_1<1$ and $C>0$ depend only on $n$, $\lambda$, $\Lambda$, $\alpha$ and $\|\psi\|_{C^{1,\alpha}(\overline{B_1'})}$.
\end{thm}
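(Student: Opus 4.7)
The plan is to reduce the curved interface problem to the flat case by flattening $\Gamma$ through a $C^{1,\alpha}$ diffeomorphism, applying the Hölder estimate for the flat transmission problem established in the preceding sections of the paper, and combining with standard interior estimates for the Pucci class away from the interface. A local-to-global covering argument then yields the global bound on $\overline{B_{1/2}}$.

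First, I introduce the straightening map $\Phi(x',x_n) = (x', x_n - \psi(x'))$, a $C^{1,\alpha}$ bi-Lipschitz diffeomorphism whose Lipschitz constants depend only on $\|\nabla\psi\|_\infty$; it sends $\Gamma$ to $\{y_n = 0\}$ and $\Omega^\pm$ to the corresponding half-spaces. Let $\tilde u := u \circ \Phi^{-1}$. I then check that $\tilde u$ satisfies a flat transmission problem of the type treated earlier. On each side, the chain rule gives $D^2 u = (D\Phi)^T D^2 \tilde u\, (D\Phi) + R$, where $R$ is linear in $D\tilde u$ with coefficients involving second derivatives of $\Phi$; consequently $\tilde u$ lies in a Pucci-extremal class $S^*_{\tilde\lambda,\tilde\Lambda}$ with modified ellipticity constants depending only on $n, \lambda, \Lambda, \|\nabla\psi\|_\infty$, and an effective right-hand side still in $L^n$. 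For the interface, continuity of the tangential derivatives of $u$ across $\Gamma$ yields
$$u_\nu^+ - u_\nu^- = \sqrt{1+|\nabla\psi(x')|^2}\,(\partial_{x_n}u^+ - \partial_{x_n}u^-),$$
so that in $y$-coordinates the jump condition becomes $\partial_{y_n}\tilde u^+ - \partial_{y_n}\tilde u^- = \tilde g$, with $\tilde g := g/\sqrt{1+|\nabla\psi|^2}\in L^\infty$.

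With the flat problem in hand, I apply the $C^{0,\alpha_1}$ estimate for flat interfaces from the previous sections to $\tilde u$, obtaining control of $\|\tilde u\|_{C^{0,\alpha_1}}$ on a neighborhood of the flat interface in terms of $\|u\|_{L^\infty(B_1)}$, $\|g\|_{L^\infty(\Gamma)}$, and $\|f^\pm\|_{L^n(\Omega^\pm)}$. Since $\Phi$ is bi-Lipschitz, pulling back gives the same regularity for $u$ on a comparable neighborhood of $\Gamma$. Away from $\Gamma$, the classical interior Krylov–Safonov–Caffarelli estimate for functions in $S^*_{\lambda,\Lambda}(f^\pm)$ applies directly to $u^\pm$. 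A standard covering argument patching the interface estimate together with the interior estimates delivers the desired $C^{0,\alpha_1}(\overline{B_{1/2}})$ bound.

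The main technical obstacle is justifying the chain-rule computation for the transformed Pucci inequalities when $\Phi$ is only $C^{1,\alpha}$ rather than $C^2$: the formula for $D^2 u$ in $y$-coordinates produces a term proportional to $D^2\psi\cdot\partial_{y_n}\tilde u$, which is not in $L^\infty$ under our assumptions on $\psi$. I would handle this either by regularizing $\psi$ to $\psi_k\to\psi$ in $C^{1,\alpha}$, establishing the estimate for the regularized problem with constants depending only on $\|\psi\|_{C^{1,\alpha}}$, and passing to the limit via stability of viscosity solutions; or by implementing the change of variables entirely at the level of smooth test functions touching $\tilde u$ from above or below, where the offending second derivatives of $\Phi$ enter only through the (bounded) first derivatives of the test function and can thus be absorbed into the Pucci-extremal framework without requiring $\psi\in C^2$.
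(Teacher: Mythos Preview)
Your approach is fundamentally different from the paper's and contains two genuine gaps.

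First, a structural issue: there is no flat-interface H\"older estimate ``established in the preceding sections.'' Section~\ref{section:ABP} proves the ABP estimate (already for curved $\Gamma$), and Section~\ref{sec:holder} is devoted to Theorem~\ref{thm:holder} itself; the flat-interface theory appears only in Section~\ref{sect:FTP}, after Theorem~\ref{thm:holder}, and the global flat H\"older estimate there (Corollary~\ref{cor:globalholderflat}) is a consequence of Proposition~\ref{prop:globalholder}, which in turn rests on Theorem~\ref{thm:holder}. So the reduction you propose is circular within this paper.

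Second, and more seriously, the flattening does not go through in the $C$-viscosity framework when $\psi$ is only $C^{1,\alpha}$. Your test-function fix fails: if a $C^2$ function $\tilde\varphi$ touches $\tilde u$ from above at $y_0$, then to use the viscosity property of $u$ you would need $\tilde\varphi\circ\Phi$ to be an admissible $C^2$ test function at $x_0=\Phi^{-1}(y_0)$, but $\tilde\varphi\circ\Phi$ is only $C^{1,\alpha}$, so no pointwise second-order information is available; the ``$D^2\psi$'' term is not merely unbounded, it does not exist. Your regularization fix also fails: if you replace $\psi$ by a smooth $\psi_k$ you change the diffeomorphism to $\Phi_k$, but $u$ still satisfies the transmission condition on the original $\Gamma$, so $\tilde u_k=u\circ\Phi_k^{-1}$ has its jump on the curved set $\{y_n=\psi(y')-\psi_k(y')\}$, and you are back to a non-flat problem with no uniform control.

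The paper sidesteps all of this by never flattening. It proves the ABP estimate directly on the curved interface (Theorem~\ref{thm:ABP2}) via a barrier $w$ solving $\mathcal{M}^-_{\lambda,\Lambda}(D^2 w)=0$ with zero data on $\Gamma$, using Hopf's lemma for $C^{1,\alpha}$ domains to guarantee $w_\nu\geq c_0>0$ on $\Gamma$ (Lemma~\ref{lem:barrier}). With this barrier in hand, a Harnack inequality at the interface (Lemma~\ref{lem:holder2}) follows by combining the interior Krylov--Safonov estimate with a comparison against a radial subsolution corrected by $w$, yielding an oscillation decay (Theorem~\ref{lem:oscillation2}) whose iteration gives Theorem~\ref{thm:holder}. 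No change of variables enters, which is precisely why only $\psi\in C^{1,\alpha}$ is needed.
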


\begin{thm}[$C^{1,\alpha}$ regularity] \label{thm:main1}
Let $0<\bar\alpha<1$, depending only on $n$, $\lambda$ and $\Lambda$ be as given below.
Fix any $0<\alpha<\bar\alpha$.
Let $\Gamma=B_1 \cap \{x_n = \psi(x')\}$, where $\psi \in C^{1,\alpha}(\overline{B_1'})$.
Assume that  $g\in C^{0,\alpha}(\Gamma)$ and $f^\pm$ satisfy 
$$
\bigg( \fint_{B_r(x_0) \cap \Omega^\pm} |f^\pm|^n\, dx\bigg)^{1/n} \leq C_{f^\pm} r^{\alpha-1}
$$ 
for all $r>0$ small and $x_0\in \Omega^\pm \cup \Gamma$, for some constants $C_{f^\pm}>0$.
There exists $0<\theta<1$, depending only on $n$, $\lambda$, $\Lambda$ and $\alpha$,
such that if 
\begin{equation}\label{eq:closeness2}
\sup_{M\in \mathcal{S}^n \setminus \{0\}} \frac{\|F^+(M)-F^-(M)\|}{\|M\|} \leq \theta
\end{equation}
then any bounded viscosity solution $u$ of \eqref{eq:TP} in $B_1$ satisfies
$u^\pm\in C^{1,\alpha}(\overline{\Omega^\pm_{1/2}})$, where $\Omega_{1/2}^\pm = \Omega^\pm \cap B_{1/2}$, and 
$$
\|u^\pm\|_{C^{1,\alpha}(\overline{\Omega^\pm_{1/2}})} \leq C\|\psi\|_{C^{1,\alpha}(\overline{B_1'})} \big( \|u\|_{L^\infty(B_1)} + \|g\|_{C^{0,\alpha}(\Gamma)}+ C_{f^-} + C_{f^+}\big)
$$
where $C>0$ depends only on $n$, $\lambda$, $\Lambda$ and $\alpha$. 
\end{thm}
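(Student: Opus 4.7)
The strategy is a standard compactness--iteration argument at points of the interface $\Gamma$, combined with Caffarelli's interior $C^{1,\alpha}$ theory for fully nonlinear equations in the interior of $\Omega^\pm$. I first flatten the interface via the $C^{1,\alpha}$ diffeomorphism $y=\Phi(x):=(x',x_n-\psi(x'))$, which sends $\Gamma$ to $\{y_n=0\}$ and $\Omega^\pm$ to $B_1\cap\{\pm y_n>0\}$ (up to a bi-Lipschitz deformation of $B_1$). Under $\Phi$, the operators $F^\pm$ become new uniformly elliptic operators $\tilde F^\pm(M,y)$ with $C^{0,\alpha}$ dependence on $y$, close to $F^\pm$ when $\|\nabla\psi\|_\infty$ is small, and the transmission condition becomes $\tilde u^{+}_{y_n}-\tilde u^{-}_{y_n}=g/\sqrt{1+|\nabla\psi|^2}$ on $\{y_n=0\}$ (the tangential derivatives cancel thanks to continuity of $u$ across $\Gamma$). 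A standard rescaling reduces to a normalized problem with $\|u\|_{L^\infty}\le 1$ and $\|\psi\|_{C^{1,\alpha}}$, $C_{f^\pm}$, $\|g\|_{C^{0,\alpha}(\Gamma)}$, $\theta$ all smaller than a parameter $\delta>0$ to be chosen.

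The main step is an approximation lemma: for every $\vep>0$ there exists $\delta>0$ such that any solution $u$ in the $\delta$-regime is $\vep$-close in $L^\infty(B_{3/4})$ to some $h$ satisfying $F(D^2 h)=0$ in $B_{3/4}$ in the viscosity sense, for a single $F\in\mathcal{E}(\lambda,\Lambda)$. I prove it by contradiction and compactness: a sequence of counterexamples $u_k$ is equicontinuous by the $C^{0,\alpha}$ estimate of Theorem \ref{thm:holder} and converges uniformly along a subsequence to a limit $h$. The stability of viscosity solutions and the closeness assumption \eqref{eq:closeness2} force $F_k^\pm$ to converge to a common $F\in\mathcal{E}(\lambda,\Lambda)$; the hypothesis $g_k\to 0$ combined with the flat-interface theory developed earlier in the paper forces the limiting jump to vanish; hence $h$ is a viscosity solution of $F(D^2 h)=0$ across the flat interface in $B_{3/4}$. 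By the Krylov--Evans theorem, $h\in C^{1,\bar\alpha}(B_{1/2})$ with a universal estimate, so there exists an affine $L$ with $\|h-L\|_{L^\infty(B_r)}\le Cr^{1+\bar\alpha}$ for every $r\le 1/2$.

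Given $\alpha<\bar\alpha$, choosing $\rho\in(0,1/4)$ with $C\rho^{1+\bar\alpha}+\vep\le\tfrac12\rho^{1+\alpha}$ produces a two-sided affine function $L_0$ (with tangential components of $\nabla L_0^\pm$ matching across $\{y_n=0\}$ and normal jump equal to $g(y_0)$) satisfying $\|u-L_0\|_{L^\infty(B_\rho(y_0))}\le\rho^{1+\alpha}$ at any $y_0\in\Gamma$. Setting $u_1(y):=\rho^{-(1+\alpha)}(u(y_0+\rho y)-L_0(y_0+\rho y))$ yields a solution of a transmission problem in $B_1$ still in the $\delta$-regime: the rescaled operators $F_1^\pm(M):=\rho^{1-\alpha}F^\pm(\rho^{-(1-\alpha)}M)$ still satisfy \eqref{eq:closeness2} with the same $\theta$, the rescaled source terms preserve the Morrey-type bound with the same constants, and the rescaled jump has $C^{0,\alpha}$ norm controlled by $[g]_{C^{0,\alpha}}$ because the constant $g(y_0)$ has been absorbed into the normal jump of $L_0$. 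Iterating produces a Cauchy sequence of two-sided affine functions $L_k$ converging to some $L_\infty$ with the pointwise estimate $|u(y)-L_\infty(y)|\le C|y-y_0|^{1+\alpha}$ near $y_0$. Combining this with interior $C^{1,\alpha}$ regularity in $\Omega^\pm\setminus\Gamma$ and a Campanato-type characterization of Hölder spaces gives $u^\pm\in C^{1,\alpha}(\overline{\Omega^\pm_{1/2}})$; pulling back by $\Phi^{-1}\in C^{1,\alpha}$ transfers the estimate to the original domain.

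The main obstacle is the approximation lemma, and inside it the passage to the limit in the transmission condition: since the normal derivatives $u_\nu^\pm$ are not a priori defined for a merely continuous viscosity solution, the jump condition must be formulated in a viscosity sense stable under uniform convergence. This is precisely where the flat-interface theory announced at the start of the section is indispensable: it provides the comparison principle and rigidity needed to identify the limit $h$ as a viscosity solution across the flat interface, at which point the condition $g=0$ and the closeness assumption \eqref{eq:closeness2} collapse the transmission problem to a standard fully nonlinear equation to which Krylov--Evans applies.
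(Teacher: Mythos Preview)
Your approach has a genuine gap at the very first step: the flattening diffeomorphism $\Phi(x)=(x',x_n-\psi(x'))$ is only $C^{1,\alpha}$, and a $C^{1,\alpha}$ change of variables does \emph{not} transform a second-order viscosity equation into another one. Computing $D^2u$ in terms of $D^2\tilde u$ produces the term $-\psi_{x_ix_j}\,\tilde u_{y_n}$ for $i,j<n$, and $D^2_{x'}\psi$ simply does not exist when $\psi$ is merely $C^{1,\alpha}$. Equivalently, if $\tilde\varphi$ is a $C^2$ test function touching $\tilde u$, then $\tilde\varphi\circ\Phi$ is only $C^{1,\alpha}$ and is not an admissible test function for $u$. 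So your claim that ``$F^\pm$ become new uniformly elliptic operators $\tilde F^\pm(M,y)$ with $C^{0,\alpha}$ dependence on $y$'' is false: the transformed problem is not a viscosity PDE of the form $\tilde F(D^2\tilde u,y)=\tilde f$ at all.

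If instead you try compactness directly on the curved domains (without flattening), you run into the obstruction the paper flags explicitly: the closedness Lemma~5.1 requires $\Gamma_k\to T$ in $C^2$, because passing the transmission condition to the limit uses test functions of the form $\varphi(x',x_n-\psi_k(x'))$, which must be $C^2$. With $\psi_k$ only $C^{1,\alpha}$ this step breaks. This is precisely why the paper abandons compactness for Theorem~\ref{thm:main1} and proceeds constructively: it proves two separate stability lemmas (Lemma~5.2 when $g$ is close to $0$, via a smooth interpolation $F_\vep$ between $F^+$ and $F^-$ and the small-oscillation $C^{1,\bar\alpha}$ theory; Lemma~5.5 when $g$ is away from $0$, via explicit flat-interface barriers $\overline v,\underline v$), and then iterates Lemmas~6.2--6.3 to get the pointwise $C^{1,\alpha}$ expansion. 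A minor additional point: even if your limit $h$ existed, you would not invoke Evans--Krylov (no concavity is assumed here) but rather Caffarelli's interior $C^{1,\bar\alpha}$ estimate.
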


Global $C^{2,\alpha}$ estimates can also be obtained under some constraints on the function $g$ and the interface $\Gamma$.
The key result for such estimates is the following pointwise boundary regularity result, where we require weaker assumptions.
Hence, we omit the statement of global estimates to provide a more complete description in the case of $C^{2,\alpha}$ interfaces.

\begin{thm}[$C^{2,\alpha}$ regularity]  \label{thm:reg2}
Let $\F$ be a uniformly elliptic concave operator and let $0<\bar{\bar{\alpha}}<1$, depending only on $n$, $\lambda$ and $\Lambda$ be
as given below. Fix any $0<\alpha<\bar{\bar\alpha}$.
Let $\Gamma=B_1 \cap \{x_n = \psi(x')\}$. Assume that $0\in \Gamma$, $\psi \in C^{2,\alpha}(0)$, $\psi\not\equiv0$ and
$g\in C^{1,\alpha}(0)$ are such that $|g(0)|\|D_{x'}^2 \psi(0)\|=0$,  $f^\pm$ are continuous at $0$, $f^+(0)=f^-(0)$ and 
$$
\Big( \fint_{B_r \cap \Omega^\pm} |f^\pm(x)-f^\pm(0)|^n\, dx\Big)^{1/n} \leq K_{f^\pm} r^{\alpha},
 \qquad \hbox{for all}~ r>0~\hbox{small}.
$$
Let $u$ be a bounded viscosity solution of \eqref{eq:TP} with $F\equiv F^+ \equiv F^-$.
Then $u^\pm \in C^{2,\alpha}(0)$, that is, there exist quadratic polynomials  $P^\pm(x)= \tfrac{1}{2}x^t A^\pm x  +  B^\pm \cdot x + c $ such that
$$
|u^\pm(x) - P^\pm(x)| \leq C |x|^{2+\alpha}, \qquad \hbox{for all}~x\in \Omega_{r_0}^\pm,
$$
 with $r_0=C_0 / \|\psi\|_{C^{2,\alpha}(0)}$, and $C_0>0$ depending only on $n, \lambda, \Lambda$ and $\alpha$. Moreover,
 $$
 \|A^\pm\|+|B^\pm| + |c| \leq  C_0 \|\psi\|_{C^{2,\alpha}(0)} \big( \|g\|_{C^{1,\alpha}(0)} +|f^-(0)| + |f^+(0)| + K_{f^-}+K_{f^+}\big).
 $$
\end{thm}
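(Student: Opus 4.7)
The plan is the standard compactness-and-iteration scheme for pointwise $C^{2,\alpha}$ regularity at the interface, adapted to the transmission setting. I would construct at every dyadic scale $r_k = \rho^k r_0$ a pair of quadratic polynomials $P_k^\pm(x)= \tfrac12 x^t A_k^\pm x + B_k^\pm\cdot x + c_k$ satisfying $F(A_k^\pm)=f^\pm(0)$ and an appropriate transmission compatibility at $0$, such that
\[
\sup_{\Omega_{r_k}^\pm} |u^\pm - P_k^\pm| \le r_k^{2+\alpha}, \qquad \|A_{k+1}^\pm-A_k^\pm\|+\tfrac{1}{r_k}|B_{k+1}^\pm-B_k^\pm|+\tfrac{1}{r_k^2}|c_{k+1}-c_k| \le C\rho^{k\alpha}.
\]
The limits of $A_k^\pm,B_k^\pm,c_k$ give the claimed Taylor polynomials, and the bound on $\|A^\pm\|+|B^\pm|+|c|$ follows by summing the geometric series and tracking the rescaling factor $\|\psi\|_{C^{2,\alpha}(0)}$.

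First, I would flatten $\Gamma$ via the change of variables $y_n=x_n-\psi(x')$, $y'=x'$. Since $\psi\in C^{2,\alpha}(0)$ and $\psi(0)=0$, $\nabla\psi(0)=0$, this produces an equation $\tilde F(D^2 v,\nabla v,y)=\tilde f^\pm$ in the half-balls $\{\pm y_n>0\}$ whose $y$-dependence is $C^{0,\alpha}$ at the origin with modulus controlled by $\|\psi\|_{C^{2,\alpha}(0)}$, and whose value at $y=0$ is exactly $F$. The transmission condition becomes a flat one on $\{y_n=0\}$ with a new datum $\tilde g\in C^{1,\alpha}(0)$. The model problem to which we compare is then the flat-interface problem with the \emph{same} concave operator $F$ on both sides, constant right-hand side $f^+(0)=f^-(0)$, and affine transmission datum. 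For this limiting problem the paper's flat-interface theory, combined with the Evans--Krylov theorem applied to even/odd extensions on each half-ball, gives a pointwise $C^{2,\bar{\bar\alpha}}$ estimate up to $\{y_n=0\}$; this is where the exponent $\bar{\bar\alpha}$ is fixed.

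Second, I would run the standard approximation lemma: given $u$ close to $P_k^\pm$ in $\Omega_{r_k}^\pm$, rescale $v(y)=r_k^{-(2+\alpha)}(u-P_k^\pm)(r_k y)$, note that $v$ solves a transmission problem with small oscillation of the coefficients (by the $C^{0,\alpha}$ dependence of $\tilde F$ on $y$ at a scale shrunk by $r_k$), small right-hand side (by the Campanato-type assumption on $f^\pm$), and small transmission datum (because the first-order expansion of $\tilde g$ has been absorbed into the jump $\partial_\nu P_k^+-\partial_\nu P_k^-$). By Theorem~\ref{thm:holder} the family $\{v\}$ is equicontinuous, so a contradiction/compactness argument yields convergence to a solution of the model problem, from which the improvement of flatness at the next scale follows by picking $\rho$ small, then $\theta_0$ small, then $r_0\le C_0/\|\psi\|_{C^{2,\alpha}(0)}$.

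The main obstacle is the step of constructing $P_{k+1}^\pm$ from $P_k^\pm$ while simultaneously honoring (i) the nonlinear constraint $F(A^\pm)=f^\pm(0)$, which restricts the quadratic part to a codimension-one manifold in $\mathcal{S}^n$, and (ii) the transmission compatibility $\partial_\nu P^+-\partial_\nu P^- = g(0)+\nabla_{x'}g(0)\cdot x'$ along $\Gamma$. The condition $|g(0)|\|D^2_{x'}\psi(0)\|=0$ is what makes this possible: either $g(0)=0$, so the zeroth-order jump vanishes and the curvature of $\Gamma$ introduces no obstruction when matched to a quadratic, or $D^2_{x'}\psi(0)=0$, so $\Gamma$ is flat up to second order and a pure quadratic $P^\pm$ can carry an arbitrary affine jump across it. Without this hypothesis, the required jump $g(0)$ across a curved $\Gamma$ would force non-quadratic behavior in $P^\pm$ and the iteration would break. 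Once this algebraic obstruction is handled, the remaining arguments are the usual geometric summation producing the pointwise $C^{2,\alpha}$ conclusion and the stated bound on $\|A^\pm\|+|B^\pm|+|c|$.
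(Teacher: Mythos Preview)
Your overall scheme—iterate quadratic polynomials via a compactness/approximation step, comparing to a flat-interface model at each scale—is exactly the paper's. The substantive difference is that you flatten $\Gamma$ by $y_n=x_n-\psi(x')$ at the outset, whereas the paper works throughout on the curved interface. Two remarks on your version. First, the aside about ``Evans--Krylov applied to even/odd extensions on each half-ball'' does not give the flat $C^{2,\bar{\bar\alpha}}$ estimate for general nonlinear $F$: even reflection conjugates the Hessian by the reflection matrix, and $F$ need not be invariant under that. The paper instead proves it via uniqueness (Corollary~\ref{cor:uniqueness}), identifying the homogeneous flat transmission problem with the full-ball Dirichlet problem and applying Evans--Krylov there. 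Second, in the flattened variables the hypothesis $|g(0)|\,\|D^2_{x'}\psi(0)\|=0$ does not arise where you locate it. After subtracting $Q_k^\pm$ and rescaling, the frozen operator on each side is $M\mapsto F\big(M-(\tilde B_k^\pm)_n\,\Psi_0\big)$, where $\Psi_0=D^2_{x'}\psi(0)$ is the block produced by the change of variables; since $(\tilde B_k^+)_n-(\tilde B_k^-)_n=\tilde g(0)$, the two frozen operators coincide only if $g(0)\,\Psi_0=0$. Without that you face a flat transmission problem with \emph{different} concave operators on the two sides, and no $C^{2,\alpha}$ result in the paper covers it.

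The paper avoids flattening altogether. It subtracts $P_k^\pm$ directly on the curved domain, so the rescaled function $v$ has a jump $h=v^+-v^-$ across the still-curved rescaled interface $\tilde\Gamma_k$ (since $P_k^+\neq P_k^-$ there). The technical core is an approximation lemma (Lemma~\ref{lem:approx3}) tolerating a small $\|h\|_{C^2(\tilde\Gamma_k)}$. Writing out $h$ explicitly, one finds a single bad term $\rho^{-k(2+\alpha)}\,g(0)\,\psi(\rho^k x')$, whose $C^2$ norm stays small iff $g(0)=0$ or $D^2_{x'}\psi(0)=0$. Your geometric heuristic—``a quadratic $P^\pm$ can carry an affine jump across $\Gamma$ only if $\Gamma$ is flat to second order''—is precisely this computation. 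So your diagnosis of the obstruction is correct, but it belongs to the un-flattened argument the paper actually runs, not to the flattened one you outlined.
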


In \cite{DSFS}, De Silva, Ferrari and Salsa proved $C^{1,\alpha}$ regularity of viscosity solutions
to a problem similar to \eqref{eq:TP}, but where $\psi\equiv 0$
(flat interface), the transmission condition is replaced by $u_{x_n}^+- \mu u_{x_n}^-=0$, for some $\mu>0$,
and $f^\pm$ are Lipschitz continuous.
Flat transmission problems play a key role in the regularity theory of solutions to two-phase free boundary problems with distributed sources,
see \cite{DeSilva-Ferrari-Salsa} for a complete account of this theory.
The condition given in \eqref{eq:closeness2}, that also appeared in \cite{Pimentel-Swiech} in the context
of fully nonlinear free transmission problems,
may be understood as a closeness condition between the operators $F^+$ and $F^-$.
For example, the linear operators given by $F^\pm(M)=\tr(A^\pm M)$ for some $A^\pm \in \mathcal{S}^n$,
with $\lambda I \leq A^\pm \leq \Lambda I$ and $\|A^+-A^-\|\leq \theta$, satisfy \eqref{eq:closeness2}. 

To develop our theory, we first prove a new Alexandroff--Bakelman--Pucci (ABP) estimate for viscosity supersolutions of \eqref{eq:TP}.
Then we show the comparison principle and the existence, uniqueness and regularity of viscosity solutions
for flat interface transmission problems. Observe that existence is not included in \cite{DSFS} and it is still
an open problem for two-phase free boundary problems with sources \cite{DeSilva-Ferrari-Salsa}.
Furthermore, the transmission condition makes the proof of the comparison principle quite challenging. Using these new tools, we derive
regularity estimates up to curved interfaces $\Gamma$ through new stability results.
Here again, the transmission condition creates new difficulties for the construction of barriers.
It is worth pointing out that our proof for $C^{1,\alpha}$ interfaces is purely constructive. 

The paper is organized as follows.
In Section \ref{section:ABP}, we prove the ABP estimate (Theorem \ref{thm:ABP2}) and the maximum principle
(Corollary \ref{cor:MP}). For this, we construct a barrier using a Hopf lemma for fully nonlinear equations,
see Lemma \ref{lem:barrier}.
We show an oscillation decay result in Section~\ref{sec:holder}, which implies the $C^{0,\alpha}$ regularity of viscosity solutions across the interface $\Gamma$ (Theorem~\ref{thm:holder}).
In Section \ref{sect:FTP}, we study flat interface problems that will play a fundamental role in the regularity theory of our original problem. 
First, we consider a family of regularizations of $u$ in the $x'$-direction and present some of their properties.
Then, in Theorem \ref{thm:existflat}, we prove existence and uniqueness of viscosity solutions of \eqref{eq:TP}
with $\psi\equiv 0$ and prescribed boundary values on $\partial B_1$.
To this end, we prove the comparison principle (Theorem \ref{thm:comparison}) and construct the solution
through a delicate adaptation of Perron's method.
H\"{o}lder estimates up to the flat interface will follow by a perturbation argument that relies on the regularity
of the homogeneous problem that we also prove here.
In Section \ref{sec:approxlem}, we obtain several closedness, approximation and stability lemmas that will be useful in the last part of the paper.
Sections \ref{section:C1alpha} and \ref{section:C2alpha} are devoted to the proofs of Theorem~\ref{thm:main1} and Theorem~\ref{thm:reg2}, respectively.

\smallskip

\noindent\textbf{Notation.~}We will use the notation
and results from Caffarelli--Cabr\'e \cite{CC} and the user's guide \cite{CIL}.
Pointwise boundary regularity estimates were proved in 
\cite{LZ2,Silvestre-Sirakov} for the Dirichlet problem, in \cite{MS} for
Neumann problems on flat boundaries, and in \cite{LiZ} for oblique boundary conditions
on curved boundaries.
\begin{itemize}
\item[$-$] For $x\in\R^n$, we write $x=(x',x_n)$, where $x'\in\R^{n-1}$ and $x_n\in\R$;
$\nabla'$ denotes the gradient in the variables $x'$; $D_{x'}^2$ denotes the Hessian in the variables $x'$;
$B_r'$ denotes the ball in $\R^{n-1}$ of radius $r>0$ centered at the origin; $T=B_1\cap\{x_n=0\}$.
\item[$-$] Given $r>0$, we write $\Omega_r^\pm=\Omega^\pm \cap B_r$ and $\Gamma_r = \Gamma \cap B_r$.
\item [$-$] Given a function $v$, we denote $v_-=-\min\{0,v\}$ and $v_+=\max\{0,v\}$; and
$v^- = v|_{\overline{\Omega^-}}$ and $v^+ = v|_{\overline{\Omega^+}}$.
\item[$-$] $\bar\alpha=\bar\alpha(n,\lambda,\Lambda)\in(0,1)$ denotes the exponent from the interior $C^{1,\bar{\alpha}}$ regularity of viscosity solutions to $F(D^2 u)=0$, where $F\in \mathcal{E}(\lambda,\Lambda)$.
\item[$-$] $\bar{\bar\alpha}=\bar{\bar\alpha}(n,\lambda,\Lambda)\in(0,1)$ denotes the exponent from the interior $C^{2,\bar{\bar{\alpha}}}$ regularity of viscosity solutions to $F(D^2 u)=0$, where $F\in \mathcal{E}(\lambda,\Lambda)$ is a concave operator.
\item[$-$] $\USC({B_1})$ and $\LSC( {B_1})$ are the sets of upper semicontinuous and lower semicontinuous functions on ${B_1}$, respectively.
\item[$-$] Solutions to \eqref{eq:TP} are understood in the viscosity sense.
In particular, $u\in\USC({B_1})$ is a subsolution to the transmission condition if whenever
$\varphi \in C^1 ( B_\delta(x_0) \cap \overline{\Omega^-}) \cap  C^1 ( B_\delta(x_0) \cap \overline{\Omega^+}) $
touches $u$ from above at $x_0\in\Gamma$ then $\varphi_\nu^+(x_0) -  \varphi_\nu^-(x_0) \geq  \  g(x_0)$.
\item[$-$] The Pucci extremal operators are denoted by $\M_{\lambda,\Lambda}^-$ and $\M_{\lambda,\Lambda}^+$;
and $\underline{S}_{\lambda,\Lambda}(f^\pm)$ is the class of functions $u\in\USC(B_1)$ such that 
$\M_{\lambda,\Lambda}^+(D^2 u) \geq f^\pm$ in $\Omega^\pm$.
The classes $\overline{S}_{\lambda,\Lambda}(f^\pm)$, 
$S_{\lambda,\Lambda}(f^\pm)$ and ${S}^*_{\lambda,\Lambda}(f^\pm)$ are defined as usual.
\end{itemize}

\section{ABP estimate}\label{section:ABP}

The fundamental tool to build up the regularity theory of viscosity solutions to the transmission problem
\eqref{eq:TP} will be a new ABP estimate.

\begin{thm}[ABP estimate] \label{thm:ABP2}
Let $\Gamma=B_1\cap \{x_n=\psi(x')\}$. Let $u$ satisfy
$$\begin{cases}
u \in \overline{S}_{\lambda, \Lambda}(f^\pm) & \hbox{in}~\Omega^\pm \\
 u_{\nu}^+ -  u_{\nu}^- \leq g & \hbox{on}~\Gamma,
\end{cases}$$
with $f^\pm\in C(\Omega^\pm)\cap L^\infty(B_1)$, $g\in L^\infty(\Gamma)$, and $\psi\in C^{1,\alpha}(\overline{B_1'})$,
for some $0<\alpha<1$. Then
$$
 \sup_{B_1} u_- \leq \sup_{\partial B_1} u_- +  C \big( \max_{\Gamma} g_+ +  \|f_+^-\|_{L^n(\Omega^-)}+ \|f_+^+\|_{L^n(\Omega^+)}\big)
$$ 
where $C>0$ depends only on $n$, $\lambda$, $\Lambda$, $\alpha$ and $[\psi]_{C^{1,\alpha}}$.
\end{thm}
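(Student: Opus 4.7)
The plan is to reduce to the case of vanishing transmission datum by means of a Hopf-type barrier, and then to run a convex-envelope ABP argument, ruling out contact points at the interface. After subtracting $\sup_{\partial B_1}u_-$ I may assume $u\geq 0$ on $\partial B_1$. Invoking Lemma~\ref{lem:barrier} together with the $C^{1,\alpha}$ regularity of $\psi$, I construct a bounded barrier $w\in C(\overline{B_1})$, smooth on each side of $\Gamma$, with $0\leq w\leq K$, $\M_{\lambda,\Lambda}^+(D^2 w)\leq 0$ in $\Omega^+\cup\Omega^-$, and a quantitative, uniformly signed jump $w_\nu^+-w_\nu^-\leq -1$ along $\Gamma$, where $K=K(n,\lambda,\Lambda,\alpha,[\psi]_{C^{1,\alpha}})$. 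Setting $\tilde u := u+(\max_\Gamma g_++\varepsilon)\,w$ for small $\varepsilon>0$, a test-function calculation using $\M_{\lambda,\Lambda}^+(D^2 w)\leq 0$ preserves the class $\overline{S}_{\lambda,\Lambda}(f^\pm)$ on each side, while the transmission inequality becomes $\tilde u_\nu^+-\tilde u_\nu^-\leq g-\max_\Gamma g_+-\varepsilon\leq -\varepsilon$ on $\Gamma$, and $\tilde u\geq 0$ on $\partial B_1$.

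It then suffices to prove an ABP-type estimate
$$
\sup_{B_1}\tilde u_-\leq C\bigl(\|f_+^-\|_{L^n(\Omega^-)}+\|f_+^+\|_{L^n(\Omega^+)}\bigr)
$$
uniformly in $\varepsilon>0$, and afterwards let $\varepsilon\to 0$ using $\|\tilde u-u\|_{L^\infty}\leq (K+1)\max_\Gamma g_++O(\varepsilon)$. For this bound I extend $v:=\min(\tilde u,0)$ by zero to $B_2$ and consider its convex envelope $\Gamma_v$; the classical measure-theoretic identity (CC, Lemma~3.5) yields
$$
\sup_{B_1}\tilde u_-\leq C\Bigl(\int_{\{v=\Gamma_v\}}|\det D^2\Gamma_v|\,dx\Bigr)^{1/n}.
$$
At any contact point in $\Omega^\pm$, the paraboloid-touching characterisation of the class $\overline{S}_{\lambda,\Lambda}(f^\pm)$ gives $|\det D^2\Gamma_v|\leq C(f_+^\pm)^n$. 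For a hypothetical contact point $x_0\in\Gamma$, the affine support plane $\ell$ of $\Gamma_v$ at $x_0$ satisfies $\tilde u\geq\ell$ near $x_0$ on both sides with equality at $x_0$, so $\ell$ is an admissible test function touching $\tilde u$ from below; since $\ell$ is affine one has $\ell_\nu^+(x_0)=\ell_\nu^-(x_0)$, and the viscosity transmission inequality applied to $\ell$ yields $0\leq -\varepsilon$, a contradiction. Hence the contact set is contained in $\Omega^+\cup\Omega^-$, and summing the two contributions gives the desired bound.

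The main obstacle is the barrier construction: producing $w$ with a uniformly signed, quantitative jump of normal derivatives across a general $C^{1,\alpha}$ interface while maintaining $\M_{\lambda,\Lambda}^+(D^2 w)\leq 0$ on each side, and with an $L^\infty$ bound depending only on the stated quantities. This is where the fully nonlinear Hopf lemma of Lemma~\ref{lem:barrier} enters decisively; the $C^{1,\alpha}$ regularity of $\psi$ is essential to keep the normal $\nu$ well-defined and H\"older continuous along $\Gamma$. A secondary subtle point is the handling of one-sided normal derivatives at potential interface contact points in the ABP step; after the $\varepsilon w$ strictification, this collapses to the elementary contradiction above.
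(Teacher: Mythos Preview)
Your proposal is correct and follows essentially the same approach as the paper: construct a Hopf-type barrier via Lemma~\ref{lem:barrier} to strictify the transmission condition to $\leq -\varepsilon$, observe that affine support planes of the convex envelope cannot touch on $\Gamma$ (giving the contradiction $0\leq -\varepsilon$), and then run the classical convex-envelope ABP on each side. The only cosmetic difference is that the paper subtracts a positive multiple of a barrier solving $\M^-_{\lambda,\Lambda}(D^2w)=0$ with $w_\nu^+-w_\nu^-\geq c_0$, whereas you add a barrier with the opposite sign convention; up to replacing the paper's $w$ by $(1-w)/c_0$, these are identical, and the paper additionally records the technical point that the contact set stays a uniform distance $\delta>0$ from $\Gamma$ (needed to get uniform paraboloid openings and hence $C^{1,1}$ regularity of the envelope), which you should make explicit when filling in the details.
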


To overcome the singularities given by the transmission condition, we construct a suitable barrier that allows us to avoid the interface.
To this end, we use the following Hopf's lemma, see \cite[Theorem 1.10]{LXZ}.

\begin{lem}[Hopf's lemma] \label{lem:hopf}
Suppose that $\Omega$ is a domain such that $\partial\Omega\in C^{1,\alpha}(0)$, for some $0<\alpha<1$.
Let $w\in  \overline{S}_{\lambda,\Lambda}(0)$ in $\Omega \cap B_1$, with $w(0)=0$ and $w\geq 0$ in $\Omega\cap B_1$.
Then, for any $l \in \R^n$ with $|l|=1$ and $l_n=l\cdot e_n>0$, we have that
$$
w(rl)\geq c l_n w(e_n/2) r,
$$
for all $0<r<r_1$, where $c>0$ and $r_1$ depend only on $n$, $\lambda$, $\Lambda$, $\alpha$ and $[\partial \Omega]_{C^{1,\alpha}(0)}$.
\end{lem}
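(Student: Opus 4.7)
The plan is to run the classical Hopf barrier argument for supersolutions in $\overline{S}_{\lambda,\Lambda}(0)$: use the weak Harnack inequality to obtain a positive lower bound on $w$ at an interior reference point, then compare $w$ on a suitable annular region with an explicit Pucci subsolution that vanishes at $0$ and has positive inward normal derivative. Since $\partial\Omega$ is only $C^{1,\alpha}$ at $0$ (rather than $C^{1,1}$), there is no true interior tangent ball, and this is the principal source of technical difficulty.

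\textbf{Setup and Harnack.} After a rotation I may assume the inward unit normal to $\partial\Omega$ at $0$ is $e_n$ and write $\partial\Omega\cap B_{r_0}=\{x_n=\psi(x')\}$ with $\psi(0)=0$, $\nabla'\psi(0)=0$, $|\psi(x')|\le K|x'|^{1+\alpha}$, where $K$ depends only on $[\partial\Omega]_{C^{1,\alpha}(0)}$. In particular $\Omega\cap B_{r_0}$ contains the interior paraboloid $\{x_n>K|x'|^{1+\alpha}\}\cap B_{r_0}$. Applying the weak Harnack inequality for $\overline{S}_{\lambda,\Lambda}(0)$ to $w\ge 0$ along a Harnack chain from $e_n/2$ to an interior reference point $P_0=\rho_0 e_n$ well inside this paraboloid (with $\rho_0$ depending on $n,\lambda,\Lambda,K$) yields a dimensional constant $c_0>0$ and a small ball $B_{\rho_0/2}(P_0)\Subset\Omega$ on which $w\ge c_0\,w(e_n/2)$.

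\textbf{Flattening and barrier.} I would use the $C^{1,\alpha}$ diffeomorphism $\Phi(x)=(x',x_n-\psi(x'))$ to map a neighborhood of $0$ in $\Omega$ to a neighborhood of $0$ in the half-space $\{y_n>0\}$. A smoothing-and-limit scheme (regularize $\psi$ by $\psi\ast\rho_\varepsilon$, apply the classical change of variables to get $\tilde w_\varepsilon\in\overline{S}_{\lambda',\Lambda'}(0)$, then pass $\varepsilon\to 0$) produces a viscosity Pucci inequality for $\tilde w=w\circ\Phi^{-1}$ on $\{y_n>0\}\cap B_{r_1}$ with adjusted ellipticity constants $\lambda',\Lambda'$ that remain close to $\lambda,\Lambda$ because $\nabla\Phi(0)=I$. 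Now place $y_0=(0',R)$ with $R\in(0,r_1/2)$ so that $B_R(y_0)\subset\{y_n\ge 0\}$ is tangent to $\{y_n=0\}$ at $0$, and introduce the standard exponential barrier
\[
\varphi(y)=\kappa\bigl(e^{-\gamma|y-y_0|^2}-e^{-\gamma R^2}\bigr)
\]
on the annulus $A=B_R(y_0)\setminus\overline{B_{R/2}(y_0)}$. A direct Caffarelli--Cabr\'e computation shows that for $\gamma$ sufficiently large depending on $n,\lambda',\Lambda',R$, $\varphi$ is a strict subsolution of the Pucci extremal operator $\M_{\lambda',\Lambda'}^+$ on $A$, $\varphi(0)=0$, and $\nabla\varphi(0)=2\kappa\gamma R e^{-\gamma R^2}e_n$.

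\textbf{Comparison and conclusion.} The Harnack bound from the previous step, together with a short Harnack chain along $\partial B_{R/2}(y_0)$ (which lies well inside $\{y_n>0\}$), transfers through $\Phi$ to give $\tilde w\ge c_0 w(e_n/2)$ on $\partial B_{R/2}(y_0)$. Choosing $\kappa=c_0 w(e_n/2)\bigl(e^{-\gamma R^2/4}-e^{-\gamma R^2}\bigr)^{-1}$ forces $\tilde w\ge\varphi$ on all of $\partial A$, and the Pucci comparison principle yields $\tilde w\ge\varphi$ on $A$. For $y=\Phi(rl)$ with $r$ small, one has $y_n=rl_n-\psi(rl')\ge rl_n/2$, and a Taylor expansion of $\varphi$ at $0$ gives $\varphi(y)\ge\kappa\gamma R e^{-\gamma R^2}y_n$. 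Combining these,
\[
w(rl)=\tilde w(y)\ge c\,l_n\,w(e_n/2)\,r
\]
for an explicit $c=c(n,\lambda,\Lambda,\alpha,[\partial\Omega]_{C^{1,\alpha}(0)})>0$ and all $r\in(0,r_1)$. The main obstacle is justifying the change of variables rigorously in the viscosity sense: since $\psi\in C^{1,\alpha}\setminus C^{1,1}$, the map $\Phi$ has only distributional second derivatives, so $\tilde w$ does not a priori satisfy a classical Pucci inequality. This has to be handled either by the smoothing/limit scheme outlined above with uniform constants, or by a direct viscosity argument producing an inequality with a lower-order drift $C|y'|^\alpha|D\tilde w|$ that is absorbed by the strict subsolution margin of $\varphi$ when $\gamma$ is taken large.
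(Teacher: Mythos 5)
First, note that the paper does not prove this lemma at all: it is quoted verbatim as an external result, \cite[Theorem 1.10]{LXZ}, so you are being compared against a citation rather than an in-house argument. Your proposal, however, has a genuine gap precisely at the point you yourself identify as "the main obstacle," and neither of your two proposed repairs works. A $C^{1,\alpha}$ graph with $\alpha<1$ admits no interior tangent ball at $0$ (the paraboloid $\{x_n>K|x'|^{1+\alpha}\}$ contains no ball touching the origin), so the classical exponential barrier is unavailable in the original coordinates; flattening by $\Phi(x)=(x',x_n-\psi(x'))$ restores the tangent ball but destroys the equation, because transforming a second-order viscosity inequality requires a $C^{1,1}$ diffeomorphism ($\varphi\circ\Phi$ is only $C^{1,\alpha}$ and is not an admissible test function). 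The smoothing scheme does not rescue this: replacing $\psi$ by $\psi\ast\rho_\vep$ gives a smooth map whose second derivatives blow up like $\vep^{\alpha-1}$, and these enter the transformed inequality as a drift coefficient multiplying $|D\tilde w_\vep|$, not as a perturbation of the ellipticity constants; the resulting inequality $\tilde w_\vep\in\overline{S}_{\lambda',\Lambda'}(C\vep^{\alpha-1}|D\tilde w_\vep|)$ degenerates as $\vep\to0$. Likewise there is no justification for a drift of size $C|y'|^{\alpha}|D\tilde w|$: a $C^{1,\alpha}$ function has no pointwise second-derivative bound of the form $|y'|^{\alpha-1}$, and even if it did, one cannot "absorb" a gradient term without an a priori gradient bound on $\tilde w$. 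The actual proof in \cite{LXZ} avoids flattening entirely and runs a dyadic iteration: at scale $r_k=\eta^k$ the boundary deviates from a hyperplane by $O(r_k^{1+\alpha})$, a flat-boundary barrier/growth argument loses a factor $1-Cr_k^\alpha$ per scale, and the infinite product converges because $\sum_k r_k^\alpha<\infty$. That multi-scale structure is exactly what your single-scale barrier cannot reproduce.

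A second, independent gap is the Harnack step. For a function that is merely in $\overline{S}_{\lambda,\Lambda}(0)$, the weak Harnack inequality bounds an $L^{\vep_0}$-average by the infimum; it does not bound the infimum over a ball from below by the \emph{point value} $w(e_n/2)$. Indeed, truncations $\min\{G(\cdot,e_n/2),M\}$ of a Green's function with pole at $e_n/2$ are nonnegative supersolutions whose value at $e_n/2$ is $M$ while their infimum on any fixed compact set stays bounded as $M\to\infty$, so the implication "weak Harnack $\Rightarrow$ $w\geq c_0\,w(e_n/2)$ on $B_{\rho_0/2}(P_0)$" is false as stated. (In the only place the present paper uses the lemma, $w$ solves $\M^-_{\lambda,\Lambda}(D^2w)=0$, so the full Harnack inequality applies and the point value is harmless; but your argument, as a proof of the lemma for supersolutions, needs either to replace $w(e_n/2)$ by an infimum over a small ball around $e_n/2$ or to restrict to solutions.) In summary: the geometric/analytic core of the lemma is not established by the proposal, and the reduction from the point value $w(e_n/2)$ to a barrier boundary datum is also unjustified in the stated generality.
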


\begin{lem}[Barrier] \label{lem:barrier}
Let $\Omega$ be a $C^{1,\alpha}$ domain, for some $0<\alpha<1$. Assume that $0\in \partial \Omega$.
Let $w$ be the viscosity solution to
\begin{align}
\begin{cases} \label{eq:barrier}
\M^-_{\lambda,\Lambda}(D^2 w)=0 & \hbox{in}~\Omega\cap B_2\\
 w = \phi & \hbox{on}~\Gamma= B_2  \cap \partial\Omega\\
 w  = 1 & \hbox{on}~ \partial B_2 \cap \Omega,
\end{cases}
\end{align}
where $\phi\in C(\Gamma)$ satisfies $0\leq\phi\leq 1$, $\phi\equiv 0$ on $\Gamma \cap B_1$, and $\phi\equiv 1$ on $\Gamma \cap B_{3/2}^c$.
Then $w$ is a classical solution in $\Omega\cap B_1$ with $0\leq w\leq 1$ and 
\begin{equation}\label{eq:normalder}
w_\nu \geq c_0 > 0 \quad \hbox{on}~\Gamma\cap B_1
\end{equation}
where $\nu$ is the unit normal vector interior to $\Omega$
and $c_0>0$ depends on $n$, $\lambda$, $\Lambda$, $\alpha$ and $[\Gamma]_{C^{1,\alpha}}$.
\end{lem}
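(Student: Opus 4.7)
The plan is to build $w$ by Perron's method, use interior $C^{2,\alpha}$ regularity to recover a classical solution, and then apply Lemma~\ref{lem:hopf} uniformly across $\Gamma \cap B_1$, with the required interior positivity supplied by a Harnack chain.

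First, since the boundary data on $\partial(\Omega \cap B_2)$ (equal to $\phi$ on $\Gamma$ and $1$ on $\partial B_2 \cap \Omega$) is continuous, with compatibility at the corner $\partial B_2 \cap \Gamma$ guaranteed by $\phi \equiv 1$ on $\Gamma \cap \overline{B_{3/2}^c}$, Perron's method for Pucci equations yields a viscosity solution $w$ of \eqref{eq:barrier}. Comparing with the constant sub/supersolutions $0$ and $1$ gives $0 \leq w \leq 1$. Since $\M^-_{\lambda,\Lambda}$ is uniformly elliptic and concave (it is an infimum of linear operators), the Evans--Krylov theorem delivers $w \in C^{2,\alpha'}_{\mathrm{loc}}(\Omega \cap B_2)$; in particular $w$ is classical in $\Omega \cap B_1$.

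The heart of the proof is a uniform positivity statement at interior reference points: there exist $\rho_0, c_1 > 0$ depending only on $n$, $\lambda$, $\Lambda$, $\alpha$ and $[\Gamma]_{C^{1,\alpha}}$ such that for each $x_0 \in \Gamma \cap \overline{B_1}$ the point $y_{x_0} := x_0 + \rho_0 \nu(x_0)$ satisfies $y_{x_0} \in \Omega \cap B_{3/2}$, $\dist(y_{x_0}, \partial(\Omega \cap B_2)) \geq \rho_0/2$, and $w(y_{x_0}) \geq c_1$. The geometric containment comes from the interior ball condition supplied by the $C^{1,\alpha}$ graph $\Gamma$. For the quantitative lower bound, continuity of $w$ up to the smooth outer piece $\partial B_2 \cap \Omega$ (via a classical Perron barrier on the sphere) yields a fixed ball $B_{r_0}(z_0) \subset \Omega \cap B_2$ on which $w \geq 1/2$. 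Since $\M^-(D^2 w) = 0$ forces $w \in S^*_{\lambda,\Lambda}(0)$, a finite Harnack chain through balls contained in the compact set $\{x \in \Omega \cap B_{3/2} : \dist(x, \Gamma) \geq \rho_0/4\}$ propagates the bound to every $y_{x_0}$, with uniformly bounded chain length as $x_0$ varies on $\Gamma \cap \overline{B_1}$.

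Finally, for $x_0 \in \Gamma \cap B_1$, I would translate and rotate so that $x_0 = 0$ and $\nu(x_0) = e_n$, and then rescale by $\tilde w(x) = w(2\rho_0 x)$, which makes $\tilde w$ belong to $\overline{S}_{\lambda,\Lambda}(0)$ on a domain whose boundary at $0$ is $C^{1,\alpha}$ with norm controlled by $[\Gamma]_{C^{1,\alpha}}$, satisfies $\tilde w(0) = 0$, $\tilde w \geq 0$, and $\tilde w(e_n/2) = w(y_{x_0}) \geq c_1$. Lemma~\ref{lem:hopf} applied with $l = e_n$ then gives $\tilde w(r e_n) \geq c\, c_1\, r$ for $0 < r < r_1$, and unscaling produces $w_\nu(x_0) \geq c_0 := c\, c_1/(2\rho_0)$, independent of $x_0$. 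The main obstacle is keeping constants uniform: one must verify that the Harnack chain has uniformly bounded length and stays at a controlled distance from $\Gamma$, and that the rigid motion and rescaling preserve the $C^{1,\alpha}$ control of the boundary so that the Hopf constant in Lemma~\ref{lem:hopf} depends only on $[\Gamma]_{C^{1,\alpha}}$ and not on the base point $x_0$.
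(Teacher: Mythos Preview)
Your approach is correct and shares the overall architecture with the paper: build $w$, get $0\leq w\leq1$, interior $C^{2,\alpha}$ regularity from concavity, then apply Lemma~\ref{lem:hopf} at each $x_0\in\Gamma\cap B_1$ after a rigid motion. The genuine difference lies in how you obtain the uniform interior lower bound $w(y_{x_0})\geq c_1$. You propagate positivity from a ball near $\partial B_2$ via a Harnack chain, which is valid but requires you to track connectivity and uniform chain length in a set at controlled distance from $\Gamma$---exactly the obstacle you flag. The paper sidesteps this entirely: after the same translation/rotation so that $x_0=0$, $\nu(0)=e_n$, and $\Gamma$ is locally the graph of $\psi$ with $\nabla'\psi(0)=0$ and $[\psi]_{C^{1,\alpha}(0)}\leq 1/4$, it compares $w$ with the explicit linear barrier $\tfrac{1}{2}x_n-\tfrac{1}{8}$ on $D=\{x_n\geq 1/4\}\cap B_2$. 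Since $w=1$ on $\partial B_2\cap\{x_n\geq 1/4\}$ and $w\geq 0$ on $\{x_n=1/4\}$, the maximum principle gives $w\geq \tfrac{1}{2}x_n-\tfrac{1}{8}$ in $D$, hence $w(e_n/2)\geq 1/8$ directly. This is a one-line substitute for your Harnack chain and automatically uniform in $x_0$.

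Two small points. First, your phrase ``interior ball condition supplied by the $C^{1,\alpha}$ graph'' is loose: $C^{1,\alpha}$ domains need not satisfy a uniform interior ball condition, though the weaker statement you actually use (that $x_0+\rho_0\nu(x_0)$ lies in $\Omega$ at controlled distance from $\Gamma$ for small $\rho_0$) does follow from the $C^{1,\alpha}$ graph bound. Second, to pass from the Hopf inequality $\tilde w(re_n)\geq c\,c_1\,r$ to an honest normal derivative $w_\nu(x_0)$ you need $w$ to be differentiable at $x_0$; the paper invokes boundary $C^{1,\alpha}$ estimates on $\Gamma\cap B_1$ for this, and you should cite the same.
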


\begin{proof}
The existence and uniqueness of a viscosity solution $w$ to \eqref{eq:barrier} follows from the classical theory of fully nonlinear elliptic equations. Moreover, since $\M^-_{\lambda,\Lambda}$ is a concave operator, $w\in C^{2,\bar{\bar{\alpha}}}(\overline{\Omega}_0)$, for any $\Omega_0 \subset\subset \Omega\cap B_2$. Furthermore, by boundary $C^{1,\alpha}$ estimates, $w \in C^{1,\alpha}(\Gamma\cap B_1)$. Hence,
$w$ is a classical solution in $\Omega\cap B_1$.
Applying the classical ABP to $w$ and $1-w$, it is easy to see that $0\leq w \leq 1$.
We are left to show \eqref{eq:normalder}. 

Fix $x_0 \in \Gamma\cap B_1$. After translation, rotation and rescaling, we can assume that $x_0=0$ and
$$ \Omega\cap B_1 = B_1 \cap \{x_n>\psi(x')\},$$ 
for some $\psi \in C^{1,\alpha}(\overline{B_1'})$, with $\nabla'\psi(0)=0$ and $[\psi]_{C^{1,\alpha}(0)}\leq 1/4$.
Then $w$ satisfies the assumptions from Lemma~\ref{lem:hopf}. Hence, by setting $l=\nu(0)=e_n$, we get
$$
w(r\nu(0)) \geq c w(e_n/2) r \quad  \hbox{for all}~0<r<r_1,
$$
where $c$ and $r_1$ depend only on $n,\lambda,\Lambda$, and $[\Gamma]_{C^{1,\alpha}}$.
Since $w$ is differentiable at $0$,  we see that
$w_\nu(0) \geq c  w(e_n/2)$. Consider $\tilde{w}(x)= w(x) - \tfrac{1}{2} x_n  + \tfrac{1}{8}$ in $D=\{x_n \geq 1/4 \}\cap B_2$. 
Then $\tilde w \in \overline{S}(0)$ in $D$ and $\tilde w \geq 0$ on $\partial D$. By the maximum principle, we have $\tilde{w}\geq 0 $ in $D$. In particular, $w(e_n/2)\geq 1/8$, and thus, we conclude that
$w_\nu(0) \geq c_0>0$, with $c_0= c/8$.
\end{proof}

\begin{proof}[Proof of Theorem~\ref{thm:ABP2}]
Let $w\in C(B_2)$ be the viscosity solution to
\begin{align*}  
\begin{cases} 
\M^-_{\lambda,\Lambda}(D^2 w)=0 & \hbox{in}~B_2 \setminus \{x_n = \psi(x')\}\\
 w  = \phi & \hbox{on}~\tilde \Gamma= B_2 \cap \{x_n = \psi(x')\}\\
 w = 1 & \hbox{on}~\partial B_2,
\end{cases}
\end{align*}
where $\phi\in C(\tilde \Gamma)$ satisfies $0\leq\phi\leq 1$, $\phi\equiv 0$ on $\Gamma = \tilde \Gamma \cap B_1$, and $\phi\equiv 1$ on $\tilde \Gamma \cap B_{3/2}^c$.
Let $w^+=w$ on ${B_2}\cap \{x_n \geq \psi(x')\}$ and $w^-=w$ on ${B_2}\cap \{x_n \leq \psi(x')\}$. By Lemma~\ref{lem:barrier},
 $0\leq w\leq 1$ and
$$
w_\nu^+ \geq c^+>0, \quad w_\nu^-\leq - c^{-}<0 \qquad \hbox{on}~\Gamma,
$$
where $\nu$ is the interior normal to $\Omega^+$, and $c^+,c^-$ depend only on $n$, $\lambda$, $\Lambda$, $\alpha$ and $[\psi]_{C^{1,\alpha}}$.
Fix $\vep>0$ small and consider in $B_1$ the function
$$
v = u- \frac{1}{c_0} \Big(\max_{\Gamma} g_++\vep\Big) w,
$$
where $c_0=c^++c^-$.
Then $v\in \overline{S}_{\lambda,\Lambda}(f^\pm)$ in $\Omega^\pm$. 
Moreover,
$$
v^+_{\nu} -  v^-_{\nu} \leq g -  \frac{1}{c_0} \Big(\max_{\Gamma} g_+ +\vep\Big)\big(w_\nu^+-w_\nu^-\big)\leq g_+- \max_{\Gamma} g_+-\vep  \leq - \vep
$$
on $\Gamma$ in the viscosity sense.
Without loss of generality, we may assume that $v\geq 0$ on $\partial B_1$. Otherwise, we  consider $v - \inf_{\partial B_1} v$.
Assume that $v_- \not\equiv 0$, and 
let $\mathcal{C}_v$ be the convex envelope of $-v_-$ in $B_2$, where we have extended $v$ by zero outside of $B_1$.  
Clearly, by definition of $\C_v$, we have that $\partial B_1 \cap \{v=\C_v\} = \varnothing$. We claim that $\Gamma \cap \{v=\C_v\} = \varnothing$. Indeed, if $A\cdot x + b$ touches $v$ from below at $x_0 \in \Gamma$, for some $A\in \Rn$ and $b\in \R$, then
$$
-\vep  \geq A \cdot \nu(x_0) - A \cdot \nu(x_0)  = 0,
$$
which is a contradiction. Moreover, there exists $\delta>0$ such that for any $x_0\in {\Gamma}$, we have $B_\delta(x_0)\cap{ \{v=\C_v\}}=\varnothing$.
If not, for any $k\geq 1$, there exist  $x_k \in \Gamma$ such that there is some $y_k\in B_{1/k}(x_k)\cap {\{v=\C_v\}}$. 
Then, up to a subsequence, it follows that $x_k,y_k\to y_0$ for some $y_0 \in \overline{\Gamma\cap \{v=\C_v\}}$, which is a contradiction.

Next, we show that $\C_v\in C^{1,1}(\overline{B_1})$. It is enough to see that there are $K>0$ and $0<r\leq 1$ such that for any $x_0 \in \overline{B_1} \cap \{v=\C_v\}$ there exists a convex paraboloid of opening $K$ that touches $\C_v$ by above at $x_0$ in $B_r(x_0)$.  Indeed, fix $x_0 \in \overline{B_1} \cap \{v=\C_v\}$. Since $x_0\notin \partial B_1 \cup \Gamma$, we may assume that $x_0\in \Omega^+\cap \{v=\C_v\}$. Furthermore, $B_{\delta}(x_0) \subset \Omega^+$, for $\delta$ small enough. 
Let $l$ be a supporting plane of $\C_v$ at $x_0$. Then $0\leq \C_v - l \leq -v_- - l$ in $B_\delta(x_0)$ and $\C_v(x_0) - l(x_0) = -v_-(x_0)-l(x_0)=0$. We know that $-v_--l\in \overline{S}_{\lambda,\Lambda}(f^+)$.
Applying \cite[Lemma 3.3]{CC} to $-v_--l$ in $B_\delta(x_0)$ and $\varphi=\C_v-l$, we get
\begin{equation} \label{eq:estimateenvelope}
\C_v(x) \leq l(x) + C^+ \Big(\sup_{B_\delta(x_0)}f^+_+\Big)|x-x_0|^2 \quad \hbox{for all}~x\in B_{\delta \gamma^+}(x_0),
\end{equation}
where $\gamma^+<1$ and $C^+$ are universal constants. If $x_0\in \Omega^-\cap \{v=\C_v\}$, the proof is analogous.
Hence, we take $K=2\max\{C^+\|f^+_+\|_{L^\infty(\Omega^+)}), C^-\|f^-_+\|_{L^\infty(\Omega^-)}\}$ and $r =\delta \min\{\gamma^+,\gamma^-\}$. 

Next, there exists a set $E\subset B_1$ such that $|B_1\setminus E|=0$ and $\C_v$ is second order differentiable at any $x\in E$. Moreover, we have that
\begin{equation}\label{eq:conditionv}
\sup_{B_1} v_- \leq C \Big( \int_{ E\cap \{v=\C_v\}} \det D^2 \C_v(x)\, dx\Big)^{1/n},
\end{equation}
where $C>0$ is a constant depending only on $n$.
Since $f^+ \in C(\Omega^+)$, by letting $\delta \to 0$ in \eqref{eq:estimateenvelope}, we see that
$\det D^2 \C_v(x_0) \leq C f_+^+(x_0)^n$ for a.e. $x_0\in B_1^+ \cap \{v=\C_v\}$,
and thus,
\begin{align*}
\int_{E\cap \{v=\C_v\}} \det D^2 \C_v(x)\,dx 
&\leq   \int_{ \Omega^- \cap \{v=\C_v\}} f_+^-(x)^n\,dx
+\int_{ \Omega^+ \cap \{v=\C_v\}} f_+^+(x)^n\,dx.
\end{align*}
Therefore,
$$
\sup_{B_1} v_- \leq  \sup_{\partial B_1} v_- + C \Big(\|f_+^-\|_{L^n( \Omega^- \cap \{v=\C_v\})}+\|f_+^+\|_{L^n( \Omega^+ \cap \{v=\C_v\})}\Big).
$$
From the definition of $v$ and by letting $\vep\to0$, we get the conclusion.
\end{proof}

\begin{rem}[Refined ABP estimate for flat interfaces] \label{refinedABP}
In the previous proof, it is not clear how to relate the contact sets $\{v=\C_v\}$ and $\{u=\C_u\}$
given that the barrier $w$ is not explicit.
In the flat case $\Gamma=B_1\cap \{x_n=0\}$, the ABP estimate from Theorem \ref{thm:ABP2} can be refined.
Indeed, take $w(x)= \max_\Gamma g_+ |x_n|$. If $v=u-w$, then $\{v=\C_v\}=\{u=\C_{v}+w\}\subseteq \{u=\C_u\}$. Therefore, in the flat case,
$$
 \sup_{B_1} u_- \leq \sup_{\partial B_1} u_- +  C \big( \max_{\Gamma} g_+ +  \|f_+^-\|_{L^n(\Omega^-\cap \{u=\C_u\})}+ \|f_+^+\|_{L^n(\Omega^+\cap\{u=\C_u\})} \big). 
 $$
\end{rem}

\begin{cor}[Maximum principle]\label{cor:MP}
Let $u$ satisfy
$$
\begin{cases}
u \in \overline{S}_{\lambda,\Lambda}(0) & \hbox{in}~\Omega^\pm\\ 
u_\nu^+-u_\nu^- \leq 0 & \hbox{on}~\Gamma.
\end{cases}
$$
If $u\geq 0$ on $\partial B_1$, then $u\geq 0$ in $B_1$.
\end{cor}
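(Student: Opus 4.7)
The plan is to deduce this directly from the ABP estimate (Theorem~\ref{thm:ABP2}). The hypotheses of the corollary match exactly the hypotheses of Theorem~\ref{thm:ABP2} with the specializations $f^+ \equiv f^- \equiv 0$ and $g \equiv 0$, under the standing assumption on $\Gamma$ (i.e.\ $\psi\in C^{1,\alpha}(\overline{B_1'})$) that was fixed at the start of the section.

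First I would note that since $u\geq 0$ on $\partial B_1$, we have $u_-\equiv 0$ on $\partial B_1$, so $\sup_{\partial B_1} u_-=0$. Then I would apply Theorem~\ref{thm:ABP2} to $u$ with $f^\pm=0$ and $g=0$, which yields
$$
\sup_{B_1} u_- \leq \sup_{\partial B_1} u_- + C\bigl(\max_\Gamma g_+ + \|f_+^-\|_{L^n(\Omega^-)} + \|f_+^+\|_{L^n(\Omega^+)}\bigr) = 0.
$$
Hence $u_-\equiv 0$ in $B_1$, which is equivalent to $u\geq 0$ in $B_1$.

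No genuine obstacle arises here since all the real work (construction of the barrier via Hopf's lemma and the convex-envelope argument around the interface) has already been absorbed into Theorem~\ref{thm:ABP2}. The only point one might want to double-check is that the statement of Theorem~\ref{thm:ABP2} allows $f^\pm\equiv 0$ (trivially satisfying the continuity and $L^\infty$ requirements) and $g\equiv 0$ (trivially in $L^\infty(\Gamma)$), which is immediate.
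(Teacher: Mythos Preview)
Your proposal is correct and is exactly the intended argument: the paper states this as an immediate corollary of Theorem~\ref{thm:ABP2} without further proof, and your application of the ABP estimate with $f^\pm\equiv 0$, $g\equiv 0$ is precisely how it follows.
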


\section{H\"{o}lder continuity} \label{sec:holder}

Theorem~\ref{thm:holder} is a consequence of the following oscillation decay.

\begin{thm}[Oscillation decay]\label{lem:oscillation2}
Let $\Gamma=B_1\cap \{x_n=\psi(x')\}$. Let $u$ satisfy
$$
\begin{cases}
u \in {S}^*_{\lambda, \Lambda}(f^\pm) & \hbox{in}~\Omega^\pm\\
 u_{\nu}^+ -  u_{\nu}^- = g & \hbox{on}~\Gamma,
\end{cases}
$$
with $f^\pm \in C(\Omega^\pm)\cap L^\infty (\Omega^\pm)$, $g \in L^\infty(\Gamma)$ and $\psi\in C^{1,\alpha}(\overline{B_1'})$,
for some $0<\alpha<1$. Then
$$
\underset{B_{1/3}}{\osc} u \leq \mu\, \underset{B_{1}}{\osc} u + C\big( \|g\|_{L^\infty(\Gamma)} + \|f^-\|_{L^n(\Omega^-)}+\|f^+\|_{L^n(\Omega^+)}\big)
$$
where $0<\mu<1$ and $C>0$ depend only on $n$, $\lambda$, $\Lambda$, $\alpha$ and $\|\psi\|_{C^{1,\alpha}(\overline{B_1'})}$.
\end{thm}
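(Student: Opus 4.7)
The plan is to deduce the oscillation decay from a growth lemma by a Krylov--Safonov dichotomy, and to establish the growth lemma via the transmission ABP of Theorem~\ref{thm:ABP2} combined with the transmission barrier of Lemma~\ref{lem:barrier}. Let $M := \osc_{B_1} u$ and $m := \tfrac{1}{2}(\sup_{B_1} u + \inf_{B_1} u)$. If $M = 0$ the claim is trivial, so assume $M > 0$ and set $v := (2/M)(u - m)$. Then $v \colon B_1 \to [-1,1]$, $v \in S^*_{\lambda,\Lambda}(\tilde f^\pm)$ in $\Omega^\pm$ with $\tilde f^\pm := 2f^\pm/M$, and $v_\nu^+ - v_\nu^- = \tilde g := 2g/M$ on $\Gamma$. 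Writing $N := \|\tilde g\|_{L^\infty(\Gamma)} + \|\tilde f^+\|_{L^n(\Omega^+)} + \|\tilde f^-\|_{L^n(\Omega^-)}$, it suffices to produce universal constants $\mu_0 \in (0,1)$ and $C > 0$ with $\osc_{B_{1/3}} v \le 2\mu_0 + CN$; multiplying by $M/2$ then recovers the claim.

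\medskip

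Since $v \in [-1,1]$, at least one of $|\{v \le 0\} \cap B_{2/3}|$ and $|\{v \ge 0\} \cap B_{2/3}|$ exceeds $|B_{2/3}|/2$. By the invariance of the problem under $v \mapsto -v$ (which only flips the signs of $\tilde f^\pm$ and $\tilde g$, preserving their norms), we may assume the former. Then $w := 1 - v$ satisfies $w \ge 0$ in $B_1$, $w \in \overline{S}_{\lambda,\Lambda}(\tilde f^\pm)$ in $\Omega^\pm$, $w_\nu^+ - w_\nu^- = -\tilde g$ on $\Gamma$, and $|\{w \ge 1\}\cap B_{2/3}| \ge |B_{2/3}|/2$. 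The theorem reduces to the following growth lemma: there is a universal $\delta_0 \in (0,1)$ such that $\inf_{B_{1/3}} w \ge \delta_0 - CN$. Granted this, $\sup_{B_{1/3}} v \le 1 - \delta_0 + CN$, hence $\osc_{B_{1/3}} v \le 2 - \delta_0 + CN$, and we conclude with $\mu_0 := 1 - \delta_0/2$.

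\medskip

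I would prove the growth lemma by the classical Krylov--Safonov machinery (in the spirit of \cite[Ch.~4]{CC}), with the interior ABP replaced by the transmission ABP of Theorem~\ref{thm:ABP2} and the classical quadratic barrier replaced by a transmission-adapted one. On a typical dyadic sub-cube $Q \subset B_{2/3}$, the one-step measure-to-pointwise estimate is obtained by applying Theorem~\ref{thm:ABP2} to $z := w - \tau \Phi_Q$ for a small universal $\tau > 0$, where $\Phi_Q \in C^2(\overline{Q \cap \Omega^\pm})$ is a barrier satisfying $\Phi_Q \le 0$ on $\partial Q$, $\Phi_Q \ge c_0 > 0$ on a concentric smaller cube, $\|D^2 \Phi_Q\|_{L^\infty(Q \cap \Omega^\pm)} \le K$, and the \emph{negative} jump $\Phi_{Q,\nu}^+ - \Phi_{Q,\nu}^- \le -c_0$ on $\Gamma \cap Q$. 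These properties give $z \in \overline{S}_{\lambda,\Lambda}(\tilde f^\pm + C\tau K)$ in $\Omega^\pm \cap Q$ and $z_\nu^+ - z_\nu^- \le -\tilde g + \tau c_0$ on $\Gamma$, so Theorem~\ref{thm:ABP2} delivers an ABP bound on $\sup_Q z_-$ that, combined with the measure condition, produces the desired measure decay at scale $Q$.

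\medskip

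The main obstacle is the barrier construction: the sign of the normal-derivative jump must be opposite to that of the natural barrier appearing in the proof of Theorem~\ref{thm:ABP2}. This is resolved by taking $\Phi_Q := 1 - w_0^Q$, where $w_0^Q$ is the two-sided barrier of Theorem~\ref{thm:ABP2} rescaled to $Q$ (obtained by applying Lemma~\ref{lem:barrier} separately to $\Omega^+\cap Q$ and $\Omega^-\cap Q$); then $\Phi_Q$ automatically has the correct negative jump, and the remaining properties follow after a smooth truncation and interpolation with a positive constant on the concentric smaller cube. Once the growth lemma is in place, the dichotomy above completes the proof.
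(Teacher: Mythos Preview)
Your reduction to a growth lemma via a measure dichotomy is a reasonable strategy in principle, but the barrier step contains a genuine gap, and the paper in fact takes a much simpler route that sidesteps it.

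The problem is with $\Phi_Q = 1 - w_0^Q$. The function $w_0^Q$ solves $\M^-_{\lambda,\Lambda}(D^2 w_0^Q)=0$ on each side of a $C^{1,\alpha}$ interface; it is $C^2$ in the open sets $\Omega^\pm\cap Q$ but you have no $C^2$ bound up to $\Gamma$, so the claim $\|D^2\Phi_Q\|_{L^\infty(Q\cap\Omega^\pm)}\le K$ is unjustified. More to the point, to put $z=w-\tau\Phi_Q$ into $\overline S_{\lambda,\Lambda}(\cdot)$ you need to control $\M^+_{\lambda,\Lambda}(D^2 w_0^Q)$ (since $\M^-(A-\tau B)\le \M^-(A)+\tau\M^+(B)$), and only $\M^-(D^2 w_0^Q)=0$ is known. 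Even granting an $L^n$ bound on $\M^+(D^2 w_0^Q)$, it would be spread over the whole annulus rather than concentrated on a small set, so the Caffarelli--Cabr\'e Lemma~4.5 mechanism---where the barrier contributes a right-hand side supported in the inner cube and ABP then yields a \emph{measure} lower bound on a sublevel set---does not go through. The vague ``smooth truncation and interpolation'' does not fix this: any $C^2$ modification of $\Phi_Q$ in a region meeting $\Gamma$ again requires $C^2$ control that the $C^{1,\alpha}$ hypothesis does not provide. A further obstruction is that the refined ABP with contact set (Remark~\ref{refinedABP}), which the Calder\'on--Zygmund step would need, is only available for flat interfaces.

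The paper avoids all of this by replacing the measure dichotomy with a \emph{pointwise} one: fix $\bar x=\tfrac15 e_n$ away from $\Gamma$ and split on the sign of $\tilde u(\bar x)$. The interior Krylov--Safonov Harnack in $B_{1/20}(\bar x)\subset\Omega^+$ already gives a lower bound on a small ball, and this is then propagated to $B_{1/3}$ by comparing with the explicit subsolution
\[
v(x)=\eta\big(|x-\bar x|^{-\gamma}-c\big)+\tfrac{\vep_0}{c_0}\,w_0(x).
\]
The crucial algebraic point you are missing is that $w_0$ is \emph{added} to a strict classical subsolution, so superadditivity $\M^-(A+B)\ge \M^-(A)+\M^-(B)$ together with $\M^-(D^2 w_0)=0$ makes $w_0$ invisible in the interior equation---no second-derivative bound on $w_0$ is needed---while its normal-derivative jump (from Lemma~\ref{lem:barrier}) absorbs the transmission term. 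One application of Theorem~\ref{thm:ABP2} to $u+1-v$ then finishes Lemma~\ref{lem:holder2}, and the oscillation decay follows from that single Harnack step rather than a full Krylov--Safonov iteration.
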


The oscillation decay will be a consequence of the following Harnack inequality.

\begin{lem}[Harnack inequality]\label{lem:holder2}
Let $u$, $f^\pm$, $g$ and $\psi$ be as in Theorem~\ref{lem:oscillation2}. 
Assume further that $\|u\|_{L^\infty(B_1)}\leq 1$,
$u(\bar{x})\geq0$ with $\bar{x}=\tfrac{1}{5}e_n$,
and $B_{1/20}(\bar{x}) \subset \Omega^+$.
There exist $0<\vep_0,c<1$ depending on $n$, $\lambda$, $\Lambda$ and $[\psi]_{C^{1,\alpha}}$ such that if 
$\|g\|_{L^\infty(\Gamma)}  + \|f^-\|_{L^n(\Omega^-)}+\|f^+\|_{L^n(\Omega^+)}\leq \vep_0$, then 
$$\inf_{B_{1/3}} u\geq -1+c.$$
\end{lem}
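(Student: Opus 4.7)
The plan is to combine the classical interior Harnack inequality inside $\Omega^+$ with a barrier argument that propagates positivity across the interface $\Gamma$ via the transmission ABP of Theorem~\ref{thm:ABP2}. Set $v := u+1$, so $v \geq 0$ in $B_1$, $v(\bar{x}) \geq 1$, $v \in S^*_{\lambda,\Lambda}(f^\pm)$ in $\Omega^\pm$, and $v_\nu^+ - v_\nu^- = g$ on $\Gamma$. Because $B_{1/20}(\bar{x})\subset\Omega^+$, the classical interior Harnack inequality of Caffarelli--Cabr\'e applied to $v \in S^*(f^+)$ in $B_{1/20}(\bar{x})$ gives
$$
1 \leq v(\bar{x}) \leq \sup_{B_{1/40}(\bar{x})} v \leq C_H\Big(\inf_{B_{1/40}(\bar{x})} v + \|f^+\|_{L^n(B_{1/20}(\bar{x}))}\Big),
$$
so for $\vep_0$ sufficiently small one obtains $v \geq c_1$ on $B_{1/40}(\bar{x})$, with $c_1 = c_1(n,\lambda,\Lambda)>0$.

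Next I construct a smooth radial subsolution barrier centered at $\bar{x}$. Fix $p = p(n,\lambda,\Lambda)$ large enough that $\M^-_{\lambda,\Lambda}(D^2 |x|^{-p}) > 0$ pointwise on $\R^n\setminus\{0\}$ (any $p > \Lambda(n-1)/\lambda - 1$ works), and define
$$
\phi(x) := A\big(|x-\bar{x}|^{-p} - (5/4)^p\big), \qquad A := \frac{c_1}{40^p - (5/4)^p}.
$$
Then $\phi \in C^\infty(B_1\setminus\{\bar{x}\})$ with $\M^-(D^2\phi) > 0$; $\phi \leq 0$ on $\partial B_1$ (since $|x-\bar{x}| \geq 1-|\bar{x}| = 4/5$ there); $\phi \equiv c_1$ on $\partial B_{1/40}(\bar{x})$; and an elementary computation gives $\phi \geq c_*$ on $B_{1/3}\setminus B_{1/40}(\bar{x})$ for some universal $c_* > 0$. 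Crucially, $\dist(\bar{x},\Gamma)\geq 1/20$, so $\phi$ is $C^\infty$ across $\Gamma$ and therefore $\phi_\nu^+ - \phi_\nu^- \equiv 0$ on $\Gamma$.

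I then apply Theorem~\ref{thm:ABP2}, adapted to the annular region $D := B_1 \setminus \overline{B_{1/40}(\bar{x})}$, to $w := v - \phi$. By the superadditivity of $\M^-$ together with $\M^-(D^2\phi) \geq 0$, $w \in \overline{S}_{\lambda,\Lambda}(|f^\pm|)$ in $D\cap\Omega^\pm$; the transmission condition reads $w_\nu^+ - w_\nu^- = g$ on $\Gamma\cap D$; and $w \geq 0$ on $\partial D = \partial B_1 \cup \partial B_{1/40}(\bar{x})$, using $v\geq 0$ and $\phi\leq 0$ on $\partial B_1$, and the previous step with $\phi\equiv c_1$ on $\partial B_{1/40}(\bar{x})$. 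The ABP estimate then yields $\sup_D w_- \leq C\vep_0$, so $v \geq \phi - C\vep_0 \geq c_*/2$ on $B_{1/3}\setminus B_{1/40}(\bar{x})$ when $\vep_0$ is small. Combined with $v \geq c_1$ on $B_{1/40}(\bar{x})$, this gives $v \geq c := \tfrac{1}{2}\min\{c_1,c_*\}$ on $B_{1/3}$, which proves $\inf_{B_{1/3}} u \geq -1+c$.

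The main technical obstacle is justifying the ABP estimate on the annular region $D$ rather than on the full ball $B_1$ where Theorem~\ref{thm:ABP2} is stated. The proof of that theorem relies on a convex envelope argument combined with the Pucci barrier of Lemma~\ref{lem:barrier} to force the contact set to miss $\Gamma$; in the annular setting one must additionally ensure that the contact set stays strictly inside $D$, i.e.\ away from the inner sphere $\partial B_{1/40}(\bar{x})$. This can be arranged by slightly lowering the calibration constant $A$ so that $w \geq c_1/2 > 0$ on $\partial B_{1/40}(\bar{x})$; the convex envelope argument of Theorem~\ref{thm:ABP2} then carries over verbatim to $D$.
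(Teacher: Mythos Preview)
Your proof is correct and follows the same strategy as the paper: interior Harnack on a small ball around $\bar{x}$, followed by a radial barrier of the form $|x-\bar{x}|^{-p}$ combined with the transmission ABP on an annular region to propagate positivity to $B_{1/3}$. The only cosmetic difference is that the paper adds the auxiliary term $\tfrac{\vep_0}{c_0}w$ (the barrier from Lemma~\ref{lem:barrier}) to the comparison function so as to invoke Theorem~\ref{thm:ABP2} with $g\equiv 0$, whereas you leave $g$ in the transmission condition and let the $\max_\Gamma g_+\leq\vep_0$ term in the ABP absorb it; both routes give the same $C\vep_0$ error, and both require (and correctly use) the ABP on an annulus, which goes through since the contact set of the convex envelope automatically avoids the inner sphere where the function is nonnegative.
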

	
\begin{proof}
Since $u+1\geq0$, by the interior Krylov--Safonov Harnack inequality in $B_{1/20}(\bar{x})$,
$$
\sup_{B_{1/20}(\bar{x})} (u+1) \leq C\Big (\inf_{B_{1/20}(\bar{x})} (u+1) + \|f^+\|_{L^n(\Omega^+)}\Big),
$$
where $C>0$ depends only on $n$, $\lambda$ and $\Lambda$. Then
$$
1 \leq u(\bar{x})+1 \leq \sup_{B_{1/20}(\bar{x})} (u+1) \leq C\big(u(x)+1+\vep_0\big),
$$
for any $x\in B_{1/20}(\bar{x})$, and thus, 
\begin{equation}\label{eq:bound}
u\geq -1 + \tilde{c} \quad \hbox{in}~B_{1/20}(\bar{x}),
\end{equation}
with $\tilde{c} = 1/C-\vep_0$ and $\vep_0 <  1/C $.
For $x\in D= B_{3/4}(\bar{x})\setminus B_{1/20}(\bar{x})$, we define  
$$
v(x) = \eta \phi (r) + \frac{\vep_0}{c_0} w(x), \quad \phi(r)=r^{-\gamma}-(2/3)^{-\gamma}, \quad r=|x-\bar{x}|,
$$
where $w$ and $c_0$ are as in the proof of Theorem~\ref{thm:ABP2}, and
 $\gamma > \max \big\{0, \frac{\Lambda}{\lambda}(n-1)-1\big\}$ and $\eta>0$ to be chosen later. 
For any $x \in D^\pm= \Omega^\pm\cap D$, we get
\begin{align*}
\M^-_{\lambda,\Lambda}(D^2 v^\pm(x)) & \geq \eta\M^-_{\lambda,\Lambda}(D^2 \phi(x)) + \frac{\vep_0}{c_0}\M^-_{\lambda,\Lambda}(D^2 w^\pm(x))\\
&= \eta \gamma r^{-\gamma-2} \big( \lambda(\gamma+1) -\Lambda(n-1)\big)>0
\end{align*}
by the choice of $\gamma$. For $x\in \Gamma \cap D$, it follows that
$$
v_{\nu}^+(x)-v_{\nu}^-(x) = \frac{\vep_0}{c_0}\big(w_\nu^+(x)-w_\nu^-(x)\big)\geq 2 \vep_0  > \|g\|_{L^\infty(\Gamma)} \geq g(x).
$$ 
We will choose $\eta$ and $\vep_0$  so that $v \leq \tilde c$ on $\partial B_{1/20}(\bar{x})$ and 
$v\leq 0$ on $\partial B_{3/4}(\bar{x}).$
Note that $\phi(r)\geq 0$ if $0< r \leq 2/3$ and $\phi(r) \leq 0$ if $ r\geq 2/3$. First, choose $\eta$ such that
$
\eta \leq \frac{\tilde c}{2\phi(1/20)}.
$
Then choose $\vep_0$ such that
$
\vep_0 \leq \frac{1}{c_0}\min \big\{ \tilde c/2, -\eta\phi(3/4) \big\}.
$
By \eqref{eq:bound}, we obtain that
$
v\leq u+1 \ \hbox{on}~\partial D.
$

Since $u+1\in \overline{S}_{\lambda,\Lambda}(|f^\pm|)$ in $D^\pm$, $v^\pm\in C^{2}(D^\pm)$, and  $\M^-_{\lambda,\Lambda}(D^2v^\pm) \geq 0$ in $D^\pm$, we have that $u+1-v \in \overline{S}_{\lambda,\Lambda}(|f^\pm|)$ in $D^\pm$. Also,
$$
(u+1-v)_{\nu}^+-(u+1-v)_{\nu}^- \leq g -(v_{\nu}^+-v_{\nu}^-)  \leq g -g =0 \quad \hbox{on}~\Gamma\cap D
$$ 
in the viscosity sense.
Hence, applying Theorem~\ref{thm:ABP2} to $u+1-v$ in $D$, with $g \equiv0$, we get
\begin{align*}
 \sup_{D} (u+1-v)_- &\leq \sup_{\partial D} (u+1-v)_- + C\big(  \|f^-\|_{L^n(\Omega^-)}+ \|f^+\|_{L^n(\Omega^+)}\big) \leq C \vep_0,
\end{align*}
where $C$ depends only on $n$, $\lambda$, $\Lambda$, $\alpha$ and $[\psi]_{C^{1,\alpha}}$.
Therefore, $u \geq -1 + v  - C\vep_0$ in $D$.
Moreover, for any $x\in B_{1/3}(0)\setminus B_{1/20}(\bar{x})$, we have that
$
v(x)\geq \eta \phi(23/60)=c_1,  
$
and $c_1>0$ depends only on $n$, $\lambda$, and $\Lambda$.
Choosing $\vep_0$ such that $\vep_0 \leq \frac{c_1}{2C}$, we get
$u\geq -1+ \tfrac{c_1}{2}$ in $B_{1/3}(0)\setminus B_{1/20}(\bar{x})$. Therefore,
by choosing $c=\min\{\tilde c,\tfrac{c_1}{2}\}$, we get
$$
\inf_{B_{1/3}} u \geq -1 + c.
$$
\end{proof}

\begin{proof}[Proof of Theorem~\ref{lem:oscillation2}]
By choosing an appropriate system of coordinates, we assume that
$$
\psi(0)=0, \quad \nabla'\psi(0)=0, \quad \text{and}\quad |\psi(x')|\leq  |x'|.
$$
Then $B_{1/20}(\bar{x}) \subset \Omega^+$, with $\bar{x}=\tfrac{1}{5}e_n$. Let $M=\|g\|_{L^\infty(\Gamma)}  + \|f^-\|_{L^n(\Omega^-)}+\|f^+\|_{L^n(\Omega^+)}$ and let $\vep_0$ be as in Lemma~\ref{lem:holder2}. Consider
$$
\tilde u = \, \frac{2u-(\inf_{B_1} u + \sup_{B_1} u\big)}{\osc_{B_1} u + 2M/\vep_0} \in {S}^*_{\lambda,\Lambda}(\tilde f^\pm),
$$
with $\tilde f^\pm = 2f^\pm (\osc_{B_1} u + 2M/\vep_0)^{-1}$. Also,
$(\tilde u^+)_\nu - (\tilde u^-)_\nu \leq \tilde g$  on  $\Gamma$
in the viscosity sense, with $\tilde g=2g(\osc_{B_1} u + 2M/\vep_0)^{-1}$. Note that $\|\tilde u\|_{L^\infty(B_1)}\leq 1$, and
$$
\max_\Gamma \tilde g + \|\tilde f^-\|_{L^n(\Omega^-)}+\|\tilde f^+\|_{L^n(\Omega^+)}\leq \vep_0.
$$
If $\tilde{u}(\bar{x})\geq 0$ then, by Lemma~\ref{lem:holder2}, it follows that $\inf_{B_{1/3}} \tilde{u}\geq -1+c$. Otherwise, $\tilde u(\bar{x})<0$, and applying the lemma to $-\tilde u$, we see that $\sup_{B_{1/3}} \tilde{u}\leq 1-c$.
In both cases, we get 
$$
\underset{B_{1/3}}{\osc} \tilde u = \sup_{B_{1/3}} \tilde u - \inf_{B_{1/3}} \tilde u = \frac{2\osc_{B_{1/3}} u}{\osc_{B_1} u + 2M/\vep_0}\leq 2-c.
$$
Therefore,
$$\underset{B_{1/3}}{\osc} u \leq \mu\, \underset{B_{1}}{\osc} u + C\big( \|g\|_{L^\infty(\Gamma)}  + \|f^-\|_{L^n(\Omega^-)}+\|f^+\|_{L^n(\Omega^+)}\big),$$
with $\mu=\tfrac{2-c}{2}<1$ and $C=\tfrac{2-c}{\vep_0}$.
\end{proof}

When $f^\pm \equiv 0$ and $g$ has compact support on $\Gamma$, we obtain the following global H\"{o}lder regularity result.
The proof is omitted because it follows similar lines as the proof of \cite[Proposition~4.13]{CC}.
Indeed, the key ingredients are the maximum principle (Corollary \ref{cor:MP}), the interior H\"{o}lder continuity (Theorem~\ref{thm:holder}),
and a simple boundary pointwise H\"{o}lder regularity result similar to \cite[Proposition~4.12]{CC}.
Details are left to the interested reader.

\begin{prop}[Global H\"older regularity] \label{prop:globalholder}
Let $\Gamma=B_1\cap \{x_n=\psi(x')\}$. Assume that $u\in C(\overline{B_1})$ satisfies
$$
\begin{cases}
u \in S_{\lambda,\Lambda}(0) & \hbox{in}~\Omega^\pm\\
u_{\nu}^+-u_{\nu}^-=g & \hbox{on}~\Gamma\\
u=\varphi & \hbox{on}~\partial B_1,
\end{cases} 
$$
where $g\in L^\infty(\Gamma)$, with $\supp(g)\subset \Gamma\cap B_{1-2\rho}$, for some $0<\rho<1/4$,
$\varphi \in C^{0,\alpha}(\partial B_1)$, and $\psi\in C^{1,\alpha}(\overline{B_1'})$, for some $0<\alpha<1$. 
Then $u\in C^{0,\beta}(\overline{B_1})$, with $0<\beta\leq \min\{\alpha_1, \alpha/2\}$, and
$$
\|u\|_{C^{0,\beta}(\overline{B_1})} \leq \frac{C}{\rho^{\gamma}}\big( \|\varphi\|_{C^{0,\alpha}(\partial B_1)} + \|g\|_{L^\infty(\Gamma)}\big),
$$
where $0<\alpha_1<1$ is given in Theorem~\ref{thm:holder}, $\gamma=\max\{\alpha_1,\alpha\}$, and $C>0$ depends
only on $n$, $\lambda$, $\Lambda$, $\alpha$ and $[\Gamma]_{C^{1,\alpha}}$.
\end{prop}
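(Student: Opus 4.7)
The plan is to combine three ingredients in the manner of \cite[Proposition~4.13]{CC}. First, Theorem~\ref{thm:ABP2} applied to $\pm(u-\sup_{\partial B_1}\varphi)$ with $f^\pm\equiv 0$ immediately gives the $L^\infty$ bound
\[
\|u\|_{L^\infty(B_1)}\leq\|\varphi\|_{L^\infty(\partial B_1)}+C\|g\|_{L^\infty(\Gamma)}.
\]
Second, Theorem~\ref{thm:holder}, rescaled to any ball $B_r(z)$ with $B_{2r}(z)\subset B_1$, yields
\[
[u]_{C^{0,\alpha_1}(B_{r/2}(z))}\leq Cr^{-\alpha_1}\bigl(\|u\|_{L^\infty(B_r(z))}+r\|g\|_{L^\infty(\Gamma)}\bigr),
\]
the $r$ in front of $\|g\|_{L^\infty(\Gamma)}$ arising from the scaling of the transmission condition. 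Third, I will prove a pointwise H\"older estimate at each boundary point $x_0\in\partial B_1$. The global $C^{0,\beta}$ bound then follows from standard interpolation between the interior seminorm and the boundary modulus.

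\textbf{Boundary pointwise estimate.} Fix $x_0\in\partial B_1$. Since $\supp(g)\subset\Gamma\cap B_{1-2\rho}$, the transmission condition in $B_{2\rho}(x_0)\cap B_1$ becomes homogeneous ($g\equiv 0$), so Corollary~\ref{cor:MP} applies across $\Gamma$ there. For $0<r\leq\rho$, define the barrier $W_r\in C(\overline{B_r(x_0)\cap B_1})$ as the viscosity solution of
\[
\begin{cases}
\M^-_{\lambda,\Lambda}(D^2 W_r)=0 & \text{in}~(B_r(x_0)\cap B_1)\setminus\Gamma, \\
W_r=0 & \text{on}~\partial B_1\cap\overline{B_r(x_0)}, \\
W_r=1 & \text{on}~B_1\cap\partial B_r(x_0), \\
(W_r)_\nu^+=(W_r)_\nu^- & \text{on}~\Gamma\cap B_r(x_0).
\end{cases}
\]
A Hopf-type argument at the smooth point $x_0\in\partial B_1$ (following Lemma~\ref{lem:hopf} after flattening $\partial B_1$) gives the linear decay $0\leq W_r(x)\leq C|x-x_0|/r$ near $x_0$, with $C$ depending on $n,\lambda,\Lambda$ and $[\Gamma]_{C^{1,\alpha}}$. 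Since $|\varphi-\varphi(x_0)|\leq[\varphi]_{C^{0,\alpha}}r^\alpha$ on $\partial B_1\cap B_r(x_0)$, comparing $\pm(u-\varphi(x_0))$ against $[\varphi]_{C^{0,\alpha}}r^\alpha+2\|u\|_{L^\infty(B_1)}W_r$ via Corollary~\ref{cor:MP} and choosing $r=|x-x_0|^{1/2}$ produces
\[
|u(x)-\varphi(x_0)|\leq \frac{C}{\rho^{\gamma_0}}\bigl(\|\varphi\|_{C^{0,\alpha}(\partial B_1)}+\|g\|_{L^\infty(\Gamma)}\bigr)|x-x_0|^{\alpha/2}
\]
for $x\in\overline{B_1}$ with $|x-x_0|\leq\rho$; the case $|x-x_0|\geq\rho$ is trivial from the $L^\infty$ bound.

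\textbf{Interpolation.} For $x,y\in\overline{B_1}$ with $d_x=\dist(x,\partial B_1)\leq d_y$, choose nearest boundary points $x_0,y_0\in\partial B_1$. If $|x-y|\geq d_x/2$, combine the boundary estimate at $x_0$ and $y_0$ with $|\varphi(x_0)-\varphi(y_0)|\leq[\varphi]_{C^{0,\alpha}}|x-y|^\alpha$ via the triangle inequality to obtain $|u(x)-u(y)|\leq C\rho^{-\gamma}|x-y|^\beta$. If $|x-y|<d_x/2$, apply the rescaled interior seminorm on $B_{d_x/2}(x)\subset B_1$: the oscillation there is bounded by $Cd_x^\beta$ thanks to the boundary estimate at $x_0$, whence $|u(x)-u(y)|\leq Cd_x^{\beta-\alpha_1}|x-y|^{\alpha_1}\leq C|x-y|^\beta$ (since $|x-y|/d_x\leq 1/2$ and $\beta\leq\alpha_1$). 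Taking $\beta\leq\min\{\alpha_1,\alpha/2\}$ and tracking the $\rho$-dependence produces the claimed estimate.

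\textbf{Main obstacle.} The delicate point is producing the barrier $W_r$ across the curved $C^{1,\alpha}$ interface $\Gamma$ sitting inside $B_r(x_0)\cap B_1$. The hypothesis $\supp(g)\subset\Gamma\cap B_{1-2\rho}$ is precisely what lets me impose the homogeneous transmission condition on $W_r$, so that Corollary~\ref{cor:MP} serves as the comparison tool; the linear decay of $W_r$ at $x_0$ then reduces to the Hopf lemma at the smooth boundary $\partial B_1$, with $\Gamma$ contributing only through the $[\Gamma]_{C^{1,\alpha}}$ dependence of the universal constants.
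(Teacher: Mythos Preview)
Your strategy matches the paper's outline exactly: maximum principle, interior H\"older continuity from Theorem~\ref{thm:holder}, a boundary pointwise estimate in the spirit of \cite[Proposition~4.12]{CC}, and then the gluing argument of \cite[Proposition~4.13]{CC}.

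There are, however, two execution slips in the boundary step. First, for the comparison $\pm(u-\varphi(x_0))\leq [\varphi]_{C^{0,\alpha}}r^\alpha+2\|u\|_{L^\infty}W_r$ to follow from Corollary~\ref{cor:MP}, you need $\M^+_{\lambda,\Lambda}(D^2 W_r)\leq 0$, not $\M^-_{\lambda,\Lambda}(D^2 W_r)=0$: with your choice, $\M^+_{\lambda,\Lambda}(D^2 W_r)\geq 0$, so when you form $CW_r-u$ (or $CW_r+u$) you cannot conclude it lies in $\overline{S}_{\lambda,\Lambda}(0)$. Second, the upper bound $W_r(x)\leq C|x-x_0|/r$ is not a Hopf-type statement---Hopf's lemma (Lemma~\ref{lem:hopf}) gives \emph{lower} bounds for nonnegative supersolutions vanishing at a boundary point---but rather comes from comparison with an explicit radial supersolution built from the exterior tangent ball to $\partial B_1$ at $x_0$. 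Both issues disappear if, as in \cite{CC}, you take $W_r$ itself to be that explicit radial barrier: it is $C^2$ across $\Gamma$, hence automatically satisfies the homogeneous transmission condition, it is a classical supersolution of $\M^+_{\lambda,\Lambda}$, and its linear growth at $x_0$ is immediate. This also sidesteps the existence question for your PDE-defined $W_r$, which the paper only settles for flat interfaces.
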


\section{Flat interface transmission problems}\label{sect:FTP}

This section is devoted to the complete analysis of flat interface transmission problems of the form
\begin{align} \label{pb:existenceflat}
\begin{cases}
\F^\pm(D^2 u) = f^\pm & \hbox{in}~B_1^\pm\\
u_{x_n}^+ - u_{x_n}^- = g & \hbox{on}~T= B_1 \cap \{x_n=0\}.
\end{cases}
\end{align}
We prove the comparison principle, existence and uniqueness of viscosity solutions with prescribed boundary values,
and we derive $C^{1,\alpha}$ and $C^{2,\alpha}$ estimates for $u^+$ and $u^-$ up to the interface. 

\subsection{Viscosity solutions to \eqref{pb:existenceflat}}

\begin{defn}[Viscosity solution for flat problem]\label{def:viscosityflat}
We say that $u\in \USC({B_1})$ is a viscosity subsolution to \eqref{pb:existenceflat} in $B_1$ if for any $\varphi $ touching $u$ by above at $x_0$ in $B_1$, the following holds:
\begin{enumerate}[$(i)$]
\item if $x_0 \in B_1^\pm$ and $\varphi \in C^2(B_\delta(x_0))$,  then
$F^\pm(D^2 \varphi (x_0)) \geq  f^\pm(x_0)$;
\item if $x_0 \in T$ and $\varphi \in C^2 (\overline{B_\delta^+(x_0)}) \cap C^2 (\overline{B_\delta^-(x_0)})$, then 
$$
\varphi_{x_n}^+(x_0)-\varphi_{x_n}^-(x_0) \geq g(x_0)
$$
where $\varphi^\pm = \varphi|_{\overline{B_\delta^\pm(x_0)}}$.
\end{enumerate}
The notions of supersolution and solution are defined as usual.
\end{defn}

It can be seen that the following condition (see also \cite{DSFS}) is equivalent to $(ii)$:
\begin{enumerate}[$(ii')$]
\item Let $x_0 \in T$ and let
$$
\varphi (x) = P(x') + p^+ x_n^+ - p^- x_n^-
$$
where $P$ is a quadratic polynomial and $p^\pm\in \R$. If $\varphi$ touches $u$ by above at $x_0$ then
$$
p^+-p^- \geq g(x_0).
$$
\end{enumerate}

\begin{lem} \label{lem:equivdef}
Definition~\ref{def:viscosityflat} for subsolutions to \eqref{pb:existenceflat}
is equivalent to replacing $(ii)$ by the following statement: if $x_0 \in T$ and $\varphi \in C^2 (\overline{B_\delta^+(x_0)}) \cap C^2 (\overline{B_\delta^-(x_0)})$ touches $u$ by above at $x_0$ then
$$
\F^\pm(D^2 \varphi  (x_0)) \geq  f^\pm(x_0) \quad \hbox{or} \quad 
\varphi_{x_n}^+(x_0) -  \varphi_{x_n}^-(x_0) \geq  g(x_0).
$$
An analogous result holds for supersolutions to \eqref{pb:existenceflat}.
\end{lem}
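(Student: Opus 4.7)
The plan is to show that the original condition $(ii)$ and the ``or'' version of the statement (call it $(ii^*)$) are equivalent. The forward implication $(ii) \Rightarrow (ii^*)$ is immediate, since the latter is strictly weaker: whenever the transmission inequality already holds, the disjunction is automatically satisfied. So the real work is the converse.

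I would argue the converse by contradiction. Assume $(ii^*)$ is in force, and suppose that some test function $\varphi$ as in the statement touches $u$ from above at a point $x_0 \in T$ with $\varphi_{x_n}^+(x_0) - \varphi_{x_n}^-(x_0) < g(x_0)$. Since the transmission disjunct fails for $\varphi$, $(ii^*)$ forces $F^\pm(D^2 \varphi^\pm(x_0)) \geq f^\pm(x_0)$. The strategy is then to perturb $\varphi$ into a new admissible test function $\tilde\varphi$, still $C^2$ on each closed half-ball and still touching $u$ from above at $x_0$, for which \emph{both} disjuncts of $(ii^*)$ fail, yielding the desired contradiction.

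The perturbation I would use is
\[
\tilde\varphi(x) = \varphi(x) + \eta |x_n| - \eta' x_n^2,
\]
for independently chosen constants $\eta, \eta' > 0$. Since $|x_n|$ equals $\pm x_n$ on each closed half-ball, $\tilde\varphi$ inherits the one-sided $C^2$ regularity of $\varphi$. The identity $\eta |x_n| - \eta' x_n^2 = |x_n|(\eta - \eta' |x_n|) \geq 0$ on $\{|x_n| \leq \eta/\eta'\}$, combined with $\varphi \geq u$, yields $\tilde\varphi \geq u$ on the smaller ball $B_{\min\{\delta,\,\eta/\eta'\}}(x_0)$, with equality at $x_0$, so the touching-from-above property is preserved. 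The term $|x_n|$ is linear on each closed half-ball, so it shifts the one-sided normal derivatives at $x_0$ by $\pm\eta$ but contributes nothing to the one-sided Hessians; conversely, $-\eta' x_n^2$ leaves the one-sided derivatives at $x_0$ untouched and subtracts $2\eta' e_n \otimes e_n$ from each one-sided Hessian. Therefore
\[
\tilde\varphi_{x_n}^+(x_0) - \tilde\varphi_{x_n}^-(x_0) = \varphi_{x_n}^+(x_0) - \varphi_{x_n}^-(x_0) + 2\eta,
\]
and, by uniform ellipticity of $F^\pm$,
\[
F^\pm(D^2 \tilde\varphi^\pm(x_0)) \leq F^\pm(D^2 \varphi^\pm(x_0)) - 2\lambda \eta'.
\]
Choosing $\eta$ sufficiently small keeps the transmission gap strictly below $g(x_0)$, and choosing $\eta'$ sufficiently large forces both $F^\pm(D^2 \tilde\varphi^\pm(x_0))$ strictly below $f^\pm(x_0)$. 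Since the two parameters decouple, both can be arranged at the same time, and applying $(ii^*)$ to $\tilde\varphi$ yields the contradiction.

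The supersolution version follows from the dual perturbation $\tilde\varphi(x) = \varphi(x) - \eta |x_n| + \eta' x_n^2$, which preserves touching from below on $\{|x_n| \leq \eta/\eta'\}$, keeps the transmission gap strictly above $g(x_0)$, and raises both $F^\pm(D^2\tilde\varphi^\pm(x_0))$ strictly above $f^\pm(x_0)$. The main obstacle is designing the perturbation in the first place: one needs to move the one-sided normal derivatives and the one-sided Hessians at $x_0$ independently without losing one-sided contact with $u$. The crucial observation is that the non-smooth piece $|x_n|$ is non-negative, vanishes at $T$, and acts only on the one-sided derivatives, while the smooth piece $x_n^2$ acts only on the one-sided Hessians; this separation of roles is exactly what allows the independent tuning of $\eta$ and $\eta'$ to work.
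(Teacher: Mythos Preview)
Your proof is correct and follows essentially the same route as the paper: both argue the nontrivial implication by contradiction using the perturbation $\varphi + \eta|x_n| - C x_n^2$, choosing $\eta$ small to keep the transmission inequality failing and $C$ large to force the PDE inequality to fail. The only cosmetic difference is that the paper passes through the Pucci operator $\M^+_{\lambda/n,\Lambda}$ in the last step, whereas you invoke uniform ellipticity of $F^\pm$ directly; your version is arguably cleaner.
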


\begin{proof}
If $u$ is a viscosity subsolution of \eqref{pb:existenceflat}, then it is clear that the statement is true. To prove the converse, let $x_0 \in T$ and assume that   $\varphi \in C^2 (\overline{B_\delta^+(x_0)}) \cap C^2 (\overline{B_\delta^-(x_0)})$ touches $u$ by above at $x_0$.
Suppose by way of contradiction that 
\begin{equation}\label{eq:contra1}
\varphi_{x_n}^+(x_0) -  \varphi_{x_n}^-(x_0) <  g(x_0).
\end{equation}
Define the function 
 $
 \psi(x) = \varphi(x)+\eta |x_n| - C|x_n|^2 \ \hbox{for}~x \in B_{\tau}(x_0),
 $
 where $\eta,\tau,C>0$ are constants to be determined. 
 For $\eta$ small and $C$ large fixed, choose $\tau<r$  such that
 $
 \eta |x_n| - C|x_n|^2\geq0 \ \hbox{in}~ B_{\tau}(x_0).
 $
 In particular, $\psi \in  C^2 ( \overline{B_\tau^+(x_0)}) \cap  C^2 (\overline{B_\tau^-(x_0)})$, and
 \begin{align*}
 \begin{cases}
 \psi (x_0) =  \varphi(x_0) = u(x_0)\\
 \psi(x) \geq \varphi(x) \geq u(x) &\hbox{for}~x\in B_{\tau}(x_0).
 \end{cases}
 \end{align*}
Then $\psi$ is a  test function touching $u$ by above at $x_0$ and, thus,
\begin{equation}\label{eq:contra2}
\F^\pm(D^2 \psi(x_0)) \geq  f^\pm(x_0) \quad \hbox{or} \quad
\psi_{x_n}^+(x_0) -  \psi_{x_n}^-(x_0) \geq  g(x_0).
\end{equation}
By \eqref{eq:contra1}, and choosing $\eta$ sufficiently small,
\begin{align*}
\psi_{x_n}^+(x_0) - \psi_{x_n}^-(x_0) = \varphi_{x_n}^+(x_0) - \varphi_{x_n}^-(x_0) +2 \eta < g(x_0).
\end{align*}
Therefore, the first inequality in \eqref{eq:contra2} must hold. 
Let $E_n = e_n e_n^T \in \mathcal{S}^n$. Then
\begin{align*}
\M_{\lambda/n,\Lambda}^+(D^2\psi(x_0))  = \M_{\lambda/n,\Lambda}^+(D^2 \varphi (x_0) - 2C E_n )
 \leq \M_{\lambda/n,\Lambda}^+(D^2 \varphi (x_0))  - 2C \lambda/n 
 < f^+(x_0)
\end{align*}
choosing $C$ sufficiently large. This is a contradiction to the first inequality in \eqref{eq:contra2} since, by uniform ellipticity,
\begin{align*}
f^+(x_0) &\leq \F^+(D^2\psi(x_0)) \leq \Lambda \| [D^2 \psi(x_0)]^+\| - \lambda \| [D^2\psi(x_0)]^-\| \leq \M_{\lambda/n,\Lambda}^+(D^2\psi(x_0)).
\end{align*} 
\end{proof}

\subsection{Lower and upper $\vep$-envelopes}

We will use a family of regularizations in the $x'$-direction
that was introduced in \cite[Section~3]{DSFS}. 

\begin{defn}\label{def:envelopes}
Given $u\in \USC(B_1)$ and any $\vep>0$, we define the upper $\vep$-envelope of $u$ in the $x'$-direction as
$$
u^\vep(y',y_n) = \sup_{x\in \overline{B_\rho} \cap \{x_n=y_n\}} \Big \{ u(x',y_n) - \tfrac{1}{\vep}|x'-y'|^2\Big\}
$$
for $y=(y',y_n) \in  \overline{B_\rho}\subset  B_1$. Similarly, given $u \in \LSC(B_1)$, we define the lower $\vep$-envelope of $u$ in the $x'$-direction as
$$
u_\vep(y',y_n) = \inf_{x\in \overline{B_\rho} \cap \{x_n=y_n\}} \Big \{ u(x',y_n) + \tfrac{1}{\vep}|x'-y'|^2\Big\}
$$
for $y=(y',y_n) \in  \overline{B_\rho}\subset  B_1$.
\end{defn}

\begin{lem}[{See~\cite[Lemma~3.1]{DSFS}}] \label{lem:envelopes}
The following properties hold:
\begin{enumerate}[$(i)$]
\item $u^\vep \geq u$ in $B_\rho$ and $\limsup_{\vep\to0} u^\vep = u$;
\item $u^\vep \in C_{y'}^{0,1}(\overline{B_\rho})$, with $[u^\vep]_{ C_{y'}^{0,1}(\overline{B_\rho})}\leq {6\rho}/{\vep}$;
\item $u^\vep \in C_{y'}^{1,1}$ by below in $B_\rho$. Thus, $u^\vep$ is punctually second order differentiable
in the $x'$-direction almost everywhere in $B_\rho$.
\end{enumerate}
\end{lem}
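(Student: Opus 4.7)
I will only write the argument for $u^\vep$; the case of $u_\vep$ is analogous up to sign changes. The key point is that the regularization is performed slice-wise in $x_n$, so throughout I would fix $y_n$, work with the slice $y' \mapsto u^\vep(y', y_n)$, and assemble the conclusions over $y_n$ at the end. Note first that $\overline{B_\rho} \cap \{x_n=y_n\}$ is compact and $u$ is upper semicontinuous, so the supremum in Definition~\ref{def:envelopes} is attained. Property $(i)$ is then immediate: taking $x'=y'$ in the sup gives $u^\vep \geq u$, and for the $\limsup$ assertion I would fix $y_0 \in B_\rho$ and sequences $\vep_k \to 0$, $y_k \to y_0$, pick maximizers $x_k'$, and use
$$
u(x_k', (y_k)_n) - \tfrac{1}{\vep_k}|x_k'-y_k'|^2 = u^{\vep_k}(y_k) \geq u(y_k)
$$
together with the local upper boundedness of $u$ to force $|x_k'-y_k'| \to 0$; upper semicontinuity of $u$ then yields $\limsup_k u^{\vep_k}(y_k) \leq u(y_0)$.

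For $(ii)$, I would use the standard Lipschitz trick for sup-convolutions: take a maximizer $x'$ for $u^\vep(y', y_n)$, test the sup defining $u^\vep(z', y_n)$ against this same $x'$, and expand
$$
|x'-z'|^2 - |x'-y'|^2 = (z'-y')\cdot(z'+y'-2x').
$$
Since $x', y', z' \in \overline{B_\rho'}$, the right-hand side is bounded in absolute value by $6\rho\,|z'-y'|$. Swapping the roles of $y'$ and $z'$ gives the two-sided bound, yielding the Lipschitz constant $6\rho/\vep$.

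For $(iii)$, the cleanest route is to rewrite the definition as
$$
u^\vep(y', y_n) + \tfrac{1}{\vep}|y'|^2 = \sup_{x' \in \overline{B_\rho'}}\Big\{ u(x', y_n) - \tfrac{1}{\vep}|x'|^2 + \tfrac{2}{\vep}\, x'\cdot y' \Big\},
$$
exhibiting the left-hand side as a pointwise supremum of functions that are affine in $y'$. Hence for each fixed $y_n$, the slice $y'\mapsto u^\vep(y',y_n)+\tfrac{1}{\vep}|y'|^2$ is convex in $y'$, which is exactly the statement that $y'\mapsto u^\vep(y', y_n)$ is semiconvex with uniform paraboloid opening $2/\vep$ from below, i.e., $C^{1,1}_{y'}$ from below. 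Alexandrov's theorem applied to each convex slice gives punctual second-order differentiability in $y'$ at a.e.\ $y' \in B_\rho'$, and a Fubini argument then upgrades this to an a.e.\ statement on $B_\rho$.

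I do not anticipate any serious obstacle here; the only slightly non-classical feature is that every tool in the proof — compactness of the maximizing set, Lipschitz estimates, convexity of the modified slice, and Alexandrov's theorem — must be applied for each fixed $x_n$ separately and then assembled via Fubini, but this transfer is routine.
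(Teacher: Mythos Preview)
Your proposal is correct. The paper itself does not supply a proof of this lemma---it is quoted directly from \cite[Lemma~3.1]{DSFS}---so there is no in-paper argument to compare against. What you have written is the standard sup-convolution argument (cf.\ \cite[Chapter~5]{CC} in the full-variable case), correctly adapted to the slice-wise regularization in $x'$ with $x_n$ frozen: attainment of the supremum by compactness and upper semicontinuity, the Lipschitz bound via testing with a fixed maximizer, and semiconvexity of each slice followed by Alexandrov's theorem plus Fubini are exactly the ingredients needed.
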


The modulus of continuity $\omega_h$ of a uniformly continuous function $h$ is denoted by
$$
w_h(r) = \sup_{|x-y|\leq r} |h(x)-h(y)| \qquad \hbox{for}~r>0.
$$

\begin{prop} \label{prop:envelopes}
 Let $f^\pm\in C(B_1^\pm)$ and $g\in C(T)$. If $u$ is a bounded viscosity subsolution (supersolution) to \eqref{pb:existenceflat}
then, for any $\vep>0$ small, $u^\vep$ is a viscosity subsolution (supersolution) to
$$ 
\begin{cases}
F^\pm (D^2 u^\vep) = f_\vep^\pm & \hbox{in}~B_r^\pm\\
(u^\vep)_{x_n}^+ - (u^\vep)_{x_n}^- = g_\vep & \hbox{on}~T_r=B_r\cap \{x_n=0\},
\end{cases}
$$
with $r\leq \rho-r_\vep$, $r_\vep=(2\vep\|u\|_{L^\infty(B_1)})^{1/2}$, $f_\vep^\pm = f - \omega_{f^\pm}(r_\vep)$, and $g_\vep=g- \omega_{g}(r_\vep)$.
\end{prop}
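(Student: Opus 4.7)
The plan is to adapt the classical sup-convolution argument for viscosity subsolutions to the transmission setting, using the disjunctive characterization from Lemma~\ref{lem:equivdef} to treat the interface condition uniformly. The case of supersolutions via $u_\vep$ is entirely symmetric, so I only discuss subsolutions.

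The argument splits into three steps. \emph{Localization of the maximizer:} for $y_0 \in \overline{B_r}$, upper semicontinuity of $u$ gives some $x_0'=x_0'(y_0)$ attaining the supremum defining $u^\vep(y_0)$; testing against $x' = y_0'$ yields $\tfrac{1}{\vep}|x_0'-y_0'|^2 \leq 2\|u\|_{L^\infty(B_1)}$, so $|x_0'-y_0'| \leq r_\vep$. The assumption $r \leq \rho - r_\vep$ then guarantees that the shifted base point $(x_0',y_{0,n})$ lies in $B_1$. \emph{Translation of test functions:} if $\varphi$ touches $u^\vep$ from above at $y_0$, I would set
$$
\tilde\varphi(x',x_n) := \varphi(x'+y_0'-x_0',\,x_n) + \tfrac{1}{\vep}|y_0'-x_0'|^2
$$
and verify directly, using only the definition of $u^\vep$ at the shifted point together with $u^\vep \leq \varphi$, that $\tilde\varphi$ touches $u$ from above at $(x_0',y_{0,n})$, with $D^2\tilde\varphi = D^2\varphi$ and matching one-sided normal derivatives at the contact point. \emph{Transfer of the viscosity inequality:} for $y_0 \in B_r^\pm$ with $\varphi\in C^2(B_\delta(y_0))$, the base point $(x_0',y_{0,n})$ still lies in $B_1^\pm$, and the subsolution property of $u$ gives
$$
F^\pm(D^2\varphi(y_0)) \geq f^\pm(x_0',y_{0,n}) \geq f^\pm(y_0) - \omega_{f^\pm}(r_\vep) = f_\vep^\pm(y_0),
$$
where I use uniform continuity of $f^\pm$ and $|x_0'-y_0'|\leq r_\vep$. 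For $y_0 \in T_r$, I would invoke Lemma~\ref{lem:equivdef} applied to $u$ and the piecewise $C^2$ function $\tilde\varphi$ at $(x_0',0) \in T$: this produces the disjunction
$$
F^\pm(D^2\varphi(y_0)) \geq f^\pm(x_0',0)\quad\text{or}\quad \varphi_{x_n}^+(y_0) - \varphi_{x_n}^-(y_0) \geq g(x_0',0),
$$
and the uniform continuity of $f^\pm$ and $g$ converts the right-hand sides into $f_\vep^\pm(y_0)$ and $g_\vep(y_0)$; a final application of Lemma~\ref{lem:equivdef} in the reverse direction certifies that $u^\vep$ satisfies Definition~\ref{def:viscosityflat} for the perturbed problem.

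I expect the interface case to be the main technical obstacle. The natural translated test function $\tilde\varphi$ is only piecewise $C^2$ across $T$, so one cannot work with condition (ii) alone without knowing a priori which side of the disjunction must hold at the contact point. The equivalent formulation in Lemma~\ref{lem:equivdef} is precisely what allows the modulus-of-continuity losses in $f^\pm$ and $g$ to be absorbed uniformly into $f_\vep^\pm$ and $g_\vep$ without case analysis, and this is the structural reason the sup-convolution machinery extends cleanly to transmission problems.
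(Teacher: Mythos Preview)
Your proof is correct, but it takes a more roundabout path than the paper's at the interface. The paper dispatches the interior subsolution property by citing \cite[Lemma~3.1]{DSFS} and then, for the transmission condition, works directly with the reduced test-function class~$(ii')$: it takes $\varphi(y)=P(y')+p^+y_n^+-p^-y_n^-$ touching $u^\vep$ from above at $y_0=(y_0',0)$, translates it in $x'$ exactly as you do, and applies Definition~\ref{def:viscosityflat}$(ii)$ to $u$ at the shifted point $y_\vep$ to obtain $p^+-p^-\geq g(y_\vep)\geq g_\vep(y_0)$ in one line. No disjunction is needed.

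Your detour through Lemma~\ref{lem:equivdef} is harmless but unnecessary, and your stated justification for it is based on a misreading: condition~$(ii)$ of Definition~\ref{def:viscosityflat} already takes test functions in $C^2(\overline{B_\delta^+(x_0)})\cap C^2(\overline{B_\delta^-(x_0)})$, i.e.\ piecewise $C^2$ across $T$, and yields the transmission inequality $\varphi_{x_n}^+(x_0)-\varphi_{x_n}^-(x_0)\geq g(x_0)$ directly, with no alternative involving $F^\pm$. So once you have built $\tilde\varphi$ touching $u$ from above at $(x_0',0)$, you may simply read off the transmission inequality and transfer it back to $\varphi$ at $y_0$ via the modulus of $g$; invoking the disjunction and then undoing it adds nothing. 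What Lemma~\ref{lem:equivdef} buys elsewhere in the paper is the freedom to \emph{verify} the subsolution property using a weaker-looking condition, but here the translated test function already lives in the natural class for~$(ii)$.
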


\begin{proof}
The fact that $F^\pm (D^2 u^\vep) \geq f_\vep^\pm$ in $B_r^\pm$ in the viscosity sense
is established in \cite[Lemma~3.1]{DSFS}. For the transmission condition,
let $y_0=(y_0',0) \in T_r$ and assume that
$$
\varphi (y) = P(y') + p^+ y_n^+ - p^- y_n^-,
$$
with $P$ a quadratic polynomial, touches $u^\vep$ by above at $y_0$. It is easy to check that
$$
\phi(y)= P(y'+y_0'-y_\vep') + \tfrac{1}{\vep} |y_\vep'-y_0'|^2 +  p^+ y_n^+ - p^- y_n^-
$$
touches $u$ at $y_\vep$ from above. Therefore, 
$$
p^+-p^- = (\phi_{x_n}^+ - \phi_{x_n}^-)(y_\vep)  \geq g(y_\vep) \geq g(y_0) - \omega_{g}(r_\vep) = g_\vep(y_0).
$$
\end{proof}

\subsection{Comparison principle and uniqueness}

We recall the classical notion of half-relaxed limits and some of its properties.
Let $\{u_k\}_{k=1}^\infty$ be a sequence of functions. For $x\in \overline{B_1}$, we define
$$
 \textstyle\limsup^* u_k(x) = \displaystyle\lim_{j\to \infty} \sup \Big\{ u_k(y) : k \geq j, \ y\in \overline{B_1}, \ \hbox{and} \ |y-x|\leq \tfrac{1}{j} \Big\}.
$$
Similarly, for $x\in \overline{B_1}$, we define 
$$
 \textstyle\liminf_* u_k(x) = \displaystyle\lim_{j\to \infty} \inf \Big\{ u_k(y) : k \geq j, \ y\in \overline{B_1}, \ \hbox{and} \ |y-x|\leq \tfrac{1}{j} \Big\}.
$$
Then $\textstyle\limsup^* u_k \in \USC(\overline{B_1})$ and $\textstyle\liminf_* u_k  \in \LSC(\overline{B_1})$.
The next lemma follows from \cite{CIL}.

\begin{lem} \label{lem:HRL}
Let $\{u_k\}_{k=1}^\infty \subseteq \USC(\overline{B_1})$ and $u=\limsupstar u_k$. Fix $x_0\in \overline{B_1}$. If
a continuous function $\varphi$ touches $u$ from above at $x_0$, then there exist indexes $k_j\to \infty$, points $x_j \in \overline{B_1}$, and functions $\varphi_j \in C^0$ such that $\varphi_j$ touches $u_{k_j}$ from above at $x_j$,  
$$
  x_j \to x_0 \quad \text{and} \quad u_{k_j}(x_j) \to u(x_0),  \qquad \hbox{as}~j\to \infty.
$$
Moreover, 
$\varphi_j(x) = \varphi(x) - \varphi(x_j) + u_{k_j}(x_j) + \delta(|x-x_0|^2-|x_j-x_0|^2)$,
for an arbitrary $\delta>0$. 
\end{lem}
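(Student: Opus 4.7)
The plan is the standard contact-sequence construction from the theory of half-relaxed limits, adapted to accommodate the extra quadratic perturbation $\delta|x-x_0|^2$ that guarantees uniqueness of the contact point.

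First I would replace $\varphi$ by $\tilde\varphi(x):=\varphi(x)+\delta|x-x_0|^2$, which is still continuous, equals $u(x_0)$ at $x_0$, and satisfies $\tilde\varphi(x)-u(x)\ge \delta|x-x_0|^2>0$ for every $x\in\overline{B_1}\setminus\{x_0\}$. Thus $x_0$ becomes the unique global point of strict contact between $\tilde\varphi$ and $u$ on $\overline{B_1}$.

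Next, for each $k\ge 1$ the function $\tilde\varphi-u_k$ is lower semicontinuous on $\overline{B_1}$ (since $-u_k\in\LSC$) and bounded below, so it attains its minimum at some $x_k\in\overline{B_1}$; by compactness I pass to a subsequence with $x_k\to x^*$. From the definition of $u=\limsupstar u_k$ one extracts indices $k_j\to\infty$ and points $y_j\to x_0$ with $u_{k_j}(y_j)\to u(x_0)$. Using that $x_{k_j}$ minimizes $\tilde\varphi-u_{k_j}$,
$$
\tilde\varphi(x_{k_j})-u_{k_j}(x_{k_j})\le \tilde\varphi(y_j)-u_{k_j}(y_j)\xrightarrow[j\to\infty]{}\tilde\varphi(x_0)-u(x_0)=0.
$$
Taking liminf and using continuity of $\tilde\varphi$ yields $\limsup_j u_{k_j}(x_{k_j})\ge \tilde\varphi(x^*)$, while the very definition of $\limsupstar$ gives $\limsup_j u_{k_j}(x_{k_j})\le u(x^*)$. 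Combining these, $\tilde\varphi(x^*)\le u(x^*)$, which by strict touching forces $x^*=x_0$; then $u(x_0)=\tilde\varphi(x_0)=\limsup_j u_{k_j}(x_{k_j})$, and passing to a further subsequence $u_{k_j}(x_{k_j})\to u(x_0)$.

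Finally, setting $x_j:=x_{k_j}$ and
$$
\varphi_j(x):=\tilde\varphi(x)-\tilde\varphi(x_j)+u_{k_j}(x_j)=\varphi(x)-\varphi(x_j)+u_{k_j}(x_j)+\delta\bigl(|x-x_0|^2-|x_j-x_0|^2\bigr),
$$
one immediately has $\varphi_j(x_j)=u_{k_j}(x_j)$, and for every $x\in\overline{B_1}$ the identity $\varphi_j(x)-u_{k_j}(x)=[\tilde\varphi(x)-u_{k_j}(x)]-[\tilde\varphi(x_j)-u_{k_j}(x_j)]\ge 0$ follows from the minimality of $x_j$, so $\varphi_j$ touches $u_{k_j}$ from above at $x_j$ and the explicit formula in the statement is reproduced. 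I expect the main obstacle to be the localization step forcing $x^*=x_0$: it is precisely the strict positivity gap introduced by the $\delta$-perturbation that rules out the minimizers drifting to another point where $\tilde\varphi=u$, and without this perturbation one would only recover convergence of $x_j$ to some (possibly distinct) contact point.
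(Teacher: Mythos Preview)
The paper does not actually prove this lemma; immediately before the statement it records that ``the next lemma follows from \cite{CIL}'' and moves on. Your write-up is precisely the standard half-relaxed-limit argument from that reference (cf.\ Lemma~6.1 and Remark~6.3 there), and it is correct: the strict-touching perturbation, the minimization of $\tilde\varphi-u_k$, and the identification of the limit via the two inequalities $\tilde\varphi(x^*)\le\limsup u_{k_j}(x_{k_j})\le u(x^*)$ are exactly the intended mechanism, and the explicit form of $\varphi_j$ drops out as you say.

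Two cosmetic points worth tightening. First, you implicitly use $\varphi\ge u$ on all of $\overline{B_1}$; if the touching is only local, carry out the minimization of $\tilde\varphi-u_k$ on a small closed ball $\overline{B_r(x_0)}$ where the strict touching holds, and nothing else changes. Second, the two subsequence extractions should be nested the other way round: first extract $k_j\to\infty$, $y_j\to x_0$, $u_{k_j}(y_j)\to u(x_0)$ from the definition of $\limsupstar u_k(x_0)$, and \emph{then} pass to a further subsequence of $\{x_{k_j}\}$ to obtain $x_{k_j}\to x^*$; as written, the convergent subsequence of $\{x_k\}$ and the indices $\{k_j\}$ need not be compatible. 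Both fixes are routine and do not affect the substance of your argument.
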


\begin{thm}\label{thm:uniqueness}
Let $f_1^\pm, f_2^\pm \in C(B_1^\pm)\cap L^\infty(B_1^{\pm})$ and $g_1,g_2\in C(T)$. Assume that $u\in \USC(\overline{B_1})$ and  $v\in \LSC(\overline{B_1})$ are bounded and satisfy
$$
\begin{cases}
F^\pm(D^2 u) \geq f_1^\pm & \hbox{in}~B_1^\pm\\
u_{x_n}^+ - u_{x_n}^- \geq g_1 & \hbox{on}~T\\
\end{cases}
\quad \hbox{and}\quad
\begin{cases}
F^\pm(D^2 v) \leq f_2^\pm & \hbox{in}~B_1^\pm\\
v_{x_n}^+ - v_{x_n}^- \leq g_2 & \hbox{on}~T\\
\end{cases}
$$ 
in the viscosity sense. Then $w=u-v$ satisfies
$$
\begin{cases}
w \in \underline{S}_{\lambda/n,\Lambda}(f_1^\pm - f_2^\pm) & \hbox{in}~B_1^\pm\\
w_{x_n}^+ - w_{x_n}^- \geq g_1-g_2 & \hbox{on}~T
\end{cases}
$$
in the viscosity sense.
\end{thm}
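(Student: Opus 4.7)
The plan splits the conclusion into two halves: the Pucci inequality for $w=u-v$ on each side $B_1^\pm$, and the jump inequality across $T$. In both parts the workhorse will be the $x'$-regularization of Definition~\ref{def:envelopes}. By Proposition~\ref{prop:envelopes}, applied to $u$ and, with the usual sign changes, to $v$, the envelopes $u^\vep$ and $v_\vep$ are respectively a viscosity subsolution and a supersolution of the flat transmission problem with data $f_{j,\vep}^\pm,g_{j,\vep}$ converging to the original data as $\vep\to 0$.

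For the interior inequality, I would fix $x_0\in B_1^+$ and a ball $B_\sigma(x_0)\subset\subset B_1^+$ disjoint from $T$. In this ball the transmission condition plays no role, so the classical Jensen--Ishii doubling-of-variables argument (Theorem~3.2 of \cite{CIL}) applied to $u^\vep$ and $v_\vep$ gives $w_\vep=u^\vep-v_\vep\in\underline S_{\lambda/n,\Lambda}(f_{1,\vep}^+-f_{2,\vep}^+)$ in $B_\sigma(x_0)$. The factor $1/n$ is the usual translation of the operator-norm ellipticity used in the paper ($\lambda\|N\|$) into the trace ellipticity underlying the Pucci classes of \cite{CC} ($(\lambda/n)\tr N$), via $\lambda\|N\|\ge(\lambda/n)\tr N$ for $N\ge 0$. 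Passing $\vep\to 0$ and using the stability of Pucci subsolutions under the half-relaxed limits of Lemma~\ref{lem:HRL} yields $w\in\underline S_{\lambda/n,\Lambda}(f_1^+-f_2^+)$ in $B_1^+$; the case of $B_1^-$ is symmetric.

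For the transmission inequality, let $\varphi(x)=P(x')+p^+x_n^+-p^-x_n^-$ touch $w$ from above at $x_0\in T$; the goal is $p^+-p^-\ge g_1(x_0)-g_2(x_0)$. Classically, if $u,v$ were $C^1$ up to $T$, one-sided $x_n$-differentiation of $u^\pm-v^\pm\le P(x')+p^\pm x_n$ at $x_0$ would give $u^+_{x_n}-v^+_{x_n}\le p^+$ and $u^-_{x_n}-v^-_{x_n}\ge p^-$ there, which combined with the known transmission conditions for $u$ and $v$ yields the claim immediately. My plan is to reproduce this in the viscosity setting by contradiction: assume $p^+-p^-<g_1(x_0)-g_2(x_0)-3\eta$, use Lemma~\ref{lem:HRL} (with a small quadratic perturbation preserving the piecewise shape of $\varphi$) to transfer the touching to $w_\vep$ at some $x_\vep\in T$ with slopes $p_\vep^\pm\to p^\pm$, and exploit the partial regularity of the envelopes from Lemma~\ref{lem:envelopes}: $u^\vep$ is semi-convex and $v_\vep$ is semi-concave in $x'$ with modulus $1/\vep$. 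I would then double the variables in the $x'$-direction only, keeping $x_n$ shared so as not to differentiate the kink of $\varphi$, by maximizing
\[
\Theta_\mu(x,y)=u^\vep(x',x_n)-v_\vep(y',x_n)-\tfrac{1}{\mu}|x'-y'|^2-P_\vep\bigl(\tfrac{x'+y'}{2}\bigr)-p_\vep^+ x_n^+ +p_\vep^- x_n^-
\]
over $(x,y)$ near $(x_\vep,x_\vep)$. The classical Ishii lemma on the constant-$x_n$ slices then produces piecewise test functions of the form $Q_\mu(x')+p_\vep^+ x_n^+-p_\vep^- x_n^-$ touching $u^\vep$ from above at some $x_\mu\in T$ and touching $v_\vep$ from below at some $y_\mu\in T$, with $x_\mu,y_\mu\to x_\vep$. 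The subsolution transmission condition for $u^\vep$ at $x_\mu$ and the supersolution condition for $v_\vep$ at $y_\mu$ give $p_\vep^+-p_\vep^-\ge g_{1,\vep}(x_\mu)$ and $p_\vep^+-p_\vep^-\le g_{2,\vep}(y_\mu)$ respectively; letting $\mu\to 0$ and then $\vep\to 0$ contradicts the assumed strict inequality.

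The hard part will be this last splitting step. The viscosity transmission conditions for $u$ and $v$ involve test functions touching $u$ and $v$ individually, not their difference, and since $u,v$ are only semicontinuous, pointwise subtraction of test functions is not permitted. The partial $x'$-envelopes supply exactly the one-sided $C^{1,1}$ regularity needed to carry out an Ishii-type doubling in the tangential direction, while the piecewise-linear $x_n$ part of $\varphi$, already $C^2$ on each side of $T$, is preserved verbatim by this doubling---precisely the structure required to extract the jump information. All other ingredients (stability as $\vep\to 0$ and the interior Pucci inequality) are classical.
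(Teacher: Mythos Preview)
Your plan for the interior Pucci inequality is fine and matches what the paper takes for granted. The gap is in the splitting step for the transmission condition.

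The $x'$-doubling you propose does not deliver what you claim. If $\Theta_\mu$ attains its maximum at $(x_\mu',y_\mu',0)$, then Ishii's lemma applied on the slice $\{x_n=0\}$ yields tangential second-order jets for $u^\vep(\cdot,0)$ at $x_\mu'$ and for $v_\vep(\cdot,0)$ at $y_\mu'$; it does \emph{not} produce full test functions of the form $Q(x')+p_\vep^+x_n^+-p_\vep^-x_n^-$ touching $u^\vep$ and $v_\vep$ separately in a neighbourhood of $T$. Because $x_n$ is shared in $\Theta_\mu$, the only normal information at the maximum is that $x_n\mapsto u^\vep(x_\mu',x_n)-v_\vep(y_\mu',x_n)$ is touched from above by $p_\vep^+x_n^+-p_\vep^-x_n^-$ at $0$; this still couples $u^\vep$ and $v_\vep$ through the non-smooth function $v_\vep(y_\mu',\cdot)$. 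To decouple you would need one-sided $x_n$-differentiability of $u^\vep$ and $v_\vep$ at the touching point, and the partial envelopes do not supply that (Lemma~\ref{lem:envelopes} gives $C^{1,1}$ only in $x'$). Consequently the viscosity transmission inequalities for $u^\vep$ and $v_\vep$ cannot be invoked at $x_\mu$ and $y_\mu$ in the way you describe, and your intended contradiction does not close.

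The paper fills exactly this gap by a different mechanism. After a convex-envelope argument on the slice $\{x_n=0\}$ locates a point $y_\vep\in T$ at which both envelopes are punctually second-order differentiable in $x'$, it introduces \emph{Dirichlet replacements} $\bar u^\vep$, $\bar v_\vep$ solving $F^\pm(D^2\,\cdot\,)=(f_j^\pm)_\vep$ in $B_\delta^\pm(x_\vep)$ with boundary data $u^\vep$, $v_\vep$. Comparison gives $\bar u^\vep\ge u^\vep$ and $\bar v_\vep\le v_\vep$, so the transmission inequalities transfer to the replacements; and pointwise boundary $C^{1,\alpha}$ estimates (available because the boundary data are $C^{1,\alpha}_{x'}$ at $y_\vep$) make $\bar u^\vep$, $\bar v_\vep$ genuinely one-sided differentiable at $y_\vep$. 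The normal slopes $p_u^\pm$, $p_v^\pm$ now exist classically and satisfy $p_u^+-p_u^-\ge (g_1)_\vep(y_\vep)$ and $p_v^+-p_v^-\le (g_2)_\vep(y_\vep)$; since the perturbed test function $\bar\varphi$ touches $\bar w_\vep=\bar u^\vep-\bar v_\vep$ from above at $y_\vep$, one reads off $p^++\tau\ge p_u^+-p_v^+$ and $p^--\tau\le p_u^--p_v^-$, which subtract to give the desired bound. This Dirichlet-replacement step is the idea missing from your outline.
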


\begin{proof}
We know that
$w \in \underline{S}_{\lambda/n,\Lambda}(f_1^\pm - f_2^\pm)$ in $B_1^\pm$. 
Hence, we only need to show the transmission condition.
Let $x_0=(x_0',0) \in T$ and assume that
$ P(x') + p^+ x_n^+ - p^- x_n^-$
 touches $w$ by above at $x_0$, where $P$ is a quadratic polynomial and $p^\pm \in \R$.
 We need to show that
 \begin{equation} \label{eq:NTS}
 p^+-p^- \geq g_1(x_0)-g_2(x_0).
 \end{equation}
 Fix $\tau>0$ and $C>0$ large to be chosen. Then 
$$
\varphi(x)= P(x') + (p^++\tau) x_n^+ - (p^--\tau) x_n^- - Cx_n^2
$$
 touches $w$ strictly by above at $x_0$, possibly in a smaller neighborhood where $\tau |x_n| - Cx_n^2\geq 0$.

For $\vep>0$, consider  $w_\vep = u^\vep - v_\vep$, where $u^\vep$ and $v_\vep$ are the upper and lower $\vep$-envelopes of $u$ and $v$, respectively, given in Definition~\ref{def:envelopes}. By Lemma~\ref{lem:envelopes}$(i)$, $\limsup_{\vep\to0} w_\vep =w$. 
By Lemma~\ref{lem:HRL}, up to a subsequence, there exist points $x_\vep \in B_1$, with $x_\vep\to x_0$, and functions
$$\varphi_\vep(x)= \varphi(x) - \varphi(x_\vep) + w_\vep(x_\vep)  + |x-x_0|^2 - |x_\vep - x_0|^2$$
such that  $\varphi_\vep$ touches $w_\vep$ strictly by above at  $x_\vep$. In particular, given $\delta>0$ small, there exists $\eta>0$ such that
$ \varphi_\vep - w_\vep \geq  \eta >0 \ \hbox{on}~\partial B_\delta(x_\vep).$
By Proposition~\ref{prop:envelopes},
\begin{equation}\label{eq:pucci1}
\M_{\lambda/n,\Lambda}^+(D^2 w_\vep) \geq (f_1^\pm)_\vep-(f_2^\pm)_\vep \quad \hbox{in}~B_\rho^\pm 
\end{equation}
in the viscosity sense, for some $0<\rho<1$ such that $\overline{B_\delta(x_\vep)}\subset B_\rho$.  

Choose $C$ large enough so that
\begin{align} \label{eq:pucci2}
 \M_{\lambda/n,\Lambda}^+(D^2 \varphi_\vep)
 &\leq  \Lambda \|D_{x'}^2P\| + 2\Lambda  -  2\lambda (C-1)  
 < \inf_{B_\rho^\pm} \big \{   (f_1^\pm)_\vep-(f_2^\pm)_\vep \big\} \quad \hbox{in}~B_\rho^\pm.
\end{align}
Since $\varphi_\vep$ touches $w_\vep$ by above at $x_\vep$, it immediately follows that $x_\vep \in T$. Otherwise,
$$
 \M_{\lambda/n,\Lambda}^+(D^2 \varphi_\vep(x_\vep) ) < (f_1^\pm)_\vep(x_\vep)-(f_2^\pm)_\vep(x_\vep),
$$
which contradicts \eqref{eq:pucci1}.
Define
\begin{equation}\label{eq:psi}
 \psi = \varphi_\vep - w_\vep- \eta/2.
 \end{equation}
Then $\psi \geq\eta/2> 0$ on $\partial B_\delta(x_\vep)$ and $\psi(x_\vep)=-\eta/2<0$.  Let $\C_\psi$ be the convex envelope of $-\psi_-$ in $B_{2\delta}'(x_\vep)=B_{2\delta}(x_\vep) \cap \{x_n=0\}$, where we have extended $-\psi_-\equiv0$ outside of $\overline{B_\delta'(x_\vep)}$. By Lemma~\ref{lem:envelopes}$(iii)$, we know that $\psi \in C_{x'}^{1,1}$ by above in $B_\rho$. Hence, for any $x_0'\in \overline{B_\delta'(x_\vep)}$, there exists a convex paraboloid $P(x')$ with uniform opening that touches $\psi(x',0)$ by above at $x_0'$. 
We have $\C_\psi \in C^{1,1}_{x'}(\overline{B'_\delta(x_\vep)})$, and for any $\gamma>0$, we claim that
$$
|D_\gamma|\equiv \big|\big\{x'\in \overline{B_\delta'(x_\vep)} : \C_\psi(x') = \psi(x',0) \ \hbox{and} \ |\nabla'\C_\psi(x')|\leq \gamma\big\}\big|>0.
$$
Indeed, the claim follows from the proof of the ABP estimate for the function $\psi$ defined on $\overline{B'_\delta(x_\vep)}$. First, if we remove the gradient condition in the definition of $D_\gamma$, then using \eqref{eq:conditionv} and Remark~\ref{refinedABP}, we get that the contact set has positive measure. 
Furthermore, since $\C_\psi(x_\vep') = \psi(x_\vep',0)$ and $\nabla'\C_\psi(x_\vep')=0'$, then $x_\vep' \in D_\gamma\neq \varnothing$, for any $\gamma>0$. Therefore, $|D_\gamma|>0$,  for any $\gamma>0$, since the gradient of $\C_\psi$ is continuous. 
Hence, choosing  $\gamma\leq \tfrac{\eta}{4\delta}$, there exists $y_\vep' \in D_\gamma$ such that both $u^\vep$ and $v_\vep$ are punctually second order differentiable at $y_\vep=(y_\vep',0)$ in the $x'$-direction, and such that 
$$l(x')=\nabla' \C_\psi(y_\vep')\cdot (x'-y_\vep') + \psi(y_\vep)$$ 
touches $\psi$ from below at $y_\vep$ on $\overline{B_\delta(x_\vep)}$. Indeed, 
$l(x')\leq \psi(x',0)$, for all $x'\in B_\delta'(x_\vep)$, and
$$
l(x') \leq |\nabla' \C_\psi(y_\vep')| |x'-y_\vep'| \leq \tfrac{\eta}{4\delta} (2\delta) = \eta/2\leq \psi(x) \quad \hbox{for all}~
x\in \partial B_\delta(x_\vep).
$$
Therefore, $l \leq \psi$ on  $\partial B_\delta^\pm(x_\vep)$. In particular, by \eqref{eq:psi}, we see that
$w_\vep \leq \varphi_\vep- l - \eta/2$  on $\partial B_\delta^\pm(x_\vep).$
Moreover, in view of \eqref{eq:pucci1} and \eqref{eq:pucci2}, we get
$$
\M_{\lambda/n,\Lambda}^+(D^2 w_\vep)  > \M_{\lambda/n,\Lambda}^+(D^2 (\varphi_\vep- l - \eta/2)) \quad \hbox{in}~B_\delta^\pm(x_\vep).
$$
Hence, by the comparison principle, it follows that $w_\vep \leq \varphi_\vep - l- \eta/2$ on $\overline{B_\delta(x_\vep)}$.
Define
$$
\bar \varphi = \varphi_\vep  - l - \eta/2.
$$

Consider the viscosity solutions $\bar u^\vep$ and $\bar v_\vep$ to the Dirichlet problems,
\begin{align*}
    \begin{cases}
F^\pm(D^2 \bar u^\vep) = (f_1^\pm)_\vep & \hbox{in}~B_\delta^\pm(x_\vep)\\
\bar u^\vep = u^\vep & \hbox{on}~\partial B_\delta^\pm(x_\vep)
\end{cases}
\quad
\hbox{and}
\quad
\begin{cases}
F^\pm(D^2 \bar v_\vep) = (f_2^\pm)_\vep & \hbox{in}~B_\delta^\pm(x_\vep)\\
\bar v_\vep = v_\vep & \hbox{on}~\partial B_\delta^\pm(x_\vep).
\end{cases}
\end{align*}
By the comparison principle, $\bar u_\vep \geq u_\vep$ and $\bar v_\vep \leq v_\vep$ in $B_\delta(x_\vep)$, and thus,
\begin{equation} \label{eq:TCreplacements}
(\bar u^\vep)^+_{x_n}-(\bar u^\vep)^-_{x_n}\geq (g_1)_\vep \quad \hbox{and} \quad (\bar v_\vep)^+_{x_n}-(\bar v_\vep)^-_{x_n}\leq (g_2)_\vep
\end{equation}
on $B_\delta(x_\vep)\cap \{x_n=0\}$, in the viscosity sense, where 
$(g_1)_\vep = g_1 - \omega_{g_1}\big((2\vep\|u\|_{L^\infty(B_1)})^{1/2}\big)$
and
$(g_2)_\vep = g_2 - \omega_{g_2}\big((2\vep\|v\|_{L^\infty(B_1)})^{1/2}\big).$
Recall that $u^\vep, v_\vep \in C_{x'}^{1,\alpha}(y_\vep)$, and thus, by pointwise $C^{1,\alpha}$-estimates, there exists $r_0>0$ and linear polynomials $l_u^\pm$ and $l_v^\pm$ such that
$$
\|\bar u_\vep^\pm-l_u^\pm\|_{L^\infty(B_r^\pm(y_\vep))}\leq C r^{1+\alpha},
\qquad\hbox{for all}~0<r<r_0. 
$$
For simplicity, call $p_u^\pm = \nabla l_u^\pm \cdot e_n$ and $p_v^\pm = \nabla l_v^\pm \cdot e_n$. Then by similar arguments as in \cite[Lemma~4.3]{DSFS}, we see that  \eqref{eq:TCreplacements} holds pointwise at $y_\vep$. Namely,
\begin{equation}\label{eq:TCreplacements2}
p_u^+-p_u^- \geq (g_1)_\vep(y_\vep) \quad \hbox{and} \quad p_v^+-p_v^- \leq (g_2)_\vep(y_\vep).
\end{equation}
Let $\bar w_\vep=\bar u^\vep - \bar v_\vep$. Then by previous computations, $\bar w_\vep$ satisfies
$$\begin{cases}
\M_{\lambda/n,\Lambda}^+(D^2 \bar w_\vep) \geq \M_{\lambda/n,\Lambda}^+(D^2 \bar \varphi)&\hbox{in}~B_\delta^\pm(x_\vep)\\
\bar \varphi \geq \bar w_\vep& \hbox{on}~\partial B_\delta^\pm(x_\vep).
\end{cases}$$
It follows that 
$\bar \varphi \geq \bar w_\vep$ in $B_\delta(x_\vep)$ and  $\bar \varphi(y_\vep)=\bar w_\vep(y_\vep).$ Since $\bar w _\vep \in C^{1,\alpha}(y_\vep)$, we have that
\begin{align*}
p^++\tau &= \bar \varphi_{x_n}^+(y_\vep) \geq (\bar w_\vep)_{x_n}^+(y_\vep) = p_u^+ - p_v^+\\ 
p^--\tau &= \bar \varphi_{x_n}^-(y_\vep) \leq (\bar w_\vep)_{x_n}^-(y_\vep) = p_u^- - p_v^-.
\end{align*}
Therefore, combining the previous estimates with \eqref{eq:TCreplacements2}, we get
\begin{align*}
p^+-p^- + 2\tau & \geq (g_1)_\vep(y_\vep) - (g_2)_\vep(y_\vep) \\
&= (g_1-g_2)(y_\vep) +  \omega_{g_1}\big((2\vep\|u\|_\infty)^{1/2}\big) -  \omega_{g_2}\big((2\vep\|v\|_\infty)^{1/2}\big).
\end{align*}
Recall that $y_\vep \in B_\delta(x_\vep)$ and $x_\vep\to x_0$, as $\vep\to 0$. Hence, letting $\tau \to0$, then $\delta\to 0$, so that $y_\vep \to x_\vep$, and finally, $\vep \to 0$, we obtain \eqref{eq:NTS}.
\end{proof}

\begin{cor}[Uniqueness] \label{cor:uniqueness}
The transmission problem 
$$\begin{cases}
F^\pm(D^2 u^\pm) = f^\pm& \hbox{in}~B_1^\pm\\
 u_{x_n}^+ -   u_{x_n}^- = g & \hbox{on}~T\\
u=\phi & \hbox{on}~\partial B_1
\end{cases}$$
has at most one viscosity solution.
\end{cor}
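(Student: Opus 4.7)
The plan is to reduce the uniqueness question to Theorem~\ref{thm:uniqueness} (the comparison result for differences) followed by the maximum principle from Corollary~\ref{cor:MP}. Suppose $u_1$ and $u_2$ are two bounded viscosity solutions of the Dirichlet problem with the transmission condition. As solutions they are both upper and lower semicontinuous, and they agree on $\partial B_1$ with $\phi$.

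First, I would apply Theorem~\ref{thm:uniqueness} with $u=u_1$, $v=u_2$, and $f_1^\pm = f_2^\pm = f^\pm$, $g_1 = g_2 = g$. This gives that $w = u_1 - u_2$ satisfies
$$
\begin{cases}
w \in \underline{S}_{\lambda/n,\Lambda}(0) & \hbox{in } B_1^\pm\\
w_{x_n}^+ - w_{x_n}^- \geq 0 & \hbox{on } T
\end{cases}
$$
in the viscosity sense, and $w = 0$ on $\partial B_1$. Setting $\tilde w := -w$, this is equivalent to
$$
\begin{cases}
\tilde w \in \overline{S}_{\lambda/n,\Lambda}(0) & \hbox{in } B_1^\pm\\
\tilde w_{x_n}^+ - \tilde w_{x_n}^- \leq 0 & \hbox{on } T\\
\tilde w = 0 & \hbox{on } \partial B_1.
\end{cases}
$$

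Second, I would invoke Corollary~\ref{cor:MP} applied to $\tilde w$ with ellipticity constants $\lambda/n, \Lambda$ (the flat interface $T$ satisfies the $C^{1,\alpha}$ hypothesis trivially). This yields $\tilde w \geq 0$ in $B_1$, i.e., $u_1 \leq u_2$ in $B_1$. Reversing the roles of $u_1$ and $u_2$ gives the opposite inequality, so $u_1 \equiv u_2$.

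Since both of the key tools are already in place, there is no real obstacle to this argument; it is essentially a two-line application of the difference comparison theorem and the maximum principle. The only point requiring a brief justification is that Corollary~\ref{cor:MP} remains valid after replacing $\lambda$ by $\lambda/n$, which is immediate because the statement depends only on having ellipticity constants satisfying $0 < \lambda/n \leq \Lambda$, and the underlying ABP estimate (Theorem~\ref{thm:ABP2}) is likewise insensitive to this replacement.
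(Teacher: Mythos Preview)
Your argument is correct and is precisely the intended one: the paper states Corollary~\ref{cor:uniqueness} without proof immediately after Theorem~\ref{thm:uniqueness}, and the very next result, Theorem~\ref{thm:comparison}, is proved by exactly the combination you describe---apply Theorem~\ref{thm:uniqueness} to the difference and then invoke the ABP/maximum principle. Your observation that Corollary~\ref{cor:MP} applies with constants $\lambda/n,\Lambda$ is the only detail worth noting, and you handle it correctly.
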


\begin{thm}[Comparison principle] \label{thm:comparison}
Let $u,v : \overline{B_1}\to \R$ be a bounded viscosity subsolution and a bounded viscosity supersolution of \eqref{pb:existenceflat}, respectively. If $u\leq v$ on $\partial B_1$, then 
$$u\leq v \quad\hbox{in}~B_1.$$
\end{thm}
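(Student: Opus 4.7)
The plan is to reduce the comparison principle to the maximum principle, using Theorem~\ref{thm:uniqueness} to perform the subtraction across the transmission interface. I apply Theorem~\ref{thm:uniqueness} with $f_1^\pm = f_2^\pm = f^\pm$ and $g_1 = g_2 = g$ to conclude that $w := u - v$ satisfies
\begin{align*}
w \in \underline{S}_{\lambda/n,\Lambda}(0) \text{ in } B_1^\pm, \qquad w_{x_n}^+ - w_{x_n}^- \geq 0 \text{ on } T,
\end{align*}
in the viscosity sense. Note that $u \in \USC(\overline{B_1})$ and $v \in \LSC(\overline{B_1})$ are exactly the regularity hypotheses required by that theorem.

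Passing to $\tilde w := -w = v - u \in \LSC(\overline{B_1})$, I then have $\tilde w \in \overline{S}_{\lambda/n,\Lambda}(0)$ in $B_1^\pm$ and $\tilde w_{x_n}^+ - \tilde w_{x_n}^- \leq 0$ on $T$, while the hypothesis $u \leq v$ on $\partial B_1$ reads $\tilde w \geq 0$ on $\partial B_1$. Applying the maximum principle (Corollary~\ref{cor:MP}) to $\tilde w$ in the flat case $\psi \equiv 0$, with ellipticity constants $(\lambda/n, \Lambda)$ in place of $(\lambda, \Lambda)$---the ABP estimate in Theorem~\ref{thm:ABP2} and the ensuing Corollary~\ref{cor:MP} hold verbatim for any pair $0 < \lambda' \leq \Lambda$, since the argument only uses uniform ellipticity---yields $\tilde w \geq 0$ throughout $B_1$, which is precisely $u \leq v$ in $B_1$.

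The only potentially subtle point is the passage of the transmission inequality from the pair $(u,v)$ to the difference $u-v$, which is exactly the content of Theorem~\ref{thm:uniqueness}. That theorem has already absorbed all of the analytic work (the sup- and inf-convolutions from Definition~\ref{def:envelopes}, the convex-envelope construction, and the pointwise reduction to the flat transmission condition via pointwise $C^{1,\alpha}$ estimates). With that tool available, the comparison principle is essentially an immediate corollary, and I do not foresee any further obstacles.
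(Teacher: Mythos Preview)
Your proof is correct and follows essentially the same approach as the paper: apply Theorem~\ref{thm:uniqueness} to obtain that $w=u-v$ lies in $\underline{S}_{\lambda/n,\Lambda}(0)$ with the appropriate transmission inequality, then invoke the ABP estimate (equivalently, the maximum principle in Corollary~\ref{cor:MP}) to conclude. Your explicit passage to $\tilde w=-w$ and your remark about the ellipticity constants are correct and make the sign bookkeeping slightly more transparent than the paper's terse version.
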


\begin{proof}
Let $w=u-v$. By Theorem~\ref{thm:uniqueness},
$$
\begin{cases}
w \in \underline{S}_{\lambda/n,\Lambda}(0) & \hbox{in}~B_1^\pm\\
w_{x_n}^+ - w_{x_n}^- \geq 0 & \hbox{on}~T.
\end{cases}
$$
From the ABP estimate (Theorem~\ref{thm:ABP2}), $w\geq0$ in $B_1$.
\end{proof}

We finish this subsection by constructing barriers for \eqref{pb:existenceflat}.

\begin{lem}[Barriers]\label{lem:Perronbarriers}
Given any $\phi\in C(\partial B_1)$, there exist functions $\underline{u},\overline{u} \in C^2(B_1\setminus T)\cap C(\overline{B_1})$ such that $\underline u$ is a viscosity subsolution and $\overline{u}$ is a viscosity supersolution of \eqref{pb:existenceflat}, respectively, with
$$\begin{cases}
\underline u \leq \overline u & \hbox{in}~B_1\\
\underline u = \overline u = \phi & \hbox{on} \ \partial B_1.
\end{cases}$$
\end{lem}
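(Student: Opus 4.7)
The plan is to construct both barriers as explicit classical sub/super-solutions built from three ingredients: a regular extension $\Phi$ of the boundary data to $\overline{B_1}$, a concave quadratic correction $M(1-|x|^2)$ that makes the PDE strict, and a linear-in-$|x_n|$ term that both creates the prescribed jump across $T$ and cancels the overshoot of $\Phi$ on $\partial B_1$. Concretely, I set
\[
\overline u(x) = \Phi(x) + M(1-|x|^2) - C|x_n|,
\]
where $M, C > 0$ are large constants to be chosen and $\Phi \in C^2(\overline{B_1})$ is any extension of the continuous boundary function $\phi(x) + C|x_n|$ on $\partial B_1$. By design $\overline u \in C^2(B_1\setminus T)\cap C(\overline{B_1})$ and $\overline u|_{\partial B_1} = (\phi + C|x_n|) + 0 - C|x_n| = \phi$.

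For the transmission condition, since $|x_n|$ is affine on each side of $T$ while $\Phi$ and $M(1-|x|^2)$ are smooth across $T$, only $-C|x_n|$ contributes to the jump, giving $\overline u_{x_n}^+(x',0) - \overline u_{x_n}^-(x',0) = -2C$ on $T$, which is $\leq g$ as soon as $C \geq \tfrac{1}{2}\|g\|_{L^\infty(T)}$. For the PDE, since $D^2\overline u = D^2\Phi - 2MI$ in $B_1^\pm$ (the $|x_n|$ term has vanishing Hessian in each half-ball), uniform ellipticity yields
\[
F^\pm(D^2\overline u) \leq F^\pm(D^2\Phi) + \M_{\lambda,\Lambda}^+(-2MI) = F^\pm(D^2\Phi) - 2Mn\lambda \leq f^\pm,
\]
once $M$ is chosen large enough, depending on $n,\lambda,\|f^\pm\|_{L^\infty}$ and $\|D^2\Phi\|_{L^\infty(B_1)}$. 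The subsolution $\underline u(x) = \Psi(x) - M(1-|x|^2) + C|x_n|$, with $\Psi \in C^2(\overline{B_1})$ extending $\phi(x) - C|x_n|$ on $\partial B_1$, is built by a mirror-image argument with the obvious sign reversals, and the ordering $\underline u \leq \overline u$ in $B_1$ is immediate from the comparison principle (Theorem~\ref{thm:comparison}): the two functions coincide on $\partial B_1$, $\underline u$ is a subsolution and $\overline u$ a supersolution of the flat transmission problem.

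The main obstacle is the existence of the $C^2(\overline{B_1})$ extensions $\Phi, \Psi$ with controlled Hessian when $\phi$ is only continuous. For smooth $\phi$, say $\phi \in C^{2,\alpha}(\partial B_1)$, any standard extension (Poisson, Whitney, or polynomial prolongation) suffices. For a general $\phi \in C(\partial B_1)$, I would approximate uniformly by smooth boundary data $\phi_k \to \phi$ on $\partial B_1$, build classical barriers $\overline u_k, \underline u_k$ for each $\phi_k$ by the construction above, and pass to the uniform limit on $\overline{B_1}$; the standard stability of viscosity sub/supersolutions preserves the sub/supersolution property in the limit, the limits match $\phi$ on $\partial B_1$, and any loss of classical $C^2$-regularity off $T$ can be bypassed in the Perron-type construction of viscosity solutions in Theorem~\ref{thm:existflat} by using the approximating barriers $\overline u_k, \underline u_k$ directly and taking a limit once the solution is obtained.
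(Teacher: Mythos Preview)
Your construction works when $\phi$ is smooth, but the gap you identify yourself is real and your workaround does not close it. A continuous function on $\partial B_1$ cannot in general be extended to a function in $C^2(\overline{B_1})$, nor even to $C^2(B_1)\cap C(\overline{B_1})$ with \emph{bounded} Hessian (take for instance $\phi(x)=|x-e_1|^{1/2}$ near $e_1$); without a uniform bound on $\|D^2\Phi\|_{L^\infty(B_1)}$ you cannot choose a single finite $M$ making $F^\pm(D^2\overline u)\le f^\pm$ throughout $B_1^\pm$. Passing to the limit of smooth approximants produces viscosity barriers, not barriers in $C^2(B_1\setminus T)$, so the lemma as stated remains unproved; your suggestion to carry the approximation through the Perron construction is a change of statement, not a proof of this one.

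The paper's fix is short and avoids the extension issue entirely: rather than extending $\phi$ by hand, solve the Dirichlet problems for the Pucci extremal operators with constant right-hand side $\pm\|f\|_{L^\infty(B_1)}$ and continuous boundary data $\phi\pm\tfrac12\|g\|_{L^\infty(T)}|x_n|$, then add or subtract $\tfrac12\|g\|_{L^\infty(T)}|x_n|$. Because $\M^\pm_{\lambda,\Lambda}$ are concave/convex, Evans--Krylov gives interior $C^{2,\bar{\bar\alpha}}$ regularity of these Dirichlet solutions regardless of the regularity of the boundary data, which is exactly what supplies the $C^2(B_1\setminus T)$ conclusion for free. The verification of the transmission jump and the ordering $\underline u\le\overline u$ via the comparison principle are then identical to yours.
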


\begin{proof}
Let $f= f^+\chi_{B_1^+} + f^- \chi_{B_1^-}$. Let $\underline \psi , \overline \psi \in C^2(B_1)\cap C(\overline{B_1})$
be the unique solutions to the Dirichlet problems
\begin{align*}
\begin{cases}
\M_{\lambda,\Lambda}^- (D^2 \underline\psi)=  \|f\|_{L^\infty(B_1)} & \hbox{in}~B_1\\
\underline \psi = \phi - \tfrac{1}{2} \|g\|_{L^\infty(T)} |x_n| & \hbox{on}~\partial B_1
\end{cases}
\quad
\hbox{and} 
\quad
\begin{cases}
\M_{\lambda,\Lambda}^+(D^2 \overline\psi)= - \|f\|_{L^\infty(B_1)} & \hbox{in}~B_1\\
\overline \psi = \phi + \tfrac{1}{2} \|g\|_{L^\infty(T)} |x_n| & \hbox{on}~\partial B_1.
\end{cases}
\end{align*}
Define the functions $\underline u, \overline u \in C^2(B_1\setminus T)\cap C(\overline{B_1})$ as
$$
\underline u = \underline \psi + \tfrac{1}{2} \|g\|_{L^\infty(T)} |x_n|
\quad\hbox{and}\quad 
\overline u = \overline \psi - \tfrac{1}{2} \|g\|_{L^\infty(T)} |x_n|.
$$
Then  $\underline u = \overline u = \phi $ on $\partial B_1$. By construction,
$\underline u$ is a subsolution of \eqref{pb:existenceflat}. Arguing similarly, we see that $\overline u$ is a supersolution of \eqref{pb:existenceflat}. By the comparison principle (Theorem~\ref{thm:comparison}), we conclude that $\underline u \leq \overline u$ in $B_1$.
\end{proof}

\subsection{Existence}

Our main theorem of this subsection is the following.

\begin{thm}[Existence]\label{thm:existflat}
Let $f^\pm\in C(B_1^\pm\cup T)\cap L^\infty(B_1)$, $g\in C(T)$, and $\phi\in C(\partial B_1)$. Then there exists a unique viscosity solution $u\in C(\overline{B_1})$ of \eqref{pb:existenceflat} such that $u=\phi$ on $\partial B_1$.
\end{thm}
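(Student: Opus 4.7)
Uniqueness is immediate from Corollary \ref{cor:uniqueness}. For existence I will implement Perron's method adapted to the transmission condition. Let $\underline u,\overline u\in C(\overline{B_1})$ be the sub- and supersolution barriers from Lemma \ref{lem:Perronbarriers}, both equal to $\phi$ on $\partial B_1$, and define the Perron class
\[
\mathcal S = \big\{v\in\USC(\overline{B_1})\,:\,v\text{ is a viscosity subsolution of }\eqref{pb:existenceflat}\text{ and }\underline u\leq v\leq \overline u\text{ in }\overline{B_1}\big\}.
\]
Since $\underline u\in\mathcal S$, the class is nonempty, and every $v\in\mathcal S$ coincides with $\phi$ on $\partial B_1$ by the sandwich with the barriers. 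The Perron candidate is $u(x)=\sup_{v\in\mathcal S}v(x)$; then $\underline u\leq u\leq \overline u$, and in particular $u=\phi$ on $\partial B_1$.

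The heart of the proof is to show that $u^*$ is a viscosity subsolution and $u_*$ a viscosity supersolution of \eqref{pb:existenceflat}. Away from $T$, both facts follow from the standard Crandall--Ishii--Lions envelope/bump theory (see \cite{CIL}). The delicate step is at points $x_0\in T$. For the subsolution property at such $x_0$, I will use the equivalent OR characterization of Lemma \ref{lem:equivdef}: given $\varphi\in C^2(\overline{B_\delta^+(x_0)})\cap C^2(\overline{B_\delta^-(x_0)})$ touching $u^*$ strictly from above at $x_0$, the standard perturbation argument produces $v_k\in\mathcal S$ and local maxima $y_k\to x_0$ of $v_k-\varphi$. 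If $y_k\in T$ infinitely often, the transmission condition for $v_k$ at $y_k$ passes to the limit and gives the transmission condition for $\varphi$ at $x_0$. Otherwise, after extracting a subsequence, $y_k$ stays in $B_1^+$ or in $B_1^-$ for all large $k$; running the maximization argument separately on $\overline{B_\delta(x_0)}\cap\overline{B_1^+}$ and on $\overline{B_\delta(x_0)}\cap\overline{B_1^-}$, and using the one-sided $C^2$ regularity of $\varphi^\pm$ up to $T$, yields $F^\pm(D^2\varphi^\pm(x_0))\geq f^\pm(x_0)$ on both sides, which is the other disjunct of the OR condition. Either way $u^*$ satisfies the subsolution condition at $x_0$. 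For the supersolution property of $u_*$, the standard bump works in the interior; at $x_0\in T$ I construct a classical subsolution $\psi$ on each side of $T$ by combining a perturbation of the violating test function with a $|x_n|$-correction (as in the barrier construction of Lemma \ref{lem:Perronbarriers}), chosen to give strict inequalities in both the PDE on each side and the transmission condition. Then $\max(u,\psi)$ lies in $\mathcal S$ and strictly exceeds $u$ at some point near $x_0$, contradicting the definition of $u$.

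Finally, applying the comparison principle of Theorem \ref{thm:comparison} to the subsolution $u^*$ and the supersolution $u_*$, which both equal $\phi$ on $\partial B_1$ by the barrier bounds, we obtain $u^*\leq u_*$ in $B_1$. Combined with the trivial inequalities $u_*\leq u\leq u^*$, this forces $u=u^*=u_*$, so $u\in C(\overline{B_1})$ is the unique viscosity solution of \eqref{pb:existenceflat} with $u=\phi$ on $\partial B_1$. The main obstacle is the subsolution step at $T$: the transmission condition is a genuinely non-local coupling of the two sides, and the standard Perron closure does not preserve it directly because the approximating touching points need not lie on the interface. The OR reformulation of Lemma \ref{lem:equivdef}, together with one-sided maximization on each half-ball, is precisely the tool that bypasses this difficulty, in analogy with the viscosity treatment of Neumann boundary conditions.
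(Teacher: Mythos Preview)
Your overall strategy coincides with the paper's: Perron's method over the class sandwiched by the barriers of Lemma~\ref{lem:Perronbarriers}, then $u^*$ is a subsolution, $u_*$ a supersolution, and Theorem~\ref{thm:comparison} closes the argument. The paper packages the subsolution step as closedness under half-relaxed limits (Lemma~\ref{lem:limitenvsubsol}), but the content at points of $T$ is what you outline, and the supersolution bump is likewise the same.

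There is one step that does not work as written. When the approximating touching points $y_k$ eventually stay in $B_1^+$, your ``separate maximization on $\overline{B_\delta(x_0)}\cap\overline{B_1^-}$'' need not yield $F^-(D^2\varphi^-(x_0))\geq f^-(x_0)$: the maximizer over the closed lower half may land on $T$, and a one-sided touching of $v_k$ from above on $\overline{B_1^-}$ at a boundary point of $B_1^-$ gives no interior viscosity information for the equation $F^-(D^2 v_k)\geq f^-$ in $B_1^-$. The repair is that you do not need both signs. The OR disjunct in Lemma~\ref{lem:equivdef} should be read as ``the PDE inequality holds for at least one of the two signs'': its proof constructs $\psi=\varphi+\eta|x_n|-Cx_n^2$ and shows that for $C$ large both $F^+(D^2\psi(x_0))<f^+(x_0)$ and $F^-(D^2\psi(x_0))<f^-(x_0)$ hold, so assuming a single PDE inequality in the hypothesis already forces the contradiction. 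Hence once $y_k$ remains in $B_1^+$ you obtain $F^+(D^2\varphi^+(x_0))\geq f^+(x_0)$ by the standard limiting argument, and that single inequality (or the transmission inequality, in the branch where $y_k\in T$ infinitely often) suffices to invoke Lemma~\ref{lem:equivdef}. This is exactly how the paper proceeds; with this correction your argument is complete and matches it.
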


We begin by defining the set of admissible subsolutions as
$$
\A =\Big \{ v \in \USC(\overline{B_1}) :  \underline{u} \leq v \leq \overline{u} \ \hbox{and} \ v~\text{viscosity subsolution of}~\eqref{pb:existenceflat}\Big\},
$$
where $\underline{u}$ and $\overline{u}$ are as in Lemma \ref{lem:Perronbarriers}.
Note that $\underline u \in \A$, so $\A \neq \varnothing$. Set 
$$
u(x) = \sup_{v\in \A} v(x).
$$
Let $u_*\in \LSC( \overline{B_1})$ and $u^*\USC(\overline{B_1})$
denote the lower and upper semicontinuous envelopes of $u$ in $B_1$, respectively. 
Clearly, $u_* \leq u \leq u^*$.

\begin{lem} \label{lem:limitenvsubsol}
If $\{v_k\}_{k=1}^\infty \subset \A$ then $v= \limsupstar v_k \in \A$.
\end{lem}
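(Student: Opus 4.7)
My plan is first to record the easy properties of $v=\limsupstar v_k$ and then to focus on the only nontrivial point, namely that $v$ is a viscosity subsolution of \eqref{pb:existenceflat}. The function $v$ is automatically upper semicontinuous on $\overline{B_1}$, and since every $v_k\in\A$ satisfies $\underline u\le v_k\le\overline u$ with $\underline u,\overline u$ continuous, the construction of $\limsupstar$ preserves these bounds, so $\underline u\le v\le\overline u$. By the equivalence of $(ii)$ and $(ii')$ noted just after Definition~\ref{def:viscosityflat}, being a subsolution reduces to verifying condition $(i)$ at each $x_0\in B_1^\pm$ and condition $(ii')$ at each $x_0\in T$.

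The interior condition $(i)$ I would handle in standard fashion: given a test function $\varphi\in C^2(B_\delta(x_0))$ touching $v$ from above at $x_0\in B_1^+$, Lemma~\ref{lem:HRL} produces $k_j\to\infty$, $x_j\to x_0$, and
$$
\varphi_j(x)=\varphi(x)-\varphi(x_j)+v_{k_j}(x_j)+\delta(|x-x_0|^2-|x_j-x_0|^2)
$$
touching $v_{k_j}$ from above at $x_j$. Since $B_1^+$ is open, eventually $x_j\in B_1^+$, so $F^+(D^2\varphi_j(x_j))\ge f^+(x_j)$; the usual bookkeeping via $F^+(M+2\delta I)\le F^+(M)+2\delta\Lambda$, continuity of $F^+$ and $f^+$, and letting $j\to\infty$ then $\delta\to 0^+$, delivers $F^+(D^2\varphi(x_0))\ge f^+(x_0)$. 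The $B_1^-$ case is identical.

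The transmission condition $(ii')$ at $x_0\in T$ is where the real work lies. I plan to argue by contradiction: assume $\varphi(x)=P(x')+p^+x_n^+-p^-x_n^-$ touches $v$ from above at $x_0$ with $\rho:=g(x_0)-(p^+-p^-)>0$. Fix $\eta\in(0,\rho/2)$, any $\delta>0$, and introduce the modified test $\psi(x)=\varphi(x)+\eta|x_n|-Cx_n^2$ with $C>0$ to be chosen large. Because $\eta|x_n|-Cx_n^2\ge 0$ on $\{|x_n|\le\eta/C\}$, $\psi$ still touches $v$ from above at $x_0$ in a small neighborhood, is continuous on $\overline{B_1}$, and is $C^2$ in each closed half-ball. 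Applying Lemma~\ref{lem:HRL} to $\psi$ produces $\psi_j$ touching $v_{k_j}$ from above at $x_j\to x_0$; after extracting a subsequence I may assume all $x_j$ lie in exactly one of $T$, $B_1^+$, or $B_1^-$. If $x_j\in T$, condition $(ii)$ for $v_{k_j}$ together with the direct computation $(\psi_j)_{x_n}^\pm(x_j)=p^\pm\pm\eta$ (the $\delta$-term contributes $2\delta(x_j)_n=0$) yields $p^+-p^-+2\eta\ge g(x_j)\to g(x_0)$, forcing $2\eta\ge\rho$ and contradicting $\eta<\rho/2$. If $x_j\in B_1^+$, condition $(i)$ for $v_{k_j}$ gives $F^+(D^2\psi_j(x_j))\ge f^+(x_j)$; but on the $+$ side $D^2\psi_j=D^2\varphi+2\delta I-2Ce_ne_n^T$, so uniform ellipticity bounds the left-hand side by $F^+(D^2\varphi(x_j))+2\delta\Lambda-2C\lambda$, which I can drive strictly below $-\|f^+\|_{L^\infty(B_1^+)}\le f^+(x_j)$ by choosing $C$ large enough (depending on $\|D^2 P\|$, $\delta$, $\lambda$, $\Lambda$, and $\|f^+\|_{L^\infty(B_1^+)}$), again a contradiction. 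The $x_j\in B_1^-$ case is symmetric, and altogether $p^+-p^-\ge g(x_0)$, showing $v\in\A$.

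The hard part is precisely the off-interface subcases: a priori the points $x_j$ from Lemma~\ref{lem:HRL} need not lie on $T$, and when $x_j\in B_1^\pm$ only the interior equation for $v_{k_j}$, not the transmission condition, is available. The shift-plus-concave-cap modification $\eta|x_n|-Cx_n^2$, borrowed from the proof of Lemma~\ref{lem:equivdef}, is designed to handle all three subcases simultaneously: the $\eta|x_n|$ shift changes the one-sided normal derivatives by $\pm\eta$ so the transmission defect remains detectable whenever $x_j\in T$, while the $-Cx_n^2$ cap drives $\partial^2_{x_nx_n}\psi$ arbitrarily negative, so uniform ellipticity prevents $F^\pm(D^2\psi_j)$ from reaching $f^\pm$ in $B_1^\pm$ and kills the off-interface subcases.
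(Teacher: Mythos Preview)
Your argument is correct. The paper's proof takes a slightly different, more modular route: it applies Lemma~\ref{lem:HRL} directly to the given (piecewise $C^2$) test function $\varphi$ at $x_0\in T$ without any prior modification. When the resulting touching points $x_j$ fall into $B_1^\pm$, the paper simply records the conclusion $F^\pm(D^2\varphi(x_0))\ge f^\pm(x_0)$; when infinitely many $x_j$ lie on $T$, it obtains the transmission inequality. Either way the ``or'' condition of Lemma~\ref{lem:equivdef} is verified, and that lemma is then invoked to upgrade to Definition~\ref{def:viscosityflat}. Your approach instead checks condition $(ii')$ directly and inlines the $\eta|x_n|-Cx_n^2$ trick from the proof of Lemma~\ref{lem:equivdef} into the stability argument itself, so that the large $C$ actively rules out the off-interface subcases rather than accommodating them via the weaker equivalent definition. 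What you gain is a self-contained argument that does not invoke Lemma~\ref{lem:equivdef} as a black box; what the paper gains is brevity and a clean separation between the closedness step and the equivalence of definitions.
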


\begin{proof}
We only need to show that $v$ is a subsolution to \eqref{pb:existenceflat}. Fix $x_0 \in {B_1}$ and assume that $\varphi\in C^2$ touches $v$ by above at $x_0$. 
Then, by Lemma~\ref{lem:HRL}, there exist indexes $k_j\to \infty$, points $x_j \in \overline{B_1}$, and functions $\varphi_j \in C^2$ such that $D^2 \varphi_j = D^2\varphi + 2\delta I$, $\varphi_j$ touches $v_{k_j}$ from above at $x_j$, and 
$$
  x_j \to x_0 \quad \text{and} \quad v_{k_j}(x_j) \to v(x_0), \quad \hbox{as}~j\to \infty.
$$

If $x_0 \in B_1^\pm$ then, for $j$ sufficiently large, we may assume that $x_j \in B_1^\pm$.
Since $v_{k_j} \in \A$, and $\varphi$ touches $v_{k_j}$ by above at $x_j$, it follows that
$$
F^\pm (D^2 \varphi(x_j) +2\delta I ) = F^\pm (D^2 \varphi_j(x_j) ) \geq f^\pm(x_j).
$$ 
Letting $j\to \infty$ and $\delta\to 0$, by continuity of $F, D^2 \varphi$, and $f^\pm$, we obtain the result.

If $x_0 \in T$, then either there exists $j_0\geq 1$ such that for all $j\geq j_0$ we have $x_j \in B_1^\pm$, or for all $j_0 \geq 1$ there exists $j \geq j_0$ such that $x_j \in T$. In the first case, by the previous argument,
$F^\pm (D^2 \varphi(x_0) ) \geq f^\pm(x_0)$.
In the second case, we have that $(\varphi_j^+)_{x_n}(x_j) - (\varphi_j^-)_{x_n}(x_j) \geq g(x_j)$.
Passing to the limit, we obtain the desired estimate. 

By Lemma~\ref{lem:equivdef}, we conclude that $v$ is a  viscosity subsolution of \eqref{pb:existenceflat}.
\end{proof}

We divide the proof of Theorem~\ref{thm:existflat} into two steps.

\begin{lem}[Step 1]
$u^*$ is a subsolution of \eqref{pb:existenceflat}. In particular, $u^*\in \A$.
\end{lem}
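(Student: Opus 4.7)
The plan is a standard Perron's-method verification, adapted to the transmission setting through Lemma~\ref{lem:equivdef}. First I will dispense with the order relations and semicontinuity: since every $v\in\A$ satisfies $\underline u\le v\le\overline u$ and the barriers $\underline u,\overline u$ lie in $C(\overline{B_1})$, one has $\underline u\le u\le\overline u$ and hence $\underline u\le u^*\le\overline u$ on $\overline{B_1}$. By construction $u^*\in\USC(\overline{B_1})$, so to conclude $u^*\in\A$ it suffices to show that $u^*$ is a viscosity subsolution of \eqref{pb:existenceflat}.

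Fix $x_0\in B_1$ and let $\varphi$ be a test function of the appropriate class touching $u^*$ from above at $x_0$ ($C^2$ on a ball if $x_0\notin T$, or $C^2$ on each side of $T$ if $x_0\in T$). After replacing $\varphi$ by $\varphi+\delta|x-x_0|^2$ and ultimately sending $\delta\to 0$, I may assume the touching is strict on a small closed ball $\overline{B_r(x_0)}\subset B_1$. By definition of $u^*$, pick $x_k\to x_0$ with $u(x_k)\to u^*(x_0)$; by definition of $u$, choose $v_k\in\A$ with $v_k(x_k)\ge u(x_k)-1/k$, so $v_k(x_k)\to u^*(x_0)$. Since $v_k\in\USC(\overline{B_1})$, the function $v_k-\varphi$ attains its maximum on $\overline{B_r(x_0)}$ at some $y_k$. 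The strict touching and $v_k\le u^*$ prevent $y_k$ from clustering on $\partial B_r(x_0)$; standard upper-semicontinuity arguments then give $y_k\to x_0$ and $v_k(y_k)\to u^*(x_0)=\varphi(x_0)$. The constant translate $\tilde\varphi_k:=\varphi+(v_k(y_k)-\varphi(y_k))$ therefore touches $v_k$ from above at $y_k$ and has the same derivatives as $\varphi$.

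If $x_0\in B_1^\pm$, then $y_k\in B_1^\pm$ for all large $k$; the subsolution property of $v_k$ yields $F^\pm(D^2\varphi(y_k))\ge f^\pm(y_k)$, and continuity of $F^\pm$, $D^2\varphi$ and $f^\pm$ delivers the desired inequality at $x_0$. If $x_0\in T$, I distribute $\{y_k\}$ among three subsequences lying respectively in $B_1^+$, in $B_1^-$, or on $T$. The first (resp. second) gives in the limit $F^+(D^2\varphi(x_0))\ge f^+(x_0)$ (resp. $F^-(D^2\varphi(x_0))\ge f^-(x_0)$). The third uses the transmission subsolution condition of $v_k$: since $\tilde\varphi_k\in C^2(\overline{B_\delta^+(y_k)})\cap C^2(\overline{B_\delta^-(y_k)})$ touches $v_k$ from above at $y_k\in T$, one gets $\varphi_{x_n}^+(y_k)-\varphi_{x_n}^-(y_k)\ge g(y_k)$; continuity of $\nabla\varphi$ up to $T$ on each side, and continuity of $g$, pass this to the limit as $\varphi_{x_n}^+(x_0)-\varphi_{x_n}^-(x_0)\ge g(x_0)$. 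By Lemma~\ref{lem:equivdef}, in every case the subsolution condition at $x_0\in T$ is verified.

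The main technical point, as in Perron's method for any problem with a boundary or interface condition, is the case $x_0\in T$: one cannot prescribe which of $B_1^+$, $B_1^-$, $T$ the maximizers $y_k$ inhabit, and must accept whichever of the three alternative conclusions survives the limit. The role of Lemma~\ref{lem:equivdef} is precisely to package these three alternatives into one admissible outcome, after which the passage to the limit is routine because $\tilde\varphi_k=\varphi+\text{const}$ requires no genuine convergence of test functions, only continuity of the data and of derivatives of the fixed $\varphi$.
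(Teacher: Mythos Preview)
Your proof is correct and follows essentially the same approach as the paper. The only difference is organizational: the paper factors the argument through the half-relaxed limit formalism, first proving a separate Lemma~\ref{lem:limitenvsubsol} (that $\limsup^* v_k\in\A$ whenever $\{v_k\}\subset\A$, via Lemma~\ref{lem:HRL}) and then observing that for each $x_0$ one can pick $\{v_k\}\subset\A$ with $\limsup^* v_k(x_0)=u^*(x_0)$, so any $\varphi$ touching $u^*$ from above at $x_0$ also touches $\limsup^* v_k$ there. You instead unfold this machinery inline: perturb $\varphi$ to make the touching strict, locate maximizers $y_k$ of $v_k-\varphi$, argue $y_k\to x_0$, translate $\varphi$, and pass to the limit case by case, invoking Lemma~\ref{lem:equivdef} at the end exactly as the paper does. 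The two arguments are the same in substance.
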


\begin{proof}
Let $x_0 \in B_1$. By the construction of $u^*$, there exist points $\{x_k\}_{k=1}^\infty$ and functions $\{v_k\}_{k=1}^\infty \subset \A$ such that $x_k \to x_0$, and
$$
 u^*(x_0) = \lim_{k\to\infty} v_k(x_k). 
$$
Hence, $\limsupstar v_k (x_0) \geq u^*(x_0)$. On the other hand, for all $x\in{B_1}$, we have
$$
u^*(x) \geq \limsupstar v_j(x),
$$
for any $\{v_j\}_{j=1}^\infty \subset \A$.
Therefore, $\limsupstar v_k(x_0) = u^*(x_0)$. In particular, if $\varphi\in C^2$  touches $u^*$ by above at $x_0$, then the same holds for $\limsupstar v_k$.
By Lemma~\ref{lem:limitenvsubsol}, $u^*\in \A$.
\end{proof}

By the previous lemma, we can conclude that $u^*=u$ on $\overline{B_1}$ and, thus, $u\in \A$.

\begin{lem}[Step 2] 
$u_*$ is a supersolution of \eqref{pb:existenceflat}.
\end{lem}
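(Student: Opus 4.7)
The plan is to argue by contradiction via the standard Perron bumping technique, with special care at interface points. Suppose $u_*$ fails to satisfy the supersolution property at some $x_0 \in B_1$. Then there is a (possibly piecewise) $C^2$ test function $\varphi$ touching $u_*$ from below at $x_0$ for which the supersolution inequalities are strictly reversed. Setting $\tilde\varphi(x) = \varphi(x) + \eta - \kappa |x-x_0|^2$ for suitable $\eta, \kappa, \rho > 0$, I will arrange that $\tilde\varphi < u_*$ on $\partial B_\rho(x_0)$, $\tilde\varphi(x_0) > u_*(x_0)$, and $\tilde\varphi$ is a strict classical subsolution of \eqref{pb:existenceflat} throughout $B_\rho(x_0)$. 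The glued function
\[
v(x) = \begin{cases} \max\{u^*(x), \tilde\varphi(x)\} & x \in \overline{B_\rho(x_0)} \\ u^*(x) & x \in \overline{B_1}\setminus \overline{B_\rho(x_0)} \end{cases}
\]
will then lie in $\A$ yet strictly exceed $u$ at a point near $x_0$, contradicting the maximality of $u$.

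If $x_0$ is interior to $B_1^+$ or $B_1^-$, verifying that $\tilde\varphi$ is a strict classical subsolution is the classical Perron computation using continuity of $F^\pm$ and $f^\pm$. The substantive case is $x_0 \in T$. Here, the supersolution analogue of Lemma~\ref{lem:equivdef} provides a piecewise $C^2$ function $\varphi$ touching $u_*$ from below at $x_0$ such that simultaneously
\[
F^\pm(D^2\varphi^\pm(x_0)) > f^\pm(x_0) \quad\text{and}\quad \varphi^+_{x_n}(x_0)-\varphi^-_{x_n}(x_0) > g(x_0),
\]
for otherwise the supersolution condition would already hold at $x_0$. Since $\eta - \kappa|x-x_0|^2$ is smooth across $T$, it leaves the jump $\varphi^+_{x_n}-\varphi^-_{x_n}$ unchanged along $T$; thus the strict transmission inequality propagates to a full neighborhood in $T$ by continuity of $g$ and of the one-sided normal derivatives, while the strict equation inequality is propagated as in the interior case by continuity of $F^\pm$, $D^2\varphi^\pm$ up to $T$, and $f^\pm$.

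It remains to check that $v \in \A$. Upper semicontinuity of $v$ is immediate from the boundary condition $\tilde\varphi < u_*$ on $\partial B_\rho(x_0)$, and the bound $v \leq \overline u$ holds for small $\eta$ because $u_*(x_0) < \overline u(x_0)$: otherwise $\overline u$ would touch $u_*$ from above at $x_0$ and its supersolution property there would contradict our hypothesis. That $v$ is a viscosity subsolution of \eqref{pb:existenceflat} follows from the standard Perron case analysis at interior and interface points, using that $u^* \in \A$ where $v = u^*$ and the strict classical subsolution inequalities verified above where $v = \tilde\varphi$; at points of $T$ we invoke Lemma~\ref{lem:equivdef}. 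Finally, choosing $y_k \to x_0$ with $u(y_k) \to u_*(x_0)$, we have $\tilde\varphi(y_k) \to u_*(x_0) + \eta > \lim_k u(y_k)$, hence $v(y_k) \geq \tilde\varphi(y_k) > u(y_k)$ for large $k$, the desired contradiction. The principal obstacle is the interface construction: guaranteeing that both the strict equation inequalities on each side and the strict transmission inequality along $T$ are preserved simultaneously in a full neighborhood of $x_0$. This relies on the $C^1$ regularity of $\varphi^\pm$ up to $T$, so that the jump $\varphi^+_{x_n}-\varphi^-_{x_n}$ depends continuously on position along $T$ and the quadratic correction does not spoil it.
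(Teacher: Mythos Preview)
Your argument is correct and follows the same Perron bumping strategy as the paper: assume a test function $\varphi$ touches $u_*$ from below with the supersolution inequalities strictly reversed, perturb to $\tilde\varphi$, and glue with $u^*$ to produce a member of $\A$ that strictly exceeds $u$ somewhere. In fact you are more careful than the paper at interface points: the paper simply asserts that the perturbed $\varphi_\delta$ is a ``classical strict subsolution'' in $B_r(x_0)$ when $x_0\in T$, whereas you correctly observe that one must also arrange $F^\pm(D^2\varphi^\pm(x_0))>f^\pm(x_0)$ (via the construction in the proof of Lemma~\ref{lem:equivdef}) so that the bump is a subsolution on both sides of $T$, not just across it.

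One minor point: your justification of $v\le\overline u$ via $u_*(x_0)<\overline u(x_0)$ is workable (using continuity of $\overline u$ and $\varphi$ to propagate the strict inequality to a small ball, then choosing $\eta$ small), but the cleaner route---and the one the paper implicitly uses---is simply to note that the glued $v$ is a subsolution with $v=u=\phi=\overline u$ on $\partial B_1$, and invoke the comparison principle (Theorem~\ref{thm:comparison}) directly.
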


\begin{proof}
Assume by means of contradiction that there exist $x_0\in {B_1}$
and some test function $\varphi$ that touches $u_*$ from below at $x_0$ such that the following holds:
\begin{enumerate}[$(i)$]
\item If $x_0\in B_1^\pm$, then
$
F^\pm (D^2\varphi(x_0)) > f^\pm(x_0).
$
\item If $x_0 \in T$, then
$
  \varphi_{x_n}^+(x_0) -  \varphi_{x_n}^-(x_0) >  g(x_0).
$
\end{enumerate}
Without loss of generality, we may assume that $\varphi$ touches $u_*$ strictly from below. Otherwise, take $\varphi-\vep|x-x_0|^2$, for some $\vep>0$ small.
By continuity, the strict inequalities in $(i)$ and $(ii)$ hold in a ball $B_r(x_0)$, for some $r>0$ small and thus
$\varphi$ is a classical strict subsolution  in $B_r(x_0)$.
Consider $\varphi_\delta = \varphi - \delta |x-x_0|^2 + \delta r^2 /2$, for some $\delta>0$ sufficiently small. Then
$$
\varphi_\delta (x_0) > u_*(x_0) \quad \hbox{and} \quad 
\varphi_\delta < u_* \leq u ~ \hbox{on}~\partial B_r(x_0).
$$
Hence, there exists some $x_1 \in B_r(x_0)$ such that 
$
u(x_1) < \varphi_\delta (x_1).
$
Define 
$$
\bar u=
\begin{cases}
\max\{ u, \varphi_\delta\} & \hbox{in}~B_r(x_0)\\
u & \hbox{on}~ \overline{B_1} \setminus B_r(x_0).
\end{cases}
$$
Since $u \in \A$ and $\varphi_\delta$ is a subsolution, it is easy to see that $\bar{u} \in \A$. This is a contradiction with
$$
u(x_1) = \sup_{v\in \A} v(x_1) \geq \bar u(x_1) = \varphi(x_1) > u(x_1).
$$
Therefore, $u_*$ is a supersolution of \eqref{pb:existenceflat}.
\end{proof}

By definition, $u_*\leq u$. Since $u_*$ is a supersolution and $u$ is a subsolution of \eqref{pb:existenceflat}, and $u_*=u$ on $\partial B_1$, then by the comparison principle (Theorem~\ref{thm:comparison}), we get $u_*=u$ on $\overline{B_1}$. Hence, we conclude that
$$
u_*=u=u^* \qquad \hbox{on}~\overline{B_1}.
$$
In particular, by Corollary~\ref{cor:uniqueness}, $u\in C(\overline{B_1})$ is the unique viscosity solution of \eqref{pb:existenceflat} with $u=\phi$ on $\partial B_1$. This concludes the proof of Theorem~\ref{thm:existflat}.

\subsection{Regularity estimates up to the flat interface}

In this subsection we derive $C^{1,\alpha}$ and $C^{2,\alpha}$ estimates for viscosity solutions to the flat problem \eqref{pb:existenceflat}. 
Recall the definitions of the H\"older exponents $0<\bar{\alpha},\bar{\bar{\alpha}}<1$ given at the end of the Introduction.

We complete the flat interface theory by proving the following theorems.

\begin{thm}[{$C^{1,\alpha}$ regularity for \eqref{pb:existenceflat}}] \label{thm:regflat}
Fix $0<\alpha<\bar \alpha$. Assume that $g\in C^{0,\alpha}(T)$ and $f^\pm\in C(B_1^\pm \cup T)$ satisfies
$$
\Big( \fint_{B_r(x_0) \cap B_1^\pm} |f^\pm|^n\, dx\Big)^{1/n} \leq C_{f^\pm} r^{\alpha-1},
$$ 
for all $x_0 \in B_1^\pm\cup T $ and $r>0$ small. Let $u$ be a bounded viscosity solution to \eqref{pb:existenceflat}.  
Then $u^\pm\in C^{1,\alpha}(\overline{B_{1/2}^\pm})$ and 
\begin{equation} \label{eq:regflat}
\|u^\pm \|_{C^{1,\alpha}(\overline{B_{1/2}^\pm})} \leq C\big(\|u\|_{L^\infty(B_1)} +  \|g\|_{C^{0,\alpha}(T)} + C_{f^-} + C_{f^+} \big),
\end{equation}
where $C>0$ depends only on $n$, $\lambda$, $\Lambda$ and $\alpha$. 
\end{thm}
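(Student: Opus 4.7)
The plan is to follow the classical pointwise-to-global scheme for $C^{1,\alpha}$ boundary regularity, tailored to the transmission setting by approximating $u$ with solutions of the homogeneous flat problem. First, I would establish $C^{1,\bar\alpha}$ estimates up to $T$ for the homogeneous flat transmission problem
$$F^\pm(D^2 h)=0~\text{in}~B_1^\pm, \quad h_{x_n}^+ - h_{x_n}^-=0~\text{on}~T.$$
Since $g\equiv 0$, the transmission condition matches normal derivatives of $h^\pm$ on $T$, and continuity of $h$ matches tangential derivatives, so $h\in C^1(B_1)$. Viewing each side as a Neumann-type problem with data coming from the opposite side, one can bootstrap using the boundary $C^{1,\bar\alpha}$ estimates referenced in the Notation section to obtain $\|h^\pm\|_{C^{1,\bar\alpha}(\overline{B_{1/2}^\pm})} \leq C\|h\|_{L^\infty(B_1)}$.

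Next, I would quantify how close $u$ is to a homogeneous solution via the ABP. Given a solution $u$ of \eqref{pb:existenceflat} with $\|u\|_{L^\infty(B_1)} \leq 1$, let $h$ be the unique viscosity solution (by Corollary~\ref{cor:uniqueness} and Theorem~\ref{thm:existflat}) of the homogeneous flat problem with $h=u$ on $\partial B_1$. Applying Theorem~\ref{thm:ABP2} to $u-h$ and to $h-u$ gives
$$\|u-h\|_{L^\infty(B_1)} \leq C\bigl(\|g\|_{L^\infty(T)} + \|f^-\|_{L^n(B_1^-)} + \|f^+\|_{L^n(B_1^+)}\bigr),$$
which makes the approximation error small whenever the right-hand side is.

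With these two ingredients, the crux is the one-step improvement: for some universal $0<\rho<1$ and suitable smallness, there exist affine functions $\ell^\pm(x) = a + b'\cdot x' + b^\pm x_n$ with $b^+ - b^- = g(0)$ and
$$\|u^\pm - \ell^\pm\|_{L^\infty(B_\rho \cap B_1^\pm)} \leq \rho^{1+\alpha}.$$
Indeed, Step~1 provides an affine approximation $\tilde\ell$ to $h$ at the origin with error of order $\rho^{1+\bar\alpha}$, hence smaller than $\rho^{1+\alpha}/2$ for $\rho$ sufficiently small; adding the jump piece $\tfrac{g(0)}{2}|x_n|$ to $\tilde\ell$ produces the desired $\ell^\pm$, and Step~2 absorbs the remaining $u-h$ contribution. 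Iterating this improvement at each $x_0\in T\cap B_{1/2}$ through the rescaling $u_k(x) = \rho^{-k(1+\alpha)}(u-\ell_k)(x_0+\rho^k x)$ produces sequences of affine functions yielding pointwise $C^{1,\alpha}$ at $x_0$; the hypothesis $(\fint_{B_r\cap B_1^\pm}|f^\pm|^n)^{1/n}\leq C_{f^\pm}r^{\alpha-1}$ is exactly what is needed to keep the rescaled source bounded at each iteration, and analogously for the $C^{0,\alpha}$ norm of $g$ through the translated quotient $\rho^{-k\alpha}[g(x_0+\rho^k\cdot)-g(x_0)]$.

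Finally, combining the uniform pointwise $C^{1,\alpha}$ on $T\cap B_{1/2}$ with the standard interior $C^{1,\bar\alpha}$ estimates of Caffarelli--Cabr\'e at points of $B_{1/2}^\pm$ away from $T$, a covering argument yields $u^\pm \in C^{1,\alpha}(\overline{B_{1/2}^\pm})$ with the estimate \eqref{eq:regflat}. The main obstacle I expect is Step~1: although $g\equiv 0$ forces $h$ to be globally $C^1$, upgrading to $C^{1,\bar\alpha}$ across $T$ cannot be done by reflection when $F^+\neq F^-$, and the ``Neumann data'' on each side is itself the unknown normal derivative of the other side. Adapting the boundary $C^{1,\bar\alpha}$ estimates for Neumann or oblique-derivative problems to this coupled setting---perhaps via a joint bootstrap between $h^+$ and $h^-$, or by exploiting the $x'$-regularizations from Proposition~\ref{prop:envelopes} to gain tangential smoothness first---is the technical heart of the argument.
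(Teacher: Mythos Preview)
Your proposal is correct and matches the paper's approach almost exactly: reduce to a pointwise estimate on $T$, solve the homogeneous flat problem with the same boundary data, control the difference by the ABP estimate (using Theorem~\ref{thm:uniqueness} to see that $u-h\in S_{\lambda/n,\Lambda}(f^\pm)$ with transmission datum $g$), exploit the $C^{1,\bar\alpha}$ regularity of the homogeneous solution to extract an affine piece, and iterate. The paper's only cosmetic differences are that it subtracts $\tfrac{g(0)}{2}|x_n|$ at the outset so that $g(0)=0$ and the approximating affine functions are the same on both sides, and it phrases the iteration in terms of oscillation rather than $L^\infty$.

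The point you flag as the ``main obstacle''---the $C^{1,\bar\alpha}$ estimate for the homogeneous flat problem with $F^+\neq F^-$---is not proved from scratch in the paper either: it is simply quoted from \cite[Theorem~1.2]{DSFS} with $a=b=1$ (see the lemma preceding Corollary~\ref{cor:homog}). So your instinct that this step requires the $x'$-regularizations is right (that is indeed how \cite{DSFS} proceeds), but for the purposes of Theorem~\ref{thm:regflat} you may treat it as a black box already available in the literature, and the rest of your argument goes through without difficulty.
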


\begin{thm}[{$C^{2,\alpha}$ regularity for \eqref{pb:existenceflat}}] \label{thm:regflat2}
Let $\F$ be a concave operator. Fix $0<\alpha<\bar{\bar\alpha}$.
Assume that $g\in C^{1,\alpha}(T)$ and $f^\pm \in C^{0,\alpha}(\overline{B_1^\pm})$ with $f^+=f^-$ on $T$.
Let $u$ be a bounded viscosity solution to \eqref{pb:existenceflat} with  $F\equiv F^+ \equiv F^-$.
Then $u^\pm\in C^{2,\alpha}(\overline{B_{1/2}^\pm})$ and
$$\|u^\pm \|_{C^{2,\alpha}(\overline{B_{1/2}^\pm})} \leq C\big(\|u\|_{L^\infty(B_1)} +  \|g\|_{C^{1,\alpha}(T)} + \|f^-\|_{C^{0,\alpha}(\overline{B_1^-})} +  \|f^+\|_{C^{0,\alpha}(\overline{B_1^+})} \big)$$
where $C>0$ depends only on $n$, $\lambda$, $\Lambda$ and $\alpha$. 
\end{thm}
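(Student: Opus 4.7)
The plan is a standard perturbation--iteration scheme, crucially exploiting the hypothesis $F\equiv F^+\equiv F^-$. The starting point is the $C^{2,\bar{\bar\alpha}}$ regularity of the homogeneous flat transmission problem
$$
F(D^2 v) = 0 \text{ in } B_1^\pm, \qquad v_{x_n}^+ - v_{x_n}^- = 0 \text{ on } T.
$$
By Theorem~\ref{thm:regflat} applied with $f^\pm = g = 0$, such a $v$ is $C^{1,\alpha}$ up to $T$ from both sides with matching tangential and normal gradients, so $v\in C^1(B_1)$. One then verifies, using the continuity of $v$ and $\nabla v$ across $T$, the interior Evans--Krylov $C^{2,\bar{\bar\alpha}}$ estimates for $v^\pm$ in $B_1^\pm$, and the uniform monotonicity of $F$ in the $(n,n)$-entry (which forces the one-sided Hessians at any $x_0\in T$ to agree when all tangential entries coincide and $F(D^2 v^\pm)=0$), that $v$ is a classical $C^2$ solution of $F(D^2 v)=0$ in $B_1$, hence $C^{2,\bar{\bar\alpha}}_{\operatorname{loc}}$ by a second application of Evans--Krylov.

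The second ingredient is a compactness-based approximation lemma: for every $\eta>0$ there exists $\delta>0$ such that if $u$ is a viscosity solution of \eqref{pb:existenceflat} with $\|u\|_{L^\infty(B_1)}\leq 1$, $\|f^\pm\|_{C^{0,\alpha}(\overline{B_1^\pm})}\leq\delta$, and $\|g\|_{C^{1,\alpha}(T)}\leq\delta$, then there is a solution $v$ of the homogeneous problem above with $\|u-v\|_{L^\infty(\overline{B_{3/4}})}\leq\eta$. I would prove this by contradiction: a failing sequence $\{u_k\}$ is uniformly $C^{0,\alpha_1}(\overline{B_{7/8}})$-bounded by Theorem~\ref{thm:holder}, hence extracts a uniform limit $u_\infty$ along a subsequence; the half-relaxed limit machinery of Lemma~\ref{lem:HRL}, together with Theorem~\ref{thm:uniqueness} and the existence theory of Theorem~\ref{thm:existflat}, passes the viscosity inequalities through the limit, forcing $u_\infty$ to solve the homogeneous problem and contradicting the hypothesis.

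Combining these two facts, the improvement-of-flatness step runs as follows: for $0<\alpha<\bar{\bar\alpha}$, fix $\rho\in(0,1)$ so that $C\rho^{\bar{\bar\alpha}-\alpha}\leq \tfrac14$ (with $C$ the $C^{2,\bar{\bar\alpha}}$ constant for the homogeneous problem), and take $\eta=\tfrac14\rho^{2+\alpha}$ with $\delta$ the corresponding threshold. The Taylor polynomial $Q$ of the approximating $v$ at $0$ is the same on both sides; it must be corrected to a compatible pair
$$
P^\pm(x) = \tfrac12 x^T A^\pm x + B^\pm\cdot x + c
$$
by adding $\pm\tfrac12 g(0)x_n\pm\tfrac12(\nabla'g(0)\cdot x')x_n+\tfrac12 s^\pm x_n^2$, where $s^\pm$ is the unique scalar such that $F(A^\pm)=f^\pm(0)$ (uniqueness by uniform monotonicity of $F$ in the $(n,n)$-slot; compatibility by the assumption $f^+(0)=f^-(0)$). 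This yields $\sup_{B_\rho^\pm}|u-P^\pm|\leq\rho^{2+\alpha}$, with coefficients linearly bounded by the data. Iterating on $(u-P_k^\pm)(\rho^k\,\cdot)/\rho^{k(2+\alpha)}$ with rescaled operators $\tilde F_k^\pm(M)=\rho^{-k\alpha}[F(A_k^\pm+\rho^{k\alpha}M)-F(A_k^\pm)]$ (still concave, still in $\mathcal{E}(\lambda,\Lambda)$) and rescaled data controlled by the $C^{0,\alpha}$-Taylor remainder of $f^\pm-f^\pm(0)$ and the $C^{1,\alpha}$-Taylor remainder of $g$, produces a Cauchy sequence of compatible polynomials $P_k^\pm$ whose limit gives $|u^\pm(x)-P^\pm(x)|\leq C|x|^{2+\alpha}$ and the claimed coefficient bounds on $\|A^\pm\|+|B^\pm|+|c|$.

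The main obstacle is that the rescaled operators $\tilde F_k^+$ and $\tilde F_k^-$ generally differ at each scale (since $A_k^+\neq A_k^-$ whenever $g$ has nonzero first-order data), so the approximation lemma and the $C^{2,\bar{\bar\alpha}}$ homogeneous estimate must be applied to a slightly broader class of transmission problems whose two-sided operators may differ but remain in the concave class $\mathcal{E}(\lambda,\Lambda)$; verifying the needed estimates in this broader setting, while the difference $A_k^+-A_k^-$ stays controlled by the $g$-related data, is the subtle bookkeeping of the iteration. Once the pointwise $C^{2,\alpha}$ expansion at $0$ is in hand, translation invariance of \eqref{pb:existenceflat} in $x'$ yields uniform pointwise bounds at every $x_0\in T\cap\overline{B_{1/2}}$; combined with the interior Evans--Krylov $C^{2,\bar{\bar\alpha}}$ estimate for $u^\pm$ inside $B_1^\pm$ (which is stronger than $C^{2,\alpha}$ since $\alpha<\bar{\bar\alpha}$), a standard Campanato/covering argument concludes $u^\pm\in C^{2,\alpha}(\overline{B_{1/2}^\pm})$ with the stated bound.
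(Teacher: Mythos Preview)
Your iteration scheme is correct in outline but departs from the paper in two places. First, the paper's proof of the homogeneous $C^{2,\bar{\bar\alpha}}$ estimate is much cheaper than yours: it solves the global Dirichlet problem $F(D^2 w)=0$ in $B_1$ with $w=v$ on $\partial B_1$, observes that any $C^2$ function trivially satisfies the zero transmission condition, and invokes uniqueness (Corollary~\ref{cor:uniqueness}) to conclude $v=w\in C^{2,\bar{\bar\alpha}}_{\mathrm{loc}}(B_1)$. Your Hessian-matching argument has a genuine gap: interior Evans--Krylov gives $D^2 v^\pm$ only in the open half-balls, not up to $T$, so the ``one-sided Hessians at $x_0\in T$'' you want to compare are not yet defined without first invoking boundary regularity---which is essentially what you are trying to prove.

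Second, the paper's iteration is constructive rather than compactness-based: at each scale it \emph{solves} the auxiliary transmission problem $F(D^2 v+A_k^\pm)=0$ in $B_r^\pm$, $v_{x_n}^+-v_{x_n}^-=0$ on $T\cap B_r$, $v=u-P_k$ on $\partial B_r$ (via Theorem~\ref{thm:existflat}), and bounds the remainder $w=u-P_k-v$ directly by the ABP estimate (Theorem~\ref{thm:ABP2}), exploiting the $C^{1,\alpha}$ Taylor decay of $g$ and the $C^{0,\alpha}$ decay of $f^\pm$. Your compactness approximation lemma is a valid alternative, but the direct route avoids a contradiction argument and yields explicit control. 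The obstacle you flag---that after subtracting $P_k^\pm$ the shifted operators $F(\cdot+A_k^+)$ and $F(\cdot+A_k^-)$ differ whenever $\nabla'g(0)\neq 0$---is real and equally present in the paper's argument, which applies Corollary~\ref{cor:homog2} (stated for a single operator) to the two-operator auxiliary problem; both proofs implicitly require the $C^{2,\bar{\bar\alpha}}$ estimate to extend to that setting.
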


We begin with some preliminary results.

\begin{lem} 
Let $v$ be a bounded viscosity solution to \eqref{pb:existenceflat} with $f^\pm\equiv g\equiv0$.
Then $v \in C^{1,\bar\alpha}(\overline{B_{1/2}})$ and
$$
\| v \|_{C^{1,\bar\alpha}(\overline{B_{1/2}})} \leq C \| v \|_{L^\infty(B_1)},
$$
where $C>0$ depends only on $n$, $\lambda$ and $\Lambda$.
\end{lem}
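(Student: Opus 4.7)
The plan is to establish pointwise $C^{1,\bar\alpha}$ estimates at each point of $T$ via an improvement-of-flatness argument, then patch this together with the Caffarelli--Cabr\'e interior $C^{1,\bar\alpha}$ theory applied on each side $B_1^\pm$ separately. The crucial structural fact we exploit throughout is that, because $f^\pm\equiv g\equiv 0$, \emph{every affine function is a viscosity solution} of \eqref{pb:existenceflat}, so the problem is invariant under subtracting affine correctors, and it is scale-invariant after the usual dyadic rescaling $v_r(x)=r^{-(1+\bar\alpha)}(v(rx)-L(rx))$.

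First, I would apply Theorem \ref{thm:holder} to $v$ to obtain $v\in C^{0,\alpha_1}(\overline{B_{3/4}})$ with $\|v\|_{C^{0,\alpha_1}}\leq C\|v\|_{L^\infty(B_1)}$. This gives precompactness in $C^0_{\mathrm{loc}}(B_1)$ of the family of bounded solutions with $\|v\|_\infty\leq 1$ and $f^\pm=g=0$. Combined with the stability of viscosity solutions of the transmission problem (Section \ref{sec:approxlem}), any uniform limit of such solutions is again a solution of the same homogeneous problem.

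The heart of the proof is an improvement-of-flatness lemma: there exist universal constants $0<\rho<1/2$ and $C_0>0$ such that for any bounded solution $v$ with $\|v\|_\infty\leq 1$, one can find an affine function $L(x)=a+b\cdot x$ with $|a|+|b|\leq C_0$ satisfying
\[
\|v-L\|_{L^\infty(B_\rho)}\leq \rho^{1+\bar\alpha}.
\]
I would prove this by contradiction and compactness: if it failed, a sequence $v_k$ would violate it for every affine $L$, and Step 1 would give a limit $v_\infty$, which is again a bounded solution of the homogeneous flat transmission problem. For $v_\infty$, the interior Caffarelli--Cabr\'e $C^{1,\bar\alpha}$ theory applies inside $B_1^+$ and $B_1^-$ separately to $F^\pm(D^2 v_\infty)=0$. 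The transmission condition with $g=0$ forces the normal derivatives of $v_\infty^+$ and $v_\infty^-$ to match across $T$; tangential derivatives match by continuity. This allows one to build a single affine tangent plane at $0$ for $v_\infty$ with universal constants, contradicting the assumption for large $k$.

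Once the improvement-of-flatness is established at $0\in T$, one iterates by applying it to the rescaled solutions $v_k(x)=\rho^{-k(1+\bar\alpha)}(v(\rho^k x)-L_k(\rho^k x))$; since affine correctors preserve the problem and scaling is homogeneous, each $v_k$ is again a bounded solution, and summing the affine corrections yields an affine function $L$ with $\|v-L\|_{L^\infty(B_r)}\leq Cr^{1+\bar\alpha}\|v\|_{L^\infty(B_1)}$ for all $0<r<1$. Translating the argument to any point of $T\cap B_{1/2}$ and combining with the interior Caffarelli--Cabr\'e estimates inside each $B_1^\pm$ (which degenerate only as one approaches $T$) via a standard two-point patching argument gives the desired $C^{1,\bar\alpha}$ estimate on $\overline{B_{1/2}}$.

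The main obstacle is the compactness step: one must ensure that the limit $v_\infty$ admits a \emph{universally controlled} affine approximation at $0$. Interior estimates on each side blow up as one approaches $T$, so one cannot simply evaluate $\nabla v_\infty$ at interior points and let them tend to $T$. The key is that the gradient-matching enforced by $g=0$, together with tangential continuity, collapses the two one-sided affine pieces into a single affine plane; this is precisely where the homogeneous transmission condition is indispensable and where the argument would break down for nonzero $g$.
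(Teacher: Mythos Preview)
Your argument has a genuine circularity that you identify but do not resolve. In the compactness step, the limit $v_\infty$ is again a bounded viscosity solution of the \emph{same} homogeneous flat transmission problem with the same operators $F^\pm$. Hence, showing that $v_\infty$ admits a universally controlled affine tangent at $0\in T$ is precisely the statement you set out to prove. Interior Caffarelli--Cabr\'e estimates on each half-ball $B_1^\pm$ degenerate as one approaches $T$ and yield no information at points of $T$. Your phrase ``gradient-matching enforced by $g=0$'' presupposes that the one-sided normal derivatives $(v_\infty^\pm)_{x_n}$ exist at points of $T$, but that existence is exactly the boundary regularity you have not yet established. Compactness-based improvement-of-flatness arguments work only when the limit lands in a \emph{strictly} more regular class---the interface flattens, the operator freezes, or the right-hand side vanishes---but here the problem is already flat and homogeneous, so the limit gains nothing. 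A secondary issue is that even if the scheme closed, iterating such an argument would yield $C^{1,\alpha}$ for every $\alpha<\bar\alpha$, not the endpoint $C^{1,\bar\alpha}$ asserted in the lemma.

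The paper proceeds quite differently: it simply invokes \cite[Theorem~1.2]{DSFS} with $a=b=1$, which directly gives $v^\pm\in C^{1,\bar\alpha}(\overline{B_{1/2}^\pm})$ with the stated estimate; the transmission condition $g=0$ then makes the two one-sided gradients agree on $T$, so $v$ is differentiable across $T$. The content you are trying to reprove is precisely what \cite{DSFS} supplies via a barrier/Harnack argument rather than compactness. Note also that the trick used later in the paper for the $C^{2,\bar{\bar\alpha}}$ case---identifying the transmission solution with the Dirichlet solution by uniqueness---does not apply here, because that argument requires $F^+\equiv F^-$, whereas the present lemma allows $F^+\neq F^-$.
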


\begin{proof}
We apply \cite[Theorem~1.2]{DSFS} with $a=b=1$. Then $v^\pm \in C^{1,\bar\alpha}(\overline{B_{1/2}^\pm})$ with the estimate
$\| v^\pm \|_{C^{1,\bar\alpha}(\overline{B_{1/2}^\pm})} \leq C \| v \|_{L^\infty(B_1)}$.
In particular, $v$ satisfies the transmission condition in the classical sense, and thus, it is differentiable in $B_{1/2}$.
\end{proof}

By a standard scaling argument we obtain the following estimate.

\begin{cor} \label{cor:homog}
Let $0<r\leq 1$. Suppose that $v$ is a bounded viscosity solution to 
 \begin{align*}
\begin{cases}
\F^\pm(D^2 v) = 0 & \hbox{in}~B_r^\pm\\
 v_{x_n}^+ -  v_{x_n}^- = 0 & \hbox{on}~B_r\cap\{x_n=0\}.
\end{cases}
\end{align*}
Then for any $0<\rho\leq r/2$, we have that $v \in C^{1,\bar\alpha}(\overline{B_\rho})$ with
\begin{align*}
\underset{B_{\rho}}{\osc} \big(v-\nabla v(0)\cdot x\big) &\leq C \Big (\frac{\rho}{r} \Big)^{1+\bar\alpha} \underset{\overline{B_r}}{\osc} v\\
|\nabla v(0)| & \leq C \frac{1}{r} \underset{\overline{B_r}}{\osc} v, 
\end{align*}
where $C>0$ depends only on $n$, $\lambda$ and $\Lambda$.
\end{cor}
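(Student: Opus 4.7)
The plan is to reduce this scaled statement to the unit-ball result from the preceding lemma via a standard two-step rescaling, one in the spatial variable and one in the amplitude. Assume without loss of generality that $\mu := \osc_{\overline{B_r}} v > 0$, since otherwise $v$ is constant and the conclusion is immediate.

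First, I would normalize by defining $\tilde v(y) := \mu^{-1}(v(ry) - v(0))$ for $y \in B_1$, so that $\tilde v(0) = 0$ and $\|\tilde v\|_{L^\infty(B_1)} \leq 1$. A direct computation shows that $\tilde v$ is a bounded viscosity solution of the analogous homogeneous flat-interface transmission problem on $B_1$, but for the rescaled operators $\tilde F^\pm(M) := (r^2/\mu) F^\pm((\mu/r^2)M)$, which again belong to $\mathcal{E}(\lambda,\Lambda)$ with $\tilde F^\pm(0) = 0$. The interior equations rescale via the standard chain rule applied to viscosity test functions; the transmission condition is preserved by using the equivalent formulation $(ii')$, since test functions of the form $P(x') + p^+ x_n^+ - p^- x_n^-$ rescale into the same form with $p^\pm$ replaced by $(r/\mu)p^\pm$, so the jump condition $p^+ - p^- = 0$ is invariant. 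Applying the preceding lemma yields $\tilde v \in C^{1,\bar\alpha}(\overline{B_{1/2}})$ with $\|\tilde v\|_{C^{1,\bar\alpha}(\overline{B_{1/2}})} \leq C(n,\lambda,\Lambda)$, so in particular $|\nabla \tilde v(0)| \leq C$ and $|\tilde v(y) - \nabla \tilde v(0) \cdot y| \leq C|y|^{1+\bar\alpha}$ for all $y \in B_{1/2}$.

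Finally, I would unfold the rescaling using $\nabla v(0) = (\mu/r)\nabla \tilde v(0)$, which directly gives $|\nabla v(0)| \leq C\mu/r$. For any $0 < \rho \leq r/2$ and $x \in B_\rho$, taking $y = x/r \in B_{1/2}$ in the Taylor bound produces $|v(x) - v(0) - \nabla v(0) \cdot x| \leq C\mu (|x|/r)^{1+\bar\alpha} \leq C\mu(\rho/r)^{1+\bar\alpha}$; since $\osc_{B_\rho}(v - \nabla v(0)\cdot x)$ is unchanged by subtracting the constant $v(0)$ and is controlled by twice the preceding supremum on $B_\rho$, the desired oscillation estimate follows. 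No real obstacle is expected here; the only delicate point is verifying that the viscosity transmission condition is preserved by the rescaling, which is routine via the equivalent pointwise formulation $(ii')$.
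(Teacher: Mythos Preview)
Your proposal is correct and is precisely the standard scaling argument the paper alludes to (the paper omits the proof entirely, stating only ``By a standard scaling argument we obtain the following estimate''). Your handling of the amplitude-and-space rescaling, the verification that $\tilde F^\pm\in\mathcal{E}(\lambda,\Lambda)$, and the unfolding of the Taylor bound are all routine and accurate.
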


\begin{lem} 
Let $\F$ be a concave operator. Let $v$ be a bounded viscosity solution to
\begin{align} \label{pb:flateqhomog}
\begin{cases}
\F(D^2 v) = 0 & \hbox{in}~B_1^\pm\\
v_{x_n}^+ - v_{x_n}^- = 0 & \hbox{on}~T.
\end{cases}
\end{align}
Then $v \in C^{2,\bar{\bar\alpha}}(\overline{B_{1/2}})$ and
$$
\| v \|_{C^{2,\bar{\bar\alpha}}(\overline{B_{1/2}})} \leq C \| v \|_{L^\infty(B_1)},
$$
where $C>0$ depends only on $n$, $\lambda$ and $\Lambda$.
\end{lem}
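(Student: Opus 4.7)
The plan is to reduce the transmission problem \eqref{pb:flateqhomog} to a single interior equation on the whole ball, so that the classical interior $C^{2,\bar{\bar\alpha}}$ theory for concave fully nonlinear operators applies. The key observation is that with matching operators $F^+=F^-=F$ and zero jump $g=0$, the transmission condition should force $v$ to be a genuine viscosity solution of $F(D^2v)=0$ across $T$, not just on each side separately.

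First, I would invoke the previous lemma (rescaled to $B_{3/4}$) to obtain $v\in C^{1,\bar\alpha}(\overline{B_{3/4}})$ with the linear dependence on $\|v\|_{L^\infty(B_1)}$. In particular, $v$ is classically differentiable at every point of $T$, and if $\varphi\in C^2$ touches $v$ from above at $x_0\in T$ then the gradients match and the gap satisfies $\varphi(x)-v(x)=O(|x-x_0|^{1+\bar\alpha})$.

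The crucial step is to promote $v$ to a viscosity solution of $F(D^2v)=0$ in $B_{3/4}$. For the subsolution direction, let $\varphi$ be a $C^2$ function touching $v$ strictly from above at $x_0\in T$ (strictness is free by adding $\eta|x-x_0|^2$ and sending $\eta\to 0$ at the end). Define $\varphi_\vep(x)=\varphi(x)+\vep x_n$ and $\psi_\vep=v-\varphi_\vep$. On $T$ and in $B^+_{3/4}$ near $x_0$, $\psi_\vep\leq 0$, but at $x=x_0-\tau e_n$ with $\tau$ of order $\vep^{1/\bar\alpha}$, the $O(|x-x_0|^{1+\bar\alpha})$ gap is beaten by the linear gain $\vep\tau$, so $\psi_\vep$ takes strictly positive values in $B^-_{3/4}$. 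Therefore, on a small ball $B_\delta(x_0)$ where strict touching keeps $\psi_\vep$ negative on the boundary, $\psi_\vep$ attains a strictly positive maximum at an interior point $x_\vep\in B^-_{3/4}$. Translating $\varphi_\vep$ upward by $\psi_\vep(x_\vep)$ produces a smooth function touching $v$ from above at $x_\vep$; since $D^2\varphi_\vep=D^2\varphi$ and $f^-=0$, the viscosity subsolution property of $v$ in $B_1^-$ forces $F(D^2\varphi(x_\vep))\geq 0$. Sending $\vep\to 0$, strict touching yields $x_\vep\to x_0$, and continuity of $D^2\varphi$ and $F$ gives $F(D^2\varphi(x_0))\geq 0$. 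Using $-\vep x_n$ in place of $\vep x_n$ gives the supersolution property analogously, so $v$ solves $F(D^2v)=0$ in $B_{3/4}$ in the viscosity sense.

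Finally, the interior $C^{2,\bar{\bar\alpha}}$ estimate for concave uniformly elliptic operators (the exponent $\bar{\bar\alpha}$ from the Introduction) applied to $v$ in $B_{3/4}$ yields $v\in C^{2,\bar{\bar\alpha}}(\overline{B_{1/2}})$ with $\|v\|_{C^{2,\bar{\bar\alpha}}(\overline{B_{1/2}})}\leq C\|v\|_{L^\infty(B_1)}$, where $C=C(n,\lambda,\Lambda)$. The main obstacle is the perturbation step: one needs precisely the $C^{1,\bar\alpha}$ regularity from the previous lemma so that $\varphi-v$ is $o(|x-x_0|)$ across $T$, which is exactly what allows the linear perturbation $\vep x_n$ to push the new contact point off the interface into an open half-ball where the interior viscosity property is available.
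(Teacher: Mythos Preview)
Your argument is correct, but it takes a different route from the paper. The paper's proof is a one-line application of uniqueness: let $w$ solve the Dirichlet problem $F(D^2w)=0$ in $B_1$ with $w=v$ on $\partial B_1$; by concavity $w\in C^{2,\bar{\bar\alpha}}_{\mathrm{loc}}(B_1)$, and since $w$ (being smooth) is trivially a viscosity solution of the flat transmission problem with $g=0$, Corollary~\ref{cor:uniqueness} forces $w=v$. This bypasses entirely the question of verifying the viscosity inequalities for $v$ at points of $T$, and it does not use the previous $C^{1,\bar\alpha}$ lemma.

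Your approach instead proves directly that $v$ is a viscosity solution of $F(D^2v)=0$ across $T$, using the $C^{1,\bar\alpha}$ regularity already established to ensure $\varphi-v=O(|x-x_0|^{1+\bar\alpha})$, which lets the linear tilt $\vep x_n$ push the contact point into $B_1^-$ (or $B_1^+$). The perturbation and limit are fine as written; strict touching guarantees $x_\vep\to x_0$. The trade-off is clear: the paper's proof is shorter and leverages the heavy comparison/uniqueness machinery developed in Section~\ref{sect:FTP}, while yours is self-contained at this stage, relying only on the preceding $C^{1,\bar\alpha}$ lemma and the interior viscosity definition. Your argument would be preferable in a setting where uniqueness for the transmission problem were not yet available.
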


\begin{proof}
Let $w$ be the unique viscosity solution of the Dirichlet problem
 \begin{align*}
\begin{cases}
\F(D^2 w) = 0 & \hbox{in}~B_1\\
w = v & \hbox{on}~\partial B_1.
\end{cases}
\end{align*}
Since $\F$ is a concave operator, we know that $w\in C_{\mathrm{loc}}^{2,\bar{\bar\alpha}}(B_1)$ and
$\| w \|_{C^{2,\bar{\bar\alpha}}(\overline{B_{1/2}})} \leq C \| w \|_{L^\infty(B_1)}$.
By uniqueness of viscosity solutions to flat interface problems (see Corollary~\ref{cor:uniqueness})
it follows that $w=v$ in $B_1$.
\end{proof}

Again a scaling argument gives the following estimate.

\begin{cor} \label{cor:homog2}
Let $0<r\leq 1$. Suppose that $v$ is a bounded viscosity solution of \eqref{pb:flateqhomog}.
Then, for any $0<\rho\leq r/2$, we have that $v^\pm \in C^{2,\bar{\bar\alpha}}(\overline{B_\rho^\pm})$ with
\begin{align*}
\underset{B_{\rho}^\pm}{\osc} \big(v^\pm- \tfrac{1}{2}x^t D^2 v^\pm(0) x-\nabla v(0)\cdot x\big) &\leq C \Big (\frac{\rho}{r} \Big)^{2+\bar{\bar\alpha}} \underset{\overline{B_r}}{\osc} v\\
r^2 \|D^2v^\pm (0)\|+ r |\nabla v(0)| & \leq C \underset{\overline{B_r}}{\osc} v, 
\end{align*}
where $C>0$ depends only on $n$, $\lambda$ and $\Lambda$.
\end{cor}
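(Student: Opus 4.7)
The plan is to reduce to the previous lemma by a standard rescaling argument, and then unfold the $C^{2,\bar{\bar\alpha}}$ information at the origin via a Taylor expansion. I would first define $\tilde v(y) = v(ry) - c$ for $y\in B_1$, where $c$ is chosen (for instance $c=\inf_{\overline{B_r}} v$) so that $\|\tilde v\|_{L^\infty(B_1)} \leq \osc_{\overline{B_r}} v$. A direct computation shows $\tilde v$ satisfies $\tilde \F(D^2 \tilde v) = 0$ in $B_1^\pm$ with the rescaled operator $\tilde\F(M)=\F(r^{-2}M)$, which retains the same ellipticity constants $\lambda,\Lambda$ and concavity as $\F$. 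The transmission condition $(\tilde v)_{x_n}^+ -(\tilde v)_{x_n}^-=r\bigl(v_{x_n}^+ - v_{x_n}^-\bigr)=0$ on $T$ is also preserved. Applying the preceding lemma to $\tilde v$ with operator $\tilde\F$, I obtain $\tilde v^\pm \in C^{2,\bar{\bar\alpha}}(\overline{B_{1/2}^\pm})$ with the universal bound $\|\tilde v^\pm\|_{C^{2,\bar{\bar\alpha}}(\overline{B_{1/2}^\pm})} \leq C\,\osc_{\overline{B_r}} v$.

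Next I would translate back to scale $r$. The transmission condition with $g\equiv 0$ together with continuity of $v$ across $T$ ensures that $\nabla v(0)$ is unambiguously defined (the tangential derivatives match automatically, and $v_{x_n}^+(0)=v_{x_n}^-(0)$). Using $\nabla v(x)=r^{-1}\nabla \tilde v(x/r)$ and $D^2 v^\pm(x)=r^{-2}D^2\tilde v^\pm(x/r)$, the pointwise bounds on $\tilde v$ give immediately
$$
r|\nabla v(0)| + r^2\|D^2 v^\pm(0)\| \leq C\,\osc_{\overline{B_r}} v.
$$
For the oscillation estimate at scale $\rho\le r/2$, I would Taylor expand each side at the origin,
$$
v^\pm(x) = v^\pm(0) + \nabla v(0)\cdot x + \tfrac{1}{2} x^t D^2 v^\pm(0)\,x + R^\pm(x),
$$
with $|R^\pm(x)| \leq [D^2 v^\pm]_{C^{0,\bar{\bar\alpha}}(B_{r/2}^\pm)}\,|x|^{2+\bar{\bar\alpha}}$. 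The Hölder seminorm scales as $[D^2 v^\pm]_{C^{0,\bar{\bar\alpha}}(B_{r/2}^\pm)} = r^{-(2+\bar{\bar\alpha})}[D^2 \tilde v^\pm]_{C^{0,\bar{\bar\alpha}}(B_{1/2}^\pm)} \leq C r^{-(2+\bar{\bar\alpha})}\osc_{\overline{B_r}} v$, which produces the required $(\rho/r)^{2+\bar{\bar\alpha}}$ factor after noting that the constant term $v^\pm(0)$ drops out of the oscillation.

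There is no genuine obstacle here; the only subtlety is ensuring that the constant in the preceding lemma depends only on $n,\lambda,\Lambda$ (and not on the specific operator $\F$), so that it applies verbatim to $\tilde\F$. This is immediate since $\tilde\F$ shares the ellipticity constants and concavity of $\F$, and the proof of the preceding lemma only invokes interior $C^{2,\bar{\bar\alpha}}$ estimates and uniqueness for concave operators with those parameters.
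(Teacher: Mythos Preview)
Your approach is correct and matches the paper's: the paper simply states that ``a scaling argument gives the following estimate'' without further detail, and what you have written is precisely that scaling argument spelled out.

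One small correction: the rescaled operator $\tilde\F(M)=\F(r^{-2}M)$ does \emph{not} retain the ellipticity constants $\lambda,\Lambda$; a direct check shows it lies in $\mathcal{E}(\lambda r^{-2},\Lambda r^{-2})$. Since the constant in the preceding lemma depends on $\lambda$ and $\Lambda$ (not merely their ratio), this would introduce an unwanted $r$-dependence. The fix is immediate: define instead $\tilde\F(M)=r^{2}\F(r^{-2}M)$, which is concave, satisfies $\tilde\F(0)=0$, belongs to $\mathcal{E}(\lambda,\Lambda)$, and still gives $\tilde\F(D^2\tilde v)=0$ because the original equation is homogeneous. With this adjustment your argument goes through verbatim.
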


\begin{proof}[Proof of Theorem~\ref{thm:regflat}]
By interior estimates, it is enough to prove \eqref{eq:regflat} for points on $T\cap \overline{B_{1/2}}$.
Without loss of generality we assume that $u(0)=0$ and $g(0)=0$. Otherwise, we may consider $u-u(0)-\tfrac{g(0)}{2}|x_n|$. 
Let $M=\|u\|_{L^\infty(B_1)} +  \|g\|_{C^{0,\alpha}(T)} + C_{f^-}+C_{f^+}$.  

We will show that there exist $0< \gamma<1$ and $C_0, C_1>0$, depending only on $n$, $\lambda$, $\Lambda$ and $\alpha$, and a sequence of vectors $\{A_k\}_{k=0}^\infty$ such that
\begin{align} \label{eq:induction1}
\underset{B_{\gamma^k}}{\osc} ( u-A_k\cdot x) &\leq C_0 M \gamma ^{k(1+\alpha)}\\
|A_{k}-A_{k-1}| &\leq C_1 M \gamma^{(k-1)\alpha}\label{eq:induction2},
\end{align}
for any $k\geq 0$, where $A_{-1}=0$.
If this holds, then one can check that $A_k\to A_\infty$, as $k\to \infty$, and that
$u\in C^{1,\alpha}(0)$ with 
$$
|u(x)-A_\infty \cdot x| \leq CM |x|^{1+\alpha}.
$$
We prove \eqref{eq:induction1}--\eqref{eq:induction2} by induction. For $k=0$, we set $A_0=0$, and choose $C_0\geq 2$ universal such that 
$$
\underset{B_1}{\osc} u \leq 2\|u\|_{L^\infty(B_1)} \leq C_0 M.
$$
Assume that the estimates hold for some $k\geq0$. 
Let $r=\gamma^k$ and $B=A_k$. Let $v\in C(\overline{B_r})$ be the unique viscosity solution to $\F^\pm(D^2 v) = 0$ in $B_r^\pm$,   with $v_{x_n}^+ -  v_{x_n}^- = 0$ on $T\cap B_r$, and $ v=u-B\cdot x $ on $\partial B_r$,
see Theorem~\ref{thm:existflat}. From the ABP estimate (Theorem~\ref{thm:ABP2}) we have
\begin{equation} \label{eq:MP}
\underset{\overline{B_r}}{\osc} v \leq \underset{\overline{B_r}}{\osc} (u-B\cdot x). 
\end{equation}
Fix $\rho \leq r/2$ to be determined. By Corollary~\ref{cor:homog}, we know that $v\in C^{1,\bar\alpha}(\overline{B_\rho})$, with
\begin{align} \label{eq:decayosc}
\underset{B_{\rho}}{\osc} (v-A\cdot x) &\leq C \Big (\frac{\rho}{r} \Big)^{1+\bar\alpha} \underset{\overline{B_r}}{\osc} v\\
|A| & \leq C \frac{1}{r} \underset{\overline{B_r}}{\osc} v, \label{eq:boundA}
\end{align}
where $A=\nabla v (0)$.
Let $\rho = \gamma r$ and $\vep=\bar\alpha - \alpha>0$. Choose $\gamma\leq 1/2$ small enough so that $C\gamma^\vep\leq1/2$. 
Combining \eqref{eq:MP}, \eqref{eq:decayosc}, and the induction hypothesis,  we see that
\begin{equation}\label{eq:boundv}
\underset{B_\rho}{\osc} (v-A\cdot x) \leq C \Big (\frac{\rho}{r} \Big)^{1+\bar\alpha} \underset{\overline{B_r}}{\osc}  (u-B\cdot x)
\leq C\gamma^{1+\alpha+\vep} C_0 M \gamma^{k(1+\alpha)}\leq\tfrac{1}{2} C_0 M \gamma^{(k+1)(1+\alpha)}.
\end{equation}

Let $w= u - B\cdot x - v$. 
By Theorem~\ref{thm:uniqueness} and the fact that $B\cdot x+v$ is differentiable, we have $w \in {S}_{\lambda/n,\Lambda}(f^\pm)$ in $B_r^\pm$, with $w_{x_n}^+ -  w_{x_n}^- = g$ on $T\cap B_r$, and $w=0$ on $\partial B_r.$
Using the rescaled ABP estimate, and the assumptions on $g$ and $f^\pm$, we get
\begin{align*}\nonumber
\| w \|_{L^\infty(B_\rho)} &\leq C \rho \big(\|g\|_{L^\infty(T \cap B_\rho)} +  \|f^-\|_{L^n(B_\rho^-)}+ \|f^+\|_{L^n(B_\rho^+)}\big) \\ \nonumber
& \leq C \rho^{1+\alpha} \big(\|g\|_{C^{0,\alpha}(T\cap B_\rho)} + C_{f^-} + C_{f^+}\big)\leq CM \rho^{1+\alpha}.
\end{align*}
Choose $C_0\geq 4C$. In view of \eqref{eq:boundv} and the previous estimate, we have
\begin{align*} 
\underset{B_{\gamma^{k+1}}}{\osc} (u-(A+B)\cdot x) &= \underset{B_\rho}{\osc} (u-(A+B)\cdot x) 
\leq \underset{B_\rho}{\osc} w + \underset{B_\rho}{\osc} (v-A\cdot x) \\
&\leq 2C M  \rho^{1+\alpha} + \tfrac{1}{2} C_0 M \gamma^{(k+1)(1+\alpha)}
\leq C_0 M \gamma^{(k+1)(1+\alpha)}.
\end{align*}
Hence, the estimate in \eqref{eq:induction1} holds for $k+1$ with $A_{k+1}=A+B$. To prove \eqref{eq:induction2}, we use \eqref{eq:boundA}, \eqref{eq:MP}, and the induction hypothesis to get
$|A_{k+1}-A_k|=|A| \leq C_1 M \gamma^{k\alpha}$,
where $C_1=CC_0$.
\end{proof}

\begin{proof}[Proof of Theorem~\ref{thm:regflat2}]
It is enough to prove the estimate at $x=0$.
Without loss of generality, we assume that $u(0)=0$ and $g(0)=0$. 
Suppose further that $f^+(0)=f^-(0)=0$. 
Otherwise,  by uniform ellipticity, there exists $s \in \R$ such that $\F(sI)=f^\pm(0)$ and $|s|\leq |f^\pm(0)|/\lambda$. Hence, consider $\tilde \F(M)= \F(M+s I) - f^\pm (0)$ and $v= u- \tfrac{s}{2}|x|^2$. We have that $\tilde \F \in \mathcal{E}(\lambda, \Lambda)$, $\tilde \F(0)=0$, and $\tilde \F(D^2 v)= f^\pm-f^\pm(0)$.

Let $M=\|u\|_{L^\infty(B_1)} +  \|g\|_{C^{1,\alpha}(T)} + \|f^-\|_{C^{0,\alpha}(\overline{B_1^-})} + \|f^+\|_{C^{0,\alpha}(\overline{B_1^+})}$.  
We will show that there exist $0< \gamma<1$ and $C_0, C_1>0$, depending only on $n$, $\lambda$, $\Lambda$ and $\alpha$, and sequences of quadratic polynomials $P_k^\pm= \tfrac{1}{2} x^t A_k^\pm x + B_k \cdot x$, $k\geq0$, such that
\begin{align} \label{eq:induction3}
\underset{B_{\gamma^k}}{\osc} ( u^\pm- P_k^\pm ) &\leq C_0 M \gamma ^{k(2+\alpha)}\\
\gamma^{k-1} \|A_{k}^\pm-A_{k-1}^\pm\| + |B_{k}-B_{k-1}|   &\leq C_1 M \gamma^{(k-1)(1+\alpha)}, \label{eq:induction4}
\end{align}
for any $k\geq 0$, where $A_{-1}^\pm=0$ and $B_{-1}=0$. Furthermore, 
\begin{align} \label{eq:FAF}
\F (A_k^\pm) &=0\\
(A_k^+)_{ij} - (A_k^-)_{ij} = 0 \ \hbox{if}~i,j \neq n 
 & \quad \hbox{and} \quad
(A_k^+)_{jn} - (A_k^-)_{jn}=  g_{x_j} (0) \  \hbox{if}~ j \neq n, \label{eq:matrix}
\end{align}
where $A_{ij}$ denotes the element in the $(i,j)$-entry of the matrix $A$.

The proof is by induction. For $k=0$, choose $B_0=0$ and $A_0^\pm$ symmetric such that 
\eqref{eq:matrix} holds and $(A_0^\pm)_{nn}$ is given by $\F(A_0^\pm)=0$. Then $\|A_0^\pm\| \leq C_1 |\nabla ' g(0)|  \leq C_1 M$, for some $C_1>0$. Moreover, choose $C_0>1$ large so that
$$
\underset{B_1}{\osc} ( u^\pm- P_0^\pm ) \leq 2 (\|u\|_{L^\infty(B_1)} + \|A_0^\pm\|) \leq 2(1+C_1)M \leq C_0 M. 
$$

Assume that \eqref{eq:induction3}--\eqref{eq:matrix} hold for some $k\geq0$.
Let $r=\gamma^k$ and $P_k= P_k^+ \chi_{\overline{B_r^+}} + P_k^-\chi_{\overline{B_r^-}}$. Note that, by \eqref{eq:matrix}, we have $P_k^+= P_k^-$ on $T \cap B_r$, and thus, $P_k \in C(\overline{B_r})$.
Let $v\in C(\overline{B_r})$ be the viscosity solution to
\begin{align*}
\begin{cases}
\F(D^2 v+A_k^\pm) = 0 & \hbox{in}~B_r^\pm\\
 v_{x_n}^+ -  v_{x_n}^- = 0 & \hbox{on}~T\cap B_r\\
 v=u- P_k & \hbox{on}~\partial B_r,
\end{cases}
\end{align*}
see Theorem~\ref{thm:existflat}. From the ABP estimate (Theorem~\ref{thm:ABP2}) and the fact that $\F(A_k^\pm)=0$,
\begin{equation} \label{eq:MP2}
\underset{\overline{B_r}}{\osc} v \leq \underset{\overline{B_r}}{\osc} (u-P_k). 
\end{equation}
Fix $\rho \leq r/2$ to be determined. By Corollary~\ref{cor:homog2}, we know that $v^\pm\in C^{2,\bar{\bar\alpha}}(\overline{B_\rho^\pm})$, with
\begin{align} \label{eq:decayosc2}
\underset{B_{\rho}^\pm}{\osc} (v^\pm- P^\pm) &\leq C \Big (\frac{\rho}{r} \Big)^{2+\bar{\bar\alpha}} \underset{\overline{B_r}}{\osc} v\\\
r^2 \|Q^\pm\|+ r|R| & \leq C \underset{\overline{B_r}}{\osc} v, \label{eq:boundQR}
\end{align}
where $P^\pm(x)=x^t Q^\pm x+R\cdot x$, $Q^\pm =D^2 v^\pm (0)$, and  $R=\nabla v (0)$. Furthermore,
since $D^2 v^\pm$ is continuous up to the interface, then by the equation, $\F(D^2 v + A_k^\pm)=0$, it follows that
\begin{equation} \label{eq:FAFk}
\F(Q + A_k^\pm)=0.
\end{equation}

Let $\rho = \gamma r$ and $\vep=\bar{\bar\alpha} - \alpha>0$. Choose $\gamma\leq 1/2$ small enough so that $C\gamma^\vep\leq1/2$. 
Combining \eqref{eq:MP2}, \eqref{eq:decayosc2}, and the induction hypothesis,  we see that
\begin{equation}\label{eq:boundv2}
\begin{aligned}
\underset{B_\rho^\pm}{\osc} (v^\pm-P^\pm) &\leq C \Big (\frac{\rho}{r} \Big)^{2+\bar{\bar\alpha}} \underset{\overline{B_r}}{\osc}  (u-P_k) 
&\leq\tfrac{1}{2} C_0 M \gamma^{(k+1)(2+\alpha)}.
\end{aligned}
\end{equation}

Let $w= u - P_k - v$. Then $w \in {S}_{\lambda/n,\Lambda}(f^\pm)$ in $B_r^\pm$, with $w_{x_n}^+ -  w_{x_n}^- = g-\nabla'g(0)\cdot x'$ on $T\cap B_r$, and $w = 0$ on $\partial B_r$.
Note that the term $\nabla'g(0)\cdot x'$ comes from \eqref{eq:matrix} since for any $x=(x',0)\in T\cap B_r$, we have
$$
(P_k^+)_{x_n}(x) - (P_k^-)_{x_n}(x) = \sum_{j=1}^{n-1} \big( (A_k^+)_{jn} - (A_k^-)_{jn}\big) x_j = \sum_{j=1}^{n-1}  g_{x_j} (0) x_j
= \nabla'g(0)\cdot x'.
$$
Using the rescaled ABP estimate, and the assumptions on $g$ and $f^\pm$, we get
\begin{align*}
\| w \|_{L^\infty(B_\rho)} &\leq C \rho \big(\|g-g(0)-\nabla'g(0)\cdot x'\|_{L^\infty(T \cap B_\rho)} \\
&\quad\qquad+  \|f^--f^-(0)\|_{L^n(B_\rho^-)}+ \|f^+-f^+(0)\|_{L^n(B_\rho^+)}\big) \\ \nonumber
& \leq C \rho^{2+\alpha} \big([g]_{C^{1,\alpha}(0)} + [ f^-]_{C^{0,\alpha}(0)}  +  [f^+]_{C^{0,\alpha}(0)} \big)\leq CM \rho^{2+\alpha}.
\end{align*}
Choose $C_0\geq 4C$. In view of \eqref{eq:boundv2} and the previous estimate, we have
\begin{align*} 
\underset{B_{\gamma^{k+1}}^\pm}{\osc} (u^\pm-P_k^\pm-P^\pm) &
\leq \underset{B_\rho}{\osc} w + \underset{B_\rho^\pm}{\osc} (v^\pm-P^\pm) 
\leq C_0 M \gamma^{(k+1)(2+\alpha)}.
\end{align*}
Hence, the estimate in \eqref{eq:induction3} holds for $k+1$ with $P_{k+1}=P_k+P$. To prove \eqref{eq:induction4}, we use \eqref{eq:boundQR}, \eqref{eq:MP2}, and the induction hypothesis to get
$$
\gamma^{k} \|A_{k+1}^\pm-A_k^\pm\| + |B_{k+1}-B_k|=\gamma^k \|Q^\pm \| + |R| \leq C_1 M \gamma^{k(1+\alpha)},
$$ 
where $C_1=CC_0$. Since $A_{k+1}^\pm = A_k^\pm+ Q$, then \eqref{eq:FAF} follows from \eqref{eq:FAFk}. Moreover, \eqref{eq:matrix} follows from the induction hypothesis.
\end{proof}

\section{Approximation and stability results} \label{sec:approxlem}
 
In this section we prove several approximation and stability results for viscosity solutions to
\eqref{eq:TP} that will be useful for the proof of the $C^{1,\alpha}$ and $C^{2,\alpha}$ estimates in the following sections. 
We point out that in the case of $C^{1,\alpha}$ interfaces our proof is constructive, given that in the closedness result
Lemma~\ref{lem:closedness}, the condition $\Gamma_k\in C^2$ cannot be relaxed.

\subsection{Closedness}

First, we prove that viscosity solutions to transmission problems \eqref{eq:TP} with $C^2$ interfaces are closed under uniform limits.

\begin{lem} \label{lem:closedness}
Assume that $\Gamma_k \in C^2$ and $u_k \in C(B_1)$ satisfy
$$
\begin{cases}
\F_k^\pm(D^2u_k) = \ f_k^\pm & \hbox{in}~\Omega_k^\pm\\
(u_k^+)_\nu- (u_k^-)_\nu = g_k & \hbox{on}~\Gamma_k,
\end{cases}
$$
where $\Gamma_k=B_1\cap \{x_n=\psi_k(x')\}$ for $\psi_k \in C^2(B_1')$, $f_k^\pm \in C(\Omega_k^\pm \cup \Gamma_k)$, and $g_k \in C(\Gamma_k)$, for $k\geq1$.
Suppose that there are continuous functions $u$, $f^{\pm}$ and $g$, and elliptic operators $F^{\pm}\in\mathcal{E}(\lambda,\Lambda)$
such that, as $k\to\infty$,
\begin{enumerate}[$(i)$]
\item $\F_k^{\pm}\to \F^{\pm}$ uniformly on compact subsets of $\mathcal{S}^n$;
\item $u_k\to u$ uniformly on compact subsets of $B_1$;
\item $\|f_k^\pm-f^\pm\|_{L^\infty(\Omega_k^\pm)}\to 0$;
\item $\|g_k-g\|_{L^\infty(\Gamma_k)}=\sup_{x'\in B_1'} |g_k(x',\psi_k(x'))-g(x',0)|\to 0$;
\item $\Gamma_k\to T$ in $C^2$ in the sense that $\|\psi_k\|_{C^2(B_1')}\to 0$. 
\end{enumerate}
Then $u \in C(B_1)$ is a viscosity solution to  
$$
\begin{cases}
\F^\pm(D^2u) = f^{\pm} & \hbox{in}~B_1^\pm\\
u^+_{x_n}- u^-_{x_n} = g & \hbox{on}~T.
\end{cases}
$$
\end{lem}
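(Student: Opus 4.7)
The plan is to verify that $u$ satisfies both the viscosity subsolution and the viscosity supersolution conditions for the limiting problem, using the equivalent formulation of Lemma~\ref{lem:equivdef}: an interior Pucci-type inequality at each point of $B_1\setminus T$, and the alternative (interior inequality on at least one side or transmission inequality) at each point of $T$. I will describe only the subsolution argument, the supersolution case being symmetric.

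For an interior point $x_0\in B_1^{\pm}$, I take a test function $\varphi\in C^2$ touching $u$ strictly from above at $x_0$. Since $\|\psi_k\|_{C^2(B_1')}\to 0$, for $k$ large a neighborhood of $x_0$ lies in $\Omega_k^{\pm}$, so a standard viscosity stability argument based on $u_k\to u$ uniformly produces points $x_k\in\Omega_k^{\pm}$ with $x_k\to x_0$ at which $\varphi$, shifted by the appropriate constant, touches $u_k$ from above. I then invoke the subsolution property of $u_k$ and pass to the limit using (i) and (iii) to obtain $F^{\pm}(D^2\varphi(x_0))\geq f^{\pm}(x_0)$.

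The substantive case is $x_0\in T$. Take $\varphi$ that is $C^2$ on each closed half-ball $\overline{B^{\pm}_\delta(x_0)}$ and touches $u$ strictly from above at $x_0$ (adding $\vep|x-x_0|^2$ and letting $\vep\to 0$ if needed). The idea is to pull $\varphi$ back through the $C^2$ flattening of $\Gamma_k$, setting
\[
\tilde\varphi_k(x) := \varphi(x',\, x_n-\psi_k(x')).
\]
Since $\psi_k\to 0$ in $C^2(B_1')$, the function $\tilde\varphi_k$ is $C^2$ on each closed half-ball on either side of $\Gamma_k$, satisfies $\tilde\varphi_k(x',\psi_k(x'))=\varphi(x',0)$, and converges to $\varphi$ uniformly. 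Minimizers $x_k$ of $\tilde\varphi_k-u_k$ over $\overline{B_\delta(x_0)}$ therefore exist and satisfy $x_k\to x_0$, and $\tilde\varphi_k$ minus a constant touches $u_k$ from above at $x_k$.

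Along a subsequence, either $x_k\in\Omega_k^{\pm}$ or $x_k\in\Gamma_k$. In the first case, the subsolution property of $u_k$ gives $F_k^{\pm}(D^2\tilde\varphi_k(x_k))\geq f_k^{\pm}(x_k)$, and direct differentiation of $\tilde\varphi_k$ shows $D^2\tilde\varphi_k(x_k)\to D^2\varphi^{\pm}(x_0)$ because $\|\psi_k\|_{C^2}\to 0$; together with (i) and (iii) this yields $F^{\pm}(D^2\varphi^{\pm}(x_0))\geq f^{\pm}(x_0)$. In the second case, with $\nu_k=(1+|\nabla'\psi_k|^2)^{-1/2}(-\nabla'\psi_k,1)$, the transmission inequality for $u_k$ at $x_k\in\Gamma_k$ reads $(\tilde\varphi_k)^{+}_{\nu_k}(x_k)-(\tilde\varphi_k)^{-}_{\nu_k}(x_k)\geq g_k(x_k)$, and a direct calculation using
\[
\nabla\tilde\varphi_k^{\pm}(x',\psi_k(x')) = \bigl(\nabla'\varphi^{\pm}(x',0)-\varphi^{\pm}_{x_n}(x',0)\,\nabla'\psi_k(x'),\,\varphi^{\pm}_{x_n}(x',0)\bigr)
\]
shows that the left side converges to $\varphi^{+}_{x_n}(x_0)-\varphi^{-}_{x_n}(x_0)$ as $\nabla'\psi_k\to 0$ uniformly; combined with (iv) this gives the transmission inequality for $\varphi$ at $x_0$. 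Either outcome matches Lemma~\ref{lem:equivdef}, finishing the subsolution proof. The main obstacle is precisely this test-function transfer at points of $T$: one must turn a test function adapted to the flat interface into one $C^2$ on each side of $\Gamma_k$ and ensure that both the Hessians (Case A) and the conormal derivatives $\nu_k\cdot\nabla\tilde\varphi_k^{\pm}$ (Case B) converge to their flat-interface counterparts, which is exactly why the hypothesis $\|\psi_k\|_{C^2}\to 0$ cannot be relaxed as the authors note.
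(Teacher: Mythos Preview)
Your proof is correct and follows essentially the same route as the paper: transfer the test function to $\Gamma_k$ via $\tilde\varphi_k(x)=\varphi(x',x_n-\psi_k(x'))$, locate touching points $x_k\to x_0$, split into the cases $x_k\in\Omega_k^{\pm}$ versus $x_k\in\Gamma_k$, and pass to the limit using $\|\psi_k\|_{C^2}\to 0$. The only cosmetic difference is that you verify the equivalent alternative of Lemma~\ref{lem:equivdef} directly, whereas the paper argues by contradiction, first modifying $\varphi$ by $\eta|x_n|-Cx_n^2$ so that the interior inequalities also fail strictly.
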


\begin{proof}
We only prove that $u$ is a viscosity subsolution. 
First, we show that
$$
\F^\pm(D^2u) \geq f^\pm   \qquad\hbox{in}~B_1^\pm.
$$
Suppose by contradiction that this fails. Then there is a point $x_0\in B_1^\pm$ and a test function $\varphi \in C^2(B_1^\pm)$ such that $\varphi$ touches $u$ from above at  $x_0$, and
$$
\F^\pm(D^2\varphi(x_0)) < f^\pm(x_0). 
$$
Without loss of generality, we can assume that
$x_0\in B_1^+$, and that $\varphi$ touches $u$ strictly from above. Otherwise, we can replace $\varphi$ by 
$
\varphi + \vep|x-x_0|^2,
$
with $\vep$ small.
Then, since $u_k\to u$ uniformly on compact sets, there exists $\vep_k>0$
such that $\varphi+\vep_k \geq u_k$ in $\overline{B_r(x_0)}\subset B_1^+$, for $k$ large and  some $r$ small.
Define
$$
d_k=\inf_{B_{r_k}(x_0)} (\varphi+\vep_k -u_k)\geq0,
$$
with $0<r_k< r$ and  $r_k \searrow 0$. Since $\Gamma_k \to T$, we can choose $r_k$ such that $\overline{B_{r_k}(x_0)} \subset \Omega_k^+$, for $k$ large. Let $x_k \in \Omega_k^+$ be a point where the infimum is attained, that is, 
$$
d_k = \varphi(x_k)+\vep_k -u_k(x_k)
$$
and define $c_k = \vep_k - d_k$. Then $x_k\to x_0$, $c_k \to 0$, 
and $\varphi+c_k$ touches $u_k$ from above at $x_k \in \Omega^+_k$, for $k$ large.
Hence, since $\F_k^+(D^2u_k(x_k)) \geq f^+_k$ in $\Omega_k^+$, we must have
$$
\F_k^+ (D^2\varphi(x_k))\geq f^+_k(x_k) .
$$
Passing to the limit as $k\to\infty$, we get
$$
\F^+(D^2\varphi(x_0))\geq f^+(x_0),
$$
which is a contradiction. 
It remains to show that the transmission condition holds. If not, there exists $x_0\in T$, $r>0$ small, and $\varphi \in C^2(\overline{B_{r}^\pm(x_0)})$ such that $\varphi$ touches $u$ from above at $x_0$, and
$$\varphi^+_{x_n}(x_0)- \varphi^-_{x_n}(x_0) < g(x_0).$$
We can assume that $\varphi$ touches $u$ strictly from above at $x_0$, and that 
$$ \F^\pm(D^2\varphi(x_0)) < f^\pm(x_0).$$
Otherwise, we can replace $\varphi$ by
 $
 \varphi(x)+\eta |x_n| - C|x_n|^2,
 $
 with $\eta$ small and $C$ large such that $\eta |x_n| - C|x_n|^2\geq0$ in a small neighborhood of $x_0$.
Arguing as before, there exist $c_k, r_k, x_k$ such that $\phi(x)=\varphi(x',x_n-\psi_k(x'))+ c_k$ touches $u_k$ from above at $x_k\in B_{r_k}(x_0)$, with $c_k\to 0$, $x_k\to x_0$ and $r_k \to 0$. Then either there exists $k_0\geq 1$ such that for every $k\geq k_0$ we have $x_k \in \Omega_k^\pm$, and thus,
$$
\F_k^\pm(D^2 \phi(x_k))\geq f^\pm_k (x_k), 
$$
or, for every $k_0\geq1$, there exists $k \geq k_0$ such that $x_k\in \Gamma_k$ and
$$
\phi_{\nu_k}^+(x_k) -  \phi_{\nu_k}^-(x_k) \geq g_k(x_k).
$$
Passing to the limit, we get a contradiction in both cases.
\end{proof}

\subsection{Stability for $C^{1,\alpha}$ interfaces} 

We distinguish two cases in the transmission condition in \eqref{eq:TP}. If $g$ is close to $0$,
then we will approximate $u$ with a differentiable function across $\Gamma$ (Lemma~\ref{lem:approx1}).
If $g$ is away from $0$, then $u$ is singular at the interface and we will approximate $u$ with solutions to flat interface problems
(Lemma~\ref{lem:stability}). Along the way, we prove the stability of flat solutions (Lemma~\ref{lem:stabilityflat}).
Observe that our methods here are constructive in nature.

\begin{lem}[Stability for $g$ close to $0$] \label{lem:approx1}
Fix $0<\alpha<\gamma<\bar\alpha$, $0<\tau <3/4$, and $0<\delta<1$. 
Assume that $\psi \in C^{1,\alpha}(\overline{B_1'})$ and $u\in C(B_1)$ is a viscosity solution to \eqref{eq:TP}, with $\|u\|_{L^\infty(B_1)}\leq 1$
and
$$\|g\|_{L^\infty(\Gamma_{3/4})}+\|f^-\|_{L^n(\Omega^-)}+\|f^+\|_{L^n(\Omega^+)} \leq \delta.$$
Then there exists 
$\theta>0$, depending only on $n$, $\lambda$ and $\Lambda$, such that, if \eqref{eq:closeness2} holds,
then there exists a viscosity solution $v\in C_{\mathrm{loc}}^{1,\gamma}(B_{3/4})\cap C^{0,\beta}(\overline{B_{3/4}})$ to $F^\pm(D^2 v)=0$ in $\Omega^\pm_{3/4}$ with $v=u$ on $ \partial B_{3/4}$ such that
\begin{align*}
\| u - v \|_{L^\infty(B_{3/4-\tau})} &\leq C(\tau^\beta+\delta),
\end{align*} 
for some $C>0$ depending only on $n$, $\lambda$, $\Lambda$, $\alpha$ and $\|\psi\|_{C^{1,\alpha}(\overline{B_1'})}$, where $\beta \equiv \alpha_1/2<1$ and $\alpha_1>0$ is given in Theorem~\ref{thm:holder}.
\end{lem}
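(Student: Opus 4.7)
The plan is to build $v$ as the unique viscosity solution of the auxiliary homogeneous transmission problem
\begin{equation*}
\begin{cases}
F^\pm(D^2 v) = 0 & \text{in } \Omega^\pm_{3/4}, \\
v^+_\nu - v^-_\nu = 0 & \text{on } \Gamma \cap B_{3/4}, \\
v = u & \text{on } \partial B_{3/4},
\end{cases}
\end{equation*}
and then to compare $u$ with $v$ through the ABP estimate. Existence and uniqueness of $v$ follow by adapting the Perron-method proof of Theorem~\ref{thm:existflat} together with the comparison principle (Theorem~\ref{thm:comparison}) to the curved-interface setting; since the transmission forcing $g\equiv 0$ in this auxiliary problem, the sub/supersolution barriers built in Lemma~\ref{lem:Perronbarriers} transfer with only cosmetic modifications (replace $|x_n|$ by a distance-to-$\Gamma$ function). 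The global Hölder modulus $v\in C^{0,\beta}(\overline{B_{3/4}})$ with $\beta=\alpha_1/2$ is then read off from Proposition~\ref{prop:globalholder}, whose hypotheses are met because $g\equiv 0$ on $\Gamma$.

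The heart of the argument is to verify the interior $C^{1,\gamma}_{\mathrm{loc}}(B_{3/4})$ estimate for $v$, and this is where condition~\eqref{eq:closeness2} is used in an essential way. I intend to run a contradiction/compactness argument: if no universal $\theta$ sufficed, one would obtain sequences of problems with closeness parameters $\theta_k\downarrow 0$ whose solutions $v_k$ violate the interior $C^{1,\gamma}$ estimate uniformly. The uniform $C^{0,\alpha_1}$ bound from Theorem~\ref{thm:holder} provides compactness, so $v_k\to v_\infty$ along a subsequence. To pass to the limit in the two-sided equation, I would approximate the $C^{1,\alpha}$ graph $\psi$ by a sequence $\psi_j\in C^2$ with $\psi_j\to \psi$ in $C^{1,\alpha}$, apply Lemma~\ref{lem:closedness} along a diagonal subsequence, and use that $F_k^+-F_k^-\to 0$ to conclude that $v_\infty$ solves a single uniformly elliptic equation $F_\infty(D^2 v_\infty)=0$ across the (limit) interface. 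The classical interior $C^{1,\bar\alpha}$ theory then contradicts the assumed failure, fixing $\theta$ in terms of $n,\lambda,\Lambda$ alone. I expect this to be the main obstacle: one has to ensure that the $C^2$-regularization of $\Gamma$ is compatible with the Perron-constructed solutions, and that the hypotheses of Lemma~\ref{lem:closedness} survive the diagonal extraction without demanding more regularity from $\psi$ than what is allowed.

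Finally, setting $w=u-v$, a curved-interface analogue of Theorem~\ref{thm:uniqueness} (the proof only uses the ABP structure and not the flatness of $\Gamma$) shows that $w\in\underline S_{\lambda/n,\Lambda}(f^\pm)\cap\overline S_{\lambda/n,\Lambda}(f^\pm)$ in $\Omega^\pm_{3/4}$, with $w^+_\nu-w^-_\nu=g$ on $\Gamma\cap B_{3/4}$ and $w\equiv 0$ on $\partial B_{3/4}$. Applying the ABP estimate of Theorem~\ref{thm:ABP2} to $\pm w$ gives
\[
\|w\|_{L^\infty(B_{3/4})}\leq C\bigl(\|g\|_{L^\infty(\Gamma_{3/4})}+\|f^+\|_{L^n(\Omega^+)}+\|f^-\|_{L^n(\Omega^-)}\bigr)\leq C\delta.
\]
The additional $\tau^\beta$ contribution in the stated bound arises from the boundary-layer step of the compactness construction of $v$: if one first solves the auxiliary problem on the slightly smaller ball $B_{3/4-\tau/2}$ and then matches boundary values to $u$ on $\partial B_{3/4}$, the joint $C^{0,\beta}$ modulus of $v$ together with the Hölder continuity of $u$ on the annulus of thickness $\tau$ each contribute a term of order $\tau^\beta$, which is then absorbed into the final constant.
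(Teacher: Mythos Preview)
Your approach has a genuine gap at its foundation. You assert that the comparison principle (Theorem~\ref{thm:comparison}), the subtraction result (Theorem~\ref{thm:uniqueness}), and Perron existence carry over to curved interfaces with ``only cosmetic modifications,'' and in particular that ``the proof only uses the ABP structure and not the flatness of $\Gamma$.'' This is not correct. The proof of Theorem~\ref{thm:uniqueness} relies essentially on the sup/inf $\vep$-envelopes taken \emph{in the $x'$-direction} (Definition~\ref{def:envelopes} and Proposition~\ref{prop:envelopes}), and those regularizations preserve the sub/supersolution property precisely because the interface $\{x_n=0\}$ is invariant under $x'$-translations. For a curved $C^{1,\alpha}$ graph this invariance fails, and there is no obvious substitute envelope that is simultaneously $C^{1,1}_{x'}$ from one side and respects the transmission condition on $\Gamma$. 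The paper flags this explicitly: its proof for $C^{1,\alpha}$ interfaces is kept ``purely constructive'' exactly to avoid needing a curved-interface comparison principle. Without that principle you cannot build your $v$, and without the curved analogue of Theorem~\ref{thm:uniqueness} you cannot justify that $w=u-v$ satisfies $w^+_\nu-w^-_\nu=g$ in the viscosity sense.

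The paper's route is quite different and sidesteps all of this. It never solves a curved transmission problem for $v$. Instead it interpolates the two operators smoothly across $\Gamma$ via
\[
F_\vep(M,x)=h_\vep(x)F^+(M)+(1-h_\vep(x))F^-(M),
\]
and lets $v_\vep$ be the viscosity solution of the \emph{single} Dirichlet problem $F_\vep(D^2v_\vep,x)=0$ in $B_{3/4}$ with $v_\vep=u$ on $\partial B_{3/4}$. The closeness hypothesis~\eqref{eq:closeness2} then gives $\beta_{F_\vep}(x)\le 2\theta$, so Caffarelli's small-oscillation $C^{1,\bar\gamma}$ interior estimate applies directly and uniformly in $\vep$; this fixes $\theta$ and yields $v_\vep\to v$ in $C^{1,\gamma}_{\mathrm{loc}}(B_{3/4})$ with no compactness-by-contradiction argument, no $C^2$ approximation of $\Gamma$, and no appeal to Lemma~\ref{lem:closedness}. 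Because $v\in C^{1,\gamma}_{\mathrm{loc}}$ is classically differentiable across $\Gamma$, the transmission condition for $w=u-v$ is inherited pointwise from $u$, again bypassing any curved analogue of Theorem~\ref{thm:uniqueness}. Finally, the $\tau^\beta$ term has a clean origin that your write-up misidentifies: $v$ is $C^{1,\gamma}$ only in the interior, so one applies the ABP estimate to $w$ on $B_{3/4-\tau}$, where the boundary values satisfy $|w|\le [w]_{C^{0,\beta}}\tau^\beta$ since $w=0$ on $\partial B_{3/4}$.
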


\begin{proof}
Fix $0<\rho< 1/2$, $0<\delta<1$, and $\theta>0$ to be determined. 
Given $\vep>0$ small, for $x\in B_1$ we define
$$
\F_\vep(M,x) = h_\vep(x) \F^+(M) + ((1-h_\vep(x)) \F^-(M),
$$
where $h_\vep \in C^\infty (B_1)$, $0 \leq h_\vep \leq 1$, and
$$
h_\vep(x) =
\begin{cases}
1 & \hbox{if}~x\in B_1 \cap  \{x_n> \psi(x')+\vep\}\\
0 & \hbox{if}~x\in B_1\cap \{x_n<\psi(x')-\vep\}.
\end{cases}
$$
Note that $\F_\vep \in \mathcal{E}({\lambda,\Lambda})$ and  $\F_\vep(0,x)\equiv 0$. Moreover,
it is easy to see that $\F_\vep \to \F^\pm$ uniformly in compact subsets of $\mathcal{S}^n \times \Omega^\pm$.

Let $v_\vep$ be the viscosity solution to
$$
\begin{cases}
\F_\vep(D^2 v_\vep,x) = 0 & \hbox{in}~B_{3/4}\\
v_\vep = u & \hbox{on}~\partial B_{3/4}.
 \end{cases}
$$
For $x\in B_1$, define
$$
\beta_\vep(x) = \sup_{M\in \mathcal{S}^n \setminus\{0\}} \frac{|\F_\vep(M,x)-\F_\vep(M,0)|}{\|M\|}.
$$
By the previous estimate and the fact that $0\leq h_\vep \leq 1$, we have
 $$
\beta_\vep(x) \leq (1-h_\vep(x)) \theta 
+ (1-h_\vep(0))  \theta \leq 2\theta
 $$
for all $x\in B_1$. Hence, for any $0<r\leq 1$, it follows that
$$
\Big(\fint_{B_r} \beta_\vep(x)^n \, dx\Big)^{1/n} \leq 2\theta.
$$
Choose $0<\theta\leq \theta_0/2$, where $\theta_0>0$ depends only $n$, $\lambda$ and $\Lambda$
and is sufficiently small so that 
$v_\vep\in C_{\mathrm{loc}}^{1,\bar\gamma}(B_{3/4})$, for some $0<\gamma<\bar\gamma<\bar\alpha$ and, for any $0<\rho<3/4$,
$$
\| v_\vep\|_{C^{1,\bar\gamma}(\overline{B_{\rho}})}\leq C_0 \|u\|_{L^\infty(B_1)}\leq C_0.
$$
By compactness, $v_\vep \to v$ in $C_{\mathrm{loc}}^{1,\gamma}(B_{3/4})$ as $\vep \to 0$.
Moreover, by the closedness of viscosity solutions under uniform limits, $v$ satisfies
\begin{equation} \label{eq:equationforv}
\F^\pm(D^2 v) = 0 \quad \hbox{in}~\Omega^\pm_{3/4}
\end{equation}
in the viscosity sense.
Let $w=u-v$. By Theorem~\ref{thm:holder}, we have that $u\in C^{0,\alpha_1}(\overline{B_{3/4}})$ and
$$
\|u\|_{C^{0,\alpha_1}(\overline{B_{3/4}})} \leq C ( 1 + \delta) \leq 2C,
$$
for some $C>0$ depending only on $n$, $\lambda$, $\Lambda$, $\alpha$, and $\|\psi\|_{C^{1,\alpha}(\overline{B_1'})}$.
It follows that $v_\vep \in C^{0,\beta}(\overline{B_{3/4}})$, with $\beta\equiv\alpha_1/2$, and 
$$
\| v_\vep\|_{C^{0,\beta}(\overline{B_{3/4}})} \leq C \|u\|_{C^{0,\alpha_1}(\partial B_{3/4})}  \leq C_1.
$$
Then $w\in C^{0,\beta}(\overline{B_{3/4}})$, with  $w=0$ on $\partial B_{3/4}$, and for any $0<\tau <1/4$,
$$
\| w \|_{L^\infty(\partial B_{3/4-\tau})} \leq [w]_{C^{0,\beta}(\overline{B_{3/4}})} \tau^\beta \leq C_2 \tau^\beta, 
$$
where $C_2=2C+C_1$. Also, since $u$ and $v$ satisfy \eqref{eq:TP}  and \eqref{eq:equationforv}, respectively,  then
$$
\begin{cases}
w \in S_{\lambda/n,\Lambda}(f^\pm) & \hbox{in}~\Omega^\pm_{3/4-\tau}\\
 w_{\nu}^+ -  w_{\nu}^- = g & \hbox{on}~\Gamma_{3/4-\tau}.
 \end{cases}
 $$
 From the ABP estimate (Theorem~\ref{thm:ABP2}) and the assumptions on $g$ and $f^\pm$, we conclude that
 $\| u - v \|_{L^\infty(B_{3/4-\tau})} \leq C(\tau^\beta+\delta)$.
\end{proof}

For $|a|<1/2$, define $T_a=B_1\cap \{x_n=a\}$. Consider the flat interface transmission problem
\begin{equation}
\begin{cases} \label{eq:flata}
\F^\pm(D^2 v) =0 & \hbox{in}~B_1\setminus T_{a} \\
v_{x_n}^+ - v_{x_n}^-=g_a & \hbox{on}~T_{a},
 \end{cases}
\end{equation}
where $g_a$ is a smooth mollification of $\chi_{T_a}$ such that $\supp(g_a)\subset B_{3/4} \cap T_{a}$. For convenience, when $a=0$ we call the solution $v_0$. Since $g_a$ has compact support, Proposition~\ref{prop:globalholder}, with $\rho=1/8$, gives global H\"{o}lder continuity of solutions to \eqref{eq:flata}.

\begin{cor} \label{cor:globalholderflat}
Fix $0<\gamma<1$.
Let $v$ be a viscosity solution to \eqref{eq:flata} in $B_1$, with $\varphi=v|_{\partial B_1} \in C^{0,\gamma}(\partial B_1)$. Then $v\in C^{0,\beta}(\overline{B_1})$ with $0<\beta\leq \min\{\alpha_1, \gamma/2\}$ and
$$
\|v\|_{C^{0,\beta}(\overline{B_1})} \leq C \big ( 1+ \|\varphi\|_{C^{0,\gamma}({\partial B_1})}\big),
$$
where $0<\alpha_1<1$ is given in Theorem~\ref{thm:holder} and  $C>0$ depends only on $n$, $\lambda$, $\Lambda$ and $\gamma$.
\end{cor}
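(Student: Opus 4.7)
The plan is to invoke Proposition~\ref{prop:globalholder} directly, since the setting of Corollary~\ref{cor:globalholderflat} is a special case: the interface is flat and the transmission datum has compact support. The only thing to check is that all the hypotheses of the proposition are met with a universal bound on the relevant geometric and data-norm quantities.

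First, I would translate the setting. The interface $T_a = B_1 \cap \{x_n = a\}$ is the graph of the constant function $\psi(x') \equiv a$, which lies in $C^{1,\gamma}(\overline{B_1'})$ for every $\gamma\in(0,1)$, with $[\psi]_{C^{1,\gamma}}=0$ and $\|\psi\|_{C^{1,\gamma}(\overline{B_1'})} = |a| \le 1/2$ bounded universally. Since $F^\pm\in\mathcal{E}(\lambda,\Lambda)$, the equation $F^\pm(D^2v)=0$ in $B_1\setminus T_a$ together with uniform ellipticity implies $v\in S_{\lambda,\Lambda}(0)$ in $\Omega^\pm = B_1\cap\{\pm(x_n-a)>0\}$. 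The transmission condition $v^+_{x_n}-v^-_{x_n}=g_a$ on $T_a$, the boundary condition $v=\varphi$ on $\partial B_1$, and the hypothesis $\varphi\in C^{0,\gamma}(\partial B_1)$ match the hypotheses of Proposition~\ref{prop:globalholder} with exponent $\gamma$ in place of $\alpha$.

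Next, the key compactness hypothesis on $g$ is met with room to spare: by construction $\supp(g_a)\subset B_{3/4}\cap T_a$, and $3/4 = 1-2\rho$ for $\rho = 1/8$, so $\supp(g_a)\subset \Gamma \cap B_{1-2\rho}$ with the universal value $\rho=1/8$. In particular, the factor $\rho^{-\gamma}$ in the conclusion of Proposition~\ref{prop:globalholder} is just a universal constant. Applying the proposition then yields $v\in C^{0,\beta}(\overline{B_1})$ with $0<\beta\le\min\{\alpha_1,\gamma/2\}$ and
\[
\|v\|_{C^{0,\beta}(\overline{B_1})} \le C\bigl(\|\varphi\|_{C^{0,\gamma}(\partial B_1)}+\|g_a\|_{L^\infty(T_a)}\bigr),
\]
with $C$ depending only on $n,\lambda,\Lambda,\gamma$ (the dependence on $[\Gamma]_{C^{1,\gamma}}$ is trivial because the interface is flat).

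Finally, because $g_a$ is a fixed smooth mollification of the characteristic function $\chi_{T_a}$, one has $\|g_a\|_{L^\infty(T_a)}\le 1$ (or at worst a universal constant depending only on the mollification), which can be absorbed into the $+1$ appearing in the statement. This gives exactly the asserted estimate
\[
\|v\|_{C^{0,\beta}(\overline{B_1})} \le C\bigl(1+\|\varphi\|_{C^{0,\gamma}({\partial B_1})}\bigr).
\]
There is no substantive obstacle here: the entire content of the corollary is a verification of the hypotheses of Proposition~\ref{prop:globalholder} in the flat case, and the only minor bookkeeping is to absorb the universal bound on $\|g_a\|_{L^\infty}$ into the constant and to record that $[\Gamma]_{C^{1,\gamma}}=0$ removes any nontrivial dependence of $C$ on the interface.
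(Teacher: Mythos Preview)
Your proof is correct and follows exactly the paper's approach: the corollary is stated as an immediate consequence of Proposition~\ref{prop:globalholder} applied with $\rho=1/8$, and your proposal simply spells out the hypothesis check (flat interface, $\supp(g_a)\subset B_{3/4}\cap T_a$, $\|g_a\|_{L^\infty}$ universally bounded) that the paper leaves implicit.
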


\begin{lem}[Stability of flat solutions]\label{lem:stabilityflat}
Fix $0<\alpha <\gamma<\bar \alpha$ and $C_0>0$.
For any $\vep>0$, there exists $0<\delta<\min\{\vep,1/2\}$ such that if $v$ satisfies \eqref{eq:flata}, with $a=\delta$, $v=v_0$ on $\partial B_1$, $\|v_0\|_{C^{0,\gamma}(\partial B_1)}\leq C_0$,
and $\|g_\delta(\cdot,\delta)- g_0(\cdot,0)\|_{L^\infty(B_1')}\leq \delta$, then
\begin{equation*}
\| v - v_0 \|_{L^\infty(B_{1})} \leq \vep.
\end{equation*}
Fix $0<r<3/4$. If, in addition, $[g_\delta(\cdot,\delta)- g_0(\cdot,0)]_{C^{0,\gamma}(\overline{B_1'})}\leq \delta$, then
\begin{align*}
\|\nabla'v-\nabla'v_0\|_{C^{0,{\alpha}}(\overline{B_{r}})}+\|{(v_0^+)}_{x_n}- v^-_{x_n}-1\|_{L^\infty(D_{\delta,r})}  &\leq (1-r)^{-(1+\gamma)} \vep,
\end{align*}
where  $D_{\delta,r} = B_{r} \cap \{0<x_n< \delta\}$.
An analogous statement holds when $a=-\delta$.
\end{lem}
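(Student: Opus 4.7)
The plan is to argue by compactness and contradiction, using uniqueness of viscosity solutions to the flat transmission problem (Corollary~\ref{cor:uniqueness}) together with the closedness result (Lemma~\ref{lem:closedness}) and the $C^{1,\gamma}$ regularity up to a flat interface (Theorem~\ref{thm:regflat}).

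For the $L^\infty$ estimate, suppose toward contradiction that there exist $\vep_0>0$, $\delta_k\downarrow 0$, boundary data $\varphi_k=v_{0,k}|_{\partial B_1}$ with $\|\varphi_k\|_{C^{0,\gamma}(\partial B_1)}\leq C_0$, and pairs $v_k,v_{0,k}$ satisfying all of the hypotheses but $\|v_k-v_{0,k}\|_{L^\infty(B_1)}\geq \vep_0$. Corollary~\ref{cor:globalholderflat} provides a uniform $C^{0,\beta}(\overline{B_1})$ bound for both sequences. Passing to a subsequence by Arzel\`a--Ascoli, $v_k\to V$ and $v_{0,k}\to V_0$ uniformly on $\overline{B_1}$, and $\varphi_k\to\varphi$. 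Lemma~\ref{lem:closedness} (which applies trivially since $T_{\delta_k}\to T_0$ in $C^2$, and $g_{\delta_k}(\cdot,\delta_k)\to g_0(\cdot,0)$ in $L^\infty$) shows that both $V$ and $V_0$ are viscosity solutions of \eqref{eq:flata} with $a=0$, interface data $g_0$, and common boundary value $\varphi$. By Corollary~\ref{cor:uniqueness}, $V=V_0$, contradicting the assumption.

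For the $C^{0,\alpha}$ control of $\nabla' v-\nabla'v_0$, I combine the $L^\infty$ smallness just proved with uniform $C^{0,\gamma}$ bounds coming from Theorem~\ref{thm:regflat}. On each side of their respective interfaces, $v^\pm$ and $v_0^\pm$ are $C^{1,\gamma}$ up to $T_\delta$ and $T_0$; the estimate rescaled to $\overline{B_r}$ produces a constant of the order of $(1-r)^{-(1+\gamma)}$. Because $\nabla'$ is tangential to both interfaces, $\nabla'v$ and $\nabla'v_0$ are genuinely $C^{0,\gamma}$ on all of $\overline{B_r}$. Repeating the compactness-uniqueness argument of the previous paragraph shows $\nabla'v_k$ and $\nabla' v_{0,k}$ both converge uniformly to $\nabla'V$ on $\overline{B_r}$, so $\|\nabla'v_k-\nabla'v_{0,k}\|_{L^\infty(B_r)}\to 0$. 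The standard Hölder interpolation
$$
\|u\|_{C^{0,\alpha}(\overline{B_r})}\leq C\,\|u\|_{L^\infty(B_r)}^{1-\alpha/\gamma}\,\|u\|_{C^{0,\gamma}(\overline{B_r})}^{\alpha/\gamma},
$$
applied with $u=\nabla'v_k-\nabla'v_{0,k}$ converts this into the desired $C^{0,\alpha}$-smallness, and choosing $\delta$ small enough finishes this part.

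The pointwise control of $(v_0^+)_{x_n}-v^-_{x_n}-1$ on $D_{\delta,r}$ follows the same scheme. By Theorem~\ref{thm:regflat}, $(v_0^+)_{x_n}$ is uniformly $C^{0,\gamma}$ on $\overline{B_r}\cap \{x_n\geq 0\}$ and $(v^-)_{x_n}$ is uniformly $C^{0,\gamma}$ on $\overline{B_r}\cap\{x_n\leq \delta\}$. Since $r<3/4$, the mollifications may be chosen so that $g_0(\cdot,0)\equiv g_\delta(\cdot,\delta)\equiv 1$ on $B_r'$, hence the transmission conditions read $(v_0^+)_{x_n}(x',0)-(v_0^-)_{x_n}(x',0)=1$ and $v^+_{x_n}(x',\delta)-v^-_{x_n}(x',\delta)=1$ on $B_r'$. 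Using that $v_k\to V$ and $v_{0,k}\to V$ in $C^{1,\gamma}$ up to each interface (interior $C^{2,\bar{\bar\alpha}}$ estimates force $v_k^-\to V^-$ on $\{x_n\leq 0\}\cap\overline{B_r}$ in $C^{1,\gamma'}$, and Hölder continuity of $(v_k^-)_{x_n}$ on $\{x_n\leq\delta_k\}\cap\overline{B_r}$ transfers this convergence to the boundary values on the moving interface $T_{\delta_k}$ with an error $O(\delta_k^\gamma)$), the difference $(v_{0,k}^+)_{x_n}(x',0)-(v_k^-)_{x_n}(x',\delta_k)$ tends to $V_{x_n}^+(x',0)-V_{x_n}^-(x',0)=g_0(x',0)=1$. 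A final application of $C^{0,\gamma}$ continuity of the two normal derivatives across the thin strip $\{0<x_n<\delta_k\}$ transports this limit to every point of $D_{\delta_k,r}$, producing the stated estimate by a third contradiction argument.

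The main obstacle is that $v$ and $v_0$ satisfy transmission problems on \emph{different} interfaces $T_\delta$ and $T_0$, so Theorem~\ref{thm:regflat} cannot be applied directly to the difference $v-v_0$. One must handle each function separately on each side of its own interface, then compare through interpolation, while keeping the $C^{1,\gamma}$ bounds uniform in $\delta$. The delicate technical point is to show that the boundary traces of the normal derivatives on the moving interface $T_{\delta_k}$ converge to the correct boundary trace of $V$ on $T_0$, which requires $C^{1,\gamma}$-convergence up to the interface on each side rather than mere interior convergence.
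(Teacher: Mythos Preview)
Your approach is essentially the same as the paper's: contradiction, uniform $C^{0,\beta}$ bounds via Corollary~\ref{cor:globalholderflat}, compactness, identification of the limit via Lemma~\ref{lem:closedness} and uniqueness (Corollary~\ref{cor:uniqueness}), and then uniform $C^{1,\gamma}$ bounds from Theorem~\ref{thm:regflat} to upgrade to convergence of derivatives. Two small remarks. First, your parenthetical appeal to ``interior $C^{2,\bar{\bar\alpha}}$ estimates'' is misplaced: $F^\pm$ are not assumed concave here, and in any case interior estimates degenerate near the moving interface $T_{\delta_k}$; the correct tool (which you also cite) is Theorem~\ref{thm:regflat} applied to $v_k$ on $\{x_n\leq\delta_k\}$, which gives a \emph{uniform} $C^{0,\gamma}$ bound on $(v_k^-)_{x_n}$ up to $T_{\delta_k}$ and hence on all of $\overline{B_r}\cap\{x_n\leq\delta_k\}$. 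Second, the paper keeps $v_0$ fixed in the contradiction sequence, while you (correctly, given the hypothesis $\|v_0\|_{C^{0,\gamma}(\partial B_1)}\leq C_0$) allow $v_{0,k}$ to vary and extract a further subsequence; this is a harmless and arguably cleaner generalization. Your use of H\"older interpolation for the $C^{0,\alpha}$ estimate on $\nabla'v-\nabla'v_0$ is equivalent to the paper's direct compactness in $C^{0,\alpha}$, and your treatment of the normal-derivative term on $D_{\delta,r}$ amounts to the paper's four-term split ${\rm I+II+III+IV}$ written in convergence language.
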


\begin{proof}
We prove the lemma by contradiction. Assume there exist $\vep_0$, $v_k$, $g_k$ such that
\begin{equation*}
\begin{cases}
\F^\pm (D^2 v_k) =0 & \hbox{in}~B_1\setminus T_{1/k} \\
(v_k^+)_{x_n} - (v_k^-)_{x_n}=g_k & \hbox{on}~T_{1/k}\\
v_k = v_0 & \hbox{on}~\partial B_1,
 \end{cases}
\end{equation*}
with $\|g_k(\cdot,1/k)-g_0(\cdot,0)\|_{C^{0,\gamma}(\overline{B_1'})} \leq 1/k$, such that
\begin{align}\label{eq:contradiction1}
\| v_k - v_0 \|_{L^\infty(B_{1})} &> \vep_0 \\ \label{eq:contradiction2}
 \|\nabla'v_k-\nabla'v_0\|_{C^{0,\alpha}(\overline{B_r})} &> (1-r)^{-(1+\gamma)} \vep_0\\\label{eq:contradiction3}
\|{(v_0^+)}_{x_n}- (v_k^-)_{x_n}-1\|_{L^\infty(D_{1/k,r})} &> (1-r)^{-(1+\gamma)} \vep_0,
\end{align}
for all $k\geq 1$. 
From the ABP estimate (Theorem~\ref{thm:ABP2}), we get
$$
\|v_k\|_{L^\infty(B_1)} \leq \sup_{\partial B_1} |v_0| + C \|g_k\|_{L^\infty(T_{1/k})}\leq C_0 + 2C.
$$
Also, from the global H\"{o}lder estimate in Corollary~\ref{cor:globalholderflat}, we have that
$$
\|v_k\|_{C^{0,\beta}(\overline{B_{1}})}\leq C\big(\|g_k\|_{L^\infty(T_{1/k})} +\|v_0\|_{C^{0,\gamma}(\partial B_1)}\big)\leq C(2+C_0)\equiv C_1,
$$
where $0<\beta\leq \min\{\alpha_1,\gamma/2\}$.
By compactness, up to a subsequence,  $v_k\to v$ uniformly in $B_1$. Moreover, by Lemma~\ref{lem:closedness}, $v$ satisfies 
\begin{equation*}
\begin{cases}
\F^\pm(D^2 v) =0 & \hbox{in}~B_1^\pm \\
v_{x_n}^+ - v_{x_n}^-=g_0 & \hbox{on}~T_{0}\\
v = v_0 & \hbox{on}~\partial B_1.
 \end{cases}
\end{equation*}
By uniqueness of viscosity solutions (Corollary~\ref{cor:uniqueness}), we see that $v=v_0$ on $B_1$. This contradicts \eqref{eq:contradiction1} for $k$ sufficiently large. Moreover, since $g_k$ is smooth, by Theorem~\ref{thm:regflat} (rescaled)
we have that $v_k \in C^{1,\gamma}$ in the $x'$-direction in $B_{r}$, with
$$
\|\nabla' v_k \|_{C^{0,\gamma}(\overline{B_{r}})} \leq \frac{C}{(1-r)^{1+\gamma}}\big(\|v_k\|_{L^\infty(B_1)} + \|g_k\|_{C^{0,\gamma}(T_{1/k})}\big)\leq \frac{C_2}{(1-r)^{1+\gamma}}. 
$$
By compactness, up to a subsequence,  $\nabla' v_k\to w$ in $C^{0,\alpha}(\overline{B_{r}})$, where $0<\alpha<\gamma$. By uniqueness of distributional limits, we have that $w=\nabla' v_0$. This contradicts \eqref{eq:contradiction2} for $k$ sufficiently large. Furthermore, by \eqref{thm:regflat} in Theorem~\ref{thm:regflat}, we have
$$
\|(v_k^-)_{x_n}\|_{C^{0,\gamma}\big(\overline{B_{r}}\cap \{x_n\leq 1/k\}\big)}\leq \frac{C}{(1-r)^{1+\gamma}}\big(\|v_k\|_{L^\infty(B_1)} + \|g_k\|_{C^{0,\gamma}(T_{1/k})}\big)\leq \frac{C_3}{(1-r)^{1+\gamma}}.
$$
By the previous argument, up to a subsequence, $(v_k^-)_{x_n} \to (v_0^-)_{x_n}$ uniformly in $B_{r}^-$. Let $x\in D_{1/k,r}$ and denote $\bar{x}=(x',0)\in T_0$. Note that $|x-\bar{x}|< 1/k$. Then
\begin{align*}
|{(v_0^+)}_{x_n}(x)- (v_k^-)_{x_n}(x)-1| & 
\leq |{(v_0^+)}_{x_n}(x)-{(v_0^+)}_{x_n}(\bar{x})|
+ |{(v_0^+)}_{x_n}(\bar{x})-(v_0^-)_{x_n}(\bar{x})-1|\\
&\quad+ |(v_0^-)_{x_n}(\bar{x})-(v_k^-)_{x_n}(\bar{x})|
+|(v_k^-)_{x_n}(\bar{x})-(v_k^-)_{x_n}(x)|\\
&\equiv{\rm I+II+III+IV}.
\end{align*}
By construction of $v_0$,  ${\rm II}=0$.  Moreover, ${\rm III}\to0$ as $k\to +\infty$, by uniform convergence. Also,
\begin{align*}
{\rm I} &\leq [(v_0^+)_{x_n}]_{C^{0,\gamma}(\overline{B_{r}^+})}|x-\bar{x}|^\gamma\leq \frac{C}{(1-r)^{1+\gamma}}\frac{1}{k^\gamma}\to 0 \quad \hbox{($r$ is fixed)}\\
{\rm IV} &\leq [(v_k^-)_{x_n}]_{C^{0,\gamma}\big(\overline{B_{r}}\cap \{x_n\leq 1/k\}\big)}|x-\bar{x}|^\gamma\leq \frac{C_3}{(1-r)^{1+\gamma}}\frac{1}{k^\gamma}\to 0.
\end{align*}
This contradicts \eqref{eq:contradiction3} for $k$ sufficiently large.
\end{proof}

\begin{cor} \label{cor:stabilityflat}
Fix $0<\alpha<\gamma<\bar\alpha$ and $\vep >0$.  Let $\overline{v}, \underline{v} \in C(\overline{B_1})$ be as in Lemma~\ref{lem:stabilityflat}, with $a=\delta$, and $a=-\delta$, respectively, where $\delta$ is the minimum between the two that exist for $\overline{v}$ and $\underline{v}$. Fix $0<r<1$. Then
\begin{align*}
\| \overline{v} - \underline{v} \|_{L^\infty(B_{1})} &\leq \vep\\
\|\nabla'\overline{v}-\nabla'\underline{v}\|_{C^{0,\alpha}(\overline{B_r})} &\leq (1-r)^{-(1+\gamma)}  \vep\\
\|(\underline{v}^+)_{x_n}- (\overline{v}^-)_{x_n}-1\|_{L^\infty(D_{\delta,r})}& \leq (1-r)^{-(1+\gamma)} \vep,
\end{align*}
where $D_{\delta,r} = B_{r} \cap \{|x_n|< \delta\}$.
\end{cor}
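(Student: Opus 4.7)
The plan is to apply Lemma~\ref{lem:stabilityflat} twice---once to $\overline{v}$ (with $a=\delta$) and once to $\underline{v}$ (with $a=-\delta$)---using $\vep/2$ in place of $\vep$ in each invocation, and to combine the two conclusions through the intermediate function $v_0$. Let $\delta>0$ be the smaller of the two thresholds produced; both sets of estimates then hold with this common $\delta$.

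The first two estimates follow immediately from the triangle inequality through $v_0$:
\[
\|\overline{v}-\underline{v}\|_{L^\infty(B_1)} \leq \|\overline{v}-v_0\|_{L^\infty(B_1)} + \|v_0-\underline{v}\|_{L^\infty(B_1)} \leq \vep,
\]
and analogously for $\|\nabla'\overline{v}-\nabla'\underline{v}\|_{C^{0,\alpha}(\overline{B_r})}$.

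For the normal-derivative estimate, split $D_{\delta,r}=B_r\cap\{|x_n|<\delta\}$ into the upper strip $D^+=B_r\cap\{0\leq x_n<\delta\}$ and the lower strip $D^-=B_r\cap\{-\delta<x_n\leq 0\}$. On $D^+$, Lemma~\ref{lem:stabilityflat} applied to $\overline{v}$ gives
\[
\|(v_0^+)_{x_n}-(\overline{v}^-)_{x_n}-1\|_{L^\infty(D^+)} \leq (1-r)^{-(1+\gamma)}\vep/2.
\]
Writing
\[
(\underline{v}^+)_{x_n}-(\overline{v}^-)_{x_n}-1 = \bigl[(\underline{v}^+)_{x_n}-(v_0^+)_{x_n}\bigr] + \bigl[(v_0^+)_{x_n}-(\overline{v}^-)_{x_n}-1\bigr],
\]
it then suffices to control the first bracket by $(1-r)^{-(1+\gamma)}\vep/2$ in $L^\infty(D^+)$. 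In $D^+$ both $\underline{v}^+$ and $v_0^+$ lie on the ``$+$'' side of their respective interfaces (which are $T_{-\delta}$ and $T_0$, both below $D^+$), hence both solve $F^+(D^2u)=0$ and are $C^{1,\gamma}$ up to their interfaces by Theorem~\ref{thm:regflat}; moreover $\|\underline{v}^+-v_0^+\|_{L^\infty}$ is small by the lemma. A compactness argument parallel to the one in the proof of Lemma~\ref{lem:stabilityflat}, applied now to the ``$+$'' sides on the common region $\overline{B_r\cap\{x_n\geq 0\}}$, then yields the desired $L^\infty$ closeness of $(\underline{v}^+)_{x_n}$ and $(v_0^+)_{x_n}$. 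The lower strip $D^-$ is handled symmetrically, using the $a=-\delta$ version of Lemma~\ref{lem:stabilityflat} for $\underline{v}$ together with the analogous stability of $(\overline{v}^-)_{x_n}$ against $(v_0^-)_{x_n}$.

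The main obstacle is precisely the bracket $(\underline{v}^+)_{x_n}-(v_0^+)_{x_n}$ (and its mirror on $D^-$): Lemma~\ref{lem:stabilityflat} directly compares normal derivatives only inside the thin slab between $T_a$ and $T_0$, whereas the present comparison is needed in a region lying on one side of \emph{both} interfaces. This requires extracting one extra piece from the compactness argument in the lemma---uniform convergence of the normal derivatives up to $T_0$ from both sides, not just from the side on which the approximating function carries its own interface. Once that convergence is in hand, the triangle inequality on each of $D^\pm$ delivers the claimed estimate with constant $(1-r)^{-(1+\gamma)}\vep$.
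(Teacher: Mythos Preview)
Your approach is correct and is essentially what the paper intends (the paper states the result as a corollary with no proof, so the implicit argument is precisely ``apply Lemma~\ref{lem:stabilityflat} twice with $\vep/2$ and combine via the triangle inequality through $v_0$'').

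One small simplification of your write-up: the ``extra piece'' you flag is not actually extra. In the proof of Lemma~\ref{lem:stabilityflat} for $a=\delta$, the intermediate step is the uniform convergence $(\overline{v}^-)_{x_n}\to(v_0^-)_{x_n}$ on $\overline{B_r^-}$; the stated strip estimate is then derived from this. By the analogous version for $a=-\delta$, the corresponding intermediate step is $(\underline{v}^+)_{x_n}\to(v_0^+)_{x_n}$ on $\overline{B_r^+}$. These two intermediate facts are exactly what you need: the first controls $(\overline{v}^-)_{x_n}-(v_0^-)_{x_n}$ on $D^-\subset B_r^-$, and the second controls $(\underline{v}^+)_{x_n}-(v_0^+)_{x_n}$ on $D^+\subset B_r^+$. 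So you do not need to run a ``parallel'' compactness argument; you only need to cite the convergences already established inside the proof of Lemma~\ref{lem:stabilityflat} (and its $a=-\delta$ analogue). With that observation, the third estimate follows by the triangle inequality on each of $D^\pm$ just as you outline.
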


\begin{lem}[Stability for $g$ away from $0$] \label{lem:stability}
Fix $0<\alpha< \gamma <\bar\alpha$ and $\vep>0$.
Let $\delta>0$ and $v=\overline{v}\chi_{\Omega^-} + \underline{v} \chi_{\Omega^+}$, where $\delta$, $\overline{v}$, and $\underline{v}$ are given in Corollary~\ref{cor:stabilityflat} with $B_{3/4}$
in place of $B_1$. 
Assume that $\psi \in C^{1,\alpha}(\overline{B_1'})$ and $u\in C(B_1)$ is a viscosity solution to \eqref{eq:TP} with $\|u\|_{L^\infty(B_1)}\leq 1$
and
$$\|\psi\|_{C^{1,\alpha}(\overline{B_1'})} + \|g-1\|_{L^\infty(\Gamma)}+\|f^-\|_{L^n(\Omega^-)}+\|f^+\|_{L^n(\Omega^+)} \leq \delta.$$
If $v=u$ on $\partial B_{3/4}$ then
$$
\|u-v\|_{L^\infty(B_{1/2})}\leq  C\vep^{1/2},
$$
where $C>0$ depends only on $n$, $\lambda$, $\Lambda$ and $\alpha$.
\end{lem}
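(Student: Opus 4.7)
The plan is to apply the ABP estimate (Theorem~\ref{thm:ABP2}) to $w=u-v$ on $B_{3/4}$, after verifying that $v$ is an approximate solution to \eqref{eq:TP}. Since $\|\psi\|_{C^{1,\alpha}(\overline{B_1'})}\leq\delta$ forces $|\psi|,|\nabla'\psi|\leq\delta$, the interface $\Gamma$ lies in the slab $\{|x_n|\leq\delta\}$. Consequently $\Omega^+\cap B_{3/4}\subset\{x_n>-\delta\}$, which is exactly the region where $\underline v$ solves $F^+(D^2\underline v)=0$ (its flat interface $T_{-\delta}$ lying below $\Omega^+$), and symmetrically $\Omega^-\cap B_{3/4}\subset\{x_n<\delta\}$ where $\overline v$ solves $F^-(D^2\overline v)=0$. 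Hence $v$ satisfies $F^\pm(D^2 v)=0$ in $\Omega^\pm\cap B_{3/4}$, and by assumption $w=0$ on $\partial B_{3/4}$.

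Next I would bound the transmission defect on $\Gamma$. Writing $\nu=(-\nabla'\psi,1)/\sqrt{1+|\nabla'\psi|^2}$,
$$
v^+_\nu-v^-_\nu=\frac{(\underline v)_{x_n}-(\overline v)_{x_n}-(\nabla'\underline v-\nabla'\overline v)\cdot\nabla'\psi}{\sqrt{1+|\nabla'\psi|^2}}.
$$
Fix $r\in(1/2,3/4)$ so that $\Gamma\cap B_{1/2}\subset D_{\delta,r}$. Corollary~\ref{cor:stabilityflat} (applied with $B_{3/4}$ in place of $B_1$) provides $\|\nabla'\underline v-\nabla'\overline v\|_{L^\infty(B_r)}\leq C\vep$ and $\|(\underline v)_{x_n}-(\overline v)_{x_n}-1\|_{L^\infty(D_{\delta,r})}\leq C\vep$. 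Together with $|\nabla'\psi|\leq\delta\leq\vep$ and $|g-1|\leq\delta\leq\vep$, this gives $|v^+_\nu-v^-_\nu-g|\leq C\vep$ on $\Gamma\cap B_{1/2}$.

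The remaining obstacle is that $v$ has a discontinuity of size at most $\vep$ across $\Gamma$, since $|v^+-v^-|=|\underline v-\overline v|\leq\vep$ by Corollary~\ref{cor:stabilityflat}; thus $w=u-v$ inherits a jump and Theorem~\ref{thm:ABP2} cannot be applied directly. To handle this, I would introduce a one-parameter barrier $\phi_\mu$, built from a rescaling of the Pucci-supersolution of Lemma~\ref{lem:barrier}, chosen so that $\|\phi_\mu\|_{L^\infty(B_{3/4})}\leq C\mu$ and $\phi_\mu^+-\phi_\mu^-\equiv\vep$ on $\Gamma$, at the cost of a normal-derivative defect $O(\vep/\mu)$. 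Then $w\pm\phi_\mu$ is continuous across $\Gamma$, lies in $S^*_{\lambda/n,\Lambda}(f^\pm)$ in $\Omega^\pm$, and has a transmission defect of order $\vep+\vep/\mu$. Applying Theorem~\ref{thm:ABP2} yields
$$
\|u-v\|_{L^\infty(B_{1/2})}\leq C\big(\mu+\vep/\mu+\vep+\|f^-\|_{L^n(\Omega^-)}+\|f^+\|_{L^n(\Omega^+)}\big),
$$
and optimizing in $\mu=\vep^{1/2}$ produces the claimed rate. The main difficulty is the construction of $\phi_\mu$: it must be a Pucci supersolution on each side of the curved interface, carry a prescribed jump of size $\vep$ (rather than being continuous, as in Lemma~\ref{lem:barrier}), remain bounded by $C\mu$ in $L^\infty$, and yield a controlled normal-derivative defect. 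This requires adapting the Hopf-based construction of Lemma~\ref{lem:barrier} to an asymmetric profile while keeping the size–jump–defect trade-off sharp.
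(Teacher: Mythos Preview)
Your strategy---fix the discontinuity of $v$ across $\Gamma$ and then apply the ABP estimate---is in the right spirit, but two genuine gaps remain, and they are exactly where the paper's work lies.

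\textbf{The barrier $\phi_\mu$.} You need $\phi_\mu^+-\phi_\mu^-$ to equal the \emph{actual} jump $\underline v-\overline v$ on $\Gamma$, not the constant $\vep$; otherwise $w+\phi_\mu$ still has a residual discontinuity and Theorem~\ref{thm:ABP2} does not apply. Lemma~\ref{lem:barrier} produces a \emph{continuous} function, so no rescaling of it can carry a prescribed jump, and there is no evident way to realize the size--jump--defect trade-off you describe from that construction. The paper bypasses the barrier entirely: it solves Dirichlet problems to obtain a \emph{continuous} $w$ in $B_{3/4}$ with $F^\pm(D^2w)=0$ in $\Omega^\pm_{3/4}$, $w=\tfrac12(v^++v^-)$ on $\Gamma_{3/4}$, $w=v$ on $\partial B_{3/4}$. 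Then $u-w$ is continuous, $\|v-w\|_{L^\infty}\leq\vep$ by the maximum principle, and the transmission defect of $w$ on $\Gamma$ is controlled through boundary pointwise $C^{1,\alpha}$ estimates for $w^\pm-v^\pm$.

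\textbf{The domain of ABP.} You apply ABP on $B_{3/4}$, but Corollary~\ref{cor:stabilityflat} only controls the normal-derivative quantities on $B_r$ with a constant $\sim(3/4-r)^{-(1+\gamma)}$ that blows up as $r\to 3/4$; the same blow-up occurs in the $C^{1,\alpha}$ estimates above. This cannot be absorbed into a universal $C$. The paper shrinks to $B_{3/4-\eta}$ and pays for the new boundary term via the global H\"older estimate: since $u-v=0$ on $\partial B_{3/4}$ and both $u$ and $v$ are $C^{0,\beta}$ up to $\partial B_{3/4}$, one has $\|u-v\|_{L^\infty(\partial B_{3/4-\eta})}\leq C\eta^\beta$. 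The final rate comes from balancing $\eta^\beta$ against $\eta^{-(2+\gamma)}\vep$, not from a separate $\mu$-parameter.
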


\begin{proof}
By Theorem~\ref{thm:holder} and the assumptions of $u$, $g$, $f^\pm$, we have $u\in C^{0,\alpha_1}(\overline{B_{3/4}})$, with
$$
\|u\|_{C^{0,\alpha_1}(\overline{B_{3/4}})} \leq C\big ( \|u\|_{L^\infty(B_1)}+ \|g\|_{L^\infty(\Gamma)} + \|f^-\|_{L^n(\Omega^-)}+\|f^+\|_{L^n(\Omega^+)}\big)\leq C_1.
$$
Since $v=u$ on $\partial B_{3/4}$, by Corollary~\ref{cor:globalholderflat} it follows that $v\in C^{0,\beta}(\overline{\Omega_{3/4}^\pm})$ with $\beta\leq\alpha_1/2$, and
$$
\|v\|_{C^{0,\beta}(\overline{\Omega_{3/4}^\pm})} \leq C\big(1+ \|u\|_{C^{0,\alpha_1}(\partial {B_{3/4}})}\big),
$$
for some $C>0$ depending only on $n$, $\lambda$, $\Lambda$, and $\alpha$.  
Hence, 
$$
\|v\|_{C^{0,\beta}(\overline{\Omega_{3/4}^\pm})} \leq C(1+C_1)\equiv C_2.
$$
Note that $v$ is not continuous across $\Gamma_{3/4}$ since $\overline{v}-\underline{v} \neq0$ on $\Gamma_{3/4}$. Let $w$ satisfy
\begin{equation*}
\begin{cases}
\F^\pm(D^2 w^\pm) = 0 & \hbox{in}~\Omega_{3/4}^\pm\\
w = \frac{1}{2}(v^++v^-) & \hbox{on}~\Gamma_{3/4}\\
 w=v & \hbox{on}~\partial B_{3/4}.
\end{cases}
\end{equation*}
By Theorem~\ref{thm:regflat} (rescaled), we know that $v^\pm \in C^{1,\alpha}(\overline{\Omega_{3/4-\eta}^\pm})$, for any $0<\eta<3/4$,  and
\begin{equation} \label{eq:estv}
\eta \|\nabla v^\pm\|_{L^\infty(\Omega_{3/4-\eta}^\pm)}+\eta^{1+\alpha} [\nabla v^\pm]_{C^{0,\alpha}(\overline{\Omega_{3/4-\eta}^\pm})} \leq C_3,
\end{equation}
for some $0<\alpha < \bar \alpha$. Then, by pointwise boundary $C^{1,\alpha}$ estimates,
we have $w^\pm\in C^{1,\alpha}(\overline{\Omega_{3/4-\eta}^\pm})$, and we will see that
$w_\nu^+-w_\nu^- \approx 1$ on  $\Gamma_{3/4-\eta}.$
Indeed, for any $x\in \Gamma_{3/4-\eta}$,  we have
\begin{align*}
w_\nu^+(x) -w_\nu^-(x) - 1 &=\big( (w-v)_\nu^+(x)\big) - \big((w-v)_\nu^-(x) \big) + \big( v_\nu^+(x)-v_\nu^-(x)-1\big) \\
&\equiv g_1+g_2+g_3.
\end{align*}
We will show that $g_1$, $g_2$, and $g_3$ are small in terms of $\vep$ and $\delta$. For $g_3$, it holds that
\begin{align*}
|g_3(x)| & \leq |v_\nu^+(x)-v_{x_n}^+(x)| + |v_{x_n}^+(x)-v_{x_n}^-(x)-1|+ |v_{x_n}^-(x)-v_\nu^-(x)| \equiv {\rm I+II+III}.
\end{align*}
By Corollary~\ref{cor:stabilityflat}, it follows that ${\rm II}\leq \eta^{-(1+\gamma)}\vep$. Moreover, since $\|\nabla'\psi\|_{L^\infty(B_1')}\leq \delta$, we get
\begin{align*}
{\rm I} &\leq |\nabla v^+(x)||\nu(x) - e_n| \leq C \|\nabla v^+\|_{L^\infty(\Omega_{3/4-\eta}^+)}|\nabla'\psi(x')| \leq \frac{C_3}{\eta} \delta.
\end{align*}
Similarly for III. For $g_1$ and $g_2$ we consider $w-v$. Since $w^\pm-v^\pm \in {S}_{\lambda/n, \Lambda}(0)$ in $\Omega_{3/4}^\pm$,
by the classical ABP estimate, and Corollary~\ref{cor:stabilityflat}, we get
$$
\|w^\pm-v^\pm\|_{L^\infty(\Omega_{3/4}^\pm)} \leq \|w^\pm-v^\pm\|_{L^\infty(\Gamma_{3/4})} = \| \overline{v} - \underline{v}\|_{L^\infty(\Gamma_{3/4})} \leq \vep.
$$
Hence,
$
\|w-v\|_{L^\infty(B_{3/4})} \leq \vep.
$
Moreover, pointwise boundary $C^{1,\alpha}$ estimates give
$$
|g_1(x)| \leq \frac{C}{\eta} \big( \|w^+-v^+\|_{L^\infty(\Omega_{3/4}^+)}
+\| \overline{v} - \underline{v}\|_{C^{1,\alpha}(\overline{\Gamma_{3/4-\eta}})} \big),
$$
where $\|v\|_{C^{1,\alpha}(\overline{\Gamma_{3/4-\eta}})}= \|v\|_{L^\infty(\Gamma_{3/4-\eta})}+\|\nabla'v+v_{x_n}\nabla'\psi\|_{C^{0,\alpha}(\overline{\Gamma_{3/4-\eta}})}$. By Corollary~\ref{cor:stabilityflat}, and estimate \eqref{eq:estv}, it follows that
\begin{align*}
\| \overline{v} - \underline{v}\|_{C^{1,\alpha}(\overline{\Gamma_{3/4-\eta}})} 
& \leq \| \overline{v} - \underline{v}\|_{L^\infty(B_{3/4})}
+\|\nabla'\overline{v}-\nabla'\underline{v}\|_{C^{0,\alpha}(\overline{B_{3/4-\eta}})}\\
&\quad+\|( \overline{v}_{x_n}-\underline{v}_{x_n})\nabla'\psi\|_{C^{0,\alpha}(\overline{\Gamma_{3/4-\eta}})}  \\
&\leq \vep + \eta^{-(1+\gamma)}\vep +2\| \overline{v}_{x_n}-\underline{v}_{x_n}\|_{C^{0,\alpha}(\overline{\Gamma_{3/4-\eta}})} \|\nabla'\psi\|_{C^{0,\alpha}(\overline{B_1'})} \\
&\leq \vep + \eta^{-(1+\gamma)}\vep + 4C_3\eta^{-(1+\gamma)} \delta.
\end{align*}
Therefore, $|g_1(x)|\leq C \eta^{-1}(\vep +\eta^{-(1+\gamma)}(\vep+\delta))\leq C\eta^{-(2+\gamma)}\vep$. Similarly for $g_2$.

Next, $u-w$ satisfies
\begin{equation*}
\begin{cases}
u-w \in {S}_{\lambda/n, \Lambda}(f^\pm) & \hbox{in}~\Omega_{3/4}^\pm\\
(u-w)_\nu^+- (u-w)_\nu^-= (g-1)-(g_1+g_2+g_3) & \hbox{on}~\Gamma_{3/4-\eta}\\
 u-w =0 & \hbox{on}~\partial B_{3/4}.
\end{cases}
\end{equation*}
Therefore, by Theorem~\ref{thm:ABP2} applied to $u-w$ in $B_{3/4-\eta}$, we get
\begin{align*}
\|u-w\|_{L^\infty(B_{3/4-\eta})} &\leq  \|u-v\|_{L^\infty(\partial B_{3/4-\eta})} + \|v-w\|_{L^\infty(\partial B_{3/4-\eta})} \\
&\quad+ C \Big(\|g-1\|_{L^\infty(\Gamma_{3/4})}+ \|f^-\|_{L^n(\Omega_{3/4}^-)}+ \|f^+\|_{L^n(\Omega^+_{3/4})}  \\
&\qquad\quad  + \|g_1\|_{L^\infty(\Gamma_{3/4-\eta})} + \|g_2\|_{L^\infty(\Gamma_{3/4-\eta})} +\|g_3\|_{L^\infty(\Gamma_{3/4-\eta})}\Big)\\
&\leq [u-v]_{C^{0,\beta}(\overline{B_{3/4}})} \eta^\beta+ \|v-w\|_{L^\infty(B_{3/4})} + C\delta +2C \eta^{-(2+\gamma)}\vep
 +C\eta^{-1} \delta \\
&\leq (C_1+C_2)\eta^\beta + \vep+ \tilde{C}\eta^{-(2+\gamma)}\vep.
\end{align*}
Choose $0<\eta<1/4$ such that $\eta < \min\{\vep^{\frac{1}{2(2+\gamma)}}, \vep^{\frac{1}{2\beta}}\}$.
We conclude that
\begin{align*}
\|u-v\|_{L^\infty(B_{1/2})} \leq \|u-w\|_{L^\infty(B_{3/4-\eta})} + \|w-v\|_{L^\infty(B_{3/4})} \leq C\vep^{1/2},
\end{align*}
where $C>0$ depends only on $n$, $\lambda$, $\Lambda$, and $\alpha$.
\end{proof}

\subsection{Stability for $C^{2,\alpha}$ interfaces} 

Our proofs of stability of solutions in this case does not need to distinguish
between whether $g$ is close to or away from $0$. 

\begin{lem} \label{lem:approx2}
Assume that $\Gamma=B_1 \cap\{x_n=\psi(x')\}$, for some $\psi \in C^2(B_1')$. Fix $D'\in \R^{n-1}$ and $b\in \R$.
Given $\vep>0$, there exists $\delta>0$ such that if $u \in C(B_1)$ is a viscosity solution to \eqref{eq:TP},
with $\|u\|_{L^\infty(B_1)}\leq 1$ and
$$\|\psi\|_{C^2(B_1')} + \|g-D'\cdot x'-b\|_{L^\infty(\Gamma)}+ \|f^-\|_{L^\infty(\Omega^-)} + \|f^+\|_{L^\infty(\Omega^+)} \leq \delta$$
then there exists a bounded viscosity solution $v\in C(B_{3/4})$ to
\begin{equation} \label{eq:approxv}
 \begin{cases}
 \F(D^2 v) = 0 & \hbox{in}~B_{3/4}^\pm\\
 v_{x_n}^+ - v_{x_n}^- = D'\cdot x' + b & \hbox{on}~B_{3/4}\cap \{x_n=0\}
 \end{cases}
 \end{equation}
such that
 $$
 \|u-v\|_{L^\infty(B_{3/4})} \leq \vep.
 $$
\end{lem}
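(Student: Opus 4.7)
The natural approach is a compactness argument by contradiction, combining the H\"older estimate of Theorem~\ref{thm:holder} with the closedness result Lemma~\ref{lem:closedness}.

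Suppose the statement fails. Then there exist $\vep_0>0$ and sequences $\{\psi_k\}\subset C^2(B_1')$, $\{g_k\}$, $\{f_k^\pm\}$, and bounded viscosity solutions $u_k\in C(B_1)$ to \eqref{eq:TP} on the associated interface $\Gamma_k=B_1\cap\{x_n=\psi_k(x')\}$ satisfying $\|u_k\|_{L^\infty(B_1)}\leq 1$ and
$$
\|\psi_k\|_{C^2(B_1')} + \|g_k - D'\cdot x' - b\|_{L^\infty(\Gamma_k)} + \|f_k^\pm\|_{L^\infty(\Omega_k^\pm)} \leq 1/k,
$$
such that $\|u_k-v\|_{L^\infty(B_{3/4})}>\vep_0$ for every bounded viscosity solution $v$ of \eqref{eq:approxv}. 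The first step is to apply Theorem~\ref{thm:holder} (combined with a rescaling and covering argument over $\overline{B_{3/4}}$, which is compactly contained in $B_1$) to obtain a uniform $C^{0,\alpha_1}$ bound on $u_k$ on $\overline{B_{3/4}}$. The constants are independent of $k$ because $\|\psi_k\|_{C^{1,\alpha}}\leq C\|\psi_k\|_{C^2}\leq C/k$ is uniformly controlled, and the right-hand side in the estimate of Theorem~\ref{thm:holder} is dominated by $\|u_k\|_\infty + \|g_k\|_\infty + \|f_k^\pm\|_\infty \leq C$. Arzel\`a--Ascoli then yields a subsequence (not relabeled) with $u_k\to u_\infty$ uniformly on $\overline{B_{3/4}}$, and $\|u_\infty\|_{L^\infty(B_{3/4})}\leq 1$.

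Next, I would identify the limit via Lemma~\ref{lem:closedness} applied with $F_k^\pm\equiv F^\pm$, $f^\pm\equiv 0$, and $g(x',0)=D'\cdot x' + b$. Hypothesis (i) is trivial; (ii) is the uniform convergence just established; (iii) follows from $\|f_k^\pm\|_\infty\leq 1/k\to 0$; for (iv) one computes
$$
\|g_k-g\|_{L^\infty(\Gamma_k)} = \sup_{x'\in B_1'}\bigl|g_k(x',\psi_k(x'))-D'\cdot x' - b\bigr| \leq 1/k\to 0;
$$
and (v) is $\|\psi_k\|_{C^2(B_1')}\leq 1/k\to 0$. Consequently, $u_\infty\in C(B_1)$ is a bounded viscosity solution of \eqref{eq:approxv}. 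Choosing $v=u_\infty$ in the contradiction hypothesis gives $\|u_k-u_\infty\|_{L^\infty(B_{3/4})}>\vep_0$ for every $k$, contradicting the uniform convergence.

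The main technical point is the first step: the H\"older estimate of Theorem~\ref{thm:holder} is stated on $\overline{B_{1/2}}$, so I would need a rescaling/covering argument to produce uniform equicontinuity on $\overline{B_{3/4}}$. Every other ingredient then slots directly into the framework set up earlier, and no delicate construction of barriers or explicit perturbation of the operator is required for this particular stability statement, since existence of solutions to \eqref{eq:approxv} is supplied for free by the limit $u_\infty$ itself rather than by invoking Theorem~\ref{thm:existflat} separately.
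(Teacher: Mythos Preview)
Your proposal is correct and follows essentially the same compactness-by-contradiction argument as the paper: uniform H\"older estimates from Theorem~\ref{thm:holder}, Arzel\`a--Ascoli, and identification of the limit via Lemma~\ref{lem:closedness} applied on $B_{3/4}$. The only slip is that the limit $u_\infty$ lives in $C(B_{3/4})$, not $C(B_1)$, which is exactly what \eqref{eq:approxv} requires.
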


\begin{proof}
Suppose the statement is false. Then there exists $\vep_0>0$ and a sequence of functions $u_k$,  $\psi_k$, $f_k^\pm$
and $g_k$ such that $u_k$ is a viscosity solution to
$$
\begin{cases}
\F (D^2 u_k) = f_k^\pm& \hbox{in}~\Omega_k^\pm\\
 (u_k^+)_{\nu} -  (u_k^-)_{\nu} = g_k & \hbox{on}~ \Gamma_k=B_1 \cap \{x_n=\psi_k(x')\},
 \end{cases}
 $$
 with $\|u_k\|_{L^\infty(B_1)}\leq 1$ and 
 \begin{equation}\label{eq:compactnessk}
 \|\psi_k\|_{C^2(B_1')} + \|g_k-D' \cdot x' - b\|_{L^\infty(\Gamma_k)}+ \|f_k^-\|_{L^\infty(\Omega_k^-)} + \|f_k^+\|_{L^\infty(\Omega_k^+)} \leq \tfrac{1}{k},
 \end{equation}
and such that, for any viscosity solution $v$ to \eqref{eq:approxv},
 \begin{equation}\label{eq:contradiction}
 \|u_k-v\|_{L^\infty(B_{3/4})} > \vep_0 \quad \hbox{for all}~k\geq1.
\end{equation}
By Theorem~\ref{thm:holder} and \eqref{eq:compactnessk},
$$
\|u_k\|_{C^{0,\alpha_1}(\overline{B_{3/4}})} \leq C ( \|u_k\|_{L^\infty(B_1)} + \|g_k\|_{L^\infty(\Gamma)}+ \|f_k^-\|_{L^\infty(\Omega_k^-)} + \|f_k^+\|_{L^\infty(\Omega_k^+)} ) \leq C_1, 
$$
for some $C_1>0$ independent of $k$, and for all $k\geq 1$. Hence, the sequence $\{u_k\}_{k=1}^\infty$ is bounded and equicontinuous on $\overline{B_{3/4}}$. By compactness, up to a subsequence,
$u_k \to u_\infty$ uniformly on compact subsets of ${B_{3/4}}$ as $k\to\infty$.
By Lemma~\ref{lem:closedness} with $B_{3/4}$ in place of $B_1$, we see that $u_\infty\in C(B_{3/4})$ is a viscosity solution to
\eqref{eq:approxv}. This is a contradiction with \eqref{eq:contradiction}.
\end{proof}

In the previous lemma we assumed that $u$ is continuous in $B_1$. 
In later proofs, we will also need a similar approximation result when $u$ has a jump discontinuity across the interface.
We prove this version in the next lemma. 

\begin{lem}  \label{lem:approx3}
Assume that $\Gamma=B_1 \cap\{x_n=\psi(x')\}$, for some $\psi \in C^2(B_1')$, and that $f^\pm$ satisfy
$$
\Big( \fint_{B_r(x_0) \cap \Omega^\pm} |f^\pm|^n\, dx\Big)^{1/n} \leq C_{f^\pm} r^{\alpha-1}
$$ 
for all $r>0$ small, $x_0\in \Omega^\pm \cup \Gamma$, and some $0<\alpha<1$.
Fix $D'\in \R^{n-1}$ and $b\in \R$.
Given $\vep>0$, there exists $\tilde \delta>0$ such that if $u\in C(B_1\setminus \Gamma)\cap L^\infty(B_1)$ satisfies \eqref{eq:TP}
in the viscosity sense, with $\|u\|_{L^\infty(B_1)}\leq 1$, $u^+ - u^-\equiv h \in C^2(\Gamma)$, and  
\begin{equation} \label{eq:assumptionu}
\|\psi\|_{C^2(B_1')} +\|h\|_{C^2(\Gamma)}+ \|g-D'\cdot x'-b\|_{L^\infty(\Gamma)}+ C_{f^-}+C_{f^+} \leq \tilde\delta,
\end{equation}
then there exists a bounded viscosity solution $v\in C(B_{1/2})$ to
\begin{equation} \label{eq:approxv2}
 \begin{cases}
 \F(D^2 v) = 0 & \hbox{in}~B_{1/2}^\pm\\
 v_{x_n}^+ - v_{x_n}^- = D'\cdot x' + b & \hbox{on}~B_{1/2}\cap \{x_n=0\}
 \end{cases}
 \end{equation}
such that
 $$
 \|u-v\|_{L^\infty(B_{1/2})} \leq \vep.
 $$
\end{lem}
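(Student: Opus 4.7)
The plan is to reduce Lemma~\ref{lem:approx3} to the continuous case handled by Lemma~\ref{lem:approx2} by subtracting off the jump $h$ via an ambient extension, and then to argue by compactness and closedness as in that earlier proof.

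First, I would extend $h$ to a function $H \in C^2(\overline{B_1})$ with $\|H\|_{C^2(B_1)} \leq C\|h\|_{C^2(\Gamma)}$; such an extension exists because $\Gamma$ is the graph of a $C^2$ function (for instance via $H(x',x_n) = h(x',\psi(x'))\,\eta(x_n-\psi(x'))$ for a suitable cutoff~$\eta$). Define $\tilde u = u - H\,\chi_{\Omega^+}$. Then $\tilde u \in C(B_1)$, since on $\Gamma$ one has $\tilde u^+ - \tilde u^- = (u^+ - u^-) - h = 0$. Moreover, $\tilde u$ satisfies $F(D^2\tilde u) = f^-$ in $\Omega^-$, the perturbed equation $F(D^2\tilde u + D^2 H) = f^+$ in $\Omega^+$ (so $\tilde u \in S^*_{\lambda,\Lambda}(|f^+| + C\|D^2H\|_{L^\infty})$ by uniform ellipticity), and the transmission condition $\tilde u_\nu^+ - \tilde u_\nu^- = g - H_\nu =: \tilde g$ on $\Gamma$, with $\|\tilde g - D'\cdot x' - b\|_{L^\infty(\Gamma)} \leq \|g-D'\cdot x' - b\|_{L^\infty(\Gamma)} + C\|h\|_{C^2(\Gamma)} \leq C\tilde\delta$.

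Next, I would argue by contradiction. Assume the lemma fails; then there exist $\vep_0>0$ and a sequence $(u_k,\psi_k,h_k,g_k,f_k^\pm)$ satisfying \eqref{eq:assumptionu} with constants $\tilde\delta_k \to 0$, but such that for every viscosity solution $v$ of \eqref{eq:approxv2} one has $\|u_k - v\|_{L^\infty(B_{1/2})} > \vep_0$. Construct extensions $H_k$ and functions $\tilde u_k = u_k - H_k\chi_{\Omega_k^+}$ as above; then $\|\tilde u_k\|_{L^\infty(B_1)} \leq 2$, and $\tilde g_k$ is uniformly bounded. Theorem~\ref{thm:holder} applied to $\tilde u_k$ (viewed as a member of $S^*_{\lambda,\Lambda}$ on $\Omega_k^\pm$) yields a uniform $C^{0,\alpha_1}(\overline{B_{3/4}})$ estimate. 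By Arzel\`a--Ascoli, along a subsequence, $\tilde u_k \to \tilde u_\infty \in C(\overline{B_{3/4}})$ uniformly on $\overline{B_{3/4}}$.

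Then I would pass to the limit by a closedness argument patterned on Lemma~\ref{lem:closedness}: since $\|D^2 H_k\|_{L^\infty} \to 0$, $f_k^\pm \to 0$ in $L^n_{\mathrm{loc}}$, $\psi_k \to 0$ in $C^2$, and $\tilde g_k \to D'\cdot x' + b$ in $L^\infty$, the limit $\tilde u_\infty$ is a viscosity solution of \eqref{eq:approxv2} (with $F^+ = F^- = F$). Setting $v := \tilde u_\infty$ and noting that $\|u_k - v\|_{L^\infty(B_{1/2})} \leq \|\tilde u_k - \tilde u_\infty\|_{L^\infty(B_{1/2})} + \|H_k\|_{L^\infty(B_1)} \to 0$ contradicts the choice of the sequence for $k$ large. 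The main obstacle is justifying the closedness step: Lemma~\ref{lem:closedness} assumes uniform convergence of the right-hand sides and exact equations on each side, whereas here $f_k^\pm$ converges only in $L^n_{\mathrm{loc}}$ and $\tilde u_k$ satisfies a perturbed equation in $\Omega_k^+$ with operator shifted by $D^2 H_k$. Both points are handled by a routine extension of the closedness argument, relying on the uniform convergence $D^2 H_k \to 0$ and standard stability of $L^n$-viscosity solutions under uniform operator convergence and $L^n$-convergence of the right-hand side.
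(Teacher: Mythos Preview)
Your approach is correct but takes a genuinely different route from the paper. The paper does \emph{not} subtract an extension of the jump. Instead, it builds a continuous intermediary $w$ by solving the two Dirichlet problems $F(D^2w)=0$ in $\Omega_{7/8}^\pm$ with boundary data $w=\tfrac12(u^++u^-)$ on $\Gamma_{7/8}$ and $w=u$ on $\partial B_{7/8}$. The classical ABP estimate gives $\|u^\pm-w^\pm\|_{L^\infty}\leq C\tilde\delta$, and boundary pointwise $C^{1,\alpha}$ estimates for the Dirichlet problem give $|\nabla(u^\pm-w^\pm)|\leq C\tilde\delta$ on $\Gamma_{3/4}$; from this one reads off that $w$ satisfies a transmission condition $w_\nu^+-w_\nu^-=\tilde g$ with $\|\tilde g-D'\cdot x'-b\|_{L^\infty}\leq\delta$. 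Since $w$ is continuous and solves $F(D^2w)=0$ on each side, Lemma~\ref{lem:approx2} applies verbatim and yields the approximating $v$.

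The trade-offs: your extension-and-subtraction idea is conceptually cleaner and avoids both the auxiliary Dirichlet problem and the boundary $C^{1,\alpha}$ estimate, but the price is exactly the obstacle you flag at the end---you must rerun the compactness/closedness argument with an $x$-dependent perturbation $F(\,\cdot\,+D^2H_k)$ and with right-hand sides $f_k^\pm\to0$ only in $L^n$, which forces you outside the $L^\infty$ hypotheses of Lemma~\ref{lem:closedness} and into the $L^n$-viscosity stability framework. That extension is indeed standard, but it is not developed in the paper. The paper's route, by contrast, produces a $w$ with \emph{zero} right-hand side and the unperturbed operator $F$, so Lemma~\ref{lem:approx2} (and hence Lemma~\ref{lem:closedness} exactly as stated) applies without modification. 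In short: your argument works with a mild appeal to outside tools; the paper's argument is slightly less direct but self-contained.
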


\begin{proof}
Given $\vep>0$, let $\delta>0$ be the one given in Lemma~\ref{lem:approx2}. Fix $\tilde \delta>0$ to be chosen.
Let $w\in C(B_{7/8})$ be the viscosity solution to the Dirichlet problems
\begin{equation*}
\begin{cases}
\F (D^2 w) = 0 & \hbox{in}~\Omega_{7/8}^\pm\\
w = \frac{1}{2}(u^++u^-) & \hbox{on}~\Gamma_{7/8}\\
 w=u & \hbox{on}~\partial B_{7/8}.
\end{cases}
\end{equation*}
We will prove that $w$ satisfies the assumptions of Lemma~\ref{lem:approx2} with $B_{3/4}$ in place of $B_1$.
Indeed, by the maximum principle, $\|w\|_{L^\infty(B_{7/8})} \leq \| u\|_{L^\infty(B_1)}\leq 1$.
Moreover,  $u^\pm - w^\pm \in S_{\lambda/n,\Lambda}(f^\pm)$ in $ \Omega_{7/8}^\pm$, $u^\pm - w^\pm = \pm \tfrac{h}{2}$ on $ \Gamma_{7/8}$, and $u^\pm - w^\pm =0$ on $\partial \Omega_{7/8}^\pm \setminus  \Gamma_{7/8}.$
 Then by the classical ABP estimate and \eqref{eq:assumptionu},
\begin{equation} \label{eq:smalldiff2}
\|u^\pm - w^\pm \|_{L^\infty(\Omega_{7/8}^\pm)} \leq \tfrac{1}{2} \|h \|_{L^\infty(\Gamma)} + C \|f^\pm\|_{L^n(\Omega^\pm)} \leq C \tilde \delta,
\end{equation}
for some $C>0$, depending only on $n$, $\lambda$ and $\Lambda$. 
Since $h \in C^2(\Gamma)$, by  boundary pointwise $C^{1,\alpha}$ estimates and by using
\eqref{eq:assumptionu} and \eqref{eq:smalldiff2}, for any $x_0\in \Gamma_{3/4}$, we have
\begin{align} \label{eq:regdiff2}
|\nabla(u^\pm - w^\pm)(x_0)| & \leq {C} \big (\|u^\pm - w^\pm \|_{L^\infty(\Omega_{7/8}^\pm)} + \|h \|_{C^{1,\alpha}(x_0)} + 
C_{f^\pm} \big)  \leq  C \tilde \delta.
\end{align}
Suppose there exists a test function $\varphi$ touching $w$ by above at $x_0$ in a small neighborhood of $x_0$ contained in $B_{3/4}$.
In particular, $\phi = \varphi - (w-u)$ is a test function that touches $u$  by above at $x_0$. 
Therefore, 
$
\phi_{\nu}^+(x_0) - \phi_{\nu}^-(x_0) \geq g(x_0).
$
It follows that:
$$
\varphi_{\nu}^+(x_0) - \varphi_{\nu}^-(x_0) \geq g(x_0) +  (w^+-u^+)_{\nu}(x_0)-(w^--u^-)_{\nu}(x_0) \equiv \tilde g(x_0).
$$
Similarly if $\varphi$ is a test function touching $w$ from below at $x_0$.
Hence, $w_{\nu}^+ - w_{\nu}^- = \tilde g$ on $\Gamma_{3/4}$ in the viscosity sense.
Furthermore,  by \eqref{eq:assumptionu} and \eqref{eq:regdiff2}, we get
\begin{align*}
\|\tilde g-D'\cdot x'-b\|_{L^\infty(\Gamma_{3/4})}
 &\leq  \| g - D' \cdot x - b\|_{L^\infty(\Gamma_{3/4})} 
  + 2 \| \nabla (w^\pm-u^\pm)\|_{L^\infty(\Gamma_{3/4})}
 \leq \tilde \delta + 2C \tilde \delta
\leq \delta
\end{align*}
if we choose $\tilde\delta$ small enough. Applying Lemma~\ref{lem:approx2} with $B_{3/4}$ in place of $B_1$, we see that there exists a bounded viscosity solution $v\in C(B_{1/2})$ of \eqref{eq:approxv2} such that
$
\| w-v\|_{L^\infty(B_{1/2})} \leq \vep/2.
$
By the previous estimate and \eqref{eq:smalldiff2}, we conclude that 
$$\| u-v\|_{L^\infty(B_{1/2})} \leq \| u-w \|_{L^\infty(B_{1/2})} + \| w-v\|_{L^\infty(B_{1/2})} \leq  \vep/2 +\vep/2=\vep. $$
\end{proof}

\section{$C^{1,\alpha}$ regularity: proof of Theorem \ref{thm:main1}}\label{section:C1alpha}

Fix $0<\alpha<\bar\alpha$, where $0<\bar\alpha<1$ is given at the end of the Introduction.
In this section we derive pointwise $C^{1,\alpha}$ estimates for viscosity solutions to \eqref{eq:TP} up to the interface $\Gamma$.
The approximation and stability lemmas from Section~\ref{sec:approxlem} will be key ingredients.
The $C^{1,\alpha}$ regularity estimates of $u^+$ and $u^-$ up to the interface (Theorem~\ref{thm:main1})
follow by patching the classical interior estimates and the boundary estimates as usual.
Thus, we only need to prove the following result.
 
\begin{thm}[Boundary pointwise $C^{1,\alpha}$ regularity]\label{thm:pointwisereg}
Assume that $0\in \Gamma$, $\psi \in C^{1,\alpha}(0)$,  $g\in C^{0,\alpha}(0)$, and $f^\pm$ satisfy 
$$
\Big( \fint_{B_r \cap \Omega^\pm} |f^\pm|^n\, dx\Big)^{1/n} \leq C_{f^\pm} r^{\alpha-1}
 \quad \hbox{for all}~ r>0~\hbox{small}.
$$ 
Suppose that $u$ is a bounded viscosity solution to \eqref{eq:TP}, with $\|u\|_{L^\infty(B_1)}\leq 1$.
If $g(0)=0$, assume further that \eqref{eq:closeness2} holds, where $\theta>0$ is given in Lemma~\ref{lem:approx1}.
Then $u^\pm \in C^{1,\alpha}(0)$, that is, there exist affine functions $l^\pm(x)=A^\pm \cdot x + b$ such that
$$
|u^\pm(x) - l^\pm(x)| \leq C |x|^{1+\alpha}, \qquad \hbox{for all}~x\in \Omega_{r_0}^\pm,
$$
 with $r_0=C_0 / \|\psi\|_{C^{1,\alpha}(0)}$, and $C_0>0$ depending only on $n$, $\lambda$, $\Lambda$ and $\alpha$. Moreover,
 $$
 |A^-|+|A^+|+|b| + |C| \leq  C_0 \|\psi\|_{C^{1,\alpha}(0)} \big( |g(0)| + [g]_{C^{0,\alpha}(0)} +C_{f^-}+C_{f^+}\big).
 $$
\end{thm}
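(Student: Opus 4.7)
The argument is a dyadic iteration in the spirit of Caffarelli, using the stability results of Section~\ref{sec:approxlem} as the main approximation engine. After a preliminary rescaling $u\mapsto u(rx)/M$ for $r\sim 1/\|\psi\|_{C^{1,\alpha}(0)}$ and a suitable $M$, followed by a rotation so that $\nabla'\psi(0)=0$, I reduce to the normalized setting $\|u\|_{L^\infty(B_1)}\leq 1$ with each of $\|\psi\|_{C^{1,\alpha}(0)}$, $|g(0)|$, $[g]_{C^{0,\alpha}(0)}$ and $C_{f^\pm}$ bounded by a small $\delta>0$ to be chosen in the course of the proof. This rescaling is what produces the $\|\psi\|_{C^{1,\alpha}(0)}$ factor appearing in the radius $r_0$ and in the affine coefficients in the statement.

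The goal is then to construct sequences $A_k^\pm\in\R^n$ and $b_k\in\R$ with $A_k^+-A_k^-=g(0)e_n$ (which matches the transmission condition at $0$, since $\nu(0)=e_n$) such that the affine functions $l_k^\pm(x)=A_k^\pm\cdot x+b_k$ approximate $u$ on each side at the dyadic scales $\rho^k$:
\[
\sup_{\Omega^\pm\cap B_{\rho^k}}|u-l_k^\pm|\leq \rho^{k(1+\alpha)},\qquad |A_{k+1}^\pm-A_k^\pm|\leq C\rho^{k\alpha},\qquad |b_{k+1}-b_k|\leq C\rho^{k(1+\alpha)},
\]
for a fixed $0<\rho<1/2$ chosen in terms of the H\"older exponents. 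Telescoping the Cauchy estimates yields the limits $A^\pm$, $b$ and the required pointwise bound $|u^\pm(x)-l^\pm(x)|\leq C|x|^{1+\alpha}$, and the explicit bound on $|A^\pm|+|b|$ follows by tracking constants through the iteration.

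The inductive step splits according to the hypothesis on $g(0)$. When $g(0)=0$, I set $l_k^+=l_k^-$, the residual $u-l_k$ is continuous across $\Gamma$, and the rescaled transmission datum retains the property that it vanishes at the origin with $L^\infty$-norm controlled by $[g]_{C^{0,\alpha}(0)}$. I apply Lemma~\ref{lem:approx1}---which requires precisely the closeness condition~\eqref{eq:closeness2}---to produce an approximating $v$ that is a $C^{1,\gamma}$ viscosity solution across the interface for some $\alpha<\gamma<\bar\alpha$; its interior $C^{1,\gamma}$ regularity furnishes the next affine iterate once $\rho$ is small enough to absorb the approximation error $\tau^\beta+\delta$ into $\frac{1}{2}\rho^{1+\alpha}$. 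When $g(0)\neq 0$ the iterates $l_k^\pm$ must differ in the $e_n$ direction, and I apply Lemma~\ref{lem:stability} at each scale instead: the flat-interface comparator $v=\overline v\chi_{\Omega^-}+\underline v\chi_{\Omega^+}$ is piecewise $C^{1,\alpha}$ up to its flat interface by Theorem~\ref{thm:regflat}, which produces two distinct affine approximations simultaneously with the correct jump $g(0)e_n$ built into the comparator, and no closeness of $F^+$ and $F^-$ is required because the flat theory decouples the two sides.

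The principal technical obstacle lies in the case $g(0)\neq 0$: after subtracting the distinct iterates $l_k^\pm$, the rescaled residual is not continuous across the rescaled interface but has a jump of order $|g(0)|\,[\psi]_{C^{1,\alpha}(0)}$, whereas Lemma~\ref{lem:stability} is stated for continuous $u$. The normalization $|g(0)|\,[\psi]_{C^{1,\alpha}(0)}\leq \delta^2$ keeps this jump small, and I would argue that the proof of Lemma~\ref{lem:stability} remains valid when the small jump is treated as an additional data term contributing a controlled error. Synchronizing the choice of $\rho$ with the exponent $\alpha_1$ from Theorem~\ref{thm:holder} and the $\vep^{1/2}$ decay in Lemma~\ref{lem:stability}, and carefully tracking how the smallness propagates through the iteration, is the bookkeeping that fixes the final constants $\theta$ and $C_0$ in the statement.
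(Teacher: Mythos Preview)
Your overall architecture---normalize by rotation and dilation, then run a dyadic iteration producing affine iterates $l_k^\pm$ with $A_k^+-A_k^-=g(0)e_n$---matches the paper exactly, and your treatment of the case $g(0)=0$ via Lemma~\ref{lem:approx1} is the paper's (packaged there as Lemma~\ref{lem:case1}).

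The gap is in the case $g(0)\neq 0$. You propose to invoke Lemma~\ref{lem:stability} at \emph{every} scale. But once you subtract $l_k^\pm$ with the built-in jump $g(0)e_n$ and rescale, the new transmission datum is
\[
g_k(x)=\rho^{-k\alpha}\bigl(g(\rho^k x)-g(0)\,\nu_n(\rho^k x)\bigr),
\]
which vanishes at the origin and satisfies $\|g_k\|_{L^\infty}\leq [g]_{C^{0,\alpha}(0)}+|g(0)|\,[\nu_n]_{C^{0,\alpha}(0)}=O(\delta_0)$. So from step $k\geq 2$ onward you are in the ``$g$ close to $0$'' regime, not the ``$g$ close to $1$'' regime, and Lemma~\ref{lem:stability} does not apply. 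The paper uses Lemma~\ref{lem:approx} (which wraps Lemma~\ref{lem:stability}) only at $k=1$ to produce $l_1^\pm$, and then switches to Lemma~\ref{lem:case1} for all subsequent steps. In particular, your hope that ``no closeness of $F^+$ and $F^-$ is required because the flat theory decouples the two sides'' does not survive the iteration as you have set it up: the paper's inductive step goes through Lemma~\ref{lem:case1}.

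That switch is also where the paper resolves the jump discontinuity you correctly flag. Rather than arguing that a stability lemma tolerates a small jump, the paper replaces the discontinuous rescaled residual $v$ by the continuous viscosity solution $w$ of $F_k^\pm(D^2w)=0$ in $\tilde\Omega_k^\pm$ with Dirichlet data $w=\tfrac12(v^++v^-)$ on $\tilde\Gamma_k$ and $w=v$ on $\partial B_1$. The ABP estimate gives $\|v^\pm-w^\pm\|_{L^\infty}=O(\delta_0)$, and boundary pointwise $C^{1,\alpha}$ estimates then give $|\nabla(v^\pm-w^\pm)|\leq C\delta_0$ on $\tilde\Gamma_k\cap B_{3/4}$, so $w$ satisfies a transmission condition $w_{\nu_k}^+-w_{\nu_k}^-=\tilde g_k$ with $\|\tilde g_k\|_{L^\infty}=O(\delta_0)$. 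Lemma~\ref{lem:case1} now applies to $w$ and delivers a single affine function $l$, from which $l_{k+1}^\pm=l_k^\pm+\rho^{k(1+\alpha)}l(\rho^{-k}\,\cdot)$. This explicit Dirichlet-replacement device is the missing mechanism in your proposal.
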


The theorem will follow from iterating the next two lemmas.

\begin{lem} \label{lem:case1}
Given $0<\alpha<\bar\alpha$, there exist $C_0>0$, $0<\delta<1$, and $0<\rho< 1/2$ depending only on $n$, $\lambda$, $\Lambda$, and $\alpha$,
such that for any viscosity solution $u\in C(B_1)$ of \eqref{eq:TP} with $\|u\|_{L^\infty(B_1)}\leq 1$
and $\|g\|_{L^\infty(\Gamma_{3/4})} +C_{f^-}+C_{f^+} \leq \delta,$ if \eqref{eq:closeness2} holds, where
$\theta>0$ is as in Lemma~\ref{lem:approx1}, then
there is an affine function $l(x)=A\cdot x + b$, with $|A|+|b|\leq C_0$, such that
\begin{align*}
\| u - l \|_{L^\infty(B_\rho)} &\leq \rho^{1+\alpha}. 
\end{align*} 
\end{lem}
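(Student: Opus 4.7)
The plan is to approximate $u$ by the function $v$ produced by Lemma~\ref{lem:approx1}, whose essential virtue is that it is \emph{differentiable across the graph interface} and enjoys uniform $C^{1,\gamma}$ bounds. Linearizing $v$ at the origin and transferring the expansion back to $u$ yields the desired affine function $l$. The improvement from $C^{1,\gamma}$ to the target decay $\rho^{1+\alpha}$ will be obtained from the slack $\gamma>\alpha$, in the standard perturbative fashion.

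Concretely, I first fix an auxiliary exponent $\gamma$ with $\alpha<\gamma<\bar\alpha$ (say $\gamma=(\alpha+\bar\alpha)/2$). For this $\gamma$, Lemma~\ref{lem:approx1} supplies a universal $\theta>0$ such that, under \eqref{eq:closeness2}, there exists a viscosity solution $v$ of $F^\pm(D^2v)=0$ in $\Omega^\pm_{3/4}$ with $v=u$ on $\partial B_{3/4}$, $\|v\|_{C^{1,\gamma}(\overline{B_{1/2}})}\leq C$, and
$$\|u-v\|_{L^\infty(B_{3/4-\tau})}\leq C(\tau^\beta+\delta), \qquad \beta=\alpha_1/2,$$
for any $0<\tau<3/4$, $0<\delta<1$. (The hypothesis $C_{f^\pm}\leq\delta$ controls $\|f^\pm\|_{L^n(\Omega^\pm)}$ by a universal multiple of $\delta$, so Lemma~\ref{lem:approx1} applies after harmless constant adjustments.) I then set $l(x):=v(0)+\nabla v(0)\cdot x=:A\cdot x+b$, so that $|A|+|b|\leq C_0$ for a universal $C_0$, and the $C^{1,\gamma}$ bound gives $\|v-l\|_{L^\infty(B_\rho)}\leq C\rho^{1+\gamma}$ for $\rho\leq 1/2$.

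Combining these on $B_\rho$, for any $\rho\leq 1/2$ and any $\tau\in(0,\,3/4-\rho]$, the triangle inequality yields
$$\|u-l\|_{L^\infty(B_\rho)} \leq C(\tau^\beta+\delta)+C\rho^{1+\gamma}.$$
The parameters are then locked in the following order. First, choose $\rho\in(0,1/2)$ small, depending only on $n,\lambda,\Lambda,\alpha$, so that $C\rho^{\gamma-\alpha}\leq 1/2$, which gives $C\rho^{1+\gamma}\leq \tfrac12\rho^{1+\alpha}$. Second, set $\tau:=\delta^{1/\beta}$ and require $\delta$ small (depending on the previously chosen $\rho$, hence still universal) so that $2C\delta\leq\tfrac12\rho^{1+\alpha}$ and $\tau\leq 3/4-\rho$. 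The conclusion $\|u-l\|_{L^\infty(B_\rho)}\leq\rho^{1+\alpha}$ is immediate.

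There is essentially no obstacle beyond bookkeeping the order of constants: the nontrivial analytic work is concentrated in Lemma~\ref{lem:approx1}, whose conclusion that $v$ is smooth \emph{through} the interface is what allows a single affine function $l$ (rather than two one-sided affine functions) to approximate $u$ up to the target order. The present lemma is precisely the one-step improvement that, iterated on dyadic balls with rescaling, will produce the pointwise boundary $C^{1,\alpha}$ estimate of Theorem~\ref{thm:pointwisereg}.
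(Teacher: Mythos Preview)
Your proposal is correct and follows essentially the same approach as the paper: both invoke Lemma~\ref{lem:approx1} to obtain a $C^{1,\gamma}$ approximant $v$ with $\alpha<\gamma<\bar\alpha$, set $l(x)=v(0)+\nabla v(0)\cdot x$, and combine the approximation error $C(\tau^\beta+\delta)$ with the Taylor remainder $C\rho^{1+\gamma}$ via the triangle inequality, then fix $\rho$, $\tau$, $\delta$ in that order. The only cosmetic differences are your particular choices $\gamma=(\alpha+\bar\alpha)/2$ and $\tau=\delta^{1/\beta}$, versus the paper's $\gamma=\bar\alpha-\epsilon$ and independent selection of $\tau$ and $\delta$.
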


\begin{proof}
Fix $0<\tau <1/4$ and $0<\delta<1$ to be chosen.
Let $v\in C^{1,\gamma}_{\mathrm{loc}}(B_{3/4})$ be the function given in Lemma~\ref{lem:approx1}, with $\gamma=\bar \alpha -\epsilon$, for some $\epsilon>0$ sufficiently small so that $\bar\alpha - \epsilon > \alpha$. Then
$$
\| u - v \|_{L^\infty(B_{3/4-\tau})} \leq C(\tau^\beta+\delta).
$$
Moreover, if
$l(x)=v(0)+\nabla v (0)\cdot x$,
then $|\nabla l|+|l(0)|\leq C_0$, and the following estimate holds:
\begin{align*}
\| v - l \|_{L^\infty(B_\rho)} &\leq C_0 \rho^{1+\bar\alpha-\epsilon},
\end{align*}
 for any $0<\rho < 1/2$. It follows  that
 \begin{align*}
 \|u - l \|_{L^\infty(B_\rho)} &\leq \|u - v\|_{L^\infty(B_\rho)} + \| v - l \|_{L^\infty(B_\rho)}
 \leq   C(\tau^\beta+ \delta)  + C_0 \rho^{1+\bar\alpha-\epsilon}.
 \end{align*}
First, choose $\rho$ small enough such that $C_0 \rho^{1+\bar\alpha-\epsilon} \leq \rho^{1+\alpha}/3$. Then choose $\tau$ and $\delta$ such that $C\tau^\beta \leq \rho^{1+\alpha}/3$ and $C\delta \leq \rho^{1+\alpha}/3$.
\end{proof}

\begin{lem} \label{lem:approx}
Given $0<\alpha<\bar\alpha$, there exist constants $C_0>0$, $0<\rho<1/2$, and $0<\delta <\rho$,
depending only on $n$, $\lambda$, $\Lambda$ and $\alpha$, such that for any viscosity solution $u$ to \eqref{eq:TP}
with $\|u\|_{L^\infty(B_1)}\leq 1$ and $\|\psi\|_{C^{1,\alpha}(\overline{B_1'})}+ \|g-1\|_{L^\infty(\Gamma)} + C_{f^-}+C_{f^+} \leq \delta$,
 there exist affine functions $l^\pm(x)=A^\pm \cdot x + b$, with $|A^-|+|A^+|+|b| \leq C_0$, such that
\begin{align*}
\| u^\pm - l^\pm \|_{L^\infty(\Omega_\rho^\pm)} \leq  \rho^{1+\alpha}.
\end{align*}
Moreover, $\nabla ' l^- = \nabla' l^+$ and $l^+_{x_n} - l^-_{x_n} =1$. 
\end{lem}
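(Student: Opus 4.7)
The plan is to approximate $u$ by the reference flat-interface solution $v_0$ supplied by Lemma~\ref{lem:stabilityflat} and Corollary~\ref{cor:stabilityflat}, and then to take $l^{\pm}$ to be the first-order Taylor polynomials of $v_0^{\pm}$ at the origin. To begin, given a small parameter $\vep>0$ to be fixed later, I would invoke Lemma~\ref{lem:stability} (stability for $g$ away from $0$): for $\delta$ smaller than some $\delta_1=\delta_1(\vep)$, we obtain $v=\overline{v}\chi_{\Omega^-}+\underline{v}\chi_{\Omega^+}$ with $v=u$ on $\partial B_{3/4}$ and
$$\|u-v\|_{L^\infty(B_{1/2})}\leq C\vep^{1/2}.$$
Here $\overline{v}$ and $\underline{v}$ are the flat-interface solutions at $T_{\pm\delta}$ from Corollary~\ref{cor:stabilityflat}, both of which are $\vep$-close in $L^\infty$ to the flat-interface solution $v_0$ at $T_0$ (with the same boundary data) by Lemma~\ref{lem:stabilityflat}. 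The ABP estimate bounds $\|v_0\|_{L^\infty(B_{3/4})}$ universally, and since $g_0$ is smooth and the right-hand side is zero, Theorem~\ref{thm:regflat} yields a universal bound $\|v_0^{\pm}\|_{C^{1,\bar{\alpha}}(\overline{B_{1/2}^{\pm}})}\leq C_1$.

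I would then define $l^{\pm}(x)=v_0^{\pm}(0)+\nabla v_0^{\pm}(0)\cdot x$. The three required structural properties read off from $v_0$ at the origin: continuity of $v_0$ on $T_0$ forces $v_0^+(0)=v_0^-(0)=:b$; continuity of the tangential gradient along $T_0$ gives $\nabla' l^+=\nabla' l^-$; and the transmission condition at $0\in T_0$ gives $(l^+)_{x_n}-(l^-)_{x_n}=g_0(0)=1$. The bound $|A^-|+|A^+|+|b|\leq C_0$ is immediate from the $C^{1,\bar{\alpha}}$ estimate on $v_0^\pm$. In particular, $l^+(x)-l^-(x)=x_n$ everywhere, which is the algebraic identity I will need below.

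For the $L^\infty$ estimate on $\Omega_\rho^+$ I split into two cases. When $x_n\geq 0$, I telescope
$$|u^+(x)-l^+(x)|\leq |u^+(x)-\underline{v}(x)|+|\underline{v}(x)-v_0^+(x)|+|v_0^+(x)-l^+(x)|\leq C\vep^{1/2}+\vep+C_1\rho^{1+\bar{\alpha}}.$$
The delicate case is the thin strip $\psi(x')<x_n<0$, which has width at most $\delta$ since $\|\psi\|_{L^\infty}\leq\delta$; there $v_0^+$ is not the natural comparison, so I route through $v_0^-$ and use the identity $l^+(x)-l^-(x)=x_n$ to pick up an extra $|x_n|\leq\delta$ term, getting $|u^+(x)-l^+(x)|\leq C\vep^{1/2}+\vep+C_1\rho^{1+\bar{\alpha}}+\delta$. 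To close, I first pick $\rho$ small enough that $C_1\rho^{1+\bar{\alpha}}\leq\tfrac{1}{3}\rho^{1+\alpha}$ (possible since $\bar{\alpha}>\alpha$), then $\vep$ small enough that $C\vep^{1/2}+\vep\leq\tfrac{1}{3}\rho^{1+\alpha}$, and finally $\delta\leq\min\{\delta_1(\vep),\tfrac{1}{3}\rho^{1+\alpha}\}$; a symmetric argument handles $\Omega_\rho^-$. The main obstacle is precisely this thin-strip case: the mismatch between the curved $\Gamma$ and the flat $T_0$ of the comparison function forces the use of both $l^+$ and $l^-$ simultaneously, which is exactly why the compatibility conditions $\nabla' l^+=\nabla' l^-$ and $l^+_{x_n}-l^-_{x_n}=1$ in the conclusion are essential rather than cosmetic.
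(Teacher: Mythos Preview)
Your argument is correct, but it diverges from the paper's in one interesting way. Both proofs start from Lemma~\ref{lem:stability} to obtain $\|u-v\|_{L^\infty(B_{1/2})}\leq C\vep^{1/2}$ with $v=\overline v\chi_{\Omega^-}+\underline v\chi_{\Omega^+}$. The paper then sets $l_v^\pm$ equal to the first-order Taylor polynomials of $v^\pm=(\underline v|_{\Omega^+},\overline v|_{\Omega^-})$ at the origin and \emph{corrects} them by terms $l_\vep^\pm$ of size $\vep$ (supplied by Corollary~\ref{cor:stabilityflat}) so that the compatibility conditions $\nabla'l^+=\nabla'l^-$ and $l^+_{x_n}-l^-_{x_n}=1$ hold exactly; since $\underline v$ is $C^{1,\gamma}$ all the way down to $\{x_n=-\delta\}\supset\Gamma$, no thin-strip case ever arises in the paper's estimate. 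You instead pull $l^\pm$ directly from the reference solution $v_0$ at $T_0$, so the compatibility conditions come for free from the transmission condition of $v_0$ and continuity across $T_0$; the price is the thin-strip case $\psi(x')<x_n<0$, which you dispatch cleanly via the identity $l^+-l^-=x_n$ and the bound $|x_n|\leq\delta$. The two approaches are equivalent in strength; yours trades an explicit correction step for a short case split.

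One cosmetic point: Theorem~\ref{thm:regflat} gives $v_0^\pm\in C^{1,\gamma}$ for any fixed $\gamma<\bar\alpha$, not $C^{1,\bar\alpha}$, so you should write $C_1\rho^{1+\gamma}$ with some intermediate $\alpha<\gamma<\bar\alpha$ (as the paper does) rather than $C_1\rho^{1+\bar\alpha}$; the choice of $\rho$ is unaffected.
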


\begin{proof}
Fix $0<\vep, \delta, \rho<1/2$ to be chosen. Let $v$ be as in Lemma~\ref{lem:stability}. Define 
$$l_v^\pm(x)=\nabla v^\pm(0)\cdot x+v^\pm(0),$$
 where $v^\pm = v\big|_{\overline{\Omega_{3/4}^\pm}}$. 
 By construction, $v^+= \underline v$ and $v^-= \overline v$, where $\underline v$ and $\overline v$ satisfy \eqref{eq:flata}, replacing $B_1$ by $B_{3/4}$, with $a=-\delta$ and $a=\delta$, respectively. Since $g_{\delta}$ is smooth, by Theorem~\ref{thm:regflat} (rescaled), we have $\overline v\in C^{1,\gamma}\big(\overline{B_{1/2}} \cap \{x_n \leq  \delta\}\big)$, for any $0<\gamma<\bar\alpha$, with
 $$
\| \overline v\|_{C^{1,\gamma}\big(\overline{B_{1/2}} \cap \{x_n \leq  \delta\}\big)} \leq C \big (\|u\|_{L^\infty(B_1)} + \|g_\delta\|_{C^{0,\gamma}(T_\delta)}\big) \leq C_0,
 $$
 where $C_0>0$ depends only on $n$, $\lambda$, $\Lambda$, and $\gamma$. 
 Similarly, $\underline v \in C^{1,\gamma}\big(\overline{B_{1/2}} \cap \{x_n \geq -\delta\}\big)$ and $\| \overline v\|_{C^{1,\gamma}\big(\overline{B_{1/2}}\cap \{x_n \geq - \delta\}\big)} \leq C_0$. 
Since $\|\psi\|_{C^{1,\alpha}(\overline{B_1'}) }\leq \delta$, then $\overline{\Omega_{1/2}} \subseteq \overline{B_{1/2}} \cap \{|x_n|\leq \delta\}.$
Therefore, $v^\pm \in C^{1,\gamma}(\overline{\Omega_{1/2}^\pm})$, with
$\|v^\pm\|_{C^{1,\gamma}(\overline{\Omega_{1/2}^\pm})} \leq C_0$. In particular, 
$|\nabla v^\pm (0)|+|v^\pm(0)|\leq C_0.$

We define the affine functions $l^\pm$ as $l^\pm_v$ plus a small correction, that is, 
$$
l^\pm (x) = l_v^\pm(x) + l^\pm_\vep(x)
$$
with $l^\pm_\vep(x) = A_\vep^\pm \cdot x+ b_\vep^\pm$ such that
\begin{align*}
b_\vep^+&=-b_\vep^-=\tfrac{1}{2}(v^-(0)-v^+(0))\\
(A_\vep^+)' &=-(A_\vep^-)'=\tfrac{1}{2}(\nabla'v^-(0)-\nabla'v^+(0)) \\
(A_\vep^+)_n &=-(A_\vep^-)_n=\tfrac{1}{2}(1-v_{x_n}^+(0)+v_{x_n}^-(0)). 
\end{align*}
By Corollary~\ref{cor:stabilityflat}, we have that $|b_\vep^\pm| + |A_\vep^\pm|\leq \vep.$
Moreover, by definition of $l^\pm$, it holds that
$$
 l^-(0)=l^+(0), \quad \nabla' l^- = \nabla' l^+, \quad \hbox{and} \quad l^+_{x_n}-l^-_{x_n}=1.
 $$

For any $x\in \Omega^\pm_{\rho}$, by Lemma~\ref{lem:stability} and the $C^{1,\gamma}$ estimate for $v^\pm$, we have that
\begin{align*}
|u^\pm (x)-l^\pm(x)| &=  |u^\pm (x)-l_v^\pm(x)-l_\vep^\pm(x)|\\ 
& \leq | u^\pm(x) - v^\pm(x)| + |v^\pm(x)-l_v^\pm(x)| + |l_\vep^\pm(x)|\\
&\leq C\vep^{1/2}+ C_0 \rho^{1+\gamma} + \vep \rho + \vep.
\end{align*}
Fix $\alpha<\gamma = \gamma(\alpha, \bar \alpha)<\bar\alpha$. First, choose $0<\rho<1/2$ such that
$C_0 \rho^{1+\gamma} \leq {\rho^{1+\alpha}}/{2}.$
Then choose $0<\vep<\rho$ such that 
$C\vep^{1/2} + \vep \rho + \vep \leq {\rho^{1+\alpha}}/{2}.$
Finally, recall that $0<\delta<\vep$ is given as in Corollary~\ref{cor:stabilityflat}. Therefore, we conclude that
$\| u^\pm - l^\pm \|_{L^\infty(\Omega_\rho^\pm)} \leq  \rho^{1+\alpha}$.
\end{proof}

\begin{proof}[Proof of Theorem~\ref{thm:pointwisereg}]
Fix $0<\alpha<\bar\alpha$. Let $C_0, \rho, \delta>0$ be the minimum of the constants given in Lemma~\ref{lem:case1} and Lemma~\ref{lem:approx}. 
Let $0<\delta_0<\min \big \{\delta/3,(2+2\tilde C)^{-1} \delta,(2\tilde C)^{-1} \rho^{1+\alpha}\big\}$.
First, we normalize the problem. Recall that we are assuming  $0\in\Gamma$, that is, $\psi(0')=0$.\medskip

$(i)$ After a rotation, we can assume that $\nu(0)=e_n$. In particular, $\nabla'\psi(0')=0'$. Also, we can suppose that $[\psi]_{C^{1,\alpha}(0)} \leq \delta_0$, where
$$
[\psi]_{C^{1,\alpha}(0)}  =  \sup_{x'\in B_1', \, x'\neq 0'} \frac{|\nabla' \psi(x')|}{|x'|^{\alpha}}.
$$
Indeed, let $K^\alpha=[\psi]_{C^{1,\alpha}(0)}/\delta_0$, and consider $v(y)=u(y/K)$, for $y\in B_1$. Then $v$ satisfies 
$$
\begin{cases}
\F_K^\pm(D^2v) =  f_K^\pm &  \hbox{in}~ \tilde \Omega^\pm\\
v_\nu^+- v_\nu^- = g_K & \hbox{on}~\tilde \Gamma,
\end{cases}
$$
where $\F_K^\pm(M) = K^{-2} \F^\pm(K^2 M)$, for  $M\in \mathcal{S}^n$, $\tilde \Omega^\pm = \{ y \in B_1 : y/K \in \Omega^\pm\}$,  $\tilde \Gamma = \{ y \in B_1 : y/K \in \Gamma\}$,  $f_K^\pm(y)=K^{-2} f^\pm(y/K)$, for $y\in \tilde \Omega^\pm $,  and $g_K(y)=K^{-1} g(y/K)$, for $y\in \tilde \Gamma$.
In particular, it holds that $\F_K^\pm \in \mathcal{E}(\lambda,\Lambda)$, that is, $\F_K^\pm$ is a fully nonlinear uniformly elliptic operator with the same ellipticity constants as $\F^\pm$. Also, $f^\pm_K$ satisfy
$$\Big( \fint_{B_r \cap \tilde\Omega^\pm} |f_K^\pm(y)|^n\, dy \Big)^{1/n} 
\leq K^{-(1+\alpha)} C_{f^\pm} r^{\alpha-1}.$$
Hence, $C_{f^\pm_K} = K^{-(1+\alpha)} C_{f^\pm}$. Moreover, $g_K$ satisfies
$$[g_K]_{C^{0,\alpha}(0)} = \sup_{y \in B_1, \, y\neq 0} \frac{|g_K(y)-g_K(0)|}{|y|^\alpha}
\leq K^{-(1+\alpha)} [g]_{C^{1,\alpha}(0)}.$$
If $y\in \tilde\Gamma$, then $y_n=\tilde \psi(y')$, with $\tilde{\psi}(y') = K \psi (y'/K).$
Moreover, 
\begin{align*}
[\tilde \psi]_{C^{1,\alpha}(0)} &=  \sup_{y'\in B_1', \, y'\neq 0'} \frac{| \nabla'\tilde \psi(y')|}{|y'|^{\alpha}}
=  \sup_{y'\in B_1', \, y'\neq 0'} \frac{| \nabla' \psi(y'/K)|}{|y'|^{\alpha}}
\leq K^{-\alpha} [\psi]_{C^{1,\alpha}(0)}= \delta_0.
\end{align*}

$(ii)$ Assume that $\| u \|_{L^\infty(B_1)}\leq 1$, $ C_{f^-} + C_{f^+} \leq \delta_0$, and 
$$ 
[g]_{C^{0,\alpha}(0)} = \sup_{x\in \Gamma\cap B_1,\,  x\neq 0} \frac{|g(x)-g(0)|}{|x|^\alpha} \leq {\delta_0}.
$$
Indeed, let $K=\| u \|_{L^\infty(B_1)}+ \delta_0^{-1}([g]_{C^{0,\alpha}(0)} +C_{f^-} + C_{f^+})$, and consider 
$v =u/K$. Then $v$ satisfies
$$
\begin{cases}
\F_K^\pm(D^2v) =  f_K^\pm &  \hbox{in}~ \Omega^\pm\\
v_\nu^+- v_\nu^- = g_K & \hbox{on}~\Gamma,
\end{cases}
$$
where $\F_K^\pm(M) = K^{-1} \F^\pm(K M)$, for  $M\in \mathcal{S}^n$, $f_K^\pm=K^{-1} f^\pm$, and $g_K=K^{-1} g$.
Moreover, $\|v\|_{L^\infty(B_1)}\leq1 $, and $[g_K]_{C^{0,\alpha}(0)} + C_{f_K^-} + C_{f_K^+} \leq \delta_0$.

For simplicity, we use the same notation as in the statement, that is, $\psi$, $u$, $F^{\pm}$, $f^\pm$ and $g$. 

The proof is different when the value of $g(0)=0$ or $g(0)\neq0$.
We will start with the case $g(0)\neq 0$, and the case $g(0)=0$ will be addressed at the end. For convenience, we set $g(0)=1$.
Under these assumptions, it is enough to prove the following:

\medskip

\noindent{\bf Claim.} {\it For every $k \geq 1$, there exist affine functions $l_k^\pm(x)=A_k^\pm \cdot x + b_k$ such that
\begin{align*}
\rho^{k} |A^\pm_{k+1}-A^\pm_{k}|  + |b_{k+1}-b_{k}|  \leq C_0 \rho^{k(1+\alpha)}
\end{align*}
where $C_0>0$ depends only on $n$, $\lambda$, $\Lambda$ and $\alpha$, and such that
\begin{align*}
\|u^\pm-l_k^\pm \|_{L^\infty( \Omega^\pm_{\rho^k})} &\leq \rho^{k(1+\alpha)}. 
\end{align*}
Moreover, $\nabla'l^-_k=\nabla'l_k^+$, and  $(l_k^+)_{x_n}-(l_k^-)_{x_n}=1$.\medskip}

We prove the claim by induction. For $k=1$, by the normalization, we are under the assumptions of Lemma~\ref{lem:approx}. Indeed, by $(i)$, we have that
\begin{align*}
\|\psi\|_{C^{1,\alpha}(\overline{B_1'})} &= \|\psi-\psi(0')\|_{L^\infty(B_1')} + \|\nabla \psi-\nabla'\psi(0')\|_{L^\infty(B_1')} + [\nabla\psi]_{C^{0,\alpha}(\overline{B_1'})}\\
&\leq 3[\psi]_{C^{1,\alpha}(0)} \leq 3 \delta_0  \leq \delta.
\end{align*}
Moreover, by $(ii)$ and $(iii)$, it follows that 
$$
\|g-1\|_{L^\infty(\Gamma)} = \|g-g(0)\|_{L^\infty(\Gamma)} \leq [g]_{C^{0,\alpha}(0)} \leq \delta_0 \leq \delta.
$$
Hence, by Lemma~\ref{lem:approx}, there exist $l_1^\pm (x) = A_1^\pm \cdot x + b_1$, with $|A_1^-|+|A_1^+|+|b_1| \leq C_0$ such that
\begin{align*}
\|u^\pm -l_1^\pm \|_{L^\infty(\Omega_\rho^\pm)} &\leq \rho^{1+\alpha}.
\end{align*}
Moreover, $\nabla' l_1^-=\nabla'l_1^+$, and $(l_1^+)_{x_n} - (l_1^-)_{x_n} = 1$.

For the induction step, assume that the claim holds for some $k\geq 1$, and let $l^\pm_k$ be such affine functions. Denote by 
$$\tilde {\Omega}^\pm_{k} =\{ x\in B_1 : \rho^k x\in \Omega^\pm\}\quad\hbox{and}\quad
\tilde \Gamma_{k}  =\{ x \in B_1 : \rho^k  x \in \Gamma \}.$$
Note that if  $\psi_{k}$ is a parametrization of $\tilde\Gamma_{k} $ in $B_1'$, then $\psi_{k}(x') = \rho^{-k}\psi(\rho^k x').$
In particular, $\nabla' \psi_{k} (x') =\nabla' \psi(\rho^k x)$, and thus, for $x\in \tilde\Gamma_{k}$, if $\nu_{k}(x)$ is the normal vector on $x$ pointing at $\tilde\Omega_{k}^+$, then  $\nu_{k}(x) = \nu(\rho^k x)$.
Define $l_k= l_k^+ \chi_{\tilde\Omega_{k}^+} + l_k^- \chi_{\tilde\Omega_{k}^-}$.
Consider the rescaled function
$$v(x) = \frac{u (\rho^k x) - {l}_k(\rho^k x)}{\rho^{k(1+\alpha)}}\qquad\hbox{for}~x\in \overline{B_1}.$$
Then $v$ satisfies 
\begin{equation} \label{eq:tpv}
\begin{cases}
\F_k^\pm(D^2 v) = f_k^\pm & \hbox{in}~\tilde\Omega_{k}^\pm\\
v_{\nu_k}^+ - v_{\nu_k}^- = g_k & \hbox{on}~\tilde\Gamma_{k}\\
\end{cases}
\end{equation}
in the viscosity sense, where 
\begin{align*}
\F_k^\pm(M) &= \rho^{k(1-\alpha)} \F^\pm( \rho^{k(\alpha-1)}M), \quad\hbox{for}~M\in \mathcal{S}^n\\
f_k^\pm(x) &= \rho^{k(1-\alpha)} f^\pm(\rho^k x), \quad \hbox{for}~x\in \tilde\Omega_{k}^\pm\\ 
g_k(x) & = \rho^{- k \alpha} (g(\rho^k x)-\nu_n(\rho^kx)), \quad \hbox{for}~x\in \tilde\Gamma_{k}.
\end{align*}
By the induction hypothesis, 
$\| v \|_{L^\infty(B_1)}  \leq 1.$
Notice that 
\begin{equation} \label{eq:cttf}
\Big( \fint_{B_r \cap \tilde\Omega_k^\pm} |f_k^\pm(y)|^n\, dy\Big)^{1/n} 
\leq C_{f^\pm} r^{\alpha-1}.
\end{equation}
Hence, $C_{f_k^\pm} = C_{f^\pm}$, and $C_{f_k^+}+C_{f_k^-}\leq \delta_0$.
Moreover,
\begin{equation} \label{eq:estg}
\|g_k\|_{L^\infty(\tilde\Gamma_k)} \leq [g]_{C^{0,\alpha}(0)}+ [\nu_n]_{C^{0,\alpha}(0)} \leq \delta_0+\delta_0=2\delta_0.
\end{equation}
However, we cannot apply Lemma~\ref{lem:case1} to $v$ since it has a jump discontinuity on $\tilde\Gamma_{k}$. In fact, if
$v^\pm  = v \big|_{\overline{\tilde \Omega^\pm_{k}}},$
then for $x\in \tilde\Gamma_{k}$, by the normalization $(i)$, and the induction hypothesis, we have
\begin{align} \label{eq:jump}
|(v^- - v^+)(x)| &= \frac{|l_k^-(\rho^k x)-l_k^+(\rho^k x)|}{\rho^{k(1+\alpha)}} = \rho^{-k \alpha} |x_n|
\leq   \rho^{-k \alpha} \sup_{x \in \tilde\Gamma_{k}} |x_n|\\\nonumber
 &\leq   \sup_{x'\in B_1'} \frac{|\psi_{k}(x')|}{ \rho^{k \alpha} } \leq [\psi]_{C^{1,\alpha}(0)} \leq  \delta_0. 
\end{align}
Let $w\in C(B_1)$ be the viscosity solution of the Dirichlet problems:
\begin{equation*}
\begin{cases}
\F_k^\pm (D^2 w) = 0 & \hbox{in}~\tilde\Omega_{k}^\pm\\
w = \frac{1}{2}(v^++v^-) & \hbox{on}~\tilde\Gamma_{k}\\
 w=v & \hbox{on}~\partial B_{1}.
\end{cases}
\end{equation*}
We will prove that $w$ satisfies the assumptions of Lemma~\ref{lem:case1}. By the maximum principle, $\|w\|_{L^\infty(B_1)} \leq \| v\|_{L^\infty(\partial B_1)}\leq 1$.
Moreover,  $v^\pm - w^\pm \in S_{\lambda/n,\Lambda}(f^\pm_k)$ in $\tilde \Omega_k^\pm$, $v^\pm - w^\pm = \pm \tfrac{1}{2} \rho^{-k\alpha} x_n$ on $\tilde \Gamma_k$, and $v^\pm - w^\pm =0$ on $\partial \tilde \Omega_k^\pm \setminus \tilde \Gamma_k.$
 Then, by the classical ABP estimate and \eqref{eq:jump},
\begin{equation} \label{eq:smalldiff}
\|v^\pm - w^\pm \|_{L^\infty(\tilde\Omega_k^\pm)} \leq \|v^\pm - w^\pm \|_{L^\infty(\tilde\Gamma_k)} + C \|f_k^\pm\|_{L^n(\tilde\Omega_k^\pm)} \leq C \delta_0.
\end{equation}
since $\|f_k^\pm\|_{L^n(\tilde\Omega_k^\pm)} \leq |B_1|^{1/n} C_{f^\pm}\leq C(n) \delta_0$ by \eqref{eq:cttf} with $r=1$.
By  boundary pointwise $C^{1,\alpha}$ estimates,  for any $x_0\in \tilde\Gamma_k\cap B_{3/4}$, we have
\begin{align} \label{eq:regdiff}
|\nabla(v^\pm - w^\pm)(x_0)| & \leq {C} \big (\|v^\pm - w^\pm \|_{L^\infty(\tilde\Omega_k^\pm)} + \tfrac{1}{2} \rho^{-k\alpha} \|\psi_k \|_{C^{1,\alpha}(x_0)} +C_{f_k^\pm} \big)  \leq \tilde C\delta_0,
\end{align}
where the last inequality follows from \eqref{eq:cttf},  \eqref{eq:smalldiff}, and the normalization $(i)$.

Let $x_0\in \tilde \Gamma_{k} \cap B_{3/4}$. Suppose there exists a test function $\varphi$ touching $w$ by above at $x_0$ in a small neighborhood of $x_0$ contained in $B_{3/4}$.
In particular, $\phi = \varphi - (w-v)$ is a test function that touches $v$  by above at $x_0$. 
Therefore, 
$
\phi_{\nu_k}^+(x_0) - \phi_{\nu_k}^-(x_0) \geq g_k(x_0).
$
It follows that:
$$
\varphi_{\nu_k}^+(x_0) - \varphi_{\nu_k}^-(x_0) \geq g_k(x_0) +  (w^+-v^+)_{\nu_k}(x_0)-(w^--v^-)_{\nu_k}(x_0) \equiv \tilde{g}_k(x_0)
$$
Moreover, by \eqref{eq:estg} and \eqref{eq:regdiff}, we get
\begin{align*}
\|\tilde g_k\|_{L^\infty(\tilde \Gamma_{k} \cap B_{3/4})} 
& \leq 2\delta_0 + 2\tilde C \delta_0 \leq \delta.
\end{align*}
Similarly, if $\varphi$ is a test function touching $w$ from below at $x_0$, in a small neighborhood of $x_0$ contained in $B_{3/4}$, then
$
\varphi_{\nu_k}^+(x_0) - \varphi_{\nu_k}^-(x_0) \leq \tilde g_k(x_0).
$
Hence, $w_{\nu_k}^+ - w_{\nu_k}^- = \tilde g_k$ on $\tilde\Gamma_{k} \cap B_{3/4}$ in the viscosity sense.
Applying Lemma~\ref{lem:case1} to $w$, we get that
there exist $C_0>0$ depending only on $n$, $\lambda$, $\Lambda$, and $\alpha$, and
 an affine function $l(x)=A\cdot x + b$, with $|A|+|b|\leq C_0$, such that
\begin{align} \label{eq:estw}
\| w - l \|_{L^\infty(B_\rho)} &\leq  \rho^{1+\alpha}/2. 
\end{align} 
Note that we can always choose $\rho$ sufficiently small such that the previous estimate holds (see proof of Lemma~\ref{lem:case1}).
Hence, by \eqref{eq:smalldiff} and \eqref{eq:estw}, we get
\begin{align*}
\| v - l \|_{L^\infty(B_\rho)} & \leq  \| v- w \|_{L^\infty(B_\rho)} + \| w - l \|_{L^\infty(B_\rho)} 
\leq \tilde C \delta_0 +\rho^{1+\alpha}/2 \leq \rho^{1+\alpha}.
\end{align*} 
In particular, for any $x\in B_\rho$, we have
\begin{align*}
\left|\frac{u (\rho^k x) - {l}_k(\rho^k x)}{\rho^{k(1+\alpha)}} - l(x) \right| \leq \rho^{1+\alpha},
\end{align*}
or equivalently, if $y = \rho^k x $, then for any $y \in B_{\rho^{k+1}}$, 
\begin{equation} \label{eq:indstep}
| u(y)  - l_k(y)  - \rho^{k(1+\alpha)} l(\rho^{-k}y)| \leq \rho^{(k+1)(1+\alpha)}.
\end{equation}
Define the affine approximations at the step $k+1$ as
$$
l_{k+1}^\pm (y) = l_k^\pm (y)  + \rho^{k(1+\alpha)} l(\rho^{-k}y).
$$ 
If $l_{k+1}^\pm (y) = A_{k+1}^\pm \cdot y+ b_{k+1}$, then
$A_{k+1}^\pm = A_k^\pm + \rho^{k\alpha}A$ and $b_{k+1} =  b_k  +  \rho^{k(1+\alpha)} b$.
Using the estimate $|A|+|b|\leq C_0$, we have 
\begin{align*}
\rho^{k} |A^\pm_{k+1}-A^\pm_{k}|  + |b_{k+1}-b_{k}|  \leq C_0 \rho^{k(1+\alpha)}.
\end{align*}
From \eqref{eq:indstep}, we see that 
$$
\| u^\pm - l_{k+1}^\pm \|_{L^\infty(\Omega^\pm_{\rho^{k+1}})} \leq \rho^{(k+1)(1+\alpha)}.
$$
Moreover, by the induction hypothesis, 
\begin{align*}
\nabla'l^-_{k+1}- \nabla'l_{k+1}^+  &=\nabla'l^-_{k}- \nabla'l_{k}^+=0\\
(l_{k+1}^+)_{x_n}-(l_{k+1}^-)_{x_n} &= (l_{k}^+)_{x_n}-(l_{k}^-)_{x_n}=1. 
\end{align*}
The proof of the claim is completed.

\medskip

Finally, we consider the case $g(0)=0$. As before, it is enough to prove the following: \medskip

\noindent{\bf Claim.} {\it For all $k \geq 1$, there exists an affine function $l_k=A_k \cdot x + b_k$  such that
\begin{align*}
\rho^{k} |A_{k+1}-A_{k}| + |b_{k+1}-b_{k}|  \leq C_0 \rho^{k(1+\alpha)},
\end{align*}
where $C_0>0$, depends only on $n$, $\lambda$, $\Lambda$ and $\alpha$, and such that
\begin{align*}
\|u-l_k\|_{L^\infty(B_{\rho^k})} &\leq \rho^{k(1+\alpha)}.
\end{align*}}

The proof is by induction. For $k=1$, we can apply Lemma~\ref{lem:case1} to $u$.  Indeed, $\| u\|_{L^\infty(B_1)}\leq 1$,  and $\|g\|_{L^\infty(\Gamma)}+C_{f^-}+C_{f^+} \leq \delta$, given that 
$$
\|g\|_{L^\infty(\Gamma)} =\sup_{x\in \Gamma}  |g(x)-g(0)|\leq [g]_{C^{0,\alpha}(0)}\leq \delta_0 \leq \delta.
$$
Then we find an affine function $l_1(x) = A_1\cdot x +b _1$, with $|A_1|+|b_1|\leq {C}_0$, such that
$$
\|u- l_1 \|_{L^\infty(B_\rho)} \leq \rho^{1+\alpha}.
$$
Assume the claim holds for $k\geq 1$. Define
$$
v(x) = \frac{u(\rho^k x)-l_k(\rho^k x)}{\rho^{k(1+\alpha)}} \qquad \hbox{for}~x\in B_1.
$$ 
Then, arguing as before, we have that $v\in C(B_1)$ satisfies \eqref{eq:tpv}, with the same operators $\F_k^\pm$ and the same right-hand sides $f_k^\pm$, but with different $g_k$:
\begin{align*}
g_k(x) = \rho^{- k \alpha} g(\rho^k x) \quad \hbox{for}~x\in \tilde\Gamma_{k}.
\end{align*}
In particular, for any $x\in \tilde\Gamma_{k}$, we have
$
|g_k(x)| = \rho^{- k \alpha} |g(\rho^k x)|  \leq [g]_{C^{0,\alpha}(0)} \leq \delta_0 \leq \delta$.
Then the claim follows for $k+1$ by applying again Lemma~\ref{lem:case1}.
\end{proof}

\section{$C^{2,\alpha}$ regularity: proof of Theorem \ref{thm:reg2}}\label{section:C2alpha}

Recall the definition of the H\"older exponent $0<\bar{\bar{\alpha}}<1$ given at the end of the Introduction.
Here we derive boundary pointwise $C^{2,\alpha}$ estimates for viscosity solutions of nonflat interface problems up to $\Gamma$ (Theorem \ref{thm:reg2}).
In this result we require that
\begin{equation} \label{eq:additional}
g(0)=0\qquad\hbox{or}\qquad D_{x'}^2 \psi(0')=0.
\end{equation}
We point out that, in fact, if $g(0)=0$ then we only need $\psi \in C^2$ instead of $\psi\in C^{2,\alpha}$. 
Observe that for boundary value problems, such as the Dirichlet and the oblique derivative problems studied in \cite{LiZ,LZ2},
an assumption like \eqref{eq:additional} is not needed because appropriate normalizations can be performed.
However, such normalizations are not possible for transmission problems when the interface is not flat.

Theorem \ref{thm:reg2} will be a consequence of iterating the next lemma.

\begin{lem} \label{lem:case2}
Given $0<\alpha<\bar{\bar\alpha}$, there exist constants $C_0>0$ and  $0<\vep, \rho<1/2$, depending only on $n$, $\lambda$, $\Lambda$, and $\alpha$ such that, if $u$ satisfies the assumptions of Lemma~\ref{lem:approx3} with $D'=\nabla' g(0)$ and $b=g(0)$, and $\|g\|_{C^1(\Gamma)}\leq 1$,
then there exist quadratic polynomials $P^\pm(x)= \tfrac{1}{2}x^t A^\pm x  +  B^\pm\cdot x + c$, where $A^\pm \in \mathcal{S}^n$, $B^\pm\in \Rn$,  $c\in \R$, and
 $ \|A^\pm\|+|B^\pm|+|c|\leq C_0$
  such that
\begin{align*}
\| u^\pm - P^\pm \|_{L^\infty(\Omega^\pm_\rho)} \leq  \rho^{2+\alpha}.
\end{align*}
Moreover, the coefficients satisfy the following conditions:
\begin{align*}
B_i^+-B_i^-= 0 \quad  \hbox{if}~i < n & \quad \hbox{and} \quad B_n^+-B_n^-= g(0) \\
A^+_{ij} - A^-_{ij} = 0 \quad \hbox{if}~i,j < n,  \quad A^+_{nj}-A^-_{nj} &= g_{x_j}(0)  \quad \hbox{if}~j<n, \quad \hbox{and} \quad  \F (A^\pm) =0.
\end{align*}
\end{lem}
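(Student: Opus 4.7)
The plan is to approximate $u$ by a viscosity solution $v$ of a flat-interface problem with affine transmission datum, and then extract the polynomials $P^{\pm}$ from the pointwise $C^{2,\alpha'}$ expansion of $v$ at the origin (for some intermediate $\alpha<\alpha'<\bar{\bar\alpha}$). Given $\eta>0$ to be chosen, I would apply Lemma~\ref{lem:approx3} with $D'=\nabla'g(0)$ and $b=g(0)$ to obtain a $\tilde\delta>0$ and a bounded viscosity solution $v\in C(B_{1/2})$ of
\[
\begin{cases}
F(D^{2}v)=0 & \text{in } B_{1/2}^{\pm},\\
v_{x_{n}}^{+}-v_{x_{n}}^{-}=\nabla'g(0)\cdot x'+g(0) & \text{on } B_{1/2}\cap\{x_{n}=0\},
\end{cases}
\]
satisfying $\|u-v\|_{L^{\infty}(B_{1/2})}\leq\eta$. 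Since $F$ is concave, $f^{\pm}\equiv 0$ is trivially $C^{0,\alpha'}$ with matching values on the flat interface, and the affine transmission datum lies in $C^{1,\alpha'}$; after a standard rescaling of $v$ onto $B_1$, Theorem~\ref{thm:regflat2} yields $v^{\pm}\in C^{2,\alpha'}(\overline{B_{1/4}^{\pm}})$ with norms controlled by a universal constant $C_{0}=C_{0}(n,\lambda,\Lambda,\alpha')$, using $\|v\|_{L^{\infty}}\leq 2$ and $\|g\|_{C^{1}(\Gamma)}\leq 1$. Let $P^{\pm}(x)=\tfrac{1}{2}x^{t}A^{\pm}x+B^{\pm}\cdot x+c$ be the second-order Taylor polynomials of $v^{\pm}$ at $0$, with $A^{\pm}=D^{2}v^{\pm}(0)$, $B^{\pm}=\nabla v^{\pm}(0)$ and $c=v(0)$ (well-defined since $v$ is continuous across $T$ by Theorem~\ref{thm:existflat}). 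The $C^{2,\alpha'}$ estimate then gives $|v^{\pm}(x)-P^{\pm}(x)|\leq C_{0}|x|^{2+\alpha'}$ on $B_{1/4}^{\pm}$, together with $\|A^{\pm}\|+|B^{\pm}|+|c|\leq C_{0}$.

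The structural constraints on the coefficients would follow directly from the form of $v$. Continuity of $v$ across $T$ gives $v^{+}(x',0)=v^{-}(x',0)$, which at first and second orders in $x'$ yields $B^{+}_{i}=B^{-}_{i}$ and $A^{+}_{ij}=A^{-}_{ij}$ for $i,j<n$. Differentiating the transmission condition $v_{x_{n}}^{+}(x',0)-v_{x_{n}}^{-}(x',0)=\nabla'g(0)\cdot x'+g(0)$ in $x_{j}$ and evaluating at the origin gives $B^{+}_{n}-B^{-}_{n}=g(0)$ and $A^{+}_{nj}-A^{-}_{nj}=g_{x_{j}}(0)$ for $j<n$. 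Finally, by the $C^{2,\alpha'}$ regularity up to $T$, the equation $F(D^{2}v^{\pm})=0$ holds classically with $D^{2}v^{\pm}$ continuous up to the interface, so passing to the limit at $0$ yields $F(A^{\pm})=0$.

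Combining the two approximations, for $x\in\Omega_{\rho}^{\pm}\subset B_{\rho}^{\pm}$ with $\rho\leq 1/4$,
\[
|u^{\pm}(x)-P^{\pm}(x)|\leq|u^{\pm}(x)-v^{\pm}(x)|+|v^{\pm}(x)-P^{\pm}(x)|\leq\eta+C_{0}\rho^{2+\alpha'}.
\]
The main obstacle, and the reason the parameters must be chosen in a definite order, is that the $C^{2,\alpha'}$ approximation of $v$ only beats $\rho^{2+\alpha}$ through the gap $\alpha'-\alpha>0$: first fix $\rho\leq 1/4$ small enough that $C_{0}\rho^{\alpha'-\alpha}\leq 1/2$; then choose $\eta\leq\tfrac{1}{2}\rho^{2+\alpha}$; finally take the corresponding $\tilde\delta=\tilde\delta(\eta)$ from Lemma~\ref{lem:approx3} as the smallness parameter required in the hypothesis (this is the $\vep$ of the statement). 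All three constants then depend only on $n$, $\lambda$, $\Lambda$ and $\alpha$, giving $\|u^{\pm}-P^{\pm}\|_{L^{\infty}(\Omega_{\rho}^{\pm})}\leq\rho^{2+\alpha}$ and completing the proof.
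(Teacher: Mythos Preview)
Your proposal is correct and follows essentially the same approach as the paper: approximate $u$ by the flat-interface solution $v$ via Lemma~\ref{lem:approx3}, invoke Theorem~\ref{thm:regflat2} with an intermediate exponent $\alpha<\alpha'<\bar{\bar\alpha}$ to extract the second-order Taylor polynomials $P^\pm$ of $v^\pm$ at the origin, read off the structural relations on $A^\pm,B^\pm$ from continuity across $T$, the transmission condition, and the equation, then choose $\rho$ and the approximation error in that order. One small slip: the inclusion $\Omega_\rho^\pm\subset B_\rho^\pm$ you assert is not literally true, but the same imprecision appears in the paper and is harmless once $\|\psi\|_{C^2}\le\tilde\delta$ is small; also, the $\vep$ in the statement is the approximation error in Lemma~\ref{lem:approx3} (your $\eta$), not the induced $\tilde\delta$.
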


\begin{proof}
Fix $0<\vep, \rho<1/2$ to be determined. By Lemma~\ref{lem:approx3}, there exists a bounded viscosity solution $v\in C(B_{1/2})$ to
\begin{equation}  \label{eq:problemforv}
 \begin{cases}
 \F(D^2 v) = 0 & \hbox{in}~B_{1/2}^\pm\\
 v_{x_n}^+ - v_{x_n}^- = \nabla' g(0) \cdot x' + g(0) & \hbox{on}~ \{x_n=0\} \cap B_{1/2}
 \end{cases}
 \end{equation}
such that 
\begin{equation} \label{eq:proof1}
\|u-v\|_{L^\infty(B_{1/2})} \leq \vep.
\end{equation}
Since $\nabla' g(0) \cdot x' + g(0)$ is smooth and the interface is flat,
by Theorem \ref{thm:regflat2}, we have that $v^\pm \in C^{2,\gamma}(\overline{B_{1/3}^\pm})$, for all $0<\gamma < \bar{\bar\alpha}$, and
\begin{align*}
\|v^\pm \|_{C^{2,\gamma}(\overline{B_{1/3}^\pm})} \leq C\big(\|v\|_{L^\infty(B_{1/2})} +  \|g\|_{C^{1}(\Gamma)} \big) \leq C_0,
\end{align*}
where $C_0>0$ only depends on $n$, $\lambda$, $\Lambda$ and $\gamma$. Let $A^\pm = D^2 v^\pm(0)$, $B^\pm = \nabla v^\pm (0)$, and $c=v(0)$. By the previous estimate, it follows that  $ \|A^\pm\|+|B^\pm|+|c|\leq C_0$ and
\begin{equation} \label{eq:proof2}
\|v^\pm - P^\pm\|_{L^\infty(B_\rho^\pm)} \leq [v^\pm]_{C^{2,\gamma}(\overline{B_\rho^\pm})} \rho^{2+\gamma} \leq C_0 \rho^{2+\gamma}. 
\end{equation}
Fix $\alpha<\gamma<\bar{\bar\alpha}$. First, choose $\rho<1/2$ such that $C_0 \rho^{2+\gamma} < \rho^{2+\alpha}/2$. Then choose $\vep < \rho^{2+\alpha}/2$. Combining \eqref{eq:proof1} and \eqref{eq:proof2}, we get
\begin{align*}
\|u^\pm-P^\pm\|_{L^\infty(\Omega^\pm_\rho)} \leq \|u-v\|_{L^\infty(B_{1/2})} + \|v^\pm - P^\pm\|_{L^\infty(B_\rho^\pm)} \leq \vep + C_0\rho^{2+\gamma}\leq \rho^{2+\alpha}.
\end{align*}
Moreover, since $v^+=v^-$ on $\{x_n=0\} \cap B_{1/2}$ and $v$ is $C^2$ up to the flat interface, we see that $\nabla ' v^+(0)= \nabla' v^-(0)$ and $D_{x'}^2 v^+(0) = D_{x'}^2 v^-(0)$. In particular, $B_i^+-B_i^-= 0$ and $A^+_{ij} - A^-_{ij} = 0$ if $i,j < n$. From the transmission condition in \eqref{eq:problemforv}, we get  
$ B_n^+-B_n^- = v^+_{x_n}(0) - v^-_{x_n}(0)=g(0)$, and $A^+_{nj} - A^-_{nj} =v^+_{x_nx_j}(0) - v^-_{x_nx_j}(0) = g_{x_j}(0)$ for all $j<n$. From the equation in \eqref{eq:problemforv} and by continuity of $\F$ and $D^2 v^\pm$ in a neighborhood of 0, we also have that $\F(A^\pm)=0$.
\end{proof}

\begin{proof}[Proof of Theorem~\ref{thm:reg2}]
Fix $0<\alpha<\bar{\bar\alpha}$. Let $C_0, \rho, \epsilon>0$ be the constants given in Lemma~\ref{lem:case2}. 
Let $\delta>0$ be the one given in Lemma~\ref{lem:approx3}.
Fix $\delta_0>0$ to be chosen sufficiently small.
Arguing similarly as in the proof of Theorem~\ref{thm:pointwisereg} and Theorem~\ref{thm:regflat2}, we may assume that we are under the following normalization conditions:
\begin{enumerate}[$(i)$]
\item $\psi(0')=0$, $\nabla'\psi(0')=0'$, $\|D_{x'}^2 \psi(0')\|\leq \delta_0$,  and $[\psi]_{C^{2,\alpha}(0)} \leq \delta_0$;
\item  $\| u \|_{L^\infty(B_1)}\leq 1$ and $\|g\|_{C^1(\Gamma)}\leq 1$;
\item $[g]_{C^{1,\alpha}(0)} + K_{f^-} + K_{f^+} \leq \delta_0$;
\item $f^+(0)=f^-(0)=0$.
\end{enumerate}
For the sake of clarity, we explain how to obtain the last two conditions in $(i)$. Let $K=\max\big\{ \big( [\psi]_{C^{2,\alpha}(0)}/ \delta_0\big)^{1/(1+\alpha)} , \|D_{x'}^2 \psi(0')\| / \delta_0 \big\}$
, and define $\tilde \Gamma= \{ y\in B_1 : y/K\in \Gamma\}.$
If $y\in \tilde\Gamma$, then $y_n=\tilde \psi(y')$, with $\tilde{\psi}(y') = K \psi (y'/K).$ We get
$\|D_{x'}^2\tilde \psi(0')\| =  \|D_{x'}^2 \psi(0')\|/K \leq \delta_0,$ and
$$
[\tilde \psi]_{C^{2,\alpha}(0)} =  \sup_{y'\in B_1', \, y'\neq 0'} \frac{\|D^2_{y'}\tilde \psi(y')-D^2_{y'}\tilde\psi(0)\|}{|y'|^{\alpha}}
\leq K^{-(1+\alpha)} [\psi]_{C^{2,\alpha}(0)}\leq \delta_0.
$$
The following property derived from $(i)$ will also be useful:
\begin{equation}\label{eq:hessianpsi}
\|D_{x'}^2\psi\|_{L^\infty(B_1')} \leq \sup_{x'\in B_1'} \|D_{x'}^2\psi(x') - D_{x'}^2\psi(0') \| + \|D_{x'}^2\psi(0') \| \leq 2 \delta_0.
\end{equation}

It is enough to prove the following: \medskip

\noindent{\bf Claim.} {\it For all $k \geq 1$, there exist quadratic polynomials $P_k^\pm(x)= \tfrac{1}{2} x^t A_k^\pm x  +  B_k^\pm\cdot x + c_k$, where $A_k^\pm \in \mathcal{S}^n$, $B_k^\pm\in \Rn$,  and $c_k\in \R$
such that
\begin{align} \label{eq:cauchy}
\rho^{2(k-1)} \|A^\pm_{k}-A^\pm_{k-1}\| + \rho^{k-1} |B_{k}-B_{k-1}| + |c_{k}-c_{k-1}|  \leq C_0 \rho^{(k-1)(2+\alpha)},
\end{align}
with $P_{0}\equiv 0$ and  $C_0>0$ depends only on $n$, $\lambda$, $\Lambda$, and $\alpha$, and such that
\begin{align*}
\| u^\pm - P_k^\pm \|_{L^\infty(\Omega^\pm_{\rho^k})} \leq  \rho^{k(2+\alpha)}.
\end{align*}
Moreover, the coefficients satisfy the following conditions:
\begin{align}\label{eq:cond1}
(B_k^+)_i-(B_k^-)_i= 0 \ \hbox{if}~i < n & \quad \hbox{and} \quad (B_k^+)_n-(B_k^-)_n= g(0) \\ \label{eq:cond2}
(A_k^+)_{ij} - (A_k^-)_{ij} = 0 \ \hbox{if}~i,j < n, & \ (A_k^+)_{nj}-(A_k^-)_{nj}= g_{x_j}(0) \ \hbox{if}~j<n, \ \hbox{and} \  \F (A_k^\pm) =0.
\end{align}}

We prove the claim by induction. In view of the assumption \eqref{eq:additional}, we divide the proof into two cases.

\smallskip

\noindent\textbf{Case 1: $D_{x'}^2 \psi(0')=0$.}
For $k=1$, by the normalization, we are under the assumptions of Lemma~\ref{lem:case2}. Indeed, by $(ii)$, we have $\| u \|_{L^\infty(B_1)}\leq 1$ and $\|g\|_{C^1(\Gamma)}\leq 1$. 
Also, $h=0$ on $\Gamma$, since $u$ is continuous in $B_1$.
By $(i)$, 
\begin{align*}
\|\psi\|_{C^2({B_1'})} &= \|\psi\|_{L^\infty(B_1')} + \|\nabla' \psi\|_{L^\infty(B_1')} + \|D_{x'}^2\psi\|_{L^\infty(B_1')} 
\leq 3[\psi]_{C^{2,\alpha}(0)} \leq 3 \delta_0 \leq \delta/2,
\end{align*}
choosing $\delta_0 \leq \delta/6$. By $(iii)$ and $(iv)$, for any $r<1$,
\begin{align*}
&\|g-\nabla' g(0)\cdot x'-g(0)\|_{L^\infty(\Gamma)} + C_{f^-} + C_{f^+} \leq [g]_{C^{1,\alpha}(0)} + rK_{f^-} + rK_{f^+}   \leq \delta_0 \leq \delta/2.
\end{align*}
Hence, there exist quadratic polynomials $P_1^\pm(x)= \tfrac{1}{2} x^t A_1^\pm x  +  B_1^\pm\cdot x + c_1$ satisfying \eqref{eq:cond1}, \eqref{eq:cond2},  and  $ \|A_1^\pm\|+|B_1^\pm|+|c_1|\leq C_0$, 
  such that
\begin{align*}
\| u^\pm - P_1^\pm \|_{L^\infty(\Omega^\pm_\rho)} \leq  \rho^{2+\alpha}.
\end{align*}

Assume the claim holds for some $k\geq 1$ and let $P^\pm_k$ be such quadratic polynomials.
Denote by 
$\tilde {\Omega}^\pm_{k} =\{ x\in B_1 : \rho^k x\in \Omega^\pm\}$ and
$\tilde \Gamma_{k}  =\{ x \in B_1 : \rho^k  x \in \Gamma \}= \{ x\in B_1 : x_n=\psi_k(x')\}$, where $\psi_{k}(x') = \rho^{-k}\psi(\rho^k x').$ In particular, $\nabla' \psi_{k} (x') =\nabla' \psi(\rho^k x)$ and $\nu_{k}(x) = \nu(\rho^k x)$. Furthermore,
\begin{equation}\label{eq:assumptionpsi}
\|\psi_k\|_{C^2(B_1')} \leq \rho^{-k} \|\psi\|_{C^2(B_{\rho^k}')} \leq 3\rho^{k(1+\alpha)} [\psi]_{C^{2,\alpha}(0)} \leq 3\delta_0 \leq \delta/5,
\end{equation}
choosing $\delta_0\leq \delta/15$.
Define $P_k= P_k^+ \chi_{\tilde\Omega_{k}^+} + P_k^- \chi_{\tilde\Omega_{k}^-}$, 
and consider the rescaled function:
\begin{align}\label{eq:rescale2}
v(x) = \frac{u (\rho^k x) - P_k(\rho^k x)}{\rho^{k(2+\alpha)}}\qquad\hbox{for}~x\in \overline{B_1}.
\end{align}
By the induction hypothesis, $\| v \|_{L^\infty(B_1)}  \leq 1$, and $v$ satisfies
$$\begin{cases}
\F_k(D^2 v) = f_k^\pm & \hbox{in}~\tilde\Omega_{k}^\pm\\
v_{\nu_k}^+ - v_{\nu_k}^- = g_k & \hbox{on}~\tilde\Gamma_{k}\\
\end{cases}$$
in the viscosity sense, where 
$\F_k(M) = \rho^{-k\alpha} \F( \rho^{k\alpha}M + A_k^\pm)$ for $M\in \mathcal{S}^n$,
$f_k^\pm(x) = \rho^{-k\alpha} f^\pm(\rho^k x)$ for $x\in \tilde\Omega_{k}^\pm$, and
$g_k(x)  = \rho^{- k (1+\alpha)} \big (g(\rho^k x)-(A_k^+-A_k^-)(\rho^k x)\cdot \nu_k(x) -(B_k^+-B_k^-)\cdot \nu_k (x)\big)$ for $x\in \tilde\Gamma_{k}.$ 

We will show that $v$ satisfies the assumptions of Lemma~\ref{lem:case2}. Indeed, first
note that $\F_k$ is concave, $\F_k \in \mathcal{E}(\lambda, \Lambda)$, and $\F_k(0)=0$.
Moreover,
\begin{equation} \label{eq:assumptionf}
\Big( \fint_{B_r \cap \tilde\Omega_k^\pm} |f_k^\pm(y)|^n\, dy\Big)^{1/n} 
\leq K_{f^\pm} r^{\alpha} \leq K_{f^\pm} r^{\alpha-1}
\end{equation}
for any $r<1$. Hence, $C_{f_k^\pm} = K_{f^\pm}$ and $C_{f_k^+}+C_{f_k^-}\leq \delta_0$.
Let $A_{nn}= (A_k^+)_{nn}- (A_k^-)_{nn}$. By \eqref{eq:cond1} and \eqref{eq:cond2}, for $x \in \tilde \Gamma_k$, we have that
\begin{align*}
g_k(x) & = \rho^{-k(1+\alpha)} \big( g(\rho^k x) - \nabla ' g(0) \cdot \rho^k x' (\nu_k)_n(x)
 - A_{nn} \rho^k \psi_k(x') (\nu_k)_n(x) - g(0)(\nu_k)_n(x) \big).
\end{align*}
Observe that  since $\psi_k(0')=0$, $\nabla' \psi_k(0')=0'$, and  $\nu_k(0)=e_n$, then $g_k(0)=0$ and $\nabla' g_k(0)=0'$. 
For any  $x \in \tilde \Gamma_k$, we get
$$\big|g_k(x) \big|  
\leq [g]_{C^{1,\alpha}(0)} + 2[\nu_n]_{C^{1,\alpha}(0)} + \tfrac{2C_0}{1-2^{-\alpha}} [\psi]_{C^{2,\alpha}(0)} 
\leq \Big(3+\tfrac{2C_0}{1-2^{-\alpha}}\Big)\delta_0$$
where $\|A_{nn}\|\leq \tfrac{2C_0}{1-2^{-\alpha}}$ follows from \eqref{eq:cauchy} since $\rho<1/2$ and
$$
\|A_k^\pm\| \leq  C_0 \sum_{j=0}^{k-1} \rho^{\alpha j} \leq C_0 \frac{1}{1-\rho^\alpha}\leq \frac{C_0}{1-2^{-\alpha}}. 
$$
Furthermore, it can be checked that, for any $x\in \tilde \Gamma_k$,
$$|\nabla ' g_k (x)|  \leq C \big( [g]_{C^{1,\alpha}(0)} + [\psi]_{C^{2,\alpha}(0)} \big) \leq 2 C \delta_0.$$
Choose $\delta_0$ small enough so that $ 2 C \delta_0\leq 1/2$ and $5\big(3+\tfrac{2C_0}{1-2^{-\alpha}}\big)\delta_0 \leq \delta$. 
We conclude that
\begin{equation} \label{eq:assumptiong}
\|g_k\|_{C^1(\tilde \Gamma_k)}\leq  1
\quad\hbox{and}\quad \| g_k - \nabla' g_k(0) \cdot x' - g_k(0) \|_{L^\infty(\tilde\Gamma_k)} \leq \delta/5.
\end{equation}
It remains to show that $h=v^+-v^- \in C^2(\tilde \Gamma_k)$ with $\|h\|_{C^2(\tilde \Gamma_k)}\leq \delta/5$. 
Indeed, since $u\in C(B_1)$, and $P_k^+\neq P_k^-$ on $\tilde \Gamma_k$,  then $v \in C(B_1 \setminus \tilde\Gamma_k)$, and for $x\in \tilde \Gamma_k$, we have
$$
h(x) =\rho^{-k(2+\alpha)}\tfrac{A_{nn}}{2} \psi(\rho^k x')^2 + \rho^{-k(1+\alpha)}(\nabla' g(0)\cdot x') \psi(\rho^k x')
+  \rho^{-k(2+\alpha)} g(0) \psi(\rho^k x')$$
since $x_n=\psi_k(x')=\rho^{-k} \psi(\rho^k x')$. Then $h\in C^2(\tilde \Gamma_k)$, given that $\psi \in C^2(B_1')$. 
Furthermore, 
\begin{align*}
 \|h\|_{L^\infty (\tilde \Gamma_k)} & \leq \big( \tfrac{\|A_{nn}\|}{2} \|\psi\|_{L^\infty(B_1')} +  |\nabla'g(0)| +|g(0)|\big)
 \rho^{-k(2+\alpha)} \|\psi \|_{L^\infty(B_{\rho^k}')}\\
 & \leq  \big( \tfrac{C_0}{1-2^{-\alpha}}+2 \big) [\psi]_{C^{2,\alpha}(0)} \leq C\delta_0.
\end{align*}
To estimate the $C^2$ norm of $h$, we also need to compute $\nabla' h$ and $D_{x'}^2 h$. First, we have
\begin{align*}
\nabla ' h(x) &= \rho^{-k(1+\alpha)} A_{nn} \psi(\rho^k x') \nabla'\psi(\rho^k x') 
+\rho^{-k(1+\alpha)} \nabla' g(0) \psi(\rho^k x') \\
&\quad + \rho^{-k\alpha} (\nabla' g(0) \cdot x') \nabla' \psi(\rho^k x')
+ \rho^{-k(1+\alpha)} g(0) \nabla' \psi(\rho^k x').
\end{align*}
Moreover,
\begin{align*}
 \|\nabla' h\|_{L^\infty (\tilde \Gamma_k)} &\leq  \big( \|A_{nn}\| \|\psi\|_{L^\infty(B_1')} + 2 |\nabla'g(0)| +|g(0)|\big)
 \rho^{-k(1+\alpha)} \|\nabla' \psi \|_{L^\infty(B_{\rho^k}')}\\
 & \leq  \big( \tfrac{2C_0}{1-2^{-\alpha}}+3 \big) [\psi]_{C^{2,\alpha}(0)} \leq C\delta_0.
\end{align*}
Second, we have
\begin{align*}
D_{x'}^2 h(x) &= \rho^{-k\alpha} A_{nn} \big((\nabla'\psi)(\nabla'\psi)^t\big)(\rho^k x')
+ \rho^{-k\alpha} A_{nn} \psi(\rho^k x') D_{x'}^2 \psi(\rho^k x')\\
& \quad + 2 \rho^{-k\alpha} \nabla' g(0) (\nabla' \psi)^t (\rho^k x')
+  \rho^{k(1-\alpha)} (\nabla' g(0)\cdot x') D_{x'}^2 \psi(\rho^k x')\\
& \quad + \rho^{-k\alpha} g(0)  D_{x'}^2 \psi(\rho^k x').
\end{align*}
Therefore,
\begin{align*}
 \|D_{x'}^2 h\|_{L^\infty (\tilde \Gamma_k)} &\leq  
 \big(\|A_{nn}\| \|\nabla'\psi\|_{L^\infty(B_1')}+ 2|\nabla'g(0)|\big) \rho^{-k(1+\alpha)} \|\nabla'\psi\|_{L^\infty(B_{\rho^k}')} \\
 &\qquad +\big( \|A_{nn}\| \|\psi\|_{L^\infty(B_1')} +  |\nabla'g(0)| +|g(0)|\big)
 \rho^{-k\alpha} \|D_{x'}^2 \psi \|_{L^\infty(B_{\rho^k}')}\\
 & \leq  \big( \tfrac{C_0}{1-2^{-\alpha}}+4 \big) [\psi]_{C^{2,\alpha}(0)} \leq C\delta_0.
\end{align*}
Finally, from the previous computations, we get
\begin{align}\label{eq:assumptionh}
\|h\|_{C^2(\tilde \Gamma_k)} = \|h\|_{L^\infty (\tilde \Gamma_k)} + \|\nabla 'h\|_{L^\infty (\tilde \Gamma_k)} + \|D_{x'}^2 h\|_{L^\infty (\tilde \Gamma_k)} \leq C\delta_0 \leq \delta/5,
\end{align}
choosing $\delta_0$ sufficiently small.
Combining \eqref{eq:assumptionpsi}, \eqref{eq:assumptionf}, \eqref{eq:assumptiong}, and \eqref{eq:assumptionh}, $v$ satisfies the assumptions of Lemma~\ref{lem:case2}. Then there exist quadratic polynomials $P^\pm(x)= \tfrac{1}{2}x^t A^\pm x  +  B^\pm\cdot x + c$, where $A^\pm \in \mathcal{S}^n$, $B^\pm\in \Rn$,  $c\in \R$, and
 $ \|A^\pm\|+|B^\pm|+|c|\leq C_0$
  such that
\begin{align} \label{eq:estimatev}
\| v^\pm - P^\pm \|_{L^\infty(\tilde \Omega_k^\pm \cap B_\rho)} \leq  \rho^{2+\alpha}.
\end{align}
Moreover, the coefficients satisfy the following conditions:
\begin{align} \label{eq:cond3}
B_i^+-B_i^-= 0  \quad &  \hbox{for all}~1\leq i \leq n  \\\label{eq:cond4}
A^+_{ij} - A^-_{ij} = 0   \quad &  \hbox{for all}~1\leq i,j \leq n  \quad    \hbox{and} \quad  \F_k (A^\pm) =0,
\end{align}
since $g_k(0)=0$ and $(g_k)_{x_j}(0)=0$. In view of \eqref{eq:rescale2} and \eqref{eq:estimatev}, for any $x\in  \Omega_k^\pm\cap B_\rho$, we have
\begin{align*}
\left|\frac{u^\pm (\rho^k x) - P_k^\pm(\rho^k x)}{\rho^{k(2+\alpha)}} - P^\pm(x) \right| \leq \rho^{2+\alpha},
\end{align*}
or equivalently, if $y = \rho^k x $, then for any $y \in \Omega^\pm_{\rho^{k+1}}$, 
\begin{equation} \label{eq:indstep2}
| u^\pm(y)  - P_k^\pm (y)  - \rho^{k(2+\alpha)} P^\pm (\rho^{-k}y)| \leq \rho^{(k+1)(2+\alpha)}.
\end{equation}
Define the quadratic polynomials at the step $k+1$ as $P_{k+1}^\pm (y) = P_k^\pm (y)  + \rho^{k(2+\alpha)} P^\pm(\rho^{-k}y)$. 
By \eqref{eq:indstep2}, we conclude that
$$
\|u^\pm - P_{k+1}^\pm\|_{L^\infty(\Omega^\pm_{\rho^{k+1}})} \leq \rho^{(k+1)(2+\alpha)}
$$
and, by \eqref{eq:cond1}, \eqref{eq:cond2},  \eqref{eq:cond3}, and  \eqref{eq:cond4}, the coefficients satisfy the conditions in the claim.

\smallskip

\noindent\textbf{Case 2: $g(0)=0$.}
The proof is analogous to the one in Case 1. The main difference is how we control the terms involving the function $\psi$, in order to satisfy the smallness assumption \eqref{eq:assumptionu} given in Lemma~\ref{lem:approx3}.
 From the normalization $(i)$, it follows that
\begin{equation}\label{eq:psi}
 |\psi(x')| = |\psi(x')-\psi(0')-\nabla'\psi(0')\cdot x' |\leq \|D^2_{x'}\psi\|_{L^\infty(B_1')} |x'|^2\leq 2 \delta_0|x'|^2,
 \end{equation}
 and
 \begin{equation}\label{eq:psi2}
 |\nabla'\psi(x')| = |\nabla ' \psi(x')-\nabla'\psi(0') |\leq \|D^2_{x'}\psi\|_{L^\infty(B_1')} |x'|\leq 2 \delta_0|x'|.
 \end{equation}
 Hence, \eqref{eq:assumptionpsi} and \eqref{eq:assumptiong} follow similarly using \eqref{eq:psi}, \eqref{eq:psi2}, and \eqref{eq:hessianpsi}. It remains to estimate $\|h\|_{C^2(\tilde \Gamma_k)}$. The only problematic term in the expression of $h$ given above is the last term: 
 $$\rho^{-k(2+\alpha)} g(0) \psi(\rho^k x').$$ 
  Indeed, by \eqref{eq:psi} we have $|\rho^{-k(2+\alpha)} g(0) \psi(\rho^k x')|\leq 2\delta_0 \rho^{-k\alpha} |g(0)|$, which is large for $\rho$ small, unless $g(0)=0$. The same argument applies for the last term in $\nabla' h$ and $D_{x'}^2 h$. Hence, assuming that $g(0)=0$, we can make the $C^2$ norm of $h$ small enough, proceeding as in Case~1, using \eqref{eq:psi}, \eqref{eq:psi2}, and \eqref{eq:hessianpsi}. This concludes the proof of the theorem.
 \end{proof}



\end{document}